      \string\usetikzlibrary{decorations.markings} to use arrows with markings}{}}{}%
\let\noi\noindent
\newcommand{\ds}{\displaystyle}
\newcommand{\ms}{\medskip}
\newcommand{\rd}{\mathrm{red}}
\newcommand{\Aut}{\mathrm{Aut}}
\newcommand{\id}{\mathrm{id}}
\newcommand{\im}{\mathrm{im}}
\newcommand{\Gal}{\mathrm{Gal}}
\newcommand{\Hom}{\mathrm{Hom}}
\newcommand{\cont}{\mathrm{cts}}
\newcommand{\pr}{\mathit{pr}}
\newcommand{\Nis}{\mathrm{Nis}}
\newcommand{\m}{\mathfrak{m}}
\newcommand{\n}{\mathfrak{n}}
\newcommand{\p}{\mathfrak{p}}
\newcommand{\q}{\mathfrak{q}}
\newcommand{\Z}{\mathds Z}
\newcommand{\Q}{\mathds Q}
\newcommand{\F}{\mathds F}
\newcommand{\R}{\mathds R}
\newcommand{\lang}{\longrightarrow}
\renewcommand{\O}{\mathcal{O}}
\newcommand{\Cat}{\mathrm{Cat }}
\newcommand{\res}{\mathrm{res}}
\newcommand{\Zar}{\mathrm{Zar}}
\newcommand{\et}{\mathrm{et}}
\newcommand{\set}{\mathrm{set}}
\newcommand{\tr}{\mathrm{tr}}
\newcommand{\Mor}{\mathrm{Mor}}
\newcommand{\Ab}{{\mathcal{A}b}}
\newcommand{\sh}{\mathit{sh}}
\newcommand{\strh}{\mathit{strict}}
\newcommand{\A}{{\mathbb A}}
\newcommand{\nr}{{\mathit{nr}}}
\newcommand{\PrSh}{\mathit{PrSh}}
\newcommand{\Sh}{\mathit{Sh}}
\newcommand{\cH}{{\mathscr H}}
\newcommand{\cU}{{\mathscr U}}
\newcommand{\cV}{{\mathscr V}}
\newcommand{\cW}{{\mathscr W}}
\newcommand{\cX}{{\mathscr X}}
\newcommand{\cY}{{\mathscr Y}}
\newcommand{\cZ}{{\mathscr Z}}
\newcommand{\liso}{\mathrel{\hbox{$\longrightarrow$} \kern-2.4ex\lower-1ex\hbox{$\scriptstyle\sim$}\kern1.7ex}}
\newtheoremstyle{alexthm}
  {}
  {}
  {\sl }
  {}
  {\bf}
  {.}
  {.5em}
  {}
\theoremstyle{alexthm}
\newtheorem{theorem}{Theorem}[section]
\newtheorem*{theorem*}{Theorem}
\newtheorem{corollary}[theorem]{Corollary}
\newtheorem{proposition}[theorem]{Proposition}
\newtheorem{lemma}[theorem]{Lemma}
\newtheorem*{lemma*}{Lemma}
\newtheorem{conjecture}[theorem]{Conjecture}
\newtheoremstyle{alexdef}
  {}
  {}
  {\rm }
  {}
  {\bf}
  {.}
  {.5em}
  {}
\theoremstyle{alexdef}
\newtheorem*{example*}{Example}
\newtheorem{example}[theorem]{Example}
\newtheorem{remark}[theorem]{Remark}
\newtheorem{remarks}[theorem]{Remarks}
\newtheorem{definition}[theorem]{Definition}
\DeclareMathOperator{\Spec}{\textit{Spec}}
\DeclareMathOperator{\Specm}{\textit{Specm}}
\DeclareMathOperator{\RZ}{\textit{RZ}}
\DeclareMathOperator{\supp}{\textrm{supp}}
\newcommand{\sets}{\textrm{Sets}}
\DeclareMathOperator{\ch}{char}
\DeclareMathOperator{\Val}{Val}
\newcommand{\Sch}{\mathrm{Sch}}
\newcommand{\Cor}{\mathrm{Cor}}
\DeclareMathOperator*{\colim}{colim}
\DeclareMathOperator{\Spa}{\mathit{Spa}}
\title{The tame site of a scheme}
\author{Katharina H\"{u}bner and Alexander Schmidt}
\date{}
\begin{document}
\maketitle
\tableofcontents

\section{Introduction}

\noindent
The \'{e}tale fundamental group of a scheme plays a similar role in algebraic geometry as the topological fundamental group in algebraic topology.  For a scheme $X$ of characteristic $p>0$ however, the $p$-part of $\pi_1^\et(X)$ is not well-behaved, e.g., it is not ($\A^1$-)homotopy invariant. Therefore the \emph{tame} fundamental group has been studied in positive and mixed characteristic (Grothendieck-Murre \cite{GM71}, Kerz-Schmidt \cite{KeSch10}). Unfortunately, lacking an associated tame cohomology theory, sometimes ad hoc arguments have to be used in applications.
It would be helpful to have a tame Grothendieck topology whose associated fundamental group is the tame fundamental group.
Such a topology would provide a tame cohomology theory and, in addition, higher tame homotopy groups.
The latter seem to be even more desirable because the higher \'{e}tale homotopy groups of affine varieties vanish in positive characteristic (Achinger \cite{Ach17}). In this paper we give a definition of a tame site that hopefully will prove useful as a tool to separate tame from wild phenomena in scheme theory.

\medskip Let $X$ be a scheme over  a fixed base scheme $S$. The tame site $(X/S)_t$ which will be defined below has the same underlying category as the small \'{e}tale site $X_\et$  but less coverings. Every Nisnevich covering is tame, so we have natural morphisms of sites
\[
 X_\et \stackrel{\alpha}{\lang} (X/S)_t \stackrel{\beta}{\lang} X_\Nis.
\]
Tameness of a covering should be thought of as ``at most tamely ramified along the boundary of compactifications over $S$''. We will use the associated valuation space as a convenient technical tool to make this precise.

\medskip
A technical advantage of the tame site sitting between the \'{e}tale and the Nisnevich site is that motivic techniques like sheaves with transfers \`{a} la Voevodsky and the $\A^1$-homotopy category of Morel-Voevodsky work essentially without change also for the tame site. A technical disadvantage is that the topology is not local enough in the sense that we cannot sufficiently separate the valuations  that occur in the definition of tameness. We will resolve this problem by comparing the tame cohomology of an $S$-scheme $X$ with the tame cohomology of the adic space $\Spa(X,S)$ as defined in H\"{u}bner \cite{HueAd}.

\bigskip
Our wish list of properties one would expect of an ``ideal'' tame site  $(X/ S)_t$
contains the following:

\begin{enumerate}
\item (Topological invariance) If $X'\rightarrow X$ is a universal homeomorphism of $S$-schemes, then the sites $(X/S)_t$ and $(X'/S)_t$ are isomorphic.
\item (Comparison with \'{e}tale cohomology for invertible coefficients) Let $F\in \Sh_\et(X)$ be an abelian sheaf  with $mF=0$ for some $m$ which is invertible on $S$. Then the natural morphism $\alpha: X_\et \to (X/S)_t$ induces isomorphisms
\[
H^n_t(X/S,\alpha_*F)\cong  H^n_\et(X,F) \quad \text{for all }n\ge 0.
\]

\item (Comparison with \'{e}tale cohomology for proper schemes) Let $F\in \Sh_\et(X)$ be an abelian sheaf and assume that $X/S$ is proper. Then
\[
H^n_t(X/S,\alpha_*F)\cong H^n_\et(X,F) \quad \text{for all }n\ge 0.
\]
  \item (Homotopy invariance of cohomology): The projection $p: \A^1_X\to X$ induces isomorphisms
 \[
 H^n_t(X/S,F) \liso H^n_t(\A^1_X/S,p^*F)
 \]
 for all $n\ge 0$ and all torsion sheaves $F\in \Sh_t(X/S)$.
 \item (Excision) Let $\pi: X' \rightarrow X$ be an \'{e}tale morphism and
$Z \hookrightarrow X$, $Z' \hookrightarrow X'$ closed immersions with open complements $U=X\smallsetminus Z$, $U'=X'\smallsetminus Z'$. Assume that $\pi(U') \subset U$
and that $\pi$ induces an isomorphism $Z'_\rd \liso Z_\rd $.
Then the induced homomorphisms between tame cohomology groups with supports
\[
H^n_{t,Z}(X/S, F) \lang H^n_{t,Z'}(X'/S, \pi^*F)
\]
are isomorphisms for every $F \in \Sh_t(X/S)$ and all $n \geq 0$.
  \item (Purity, locally constant coefficients) If $X$ is regular and of finite type over~$S$, $S$ is pure of characteristic $p$>0, $U\subset X$ is a dense open subscheme and $F$ is a locally constant $p$-torsion sheaf, then
  \[
  H^n_t(X/S, F) \cong H^n_t(U/S, F|_U).
  \]
 for all $n\ge 0$.
\end{enumerate}

Moreover, we expect tame versions of the proper and the smooth base change theorem and purity for deRham-Witt sheaves. In addition, we expect finiteness properties of tame cohomology groups and cohomological dimension in those cases where they hold for the \'{e}tale cohomology (e.g., schemes of finite type over $\Z$ or over separably closed fields).

\medskip
In this paper we will prove properties (1), (2), (3) and (5) in general. Assuming resolution of singularities (in the version of  \cref{ros} below), we prove (6) and (4) for locally constant coefficients. If $X$ is locally noetherian, we show that the (curve-)tame fundamental group as defined in \cite{KeSch10} occurs as the natural fundamental group of the site $(X/S)_t$. For schemes of dimension less or equal to one, we prove finiteness properties in \cref{sec:finite}. Purity for deRham-Witt sheaves on curves has been addressed in \cite{HueCur}.

\medskip
The essential point in proving (4) and (6) is the comparison theorem \ref{main compare} between the tame cohomology of an $S$-scheme $X$ and the cohomology of the associated adic space $\Spa(X,S)$. At the moment, we can prove this only in pure characteristic. The method is to compare the associated \v{C}ech cohomologies. To prove that the corresponding sheaf and the \v{C}ech cohomologies coincide, we adapt M.~Artin's method of joins of hensel rings \cite{Art71} to the respective situations. In the adic case this is rather involved and requires a careful analysis of relative Riemann-Zariski spaces as introduced by M.~Temkin \cite{Tem11}.

\medskip
Finally, in \cref{sec_suslin} we give an application to Suslin homology which in fact was the initial motivation for our construction of the tame site.
We fix an algebraically closed field~$k$ and assume that resolution of singularities holds over~$k$.
For a scheme $X$ of finite type over  $k$, we construct natural maps from tame to Suslin cohomology
\[
\beta_n: H^n_t(X,\Z/m\Z)\lang H^n_S(X,\Z/m\Z), \ n\ge 0, \ m\ge 1.
\]
We conjecture that all $\beta_n$ are isomorphisms. This extends work of Suslin-Voevodsky \cite{SV96} in the case $(m,\ch(k))=1$ and of Geisser-Schmidt \cite{GS16} for $n=1$ and general $m$.

\medskip\noindent
\emph{Acknowledgement:} The authors want to thank M.~Temkin for helpful discussions on Riemann-Zariski points. The construction of the map between tame and Suslin cohomology arose from discussions of the second author with T.~Geisser while writing  \cite{GS16}. We would like to thank the referee, whose constructive criticism has helped to improve the presentation.

\section{Definition of the tame site}

By a valuation on a field $K$ we mean a non-archimedean valuation, not necessarily discrete or of finite rank. The trivial valuation is included. If $v$ is a valuation on $K$, we denote by $\O_v,\m_v$ and $k(v)$ the valuation ring, its maximal ideal and the residue field. By $\O_v^h$ and $\O_v^\sh$ we denote the henselization and strict henselization of $\O_v$ and by $K_v^h$ and $K_v^\sh$ their quotient fields.

Let $v$ be a valuation on $K$ and $w$  an extension of $v$ to a finite separable extension field $L/K$. We call $w/v$ \emph{unramified} if $\O_v \to \O_w$ is \'{e}tale, i.e., $L_w^\sh =K_v^\sh$,  and \emph{tamely ramified} if the field extension $L_w^\sh /K_v^\sh$ is of degree prime to the residue characteristic $p=\ch k(v)$. In this case $L_w^\sh /K_v^\sh$ is automatically Galois with abelian Galois group of order prime to $p$. If $L/K$ is Galois, then $w/v$ is unramified (resp.\ tamely ramified) if and only if the inertia group $T_w (L/K)$ (resp.\ the ramification group $R_w(L/K)$) is trivial. (See \cite{Ray70} and \cite{EP2005}.)

\begin{definition}
An \emph{$S$-valuation} on an $S$-scheme $X$ is a valuation $v$ on the residue field $k(x)$ of some point $x\in X$ such that there exists a morphism
$
\varphi: \Spec(\O_v) \to S
$
making the diagram
\[
\begin{tikzcd}
\Spec(k(x)) \arrow[rr] \arrow[d] &  &X \arrow[d,"f"] \\
\Spec(\O_v) \arrow[rr, "\varphi"] &  & S
\end{tikzcd}
\]
commutative (if $S$ is separated, $\varphi$ is unique if it exists).
The set of all $S$-valuations is denoted by $\Val_S X$.

We denote elements of $\Val_S X$ in the form $(x,v)$, $x\in X$, $v\in \Val_S(k(x))$.
\end{definition}

\begin{remarks} \begin{enumerate}
                  \item Directly by definition, we have a disjoint union decomposition
                  \[
\Val_S X= \coprod_{x\in X} \Val_S(k(x)).
\]
                  \item If $S=\Spec k$ for a field $k$, then $S$-valuations are the valuations with trivial restriction to~$k$.
                  \item If $S=\Spec \Z$, then every valuation is an $S$-valuation.
                \end{enumerate}
\end{remarks}

\noindent
Given a commutative square of scheme morphisms
\[
\begin{tikzcd}
X'\arrow[d,"f'"']\rar{\varphi} &X\dar{f}\\
S' \rar{\psi} & S,
\end{tikzcd}
\]
there is an associated map
\[
\Val_\psi \varphi: \Val_{S'} X' \lang \Val_S X,\quad  (x',v') \longmapsto (x=\varphi(x'), v=v'|_{k(x)}).
\]

\begin{definition}
The \emph{tame site} $(X/S)_t$ consists of the following data:

\smallskip\noindent
The category $\Cat (X/S)_t$ is the category of \'{e}tale morphisms $p:U\to X$.

\smallskip\noindent
A family $(U_i \to U)_{i\in I} $ of morphisms in $\Cat (X/S)_t$ is a covering if it is an \'{e}tale covering and for every point $(u,v)\in \Val_SU$ there
exists an index~$i$ and a point $(u_i,v_i)\in \Val_S U_i$ mapping to $(u,v)$ such that $v_i/v$ is  tamely ramified.
\end{definition}
We will use the notation $\Sh_t^\sets (X/S)$ resp.\ $\Sh_t(X/S)$ for the category of sheaves of sets resp.\ abelian groups on $(X/S)_t$.

\begin{remark}
There are obvious morphisms of sites
\[
X_\et \stackrel{\alpha}{\lang} (X/S)_t \stackrel{\beta}{\lang} X_\Nis.
\]
In particular, every \'{e}tale sheaf is a tame sheaf. We mention the following special cases:

 For an abelian group $A$, the presheaf (constant Zariski-sheaf)
\[
U \longmapsto {\underline A} (U):=\mathrm{Map}_\cont (U,A)
\]
is a tame sheaf.

For a quasi-coherent sheaf $Q$ on $X_\Zar$, the presheaf $W(Q)$ given by
\[
W(Q)(U\stackrel{h}{\to} X):= \big(h_\Zar^*(Q)  \otimes_{h^*_\Zar (\O_X)} \O_U\big)  (U)
\]
is a sheaf on $(X/S)_t$.
\end{remark}

Consider a commutative diagram
\[
\begin{tikzcd}
X'\dar\rar &X\dar\\
S' \rar & S
\end{tikzcd}
\]
of scheme morphisms. Then the pullback $(U\to X)\mapsto (U'=U\times_XX'\to X')$ induces  a morphism of sites $\varphi: (X'/S')_t \to (X/S)_t$.

\begin{lemma}\label{inverse-image}
The inverse image functors $\varphi^*_\sets: \Sh_t^\sets(X/S)\to \Sh_t^\sets(X'/S')$ and  $\varphi^*: \Sh_t(X/S)\to \Sh_t(X'/S')$ are exact. The direct image functor $\varphi_*:  \Sh_t(X'/S')\to \Sh_t(X/S)$ sends injective abelian  sheaves to injective abelian sheaves.
\end{lemma}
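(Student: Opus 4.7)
The lemma is a formal consequence of the standard Grothendieck-topology machinery for morphisms of sites. The key concrete input is to check that the underlying continuous functor
\[
u\colon \Cat (X/S)_t \lang \Cat (X'/S')_t, \quad (U\to X)\longmapsto (U\times_X X'\to X'),
\]
preserves finite limits; everything else will follow from general nonsense.

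First I would observe that both categories admit a terminal object ($X$, respectively $X'$) and fiber products (computed in schemes, since étale morphisms are stable under base change and composition). Then $u(X)=X\times_X X'=X'$, and for étale morphisms $U_1 \to U$, $U_2\to U$ in $\Cat (X/S)_t$ the scheme-theoretic isomorphism
\[
(U_1\times_U U_2)\times_X X' \;\cong\; (U_1\times_X X')\times_{U\times_X X'}(U_2\times_X X')
\]
shows that $u$ commutes with fiber products. Hence $u$ preserves all finite limits in $\Cat (X/S)_t$. Combined with the continuity of $u$ (i.e., preservation of tame coverings, which is the very content of the assertion that $\varphi$ is a morphism of sites), this is exactly the hypothesis of the standard criterion (SGA 4, Exp.\ IV) guaranteeing that the inverse image functor $\varphi^*_\sets$ on sheaves of sets is left exact. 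It is right exact, being a left adjoint to $\varphi_*$, so $\varphi^*_\sets$ is exact. The exactness of $\varphi^*$ on abelian sheaves is then inherited via the forgetful functor to $\Sh_t^\sets$, which creates kernels and cokernels.

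Finally, the preservation of injectives by $\varphi_*$ is formal: for any injective $I\in \Sh_t(X'/S')$, the functor
\[
F \longmapsto \Hom_{\Sh_t(X/S)}(F,\varphi_* I) \;\cong\; \Hom_{\Sh_t(X'/S')}(\varphi^* F, I)
\]
is exact as the composition of the exact $\varphi^*$ with the exact functor $\Hom(-,I)$, whence $\varphi_* I$ is injective. I foresee no essential obstacle; the whole argument reduces to the purely formal commutation of fiber products of schemes with base change, together with standard adjunction folklore.
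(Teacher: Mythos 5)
Your proposal is correct and follows essentially the same route as the paper: the paper notes that the underlying categories agree with those of the small \'{e}tale sites and invokes the filteredness of the index categories for the presheaf pullback (Artin, II Thm.\ 4.14), which is precisely what your verification that $u$ preserves finite limits (terminal object and fibre products) establishes, since that is what makes the relevant comma categories cofiltered. The injective-preservation argument via the adjunction $\varphi^*\dashv\varphi_*$ is identical in both.
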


\begin{proof}
The underlying categories of the tame sites are the same as for the small \'{e}tale sites. Hence the first statement follows in the same way as for the \'{e}tale topology from the fact that the index categories for the colimits which define the presheaf-pullback are filtered (cf.\ \cite[II,\,Thm.\,4.14]{Art62}). The second assertion is a formal consequence of the adjunction $\varphi^*\dashv \varphi_*$.
\end{proof}

\begin{lemma}\label{proper-lem}
Let $X \stackrel{f}{\lang}S \stackrel{g}{\lang}T$
be scheme morphisms. If $g$ is proper, then the natural morphism of sites
\[
(X/S)_t \lang (X/T)_t
\]
is an isomorphism.
\end{lemma}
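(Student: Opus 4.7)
The plan is to observe that both sites $(X/S)_t$ and $(X/T)_t$ share the same underlying category, namely the category of \'{e}tale morphisms $U \to X$, so the only thing to compare is the collection of coverings. Since the tameness condition on an \'{e}tale covering $(U_i \to U)_{i \in I}$ is phrased entirely in terms of the valuation spaces $\Val_? U$ and $\Val_? U_i$ and the relation of tame ramification between valuations, it suffices to prove that the set-valued functor $\Val_? U$ is the same whether computed over $S$ or over $T$.

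First I would establish the tautological inclusion $\Val_S U \subseteq \Val_T U$: given $(u,v) \in \Val_S U$, the witnessing morphism $\Spec(\O_v) \to S$ composed with $g$ yields a morphism $\Spec(\O_v) \to T$ that completes the $T$-diagram, since the composite $U \to X \to S \to T$ equals the structure morphism $U \to X \to T$. Hence $(u,v) \in \Val_T U$.

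For the reverse inclusion I would invoke the valuative criterion of properness applied to $g: S \to T$. Given $(u,v) \in \Val_T U$, we obtain a commutative square
\[
\begin{tikzcd}
\Spec(k(u)) \arrow[r] \arrow[d] & X \arrow[r] & S \arrow[d,"g"] \\
\Spec(\O_v) \arrow[rrr,"\varphi"] & & & T,
\end{tikzcd}
\]
whose outer rectangle commutes because both paths compute the structural morphism $\Spec(k(u)) \to T$ (here $\varphi$ is the witness for $v$ being a $T$-valuation). Since $\O_v$ is a valuation ring and $g$ is proper, the valuative criterion provides a unique lift $\Spec(\O_v) \to S$ of $\varphi$ extending $\Spec(k(u)) \to X \to S$. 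This shows $(u,v) \in \Val_S U$, and by the same argument $\Val_S U_i = \Val_T U_i$ for every $U_i$ appearing in a potential covering.

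Consequently the tame covering condition \textemdash{} existence, for every $(u,v)$ in the valuation space of $U$, of an index $i$ and a point above it with tame residue extension \textemdash{} reads identically in both sites, so the classes of coverings coincide. The only input of substance is the valuative criterion of properness, which I do not anticipate as a real obstacle; the argument is essentially formal once one notices that $\Val_?$ depends only on the existence of an extension to $\Spec(\O_v)$ and that properness of $g$ makes the existence over $S$ equivalent to existence over $T$.
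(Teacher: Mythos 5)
Your proof is correct and takes essentially the same approach as the paper: the paper's proof is the one-line observation that the valuative criterion of properness gives $\Val_S U = \Val_T U$ for every \'{e}tale $U \to X$, and your write-up simply spells out both inclusions and the application of the valuative criterion in detail.
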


\begin{proof}
If $g: S\to T$ is proper, then the valuative criterion for properness implies $\Val_S U=\Val_T U$ for every $p: U\to X$ \'{e}tale. Whence the statement.
\end{proof}
\begin{definition}\label{deftamept}
A \emph{tame point} of $(X/S)_t$ is a pair $(\bar{x},\bar{v})$ where $\bar{x}:\Spec k(\bar{x})\to X$ is a morphism from the spectrum of a field to~$X$ and~$\bar{v}$ is an $S$-valuation on $k(\bar{x})$ such that~$k(\bar{x})$ does not admit a nontrivial,  finite, separable extension to which $\bar{v}$ has a tamely ramified extension. In other words, $k(\bar{x})$ is strictly henselian with respect to~$\bar{v}$ and the absolute Galois group of $k(\bar x)$ is a pro-$p$ group, where $p   $ is the residue characteristic of $\bar{v}$ (if $p=0$ this means that the group is trivial).

If $\bar v$ is the trivial valuation, we call the point $(\bar{x},\bar{v})$ \emph{trivial point}. Note that (regardless of the residue characteristic) the field $k(\bar x)$ of a trivial point is separably closed.
\end{definition}

A tame point $(\bar{x},\bar{v})$ induces a morphism of sites
\[
\bar x: (\Spec k(\bar{x})/\Spec \O_{\bar v})_t \lang (X/S)_t.
\]
By \cref{inverse-image}, the inverse image sheaf functor ${\bar x}^*$ is exact. Since every tame covering of $\Spec(k(\bar x))$ splits, the global sections functor on $(\Spec k(\bar{x})/\Spec \O_{\bar v})_t$ is exact. Therefore the functor ``stalk at $\bar x$''
\[
 F \longmapsto F_{\bar x}:=\Gamma (\Spec k(\bar{x}),\bar x^*F),
\]
is a topos-theoretical point of $(X/S)_t$. The trivial points correspond to the usual geometric points of the \'{e}tale site (followed by $\alpha: X_\et \to (X/S)_t$).

\begin{remark}\label{stalkremark} The stalk of the structure sheaf at a point of the Nisnevich resp.\ \'{e}tale site is the henselization resp.\ the strict henselization of the local ring of the underlying scheme theoretic point.

Let $(\bar x,\bar v)$ be a tame point of $X/S$ with underlying scheme theoretic point $x_0\in X$. Let $k(\bar x)_s/k(x_0)$ be the relative separable algebraic closure of $k(x_0)$ in $k(\bar x)$ and choose a separable algebraic closure  $k^s/k(\bar x)_s$. Denote the strict henselization of $\O_{X,x_0}$ with respect to $k^s /k(x_0)$ by $\O_{X,\bar x}^{sh}$. Then the stalk $\O_{X, (\bar x,\bar v)}$ of the structure sheaf at $(\bar x,\bar v)$ is the unique ind-\'{e}tale subextension of
\[
\O_{X,x_0}^{h} \hookrightarrow \O_{X,\bar x}^{sh}
\]
which corresponds to the subextension $k(\bar x)_s$ of $k^s/k(x_0)$.
\end{remark}

\begin{definition}\label{tame-henselization-def}  We call the ring $\O_{X, (\bar x,\bar v)}$ described in \cref{stalkremark} the \emph{tame henselization} of $X$ at the point $(\bar x,\bar v)$.
\end{definition}

\begin{lemma}\label{enoughpoints}
The site $(X/S)_t$ has enough points.
\end{lemma}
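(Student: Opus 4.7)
The plan is to prove that the family of stalk functors at all tame points is jointly conservative, which then gives existence of enough points by the standard criterion (compare \cite[II, Thm.\,4.14]{Art62}). I will need two ingredients: (a) at least one tame point lying over every $S$-valuation $(x,v)\in\Val_SX$; (b) for any $U\in\Cat (X/S)_t$, the étale neighborhoods of chosen tame points over each $(x,v)\in\Val_SU$ assemble into a tame covering of $U$.

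For (a), I would fix a separable closure $k(x)^{\sep}$ of $k(x)$ and an extension $\tilde v$ of $v$ to it. Inside the inertia subgroup $I\subset\Gal(k(x)^{\sep}/k(x))$ of $\tilde v$, the wild inertia $P$ is a normal pro-$p$ subgroup with pro-cyclic quotient $I/P$ of order prime to $p=\ch k(v)$. Setting $k(\bar x):=(k(x)^{\sep})^P$ and $\bar v:=\tilde v|_{k(\bar x)}$ should produce a tame point: since $k(\bar x)$ is algebraic over $(k(x)^{\sep})^I=k(x)_v^{\sh}$, it is strictly henselian at $\bar v$, and its absolute Galois group is $P$, which is pro-$p$. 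The $S$-valuation condition is immediate from $\O_v\subset\O_{\bar v}$, which gives $\Spec\O_{\bar v}\to\Spec\O_v\to S$.

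For (b), for each $(x,v)\in\Val_SU$ I would choose a tame point $(\bar x,\bar v)$ over it (via (a) applied to $U/S$) together with an étale neighborhood $V_{(x,v)}\to U$ through which $\Spec k(\bar x)$ factors, and show that the resulting family is a tame covering. Étale surjectivity is ensured by the trivial valuations. For tameness at a given $(u',v')\in\Val_SU$, one takes the index $(u',v')$ itself and the image $y\in V_{(u',v')}$ of $\bar x'$ with valuation $\bar v'|_{k(y)}$; the ramification-theoretic input needed is that any finite subextension of $k(\bar x')/k(u')$ is tamely ramified at the induced valuation over $v'$, which holds because $\Gal(k(\bar x')/k(u')_{v'}^{\sh})=I/P$ is pro-cyclic of order prime to $p$. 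The dual statement — that any tame covering $(W_i\to U)_i$ admits a lift $\Spec k(\bar x)\to W_i$ for every tame point of $U$ — follows from the same circle of ideas, since a tame extension $k(u_i)^{\sh}_{v_i}/k(u)^{\sh}_{v_u}$ of degree prime to $p$ must embed into $k(\bar x)$ compatibly with valuations, $k(\bar x)$ having no non-trivial tame separable extension.

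Granted (a) and (b), the conclusion follows by the classical argument. Injectivity of $\phi(U)$ for a morphism $\phi:F\to G$ of tame sheaves which is stalk-wise an isomorphism amounts to observing that two sections of $F(U)$ with equal image in $G(U)$ agree at every tame point, hence on each $V_{(x,v)}$ (after shrinking), and therefore on $U$ by the sheaf axiom. Surjectivity is analogous, lifting a given $s\in G(U)$ stalkwise to local preimages $t_{(x,v)}\in F(V_{(x,v)})$ with $\phi(t_{(x,v)})=s|_{V_{(x,v)}}$ after shrinking, then gluing via injectivity on overlaps. The main obstacle will be the careful verification of (b); once the ramification-theoretic fact about finite subextensions of $k(\bar x)/k(x)$ is in hand, everything else is essentially formal.
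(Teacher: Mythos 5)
Your construction coincides with the paper's: you take $k(\bar x) = (k(x)^{\sep})^{P}$ with $P$ the wild inertia of $\tilde v$, which is exactly the paper's $K_v = (k(x)^s)^{R_{\bar v}}$, and your conservativity argument unpacks what the paper dispatches in one line ("follows immediately from the definition of coverings"). Two minor remarks: the paper actually takes the larger family of all finite subextensions $L$ of $k(x)^s/K_v$ (your subfamily of just the $K_v$'s is still conservative, by the argument you give), and the tame quotient $I/P$ is abelian of order prime to $p$ but need not be pro-cyclic for valuations of rank $>1$ --- though only the prime-to-$p$ property is used, so the argument is unaffected.
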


\begin{proof}
We fix a separable closure $k(x)^s$ of $k(x)$ for every point $x\in X$. For each $S$-valuation $v$ on $k(x)$ we choose a prolongation $\bar v$ to $k(x)^s$ and consider the fixed field $K_v:=(k(x)^s)^{R_{\bar v}}$ of its ramification group. Then the tame points $(\bar x_L, \bar v|_L)$ with
\[
\bar x_L: \Spec L \lang \Spec k(x) \lang X,
\]
where $x$ runs through the points of $X$, $v$ runs through the $S$-valuations of $k(x)$ and $L$ runs through the finite subextensions of $k(x)^s/K_v$ establish a conservative set of points. Indeed, this follows immediately from the definition of coverings in $(X/S)_t$. \end{proof}

We learn that exactness of sequences of tame sheaves can be checked stalkwise at tame points. As in the \'{e}tale case, we obtain

\begin{lemma}
Let $\pi: X' \to X$ be a finite morphism between $S$-schemes. Then
\[
\pi_*: \Sh_t(X'/S) \to \Sh_t(X/S)
\]
is exact. For $F' \in \Sh_t(X'/S)$ we have $H^q_t(X/S,\pi_* F')\cong H^q_t(X'/S,F')$ for all $q\ge 0$.
\end{lemma}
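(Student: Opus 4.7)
\emph{Plan.} The strategy follows the standard \'{e}tale template: compute stalks of $\pi_*F'$ to obtain exactness, and then combine this with preservation of injectives to get the cohomology isomorphism.

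\emph{Step 1 (exactness via stalks).} By \cref{enoughpoints} it suffices to verify that for every tame point $(\bar x,\bar v)$ of $(X/S)_t$, the stalk $(\pi_*F')_{(\bar x,\bar v)}$ is a finite product of stalks of $F'$ at tame points of $X'/S$ lying above $(\bar x,\bar v)$. Writing the stalk as the filtered colimit of $F'(U\times_X X')$ over tame neighbourhoods $U$ of $(\bar x,\bar v)$, one obtains
\[
(\pi_*F')_{(\bar x,\bar v)}\;=\;F'\bigl(X'\times_X\Spec\O_{X,(\bar x,\bar v)}\bigr).
\]
Since $\pi$ is finite, $X'\times_X\Spec\O_{X,(\bar x,\bar v)}=\Spec B$ with $B$ a finite $\O_{X,(\bar x,\bar v)}$-algebra. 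The tame henselization $\O_{X,(\bar x,\bar v)}$ is henselian, being a filtered colimit of \'{e}tale extensions of the henselian ring $\O^h_{X,x_0}$ (cf.\ \cref{stalkremark}), so $B$ splits as a finite product $\prod_{i=1}^n B_i$ of local henselian rings. Identifying each factor $B_i$ with the tame henselization $\O_{X',(\bar x'_i,\bar v'_i)}$ at a tame point of $X'/S$ above $(\bar x,\bar v)$ yields
\[
(\pi_*F')_{(\bar x,\bar v)}\;=\;\prod_{i=1}^n F'_{(\bar x'_i,\bar v'_i)}.
\]
Since finite products of exact sequences of abelian groups are exact, $\pi_*$ is exact.

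\emph{Step 2 (cohomology isomorphism).} By \cref{inverse-image} the direct image $\pi_*$ sends injective abelian sheaves to injective abelian sheaves. Combined with the exactness from Step 1, any injective resolution $F'\to I^\bullet$ in $\Sh_t(X'/S)$ yields an injective resolution $\pi_*F'\to \pi_*I^\bullet$ in $\Sh_t(X/S)$. Since $\Gamma(X/S,\pi_*I^q)=\Gamma(X'/S,I^q)$ for all $q$, taking cohomology of the resulting complexes of global sections gives $H^q_t(X/S,\pi_*F')\cong H^q_t(X'/S,F')$.

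\emph{Main obstacle.} The substantive content is the stalk decomposition in Step 1, specifically the identification of each local factor $B_i$ with a tame henselization $\O_{X',(\bar x'_i,\bar v'_i)}$. The splitting of $B$ into local henselian rings is immediate from Hensel's lemma. Matching each factor with a tame henselization amounts to tracking how $\bar v$ extends along the finite residue field extensions at the points of $X'$ above $x_0$, together with the ind-\'{e}tale description of the tame henselization recalled in \cref{stalkremark}; finiteness of the product is automatic because $\pi$ is finite.
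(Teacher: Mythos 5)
Your proof is correct and follows essentially the same route as the paper. Step 1 (computing the stalk $(\pi_*F')_{(\bar x,\bar v)}$ and decomposing the resulting finite algebra over the tame henselization into local henselian factors) is exactly what the paper does, and your Step 2 is an equivalent reformulation of the paper's invocation of the Leray–Serre spectral sequence with $R^q\pi_*F'=0$ for $q>0$, relying on \cref{inverse-image} for preservation of injectives; you also correctly flag the identification of each factor $B_i$ with a tame henselization of $X'$ at a tame point lying over $(\bar x,\bar v)$ as the only nontrivial verification, which the paper likewise leaves implicit in the phrase ``similar to the \'{e}tale topology.''
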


\begin{proof}
The proof is similar to that for the \'{e}tale topology:
The first statement follows by considering the stalks of $\pi_*F'$ and using the fact that a finite algebra over a local henselian ring is a finite product of local henselian rings. From this we obtain $R^q\pi_* F'=0$ for $q>0$ and the second statement follows from the Leray-Serre spectral sequence.
\end{proof}

\section{Topological invariance and excision}
We start by collecting properties of the tame site which follow from or in a similar way as the corresponding properties of the small \'{e}tale site.

\begin{proposition}[Topological invariance] \label{topinvariance}
Let $f: X_0\to X$ be a universal homeomorphism of $S$-schemes. Then the tame sites
$(X_0/S)_t$ and $(X/S)_t$ are isomorphic. In particular, there are isomorphisms
\[
H^n_t(X/S,F) \liso H^n_t(X_0/S,f^*F)
\]
for all $n\ge 0$ and $F\in \Sh_t(X/S)$.
\end{proposition}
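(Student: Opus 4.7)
The plan is to deduce the isomorphism of sites from three ingredients: an equivalence of underlying categories coming from topological invariance of $X_\et$, a bijection on $S$-valuations for each étale object, and invariance of tame ramification under purely inseparable base change on residue fields. The cohomological consequence is then formal.

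First I would invoke topological invariance of the small étale site: base change $V \mapsto V\times_X X_0$ is an equivalence $X_\et \liso (X_0)_\et$, which identifies the underlying categories of the two tame sites. It remains to show that this equivalence matches the respective coverings. For étale $U \to X$ with pullback $f_U\colon U_0 \to U$, the morphism $f_U$ is again a universal homeomorphism (preserved by base change). In particular it is bijective on points and all residue field extensions $k(u_0)/k(u)$ are purely inseparable. Valuations extend uniquely across purely inseparable extensions, so restriction induces a canonical bijection $\Val_S U_0 \liso \Val_S U$. The $S$-valuation datum is transported along this bijection because $\O_\nu \subseteq \O_{\nu_0}$ is integral, and $\Spec(\O_{\nu_0}) \to \Spec(\O_\nu) \to S$ supplies the required morphism in one direction, while restriction handles the reverse.

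The key step is to verify invariance of tame ramification. Given finite separable $L/K$ with $w\mid\nu$, a purely inseparable $K_0/K$ with unique extension $\nu_0$, the tensor product $L_0 := L\otimes_K K_0$ is again a field, finite separable over $K_0$, and $w$ admits a unique extension $w_0$ to $L_0$ restricting to $\nu_0$. I would then show that $w/\nu$ is tamely ramified if and only if $w_0/\nu_0$ is. This rests on the standard canonical isomorphism $\Gal(K_0^{\sep}/K_0) \liso \Gal(K^{\sep}/K)$ induced by restriction, under which decomposition, inertia, and ramification subgroups at compatible prolongations of $\nu$ correspond. The residue characteristic is preserved because $k(\nu_0)/k(\nu)$ is purely inseparable, so the notion of ``prime to $p$'' is unambiguous. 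Consequently $[L_{0,w_0}^\sh : K_{0,\nu_0}^\sh] = [L_w^\sh : K_\nu^\sh]$.

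Combining these three ingredients, a family $(U_i \to U)_i$ in $X_\et$ is a tame covering if and only if its base change $(U_{i,0} \to U_0)_i$ is: the étale covering part is immediate, existence of a prolongation above every $(u,\nu)$ is bijectively transported, and tameness of that prolongation is preserved. Hence the sites $(X/S)_t$ and $(X_0/S)_t$ are isomorphic, and the cohomology assertion follows. The main obstacle is the tame-ramification step: purely inseparable base change can shift individual ramification indices and residue field degrees by $p$-powers, so one must argue that it is the \emph{relative} extension $L_w^{\sh}/K_\nu^\sh$ that is unaffected; the cited Galois-theoretic identification is what makes this precise.
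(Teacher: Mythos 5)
Your proposal is correct and follows essentially the same route as the paper: identify the underlying categories via topological invariance of the small \'etale site, transport $S$-valuations uniquely along the purely inseparable residue field extensions coming from the universal homeomorphism, and invoke invariance of ramification groups under purely inseparable base change. The only difference is cosmetic: where you sketch the Galois-theoretic identification $\Gal(K_0^{\sep}/K_0)\liso\Gal(K^{\sep}/K)$ respecting ramification subgroups, the paper simply cites this fact as Lemma~1.1 of the authors' companion paper on quasi-purity.
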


\begin{proof}
By the topological invariance of the small \'{e}tale site \cite[Theorem 18.1.2]{EGAIV.4}, $Y\mapsto Y\times_XX_0$ defines an equivalence $f^{-1}: \Cat (X/S)_t \liso \Cat (X_0/S)_t$ of the underlying categories. By \cite[TAG 04DF]{stacks-project}, $f$ is integral. Hence, for any point $x_0 \in X_0$ with image $x\in X$, the induced field extension $k(x_0)/k(x)$ is algebraic and a universal homeomorphism, hence purely inseparable.
This implies that every $S$-valuation on $k(x)$ has a unique extension to $k(x_0)$. By \cite[Lemma 1.1]{quasi-purity}, for a valuation $v_0$ on $k(x_0)$ with restriction $v$ to $k(x)$, the induced homomorphism on ramification groups is an isomorphism. Hence, $f^{-1}$ respects tame coverings. The result follows.
\end{proof}

Let $X\to S$ be an $S$-scheme, $i:Z\hookrightarrow X$ a closed immersion and $U=X\smallsetminus Z$ the open complement.
The right derivatives of the left exact functor ``sections with support in $Z$''
\[
F \mapsto \ker (F(X) \to F(U))
\]
are called the \emph{tame cohomology groups with support in $Z$}. Notation:  $H^*_{t,Z}(X/S, F)$.

Exactly in the same way as in the \'{e}tale case (\cite[III,\,(2.11)]{Art62}), one shows the existence of a long
 exact sequence
\begin{equation*}\label{long exact for support}
  \ldots \to  H^p_{t,Z}(X/S, F) \rightarrow H^p_t (X/S, F) \rightarrow H^p_t(U/S,F) \rightarrow H^{p+1}_{t,Z}(X/S, F) \rightarrow \ldots
\end{equation*}

\begin{proposition}[Excision] Let $\pi: X' \rightarrow X$ be a morphism of $S$-schemes,
$Z \hookrightarrow X$, $Z' \hookrightarrow X'$ closed immersions and $U=X\smallsetminus Z$, $U'=X'\smallsetminus Z'$ the open complements. Assume that

\ms
\begin{compactitem}
\item $\pi: X' \to X$ is \'{e}tale,
\item $\pi$ induces an isomorphism $Z'_\rd \liso Z_\rd $,
\item $\pi(U') \subset U$.
\end{compactitem}

\ms\noi
Then the induced homomorphism
\[
H^p_{t,Z}(X/S, F) \liso H^p_{t,Z'}(X'/S, \pi^*F)
\]
is an isomorphism for every sheaf $F \in \Sh_t(X/S)$ and all $p \geq 0$.
\end{proposition}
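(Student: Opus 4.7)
My approach is to adapt the classical proof of excision for the small \'{e}tale site (cf.~\cite[III,\,\S 1]{Art62}) to the tame setting. Let $j\colon U \hookrightarrow X$ and $j'\colon U' \hookrightarrow X'$ denote the open immersions. I would introduce the local cohomology sheaves $\cH^q_Z(F)$ on $(X/S)_t$ as the right derived functors of the left exact functor $F \mapsto \ker(F \to j_* j^* F)$, and likewise $\cH^q_{Z'}(\pi^* F)$ on $(X'/S)_t$. Exactly as in the \'{e}tale setting, one obtains a spectral sequence
\[
E_2^{p,q} = H^p_t(X/S, \cH^q_Z(F)) \Longrightarrow H^{p+q}_{t,Z}(X/S, F),
\]
and analogously on $X'$; it therefore suffices to construct compatible isomorphisms of the $E_2$-pages.

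First I would verify $\pi^* \cH^q_Z(F) \cong \cH^q_{Z'}(\pi^* F)$. The hypotheses $\pi(U') \subset U$ and $Z'_\rd \liso Z_\rd$ together force $\pi^{-1}(U) = U'$ as topological spaces, so $j$ and $j'$ fit into a cartesian square over $\pi$. Base change along this square combined with exactness of $\pi^*$ (\cref{inverse-image}) yields $\pi^* j_* = j'_* (\pi|_{U'})^*$, whence $\pi^*$ commutes with $\cH^0_Z$ and, by exactness, with all of its right derivatives.

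The main step is to show that for every tame sheaf $G$ on $(X/S)_t$ supported on $Z$, the map $H^p_t(X/S, G) \to H^p_t(X'/S, \pi^* G)$ is an isomorphism for all $p \geq 0$. Via the Leray spectral sequence for $\pi$, this reduces to $\pi_* \pi^* G \cong G$ and $R^q \pi_* \pi^* G = 0$ for $q > 0$, both of which I would check stalkwise at tame points (\cref{enoughpoints}). At a tame point $(\bar x, \bar v)$ with $x_0 \in U$, both stalks vanish because $\pi^* G$ vanishes on $U' = \pi^{-1}(U)$. At a tame point with $x_0 \in Z$, the isomorphism $Z'_\rd \liso Z_\rd$ combined with \'{e}taleness of $\pi$ produces a unique scheme-theoretic point $x'_0 \in Z'$ over $x_0$ with $k(x'_0) \cong k(x_0)$, hence a unique tame point $(\bar x', \bar v')$ of $X'/S$ above $(\bar x, \bar v)$; \'{e}taleness of $\pi$ moreover induces an isomorphism of strict henselizations $\O^{sh}_{X, \bar x_0} \liso \O^{sh}_{X', \bar x'_0}$, and thus, via \cref{stalkremark}, an isomorphism of tame henselizations $\O_{X, (\bar x, \bar v)} \liso \O_{X', (\bar x', \bar v')}$. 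The required stalk identities and vanishing then follow, using that tame cohomology of the spectrum of a tame henselization vanishes in positive degrees (every tame covering splits there, by \cref{deftamept}). The main obstacle is precisely this bijective correspondence of tame points and identification of tame henselizations over $Z_\rd \cong Z'_\rd$; once that is in hand, the classical excision argument transcribes directly.
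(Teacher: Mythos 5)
Your argument is correct in spirit but takes a genuinely different route from the paper's. You go via the local cohomology spectral sequence $H^p_t(X/S,\cH^q_Z(F))\Rightarrow H^{p+q}_{t,Z}(X/S,F)$, identify $\pi^*\cH^q_Z(F)\cong\cH^q_{Z'}(\pi^*F)$ by base change, and then reduce everything (via Leray for $\pi$) to showing $\pi_*\pi^*G\cong G$ and $R^q\pi_*\pi^*G=0$ for sheaves $G$ supported on $Z$, which you verify stalkwise at tame points. The paper instead notes that $\pi^*$ has an exact left adjoint (extension by zero), hence sends injectives to injectives, so both $F\mapsto H^p_{t,Z}(X/S,F)$ and $F\mapsto H^p_{t,Z'}(X'/S,\pi^*F)$ are erasable $\delta$-functors and it suffices to treat $p=0$; there, injectivity and surjectivity follow directly from the sheaf axiom for the tame covering $X'\sqcup U\to X$ (whose tameness uses $Z'_\rd\liso Z_\rd$), with the compatibility $p_1^*\alpha'=p_2^*\alpha'$ on $X'\times_XX'$ checked on stalks because $Z'\times_ZZ'\to Z'$ has equal projections. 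The paper's proof is shorter and sidesteps the Leray and local-cohomology machinery entirely; yours is a legitimate alternative that makes the stalk structure visible but carries more moving parts.

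One step in your writeup is compressed to the point of being a potential gap: matching the tame point $(\bar x,\bar v)$ over $x_0\in Z$ with a unique tame point $(\bar x',\bar v')$ over $x_0'$ and identifying the two tame henselizations does not by itself compute the stalks of $\pi_*\pi^*G$ and $R^q\pi_*\pi^*G$. For $\pi_*$ you must analyse the \emph{entire} fibre product $\Spec\O_{X,(\bar x,\bar v)}\times_XX'$, not just the matched component. You should argue explicitly that, since over every $z\in Z\cap\Spec\O_{X,(\bar x,\bar v)}$ there is exactly one point of $X'$ with trivial residue extension, the henselian ring $\O_{X,(\bar x,\bar v)}$ forces a decomposition $\Spec\O_{X,(\bar x,\bar v)}\times_XX'\cong\Spec\O_{X,(\bar x,\bar v)}\sqcup V$ with $V$ mapping into $U$; then $\pi^*G$ vanishes on $V$ and the remaining summand is tamely henselian with the desired stalk, giving both $\pi_*\pi^*G\cong G$ and the vanishing of $R^q\pi_*\pi^*G$ for $q>0$. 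As written, the phrase ``the required stalk identities and vanishing then follow'' skips exactly this point.
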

\begin{proof} The standard proof for the \'{e}tale topology applies:
Since $\pi$ belongs to $\Cat (X/S)_t$, $\pi^*$ has the exact left adjoint ``extension by zero''.  Hence $\pi^*$ sends injectives to injectives. Since $\pi^*$ is exact,  it suffices to deal with the case $p =0$. Without changing the statement, we can replace all occurring schemes by their reductions (see \cref{topinvariance}).
By assumption, $X' \sqcup U \rightarrow X$, is a tame covering.
For $\alpha \in H^0_{t,Z} (X, F)$ mapping to zero in  $H^0_{t,Z'}(X',\pi^*F)$ we therefore obtain $\alpha=0$.

Now let $\alpha' \in H^0_{t,Z'}(X', \pi^*F)$ be given.
We show that $\alpha'$ and $0\in H^0_t(U, F)$ glue to an element in $H^0_{t,Z}(X, F)$. The only nontrivial compatibility on intersections is  $p^*_1(\alpha')=p^*_2(\alpha')$ for $p_1,p_2:  X'\times_XX' \to X'$.
This can be checked on stalks noting that $Z'\liso Z$ implies that the two projections $Z'\times_Z Z'\to Z'$ are the same.
\end{proof}

\section{Continuity}\label{sec:cont}

The aim of this section is to prove theorems \ref{colimitsheaves} and \ref{inverselimit} below, which deal with the cohomology of colimits and limits.
We start with the following finiteness result:

\begin{theorem}\label{finite-cov}
Assume that $S$ is quasi-compact and quasi-separated and let $X$ be a quasi-compact $S$-scheme. Then every tame covering of $X$ admits a finite subcovering.
\end{theorem}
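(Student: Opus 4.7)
The proof plan is to topologise $\Val_S X$ as a quasi-compact space in such a way that the ``tame coverage'' locus of each $U_i$ is open, and then extract a finite subcover. The easy observation — that quasi-compactness of $X$ together with openness of \'{e}tale morphisms yields a finite \emph{\'{e}tale} subcovering — is insufficient: an Artin--Schreier cover $Y \to \mathbb{A}^1_k$ over a field $k$ of characteristic $p$, together with the identity of $\mathbb{A}^1_k$, forms a tame covering whose subfamily $\{Y\}$ is finite \'{e}tale but not a tame covering, because the valuation at infinity is not tamely covered.

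The topology on $\Val_S X$ I have in mind is the coherent topology inherited from Temkin's relative Riemann--Zariski space $\mathrm{RZ}_S(X)$ \cite{Tem11} (equivalently, from the underlying topological space of the adic space $\Spa(X,S)$ of \cite{HueAd}). Under the hypotheses that $S$ is qcqs and $X/S$ is quasi-compact, this space is quasi-compact (in fact spectral). For each $i \in I$ put
\[
V_i := \{(x,v) \in \Val_S X : \exists\,(x_i,v_i) \in \Val_S U_i \text{ above } (x,v) \text{ with } v_i/v \text{ tamely ramified}\}.
\]
By the definition of a tame covering, $\bigcup_{i\in I} V_i = \Val_S X$.

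The heart of the argument is to show that each $V_i$ is open. Fix $(x_0,v_0) \in V_i$ with tame lift $(x_{i,0}, v_{i,0})$; then the tame henselisation $\mathcal{O}_{X,(x_0,v_0)}$ of \cref{tame-henselization-def} and the analogous tame henselisation of $U_i$ at $(x_{i,0}, v_{i,0})$ have canonically identified strict henselisations. Since $\varphi_i : U_i \to X$ is \'{e}tale, étale neighbourhoods spread in the Riemann--Zariski topology, so one obtains a basic open neighbourhood of $(x_0, v_0)$ — of the form ``centre lies in a given Zariski open of $X$, on which certain regular functions have prescribed valuation signs'' — every point of which admits a tame lift to $U_i$. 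Once openness is in place, quasi-compactness gives a finite subcover $V_{i_1}, \ldots, V_{i_n}$, and $(U_{i_j} \to X)_{1 \le j \le n}$ is the desired finite tame subcovering; set-theoretic surjectivity onto $X$ is automatic since trivial valuations at points of $X$ lie in $\Val_S X$ and are therefore hit.

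The principal obstacle is the openness claim, which rests on a delicate analysis of how \'{e}tale morphisms interact with the Riemann--Zariski topology and on the stability of tame ramification under specialisation and generisation of valuations; the case of valuations of higher rank in particular needs the fact that tame ramification is controlled by the residue extension, and this has to be made uniform over a whole neighbourhood. This is precisely where the relative Riemann--Zariski machinery (equivalently, the adic space $\Spa(X,S)$) enters essentially, and it is reasonable to expect the proof to reduce to quasi-compactness of $\Spa(X,S)$ together with a ``spreading out'' of tame lifts which has already been developed in \cite{HueAd}.
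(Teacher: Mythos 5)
Your approach is essentially the paper's: topologize $\Val_S X$ via the adic space $\Spa(X,S)$ (spectral, hence quasi-compact, by \cref{spectral}), observe that each $V_i$ is the image under the open map $\Spa(U_i,S)\to\Spa(X,S)$ of the tame locus, which is open by \cite[Corollary~4.4]{HueAd} (stated as \cref{opennness-tame-locus}), and extract a finite subcover. You correctly pinpoint openness of the tame locus as the crux and correctly locate it in \cite{HueAd}, which is exactly what the paper does.
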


For the proof we need some preparations.
The definition of the tame site suggests to tackle the problem by endowing $\Val_S X$ with a suitable topology and then to apply a compactness argument.
A convenient way to do this is to move the problem to the setting of adic spaces, where it was solved in \cite{HueAd}.

\medskip
Following M.~Temkin \cite{Tem11}, we associate with a morphism of schemes $f: X \to S$ the set $\Spa(X,S)$, whose elements are triples $(x,v,\varepsilon)$, where~$x$ is a point of~$X$,~$v$ is a valuation of~$k(x)$ and $\varepsilon : \Spec \O_v \to S$ is a morphism compatible with $\Spec k(x) \to S$. If $S$ is separated, then $\varepsilon$ is uniquely determined (if it exists) by $(x,v)$. In other words, we have a natural forgetful map
\[
\Spa(X,S) \lang \Val_S X,
\]
which is bijective if $S$ is separated.
Here, $\Val_S X$ is the earlier defined space of pairs $(x,v)$ with $x \in X$ and~$v$ a valuation of~$k(x)$ with center in S (but we do not specify a homomorphism $\Spec \O_v \to S$).

\begin{remark}
 Note that the space $\Val_X(S)$ defined in \cite{Tem11} is not the same as our $\Val_S X$.
 Temkin's $\Val_X(S)$ denotes the relative Riemann-Zariski space of~$X$ over~$S$.
 It can be viewed as the subspace of all Riemann-Zariski points (see \cref{RZ-points} below) of $\Spa(X,S)$.
\end{remark}

If $X = \Spec A$ and $S = \Spec R$ are affine, then $\Spa(X,S)$ coincides with the set of points of R.~Huber's affinoid adic space $\Spa(A,A^+)$, where $A^+$ ist the integral closure of $R$ in $A$ and $A$ and $A^+$ are equipped with the discrete topology, cf.\ \cite{Hu96}. By patching, we obtain on $\Spa(X,S)$ the structure of a discretely ringed adic space.
If
 \[
  \begin{tikzcd}
  X'\dar\rar{\varphi} &X\dar\\
  S' \rar & S
  \end{tikzcd}
 \]
is a commutative diagram of scheme morphisms, then
 \[
  \Spa(\varphi) : \Spa(X',S') \lang \Spa(X, S)
 \]
is a morphism of adic spaces, in particular it is continuous.
The topology of $\Spa(X,S)$ is generated by the images of $\Spa(X',S')$ in $\Spa(X,S)$ coming from commutative diagrams as above
with~$X'$ and~$S'$ affine, $X' \to X$ an open immersion and $S' \to S$ locally of finite type.

\medskip
Recall that a topological space is called \emph{spectral} if it is homeomorphic to $\Spec(R)$ for some commutative ring $R$.

\begin{lemma}\label{spectral}
If~$S$ and~$X$ are quasi-compact and quasi-separated, then $\Spa(X,S)$ is a spectral space. In particular,
$\Spa(X,S)$ is a quasi-compact and quasi-separated topological space.
\end{lemma}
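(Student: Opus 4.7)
The plan is to reduce to the affine–affine case where the claim is R.~Huber's spectrality theorem, and then glue finitely many spectral pieces.

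\medskip

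\noindent\textbf{Step 1 (affine case).}
If $X=\Spec A$ and $S=\Spec R$ are affine, then by the description given just before the lemma, $\Spa(X,S)$ agrees (as a ringed space) with Huber's affinoid adic space $\Spa(A,A^+)$, where $A^+$ is the integral closure of $R$ in $A$, both carrying the discrete topology. Since $(A,A^+)$ is a (complete) Huber pair, spectrality is Huber's theorem; see \cite[Thm.\,3.5(i)]{Hu96}.

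\medskip

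\noindent\textbf{Step 2 (open cover by affine pieces).}
The key observation is that $\Spec \O_v$ is local, so any morphism $\varepsilon:\Spec \O_v\to S$ has its image concentrated around the image of the closed point; hence $\varepsilon$ factors uniquely through any open subscheme of $S$ containing this image. Consequently, for an open subscheme $S'\subset S$, the natural map $\Spa(X,S')\to\Spa(X,S)$ is an injection whose image is precisely the preimage of $S'$ under the structure map $\Spa(X,S)\to S$; in particular it is open. The same remark for an open immersion $X'\hookrightarrow X$ (which uses only the scheme-theoretic structure of $X$) shows that $\Spa(X',S)\to\Spa(X,S)$ is an open embedding. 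Combining both, for affine opens $U\subset X$ and $V\subset S$, $\Spa(U,V)$ sits as an open subspace of $\Spa(X,S)$, and these open subspaces form a basis of the topology by construction (the topology of $\Spa(X,S)$ is \emph{defined} as the one generated by such images).

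\medskip

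\noindent\textbf{Step 3 (finite cover and gluing).}
Now use the qcqs hypotheses. Since $X$ is quasi-compact, cover it by finitely many affine opens $U_1,\dots,U_n$; since $S$ is quasi-compact, cover it by finitely many affine opens $V_1,\dots,V_m$. By Step~2 the open subspaces $\Spa(U_i,V_j)$ cover $\Spa(X,S)$, and each is spectral by Step~1. The quasi-separatedness of $X$ and $S$ ensures that $U_i\cap U_{i'}$ and $V_j\cap V_{j'}$ are quasi-compact, so they can again be covered by finitely many affine opens, and hence the intersections $\Spa(U_i,V_j)\cap\Spa(U_{i'},V_{j'})=\Spa(U_i\cap U_{i'},V_j\cap V_{j'})$ are themselves finite unions of spectral open subspaces. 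It is a standard fact (Hochster; see e.g.\ \cite[Tag 0A2Y]{stacks-project}) that a topological space which admits a finite open cover by spectral subspaces such that all pairwise intersections are again finite unions of spectral quasi-compact opens is itself spectral. Applying this yields spectrality of $\Spa(X,S)$, and the final sentence of the lemma is immediate since spectral spaces are quasi-compact and quasi-separated.

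\medskip

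\noindent\textbf{Main obstacle.} Steps 1 and 3 are essentially citations; the real work is Step~2, namely verifying that the topology defined by the system of images of $\Spa(X',S')\to\Spa(X,S)$ restricts on each $\Spa(U_i,V_j)$ to the Huber topology of $\Spa(A,A^+)$, so that gluing the spectral structures is consistent. This is where one has to be careful about the compatibility of the ad hoc topology of the discretely ringed adic space $\Spa(X,S)$ with Huber's topology on affinoid pieces; once granted, the argument is a routine compactness/gluing exercise.
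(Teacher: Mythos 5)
Your proposal follows the same strategy as the paper: reduce to the affinoid case, where spectrality is Huber's theorem, and then invoke Hochster's gluing criterion (locally spectral plus quasi-compact plus quasi-separated implies spectral). The paper cites Hochster directly; your Stacks Project reference encodes the same fact. So the substance is the same.

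One notational imprecision in your Step~2 and Step~3 is worth fixing. For an open $S'\subset S$, the object ``$\Spa(X,S')$'' is only defined when the structure morphism $X\to S$ factors through $S'$; what you actually mean is $\Spa(f^{-1}(S'),S')$, which is indeed the preimage of $S'$ in $\Spa(X,S)$. Likewise ``$\Spa(U_i,V_j)$'' for independently chosen affine opens $U_i\subset X$, $V_j\subset S$ is not in general defined, since $U_i$ need not map into $V_j$. The paper avoids this by first choosing affine opens $S_i$ of $S$ and then covering each $X\times_S S_i = f^{-1}(S_i)$ by affine opens $X_{j_i}$, so that $X_{j_i}\to S_i$ exists by construction. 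You should do the same (or equivalently cover each $U_i\cap f^{-1}(V_j)$ by affines). This is a small repair, not a genuine gap: once the covering family is well-formed, both your argument and the paper's are identical — each piece is spectral by Huber, the cover is finite by quasi-compactness, and the pairwise intersections are quasi-compact by quasi-separatedness of $X$ and $S$, which is precisely what Hochster's criterion requires.
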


\begin{proof}
By~\cite[Corollary\,to\,Proposition\,7]{Ho69}, we have to show that $\Spa(X,S)$ is locally spectral, quasi-compact and quasi-separated.
 Let
 \[
  S = \bigcup_i S_i \qquad \text{and} \qquad X \times_S S_i = \bigcup_{j_i} X_{j_i}
 \]
 be finite coverings by open, affine subschemes.
 Then
 \[
  \Spa (X,S) = \bigcup_{i,j_i} \Spa(X_{j_i},{S_i}).
 \]
The spaces $\Spa(X_{j_i},S_i)$ are  spectral  by \cite[Theorem~3.5]{Hu93}.
Hence,  $\Spa(X,S)$ is locally spectral and quasi-compact.
In order to show quasi-separateness, fix~$i,i'$ and $j_i,j'_{i'}$.
The intersection
 \[
  \Spa(X_{j_i},{S_i}) \cap \Spa(X_{j'_{i'}},{S_{i'}}) = \Spa((X_{j_i} \cap X_{j'_{i'}}),{S_i \cap S_{i'}})
 \]
is quasi-compact by what we just saw and the quasi-separateness assumptions on~$S$ and~$X$.
Therefore, $\Spa(X,S)$ is quasi-separated.
\end{proof}

\medskip
If $\varphi:Y \to X$ is an \'{e}tale morphism of $S$-schemes, then $\Spa(\varphi): \Spa(Y,S)\to \Spa(X,S)$ is an \'{e}tale morphism of adic spaces in the sense of \cite[Definition~1.6.5]{Hu96}. By definition \cite[Definition 3.1]{HueAd}, $\Spa(\varphi)$ is \emph{tame} if for every point $(y,v,\varepsilon)$ of $\Spa(Y,S)$, the valuation $v$ is (at most) tamely ramified in the finite separable field extension $k(y)/k(\varphi(y))$. In particular, tameness at a point of $\Spa(Y,S)$ only depends on its image in $\Val_S Y$. The following proposition is essential for us:

\begin{proposition}[Openness of the tame locus, {\cite[Corollary\,4.4]{HueAd}}]\label{opennness-tame-locus}
Let $\varphi:Y \to X$ be an \'{e}tale morphism of $S$-schemes. Then the set of points where the morphism of adic spaces $\Spa(\varphi): \Spa(Y,S) \to \Spa(X,S)$ is tame is an open subset of $\Spa(Y,S)$.
\end{proposition}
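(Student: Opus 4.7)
The strategy is to reduce to an explicit local setting and exhibit the tame locus as a union of rational opens of $\Spa(Y,S)$. Since the topology is generated by subsets coming from affine opens in $Y$, $X$ and $S$, and since tameness at $\xi = (y, v, \varepsilon)$ depends only on $(y,v)$, I would first assume $S = \Spec R$, $X = \Spec A$, $Y = \Spec B$ are all affine and $\varphi$ is a standard \'{e}tale morphism $B = (A[T]/(f))_g$ with $f'$ a unit in $B$.

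Next, write $n(y) = [k(y) : k(\varphi(y))]$; this is locally constant on $Y$. Since $[k(y)_v^{\sh} : k(\varphi(y))_{v|_X}^{\sh}]$ always divides $n(y)$, the point $\xi$ is automatically tame whenever the residue characteristic $p$ of $v$ is either zero or coprime to $n(y)$. The condition ``$v(p) = 0$ for every prime $p$ dividing $n(y)$'' is a finite intersection of rational subsets and hence defines an open subset of $\Spa(Y,S)$ consisting entirely of tame points. It therefore remains to build a tame adic-open neighborhood for each tame point $\xi_0 = (y_0, v_0, \varepsilon_0)$ whose residue characteristic $p_0 > 0$ divides $n(y_0)$.

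For such a $\xi_0$, the tame structure theorem decomposes $k(y_0)_{v_0}^{\sh}/k(\varphi(y_0))_{v_0|_X}^{\sh}$ as a Kummer extension of degree $e_0$ coprime to $p_0$, generated by an element $b$ satisfying $b^{e_0} = a$ for some $a$ in the base. I would lift $a$ and $b$, after possibly shrinking $Y$ around $y_0$, to elements of $A$ and $B$, and then use the relation $b^{e_0}=a$ together with valuation inequalities controlling $v(b)$ and $v(a)$ to cut out a rational neighborhood $U \subset \Spa(Y,S)$ of $\xi_0$. On $U$, the ramification index should divide $e_0$ and hence be coprime to every residue characteristic that actually occurs at points of $U$.

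The hard part will be this last step: making the Kummer-theoretic description spread over a genuine adic-open neighborhood, and in particular controlling what happens at boundary points where the residue characteristic transitions from $p_0$ to $0$ or where the lifted Kummer generator degenerates. A probably equivalent but more robust alternative is to prove directly that the wild locus is closed by verifying that it is pro-constructible and stable under specialization; this reduces to semi-continuity statements for ramification invariants along horizontal and vertical specializations in $\Spa(Y,S)$, which can be established using Temkin's local description of the Riemann--Zariski points and the local structure of \'{e}tale morphisms.
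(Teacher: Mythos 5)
Note first that the paper does not give its own proof of this proposition: it is quoted verbatim from \cite[Corollary 4.4]{HueAd} and used as an external input (for instance in the proof of \cref{finite-cov}). So there is no internal argument to compare against; what follows is an assessment of your sketch on its own terms.

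Your initial reduction contains a slip: the residue-field degree $n(y)=[k(y):k(\varphi(y))]$ of an \'etale morphism is \emph{not} locally constant. For the squaring map $\mathbb{G}_m\to\mathbb{G}_m$ over $\Q$ it equals $2$ at the generic point but $1$ at the closed point lying over $(v-4)$, and the set where it equals $2$ is not even constructible. What is true, and what your argument actually needs, is that $n(y)$ is locally \emph{bounded}: shrink so that $\varphi$ is finite \'etale of some degree $d$, and then $n(y)\le d$. There is also a convention mismatch: in the paper's multiplicative notation for valuations, ``residue characteristic prime to $p$'' is $v(p)=1$, not $v(p)=0$; this is still an open (rational) condition, namely $\{v: v(1)\le v(p)\ne 0\}$, so the conclusion you want does hold once corrected.

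The genuine gap is in the case of a tame point $\xi_0$ whose residue characteristic $p_0>0$ divides the local degree. You propose spreading out a Kummer description of the tame inertia, but you do not carry this out and you flag it yourself as ``the hard part''. The difficulty is real: the Kummer generator $b$ lives in a strict henselization $k(y_0)^{\sh}_{v_0}$, not in $k(y_0)$, so lifting it to an element of $B$ over a neighborhood and converting the inertial data into valuation inequalities that remain valid as $(y,v)$ varies (including at boundary points where the residue characteristic changes) requires an actual construction that is absent. Your proposed alternative --- proving that the wild locus is pro-constructible and stable under specialization, hence closed in the spectral space $\Spa(Y,S)$ --- is a sensible strategy in principle, but the necessary semi-continuity of ramification invariants along horizontal and vertical specializations is precisely the nontrivial content, and it too is left as a remark. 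As written, the proof is incomplete.
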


\begin{proof}[Proof of \cref{finite-cov}]
Since $X$ admits a finite Zariski-covering by affine schemes, we may assume that $X$ is affine. Let $(U_i\to X)$ be a tame covering.
 By \cref{opennness-tame-locus}, the tame locus of $\Spa(U_i,S)\to \Spa(X,S)$ is an open subspace $\cU_i$ of $\Spa(U_i,S)$ for all~$i$. By \cite[1.10.12.\,i)]{Hu96}, \'{e}tale morphisms of adic spaces are open, hence the images  $\cW_i\subset \Spa(X,S) $ of the $\cU_i$ are open. As $(U_i \to X)$ is a tame covering, the $\cW_i$ cover $\Spa(X,S)$.  Since $X$ is affine, $\Spa(X,S)$ is a spectral space by \cref{spectral}. In particular, $\Spa(X,S)$ is quasi-compact.

We conclude that there is a finite subset $J\subset I$ with $\Spa(X,S)=\bigcup_{j\in J} \cW_j$. Therefore, $(U_j \to W)_{j\in J}$ is a finite subcovering of $(U_i \to X)_{i\in I}$.
\end{proof}
As in \cite[VII, 3.2]{SGA4} for the \'{e}tale site, we define the restricted tame site
\[
(X/S)_t^\res
\]
as the restriction of $(X/S)_t$ to the subcategory of all $U\in \Cat(X/S)_t$ such that $U\to X$ is of finite presentation.

Assume that $X$ is quasi-compact and quasi-separated. Then the same is true for any such $U$ and \cref{finite-cov} shows that the restricted site is noetherian. Moreover, the categories of sheaves on $(X/S)$ and $(X/S)_t^\res$ are naturally equivalent. Hence the same argument as in the \'{e}tale case \cite[VII,\,Proposition\,3.3]{SGA4} shows

\begin{theorem} \label{colimitsheaves} Assume that $S$ and $X$ are quasi-compact and quasi-separated and let $(F_i)$ be a filtered direct system of abelian sheaves on $(X/S)_t$. Then
\[
\colim_i \, H^q_t(X/S, F_i)\cong H^q_t(X/S, \colim_i \, F_i)
\]
for all $q\ge 0$.
\end{theorem}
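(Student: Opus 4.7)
The plan is to reduce to the restricted site $(X/S)_t^\res$ and then to invoke the standard argument for noetherian sites, copying \cite[VII, Proposition 3.3]{SGA4} essentially verbatim.

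First, I would verify that restriction induces an equivalence of sheaf categories $\Sh_t(X/S) \simeq \Sh_t((X/S)^\res)$. Since~$X$ is quasi-compact and quasi-separated, any $U \to X$ in $\Cat(X/S)_t$ is a cofiltered limit of finitely presented \'{e}tale morphisms $U_j \to X$ with affine transition maps, and the underlying category $\Cat(X/S)_t^\res$ together with the topology of tame coverings (which is well-defined by restriction since a covering of a finitely presented object by finitely presented objects stays inside the restricted category) generates the same topos. This is the standard argument for the \'{e}tale site and goes through unchanged, because tame coverings of $U$ by objects of $\Cat(X/S)_t^\res$ are cofinal among all tame coverings.

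Next, by \cref{finite-cov}, every covering in the restricted site admits a finite subcovering; together with the fact that every object of $\Cat(X/S)_t^\res$ is quasi-compact and quasi-separated, this shows that the restricted site is noetherian in the sense of \cite[VI, D\'{e}finition~2.3]{SGA4}. On a noetherian site one has the following two key facts that make the theorem formal: (a) the sheafification functor commutes with filtered colimits, because the plus construction uses only \v{C}ech sections indexed by finite covering families; and (b) a filtered colimit of flasque (or, more precisely, of sheaves whose \v{C}ech cohomology vanishes) sheaves is again acyclic for global sections. From (a) and the exactness of filtered colimits of presheaves, one deduces that filtered colimits are exact in $\Sh_t((X/S)^\res)$ and preserve quasi-isomorphisms. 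Combined with (b), which lets one compute $H^q_t$ via acyclic resolutions compatible with colimits, one obtains the desired natural isomorphism $\colim_i H^q_t(X/S, F_i) \cong H^q_t(X/S, \colim_i F_i)$.

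The main obstacle, and the only point where one has to do work beyond quoting the noetherian-site formalism, is the equivalence of sheaf categories on $(X/S)_t$ and $(X/S)_t^\res$: one must check that the refinement of a tame covering by finitely presented \'{e}tale maps is itself tame, i.e.\ that tameness survives passage to cofinal finitely presented subcoverings. But tameness is tested pointwise on $\Val_S U$, and at each valuation $(u,v)$ one needs only a single finitely presented \'{e}tale neighborhood containing a tame preimage; since any \'{e}tale morphism into a qcqs scheme factors through a finitely presented one in a neighborhood of any fixed point, this is indeed automatic. After that, everything is the formalism of noetherian sites as in \cite[VII, 3.3]{SGA4}.
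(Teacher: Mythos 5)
Your proposal follows essentially the same route as the paper: pass to the restricted site $(X/S)_t^\res$ of finitely presented objects, observe it is noetherian by \cref{finite-cov}, note the equivalence of sheaf categories, and then invoke the standard noetherian-site argument of \cite[VII, Prop.\,3.3]{SGA4}. Your added discussion of why the equivalence of sheaf categories holds and which facts about noetherian sites do the work is reasonable, though one small imprecision is worth flagging: the reason a filtered colimit of sheaves (computed as presheaves) is again a sheaf on a noetherian site is not that the plus-construction commutes with colimits in general, but that the sheaf condition for each covering becomes an equalizer over a \emph{finite} diagram once coverings admit finite subcoverings, so it commutes with filtered colimits of abelian groups; once this is in place, the argument with colimits of flasque resolutions proceeds as you describe.
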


Now we consider limits of schemes. The main result is:

\begin{theorem} \label{inverselimit}
Let~$(f_i:X_i \to S_i)_{i\in I}$ be a filtered inverse system of scheme morphisms with affine transition morphisms and assume that all $S_i$ and $X_i$ are quasi-compact and quasi-separated.
Denote by $X \to S$ its inverse limit. Then, for every \'{e}tale surjection
$U \to X$ of finite presentation which is a tame covering with respect to $S$, there exists an index $i\in I$ and an \'{e}tale surjection
$U_i \to X_i$ of finite presentation which is a tame covering with respect to $S_i$ and whose base change to $X$ is $U\to X$.

The natural map from $(X/S)_t^\res$ to the limit topology of the $(X_i/S_i)_t^\res$ is an isomorphism.
\end{theorem}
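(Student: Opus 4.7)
The plan is to combine the classical limit theorems for étale morphisms of finite presentation with the adic-geometric tools of Section~4 (openness of the tame locus and spectrality of $\Spa$). For the first step, by \cite[IV, 8.8.2, 17.7.8]{EGAIV.4} there exist an index $i_0 \in I$ and an étale morphism $U_{i_0} \to X_{i_0}$ of finite presentation whose pullback along $X \to X_{i_0}$ is $U \to X$; enlarging $i_0$ if necessary, the same reference allows us to assume $U_{i_0} \to X_{i_0}$ is surjective. For $i \ge i_0$ set $U_i := U_{i_0} \times_{X_{i_0}} X_i$.

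The main content is to show that $U_i \to X_i$ is a tame covering with respect to $S_i$ for some $i \ge i_0$, and I would pass to the adic side. Let $\cU_i^{\mathrm{tame}} \subset \Spa(U_i, S_i)$ denote the tame locus of $\Spa(U_i, S_i) \to \Spa(X_i, S_i)$, open by \cref{opennness-tame-locus}, and let $\cW_i \subset \Spa(X_i, S_i)$ be its image, which is open since étale morphisms of adic spaces are open (\cite[1.10.12]{Hu96}). The condition that $U_i \to X_i$ is a tame covering with respect to $S_i$ is equivalent to the closed complement $N_i := \Spa(X_i, S_i) \setminus \cW_i$ being empty; each $N_i$ is a closed, hence quasi-compact, subset of the spectral space $\Spa(X_i, S_i)$ (\cref{spectral}). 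After verifying that $\Spa$ commutes with the filtered inverse limit --- reducing via the affine covers from the proof of \cref{spectral} to the statement $\Spa(\colim A_i, \colim R_i) = \varprojlim \Spa(A_i, R_i)$ for discretely-ringed affine adic spaces --- and that $\varprojlim_i N_i \subseteq N := \Spa(X, S) \setminus \cW$, the tame-covering hypothesis $\cW = \Spa(X, S)$ gives $\varprojlim_i N_i = \emptyset$. The standard fact that a cofiltered inverse limit of nonempty quasi-compact spectral spaces along spectral transition maps is nonempty then forces $N_i = \emptyset$ for some $i$.

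The second assertion follows: the first step identifies $\Cat((X/S)_t^\res)$ with $\colim_i \Cat((X_i/S_i)_t^\res)$, and by \cref{finite-cov} tame covering conditions reduce to finite families, which in turn reduce via disjoint union to the case of single étale surjections handled above. The principal technical obstacle is the inclusion $\varprojlim_i N_i \subseteq N$: beyond the compatibility of $\Spa$ with the inverse limit (routine but requiring a check at the level of triples $(x, v, \varepsilon)$), one must carefully relate the tame loci along the transition maps of the tower, since $k(u)/k(x)$ is a residue field of the étale $k(x)$-algebra $k(u_i) \otimes_{k(x_i)} k(x)$ rather than equal to $k(u_i)/k(x_i)$; tracking how a tame preimage at the limit arises from one at some finite level is the delicate point. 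Once this is in place, the rest of the proof is a formal combination of openness of the tame locus, spectrality of $\Spa$, and the standard compactness argument for cofiltered limits of spectral spaces.
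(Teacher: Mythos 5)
Your overall skeleton matches the paper's proof almost exactly: spread the étale surjection to a finite level, pass to $\Spa$, use \cref{opennness-tame-locus} to define an open tame locus with closed complement $\cZ_j$ (your $N_i$), and conclude by spectrality and quasi-compactness that $\cZ_j = \varnothing$ at some finite level. But you have left the crux of the argument as an acknowledged blank. You write that the inclusion $\varprojlim_i N_i \subseteq N$ is ``the principal technical obstacle'' and that ``tracking how a tame preimage at the limit arises from one at some finite level is the delicate point'' --- and then you stop, saying ``once this is in place, the rest is formal.'' That inclusion \emph{is} the theorem; without it you have only a compactness scaffold around a hole.

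The paper fills this hole with a concrete ramification-theoretic computation. For a point $(u,w,\mu)\in\Spa(U,S)$ over $(x,v,\varepsilon)\in\Spa(X,S)$ with images $(u_j,w_j,\mu_j)$ over $(x_j,v_j,\varepsilon_j)$, one has $k(u)=\bigcup_j k(u_j)$ and $k(x)=\bigcup_j k(x_j)$, and for $j$ large enough $k(u)=k(u_j)\cdot k(x)$ (so your worry that $k(u)/k(x)$ is merely a residue field of $k(u_j)\otimes_{k(x_j)}k(x)$ disappears for large $j$). Taking Galois closures $K_j\supset k(u_j)$ over $k(x_j)$ and $K=\bigcup_j K_j$, one gets $G(K/k(x))\xrightarrow{\sim}G(K_j/k(x_j))$ for large $j$, and likewise for the ramification groups $R_{w'}(K/k(x))\xrightarrow{\sim}R_{w'_j}(K_j/k(x_j))$ along a chosen extension $w'$ of $w$ to $K$. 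Tameness of $w/v$ says $R_{w'}(K/k(x))\subset G(K/k(u))$, which for large $j$ forces $R_{w'_j}(K_j/k(x_j))\subset G(K_j/k(u_j))$, i.e.\ $w_j/v_j$ is tame. That is precisely the contrapositive of $\varprojlim_j N_j\subseteq N$. You need to supply this argument (or an equivalent one) rather than flag it as remaining work; as written the proof does not close.
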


\begin{proof}
By~\cite[VII, 5.6]{SGA4}, there is $i \in I$ such that $U \to X$ is the pullback of an \'{e}tale surjection $U_i \to X_i$.  For every $j \geq i$ denote by $U_j \to X_j$ its pullback to~$X_j$. We have to show that there is $j \geq i$ such that $U_j \to X_j$ is tame over~$S_j$. By \cref{opennness-tame-locus}, the set of points in $\Spa(U_j,S_j) $ where
$
 \Spa(U_j,S_j) \to \Spa(X_j, S_j)
$
is tame is open.  Denote by~$\cZ_j\subset \Spa(X_j, S_j)$ the complement of the image of this set. It consist of all points $(x_j,v_j,\varepsilon_j)\in \Spa(X_j, S_j)$ such that there is no point  $(u_j, w_j, \mu_j) \in \Spa(U_j,S_j)$ lying over $(x_j,v_j,\varepsilon_j)$ such that $w_j/v_j$ is tamely ramified in $k(u_j)/k(v_j)$. Since \'{e}tale morphisms of adic spaces are open, $\cZ_j$ is a closed subset of $\Spa(X_j,S_j)$. Since tameness is stable under base change, for $k \geq j \geq i$ the transition map
 $
 \Spa(X_k,S_k) \to \Spa(X_j,S_j)
 $
sends $\cZ_k$ to $\cZ_j$.

For $(u,w,\mu) \in \Spa(U,S)$ with images $(u_j,w_j,\mu_j) \in \Spa(U_j,S_j)$ lying over $(x,v,\varepsilon) \in \Spa(X,S)$ with images $(x_j,v_j,\varepsilon_j) \in \Spa(X_j,S_j)$ we have $k(u)=\bigcup k(u_j)$ and $k(x)=\bigcup k(x_j)$. For sufficiently large $j$, we have $k(u)=k(u_j)k(x)$  (compositum in $k(u)$).

Let $K$ be the Galois closure of $k(u)/k(x)$ in some separable closure of $k(u)^s$ of $k(u)$ and $K_j$ the Galois closure of $k(u_j)/k(x_j)$ in $k(u)^s$. Then $K=\bigcup K_j$ and $G(K/k(x)) \to G(K_j/k(x_j))$ is an isomorphism for sufficiently large~$j$. Let $w'$ be a valuation on $K$ with $w'|_{k(u)}=w$ and let $w_j'=w'|_{K_j}$, so that $w_j'|_{k(u_j)}=w_j$. Then the induced homomorphism of ramification groups $R_{w'}(K/k(x)) \to R_{w_j'}(K_j/k(x_j))$ is an isomorphism for sufficiently large $j$. If $w/v$ is tame, then $R_{w'}(K/k(x))\subset G(K/k(u))$, hence $R_{w_j'}(K_j/k(x_j))\subset G(K_j/k(u_j))$ and $w_j/v_j$ is tame for sufficiently large $j$.
As $U \to X$ is a tame covering, we conclude that
$$
 \lim_{j \geq i} \cZ_j = \varnothing.
$$
By \cref{spectral}, $\Spa(X_j,S_j)$ is spectral and hence by \cite[TAG 0901]{stacks-project} compact in its constructible topology. In particular, the $\cZ_j$ are compact in the constructible topology. Since the inverse limit of nonempty compact spaces is nonempty, we conclude that
$\cZ_j = \varnothing$ for large enough $j$.  In other words $U_j \to X_j$ is a tame covering. This shows the first statement.

For the second statement note that every tame covering in the limit topology of the $(X_i/S_i)_t$ defines a covering in $(X/S)_t$. To show the equivalence, it suffices to show that every morphisms
$$
U \to V
$$
of \'{e}tale $X$-schemes of finite presentation which is a tame covering with respect to $S$ comes by base change from a finite level. This follows from the first statement
\end{proof}

As an immediate consequence we obtain:

\begin{theorem} \label{cohomologyoflimit}
 In the situation of \cref{inverselimit} assume that~$i_0 \in I$ is a final object.
 Let~$F_0$ be a sheaf of abelian groups on $(X_{i_0}/S_{i_0})_t$.
 For $i \in I$ denote by~$F_i$ its pullback to~$(X_i/S_i)_t$ and by~$F$ its pullback to~$(X/S)_t$.
 Then the natural map
 $$
 \colim_{i \in I}\, H^n_t(X_i/S_i,F_i) \longrightarrow H^n_t(X/S,F)
 $$
 is an isomorphism for all $n \geq 0$.
\end{theorem}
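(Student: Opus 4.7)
The plan is to deduce this theorem from \cref{finite-cov} and \cref{inverselimit} by the standard continuity argument for filtered limits of noetherian sites, in the spirit of \cite[VII,\,5.7--5.8]{SGA4}. \cref{finite-cov} ensures that the restricted tame sites $(X_i/S_i)_t^\res$ and $(X/S)_t^\res$ are noetherian, while \cref{inverselimit} identifies $(X/S)_t^\res$ with the filtered colimit, as a site, of the $(X_i/S_i)_t^\res$.

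First I would pass to the restricted sites, which does not affect cohomology since the categories of sheaves on $(X_i/S_i)_t$ and $(X_i/S_i)_t^\res$ are canonically equivalent (and likewise for the limit site). The key step is the sections-level continuity identity $F(U) = \colim_{j \geq i} F_j(U_j)$ for any $U \in (X/S)_t^\res$ descending from $U_i \to X_i$, where $U_j := U_i \times_{X_i} X_j$. Let $G$ denote the presheaf $U \mapsto \colim_{j \geq i} F_j(U_j)$. By \cref{inverselimit} combined with \cref{finite-cov}, every tame covering of $U$ in $(X/S)_t^\res$ descends to a finite tame covering at some level, and the fibre products in its \v{C}ech nerve remain of finite presentation. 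The sheaf condition for $G$ on such a covering then follows from the sheaf conditions for the $F_{j'}$ via exactness of filtered colimits. Hence $G$ is a sheaf, and the canonical morphism $G \to \pi^* F_0 = F$ is an isomorphism because both functors of $F_0$ preserve colimits and agree on representable sheaves by the classical limit theorem for morphisms into a finitely presented scheme.

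Cohomology is then extracted by \v{C}ech methods. On a noetherian site, sheaf cohomology is the colimit over refinements of \v{C}ech cohomology of finite coverings. For a finite covering $\cU$ of $U \in (X/S)_t^\res$ descending from $\cU_j$ at some level $j$, the previous paragraph applied termwise to the \v{C}ech nerve yields $\check C^\bullet(\cU, F) = \colim_{j' \geq j} \check C^\bullet(\cU_{j'}, F_{j'})$. Exactness of filtered colimits then gives $\check H^n(\cU, F) = \colim_{j' \geq j} \check H^n(\cU_{j'}, F_{j'})$, and exchanging the colimit over $j'$ with the colimit over refinements (which themselves descend to finite level by \cref{inverselimit}) gives the claim.

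The main technical obstacle is showing that $G$ is a sheaf. This rests crucially on the finiteness of tame coverings (\cref{finite-cov}): the sheaf axiom is a \emph{finite} equalizer, which commutes with filtered colimits, whereas an infinite one would not. Once $G$ is known to be a sheaf, its identification with $\pi^* F_0$ is formal, reducing on representables to a standard EGA limit theorem.
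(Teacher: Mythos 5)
Your proposal is correct and is essentially the argument the paper has in mind: the paper gives no written proof of \cref{cohomologyoflimit}, stating only that it is ``an immediate consequence'' of \cref{inverselimit}, and the intended justification is exactly the standard SGA4 continuity argument (\cite[VII,\,5.7--5.8]{SGA4}) for noetherian sites that you have spelled out, using \cref{finite-cov} for noetherianness of the restricted sites and \cref{inverselimit} for descent of objects and coverings.
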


\section{The tame fundamental group}

In this section we assume that $X$ is locally noetherian. Then the small \'{e}tale site of $X$ is locally connected \cite[Prop.\,9.5]{AM} and, having  the same underlying category, the same is true for the site $(X/S)_t$.
Therefore by \cite[\S9]{AM}, after choosing a tame base point $(\bar x,\bar v)$, we have the tame homotopy (pro)groups
\[
\pi_n^t(X/S,(\bar x,\bar v)).
\]
We call the first tame homotopy group $\pi_1^t(X/S,(\bar x,\bar v))$ the \emph{tame fundamental group}. If $X$ is connected, then by \cite[Corollary 10.7]{AM}, it classifies pointed torsors for the tame topology. Moreover, by \cite[Corollary 10.8]{AM},  there is a natural equivalence between the category of locally constant abelian tame sheaves and the category of abelian groups $A$ with a homomorphism (of pro-groups) $\pi_1^t(X/S,(\bar x,\bar v)) \to \Aut (A)$.

For a group $G$, the set of isomorphism classes of tame $G$-torsors is a subset of the set of isomorphism classes of \'{e}tale $G$-torsors. Hence, for a trivial point $\bar x=(\bar x,v_\mathit{triv})$, the morphism of pointed sites
\[
(X,\bar x)_\et \lang (X/S,\bar x)_t
\]
induces a surjection $\pi_1^\et(X,\bar x)\twoheadrightarrow \pi_1^t(X/S,\bar x)$ (here $\pi_1^\et$ denotes the \'{e}tale fundamental group of \cite{AM}, which coincides with the \emph{pro-groupe fondamentale \'{e}largi} of \cite[X,\,\S6]{SGA3}). We conclude that the finite quotients of $\pi_1^t(X/S,\bar x)$ correspond to pointed, finite, connected \'{e}tale Galois covers $f:(Y,\bar y)\to (X,\bar x)$ that are coverings for the tame site. Since the Galois group acts transitively on the fibres, this means that for \emph{every} $y\in Y$ and $w\in \Val_S k(y)$ the extension $k(y)/k(f(y))$ is at most tamely ramified at $w$. We will call such covers \emph{tame Galois covers}.  If $X$ is geometrically unibranch, then the group $\pi_1^\et(X,\bar x)$ is profinite by \cite[Theorem\,11.1]{AM}, and hence the same holds for its factor group $\pi_1^t(X/S,\bar x)$.

\ms\noi
From now on we assume that  $S$ is integral, pure-dimensional (i.e., $\dim  S= \dim \mathcal{O}_{S,s}$ for every closed point $s\in S$), separated and excellent. Furthermore, we assume that $X\to S$ is separated and of finite type.
For integral $X$ we put
\[
\dim_S X:= \mathrm{deg.tr.}(k(X)/k(T)) + \dim_{\textrm{Krull}} T,
\]
where $T$ is the closure of the image of $X$ in $S$. If the image of $X$ in $S$ contains a closed point of $T$,  then $\dim_S X=\dim_{\textrm{Krull}} X$ by \cite[5.6.5]{EGAIV.4}.
We call $X$ an \emph{$S$-curve}, if $\dim_S X=1$. For a regular connected $S$-curve $C$, let $\bar C$ be the unique regular compactification of $C$ over $S$. Then the finite tame Galois covers of $C$ are exactly those \'{e}tale Galois covers which are tamely ramified along $\bar C \smallsetminus C$ in the classical sense (e.g.\ \cite{GM71}).
We recall the notion of curve-tameness from \cite{KeSch10}:

\begin{definition}
Let $Y\to X$ be an \'{e}tale cover of separated $S$-schemes of finite type. We say that $Y\to X$ is {\em curve-tame} if for any morphism $C\to X$ with $C$ a regular $S$-curve, the base change $Y\times_X  C \to  C$ is tamely ramified along $\bar C\smallsetminus C$.
\end{definition}

After choosing a geometric point $\bar x$, we obtain the \emph{curve-tame fundamental group} $\pi_1^{ct}(X,\bar x)$, which is the profinite quotient of $\pi_1^\et(X,\bar x)$ which classifies finite curve-tame Galois covers. This curve-tame fundamental group was considered in \cite{Sch-singhom}, \cite{KeSch09}, \cite{GS16}.

\begin{proposition}\label{pi1ct=pi1t}
The curve-tame fundamental group is the profinite completion of the tame fundamental group. If $X$ is geometrically unibranch, then both are isomorphic:
\[
\pi_1^{t}(X,\bar x) \cong \pi_1^{ct}(X,\bar x).
\]
\end{proposition}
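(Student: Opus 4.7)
The plan is to reduce the statement to the following cover-theoretic equivalence: a finite \'{e}tale Galois cover $Y \to X$ is a tame Galois cover (in the valuation-theoretic sense introduced just before the proposition) if and only if it is curve-tame. Given this, the finite quotients of $\pi_1^t(X/S,\bar x)$ and of $\pi_1^{ct}(X,\bar x)$ classify the same class of pointed finite Galois covers, so $\pi_1^{ct}$ is canonically the profinite completion of $\pi_1^t$. If $X$ is moreover geometrically unibranch, then $\pi_1^\et(X,\bar x)$ is profinite by \cite[Theorem\,11.1]{AM}, and hence so is its quotient $\pi_1^t(X/S,\bar x)$, which therefore coincides with its profinite completion $\pi_1^{ct}$.

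The implication ``tame $\Rightarrow$ curve-tame'' is a base-change argument. Let $\varphi: C \to X$ be a regular $S$-curve with regular compactification $\bar C$ and let $p \in \bar C \smallsetminus C$. The discrete valuation $v_p$ on $k(C)$ is an $S$-valuation via $\Spec \O_{v_p} \hookrightarrow \bar C \to S$. Pick $\tilde y \in Y \times_X C$ above the generic point of $C$ and an extension $w$ of $v_p$ to $k(\tilde y)$; let $y \in Y$ be its image and set $x = \pi(y)$ and $v' := w|_{k(y)}$. Separatedness of $S$ ensures that $v'$ is itself an $S$-valuation (the required morphism $\Spec \O_{v'} \to S$ is obtained by restricting $\Spec \O_w \to S$), and tameness of $Y \to X$ then gives that $v'/v'|_{k(x)}$ is tame. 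Since $Y \times_X C \to C$ is \'{e}tale, $k(\tilde y)$ is generated over $k(C)$ by $k(y)$, so $k(\tilde y)_w^\sh$ is a quotient of the tensor product $k(y)_{v'}^\sh \otimes_{k(x)_{v'|_{k(x)}}^\sh} k(C)_{v_p}^\sh$. Consequently $[k(\tilde y)_w^\sh : k(C)_{v_p}^\sh]$ divides $[k(y)_{v'}^\sh : k(x)_{v'|_{k(x)}}^\sh]$, which is prime to the residue characteristic, so $w/v_p$ is tame.

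For the converse, fix a pair $(y, w)$ with $w$ an $S$-valuation on $k(y)$, $x = \pi(y)$, $v = w|_{k(x)}$. The task is to show that the inertia of $w$ in $\Gal(Y/X)$ has order prime to the residue characteristic. The strategy is to produce a regular $S$-curve $\varphi: C \to X$ factoring through $x$ together with a boundary point $p \in \bar C \smallsetminus C$ of its compactification such that an extension of $v_p$ to a component of $Y \times_X C$ has inertia group containing the inertia at $w$ modulo its pro-$p$ part (which is irrelevant for tameness). Curve-tameness of $Y \to X$ applied at $v_p$ then forces the inertia at $w$ to be of order prime to the residue characteristic. The main obstacle is this construction of test curves, especially for $S$-valuations of rank greater than one, where a single regular curve cannot realize the full valuation and one has to combine its discrete rank-one specializations. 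This construction is the content of the Kerz-Schmidt approximation theory of curve-tame covers (\cite[\S4]{KeSch10}), whose hypotheses match our running assumptions on $S$ (integral, pure-dimensional, separated, excellent) and on $X \to S$ (separated of finite type).
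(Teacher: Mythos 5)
Your reduction to the cover-theoretic equivalence and the profinite-completion argument match the paper. The forward direction (tame $\Rightarrow$ curve-tame) is correct and you spell it out in more detail than the paper does (which simply notes that both notions coincide for regular $S$-curves and are stable under base change); the phrase ``$k(\tilde y)_w^\sh$ is a quotient of the tensor product'' is a little imprecise (it is a factor of a finite \'{e}tale algebra), but the intended divisibility of degrees is correct.

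The converse direction (curve-tame $\Rightarrow$ tame) has a genuine gap. You correctly identify the construction of test curves as the hard part, but you misdiagnose the obstacle and the tool. Your description — that for valuations of rank $>1$ one should ``combine discrete rank-one specializations'' — suggests approximating the given valuation $w$ by curves, which is not how the paper proceeds and is unlikely to work. Instead, after reducing (via a subgroup of order $p$ in the ramification group) to the case where $\Gal(Y/X)$ is cyclic of order $p=\ch k(w)$ and the relevant schemes are integral and normal, the paper invokes \emph{quasi-purity of the branch locus} (\cite{quasi-purity}) to replace the wildly-ramified $w$ by a \emph{geometric discrete rank-one} $S$-valuation that is also wildly ramified; this sidesteps higher-rank valuations entirely. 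One then chooses a normal compactification in which the center of this divisorial valuation has codimension one, and applies the Key Lemma \cite[Lemma 2.4]{KeSch10} to the local ring at a closed point to manufacture the test curve. Your citation to \cite[\S4]{KeSch10} does not supply the quasi-purity input, which is the step that makes the whole reduction to curves possible; without it the argument does not close.
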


For the proof we recall the ``Key Lemma'', Lemma 2.4 of \cite{KeSch10}. The normalization of an integral scheme $Z$ in its function field is denoted by $\widetilde Z$.

\begin{lemma} \label{key-lemma} Let $A$ be a local, normal and excellent ring and let
$X'\subset X=\Spec(A)$ be a nonempty open subscheme. Let $Y'\to X'$ be an \'{e}tale Galois cover of prime degree $p$.  Assume that $X\smallsetminus X'$ contains an irreducible component $D$ of codimension one in $X$ such that $Y'\to X'$ is ramified along the generic point of\/ $D$. Then there exists an integral, closed subscheme $C$ of $X$ of Krull-dimension one with $C':=C \cap X'\ne\varnothing$ such that the base change $Y'\times_{X'} \widetilde C' \to \widetilde C'$ is ramified along a point of\/ $\widetilde C \smallsetminus \widetilde C'$.
\end{lemma}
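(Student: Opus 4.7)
The proof proceeds by induction on $n = \dim A$. The base case $n = 2$ is handled by an explicit transverse curve construction, and the inductive step $n \ge 3$ cuts $X$ with a general hyperplane section through the closed point to descend the data to a local, normal, excellent ring of dimension $n-1$.

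\textbf{Base case ($n = 2$).} Here $D$ is itself a curve meeting the unique closed point $x_0$ of $X$. After passing to the completion and, if $\widehat A$ is not already regular, to a suitable regular cover (available in dimension two by Lipman's resolution of singularities for excellent rings), we may assume $\widehat{\O_{X,x_0}} \cong k[[f,g]]$ with $\mathfrak{p}_D = (f)$. Choose $g \in A$ lifting the second regular parameter and, by prime avoidance, not in $\mathfrak{p}_D$ and not in any other height-one prime corresponding to a codim-one component of $X \smallsetminus X'$; let $C$ be the irreducible component of $V(g)$ through $x_0$. Then $C \ne D$, $\dim C = 1$, $C \cap X' \ne \varnothing$, and $C$ is regular at $x_0$, so $\widetilde C$ has a unique point $\xi$ above $x_0$ with $\widehat{\O_{\widetilde C, \xi}} \cong k[[f]]$. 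The $\Z/p$-Galois cover over $\widehat{\O_{X,x_0}}$, which by hypothesis is ramified along $(f)$, is described by Kummer theory (when $p$ is coprime to the residue characteristic) or Artin--Schreier theory (when $p$ equals it); in either case an explicit generator has a nontrivial $f$-contribution (Kummer exponent not divisible by $p$, respectively a pole of order prime to $p$), and reducing modulo $g$ yields a ramified extension of the DVR $k[[f]]$ at its closed point, giving the required ramification at $\xi$.

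\textbf{Inductive step ($n \ge 3$).} Choose $h \in \mathfrak{m}$ which, by prime avoidance, lies in no minimal prime of $A$, is not in $\mathfrak{p}_D$, and is in no other codim-one prime of $X \smallsetminus X'$, and moreover arranges that: (i) the normalization $A'$ of $A/(h)$ is local, normal, excellent of dimension $n-1$; (ii) $V(h) \cap D$ contains an irreducible component $D_0$ of codimension one in $V(h)$; and (iii) the pulled-back $\Z/p$-cover over $X' \cap V(h)$ is ramified along $\eta_{D_0}$. Applying the induction hypothesis to $A'$ and $D_0$ produces a curve $C_0 \subset V(h) \subset X$ fulfilling the desired conclusion for the restricted cover; since the base change of $Y' \to X'$ to $\widetilde{C_0'}$ is unaffected by the intermediate restriction to $V(h)$, this $C_0$ is the $C$ we seek.

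\textbf{Main obstacle.} The hardest ingredient is property (iii) in the inductive step: guaranteeing that a sufficiently general hyperplane section through the closed point preserves ramification of the Galois cover at $\eta_D$. Classical (projective) Bertini theorems do not directly apply in our local, excellent setting. The expected fix is to combine purity of the branch locus (Zariski--Nagata) with an Artin--Popescu-type approximation argument, reducing (iii) to a computation in the completion at the closed point parallel to the Kummer/Artin--Schreier calculation of the base case. A secondary (but more routine) technical point is the reduction at the start of the base case to the regular situation, where the intersection-theoretic picture is transparent.
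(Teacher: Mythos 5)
The paper does not prove this statement: it is quoted verbatim as Lemma~2.4 of Kerz--Schmidt~\cite{KeSch10} (``For the proof we recall the `Key Lemma', Lemma~2.4 of \cite{KeSch10}.''), and no proof is given in the present text. So there is no in-paper argument against which to check your proposal step by step; it must be judged on its own.

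Your proposal has a genuine gap, and you have in fact correctly located it yourself in the ``Main obstacle'' paragraph: step~(iii) of the inductive step --- that a sufficiently generic $h \in \mathfrak{m}$ preserves ramification of the degree-$p$ cover along the corresponding component of $D \cap V(h)$ --- is not proved, and the proposed combination of purity of the branch locus with an Artin--Popescu approximation argument is left as a sketch, not an argument. Since this is the entire content of the inductive step, the proof is incomplete as written. Two further issues arise earlier. First, in the base case you replace $\widehat A$ by a regular cover via Lipman's resolution; this changes the scheme and you do not explain how to descend the curve you build on the resolution back to $X=\Spec A$ (one needs the image in $X$ to again be a curve meeting $X'$ and to retain ramification, which is nontrivial since the exceptional locus of the resolution maps to the closed point). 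Second, $\widehat{\O_{X,x_0}}\cong k[[f,g]]$ is only available in the equicharacteristic case; in mixed characteristic one must work over $W(k)[[f]]$ (or a finite extension), where the Kummer/Artin--Schreier dichotomy and the ``nontrivial $f$-contribution'' reduction you invoke would need to be redone. Finally, the claim that ``reducing modulo $g$ yields a ramified extension'' presupposes that the Kummer or Artin--Schreier representative of the cover has coefficients that remain regular along $V(g)$ and whose $f$-valuation is unchanged after restriction; this again requires the $h$ (resp.\ $g$) to be generic in a precise sense that is not established, which is exactly the difficulty in (iii) resurfacing in the base case. In short, the skeleton is plausible (dimension induction via hyperplane sections is a natural strategy here), but the pivotal Bertini-type statement that generic sections preserve ramification at $\eta_D$ is assumed rather than proved.
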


\begin{proof}[Proof of \cref{pi1ct=pi1t}]
We have to show that a finite \'{e}tale Galois cover is tame if and only if it is curve-tame. Both notions coincide for regular $S$-curves and are stable under base change. Hence finite tame Galois covers are curve-tame and it remains to show the converse.

So let $f: Y\to X$ be a finite \'{e}tale Galois cover, $y\in Y$ a point, $x=f(y)$ and $w\in \Val_S k(y)$ such that $w$ is wildly ramified in $k(y)/k(x)$. We have to find an $S$-curve $C$ and a morphism $\varphi: C\to X$ such that $C\times_XY\to C$ is wildly ramified at some point in $\bar C \smallsetminus C$.

We first assume that $\Gal(Y/X)$ is cyclic of order $p:=\ch k(w)$. Then, since $w$ is ramified, $k(y)/k(x)$ is also of degree $p$. Let $X'$ be the closure of $\{x\}$ in $X$ with reduced scheme structure and $Y'=X'\times_XY$. In order to find $\varphi: C\to X$, we may replace $X$ by $X'$ and $Y$ by the connected component of $y$ in $Y'$, i.e., we may assume that $x$ and $y$ are the generic points of the integral schemes $X$ and $Y$. Replacing $X$ by its normalization $\tilde X$ and $Y$ by its base change along $\tilde X\to X$, we may assume that $X$ and $Y$ are normal.

By quasi-purity of the branch locus \cite{quasi-purity}, we find a geometric discrete rank-one $S$-valuation $v$ on $k(y)$ with $\ch k(v)=p$ that is (wildly) ramified in $k(y)/k(x)$.
Then we can find a normal compactification (over $S$) $\bar X$ of $X$ such that the center $D$ of $v$ on $\bar X$ is of codimension one.  Applying  \cref{key-lemma} to the local ring of some closed point of $\bar X$ contained in $D$, we obtain a morphism $\varphi: C\to X$ with the required property.

\smallskip\noindent
It remains to reduce the general case to the cyclic-order-$p$-case. For this let
\[
A\subset R_w(k(y)/k(x)) \subset \Gal(k(y)/k(x))
\]
be a subgroup of order $p$ (here $R_w$ denotes the ramification group) and let $X'=Y_A$. Then $Y/X'$ is wildly ramified and we find a morphism $\varphi: C \to X'$ with the required property. Then the composite of $\varphi$ with the projection $X'\to X$ yields what we need.
\end{proof}

\section{Joins of hensel rings}

In this section we recall some results of M.~Artin \cite{Art71}. We need them in slightly higher generality, therefore we have to go into a little more detail.

\medskip\noindent
Let $A$ be a normal integral domain, and $\p$, $\q$ prime ideals in $A$. Suppose we are given embeddings of the henselizations $A_\p^h$, $A_\q^h$ into a separable closure  $\bar K$ of $K$. We call the ring $D=[A_{\p}^h,A_{\q}^h]\subset \bar K$ generated by $A_\p^h$ and $A_\q^h$ their \emph{join} (with respect to the chosen embeddings to $\bar K$):
\[
\begin{tikzcd}
  &\bar{K}\arrow[d,dash]\\
  &D=[A_{\p}^h,A_{\q}^h]\arrow[dl,dash]\arrow[dr,dash]\\
A_{\p}^h\arrow[dr,dash]&&A_{\q}^h\arrow[dl,dash]\\
&A
\end{tikzcd}
\]
Artin's main technical result is the following.
\begin{theorem}[{\cite[Theorems 2.2, 2.5]{Art71}}]
\label{artin-2.5} The join $D=[A_{\p}^h,A_{\q}^h]$ is a local henselian ring. If neither of the primes $\p$, $\q$ contains the other, then $D$ is strictly hen\-selian and the pullback of its maximal ideal to $A$ is strictly contained in $\p$ and~$\q$. \qed
\end{theorem}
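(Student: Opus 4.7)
The plan is to present $D$ as a filtered colimit of finite étale normal $A$-subalgebras of $\bar K$ and to locate its maximal ideal via distinguished primes coming from both henselizations. Writing $A_\p^h=\colim_\alpha A_\alpha$ and $A_\q^h=\colim_\beta A_\beta$ as filtered colimits of pointed étale neighborhoods with distinguished primes $\p_\alpha\subset A_\alpha$, $\q_\beta\subset A_\beta$ restricting to $\p,\q$ and inducing isomorphisms on residue fields, the embeddings into $\bar K$ give $D=\bigcup_{\alpha,\beta} D_{\alpha,\beta}$, where $D_{\alpha,\beta}\subset\bar K$ is the image of the multiplication $A_\alpha\otimes_A A_\beta\to\bar K$. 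Since $A$ is normal, $A_\alpha\otimes_A A_\beta$ is étale over $A$ and decomposes as a finite product of normal domains étale over $A$, so $D_{\alpha,\beta}$ is one such factor. The embedding into $\bar K$ singles out a distinguished prime $\mathfrak{M}_{\alpha,\beta}$ of $D_{\alpha,\beta}$ lying over $\p_\alpha$ (via $A_\alpha\hookrightarrow D_{\alpha,\beta}$) and over $\q_\beta$ (via $A_\beta\hookrightarrow D_{\alpha,\beta}$); these primes are compatible and assemble to a prime $\mathfrak{M}\subset D$ with $\mathfrak{M}\cap A_\p^h=\m_\p^h$ and $\mathfrak{M}\cap A_\q^h=\m_\q^h$.

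To prove locality of $D$, I would show that any element $x\in D\setminus\mathfrak{M}$ is already a unit at some finite level: for $x\in D_{\alpha,\beta}$ with nonzero image in $\kappa(\mathfrak{M}_{\alpha,\beta})$, henselianness of $A_\p^h$ through $A_\alpha$ (or of $A_\q^h$ through $A_\beta$) guarantees that the factor of some larger étale refinement containing $\mathfrak{M}_{\alpha,\beta}$ splits off as a subring in which $x$ becomes a unit. Henselianness of $(D,\mathfrak{M})$ is then verified by lifting idempotents in finite $D$-algebras: any such algebra descends to a finite $D_{\alpha,\beta}$-algebra, whereupon the henselian property of $A_\p^h$ through the structure map $A_\alpha\to D_{\alpha,\beta}$ produces the required decomposition in the colimit.

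Assuming neither of $\p,\q$ contains the other, I would check strict henselianness as follows. Given a finite separable extension $L/\kappa(\p)$, realize $L$ as the residue field at the distinguished prime of some étale neighborhood $A'_\alpha$ of $(A,\p)$; using an element $f\in\q\setminus\p$, the localization $A'_\alpha[1/f]$ retains residue field $L$ at the distinguished prime (since $f\notin\p_\alpha$) and, being étale over $A$, refines some $D_{\alpha,\beta}$. Thus every finite separable extension of $\kappa(\p)$ embeds into $D/\mathfrak{M}$, so the residue field is separably closed. By the same construction $f$ is inverted in some $D_{\alpha,\beta}$, forcing $f\notin\mathfrak{M}\cap A$ and hence $\mathfrak{M}\cap A\subsetneq\q$; symmetrically $\mathfrak{M}\cap A\subsetneq\p$.

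The most delicate step is the locality and uniqueness argument for $\mathfrak{M}$: the two henselizations interact only through their images in the ambient separable closure, and their distinguished primes sit inside $D_{\alpha,\beta}$ through two \emph{different} structure maps. Identifying the correct connected component of $\Spec(A_\alpha\otimes_A A_\beta)$ and establishing uniqueness of the compatible system of distinguished primes requires careful geometric analysis of étale covers of the normal domain $A$ simultaneously above $\p$ and $\q$; this is the technical heart of Artin's argument.
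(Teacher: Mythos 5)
The paper states this theorem with a citation to Artin and no proof of its own, so there is nothing internal to compare against; I will therefore just assess the proposal on its own terms. It contains a genuine error at precisely the step you flag as the technical heart.

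The fatal move is the claim that the embedding into $\bar{K}$ singles out a prime $\mathfrak{M}_{\alpha,\beta}\subset D_{\alpha,\beta}$ lying simultaneously over $\p_\alpha$ (via $A_\alpha\hookrightarrow D_{\alpha,\beta}$) and over $\q_\beta$ (via $A_\beta\hookrightarrow D_{\alpha,\beta}$), assembling to a prime $\mathfrak{M}\subset D$ with $\mathfrak{M}\cap A_\p^h$ and $\mathfrak{M}\cap A_\q^h$ equal to the respective maximal ideals. No such prime exists unless $\p=\q$: the first condition forces $\mathfrak{M}\cap A=\p$, the second forces $\mathfrak{M}\cap A=\q$. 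Worse, it directly contradicts what the theorem asserts when $\p$ and $\q$ are incomparable (that $\m_D\cap A$ is \emph{strictly smaller} than both $\p$ and $\q$), and it contradicts the last sentence of your own third paragraph, where you correctly deduce $\mathfrak{M}\cap A\subsetneq\p$ and $\mathfrak{M}\cap A\subsetneq\q$. To see the drop concretely: choose $f\in\q\smallsetminus\p$. Then $f$ is a unit in $A_\p\subset A_\p^h\subset D$, so $f\notin\m_D$; but $f$ lies in the maximal ideal of $A_\q^h$. Hence $A_\q^h\to D$ is \emph{not} local, and symmetrically neither is $A_\p^h\to D$. There is no compatible system of ``distinguished primes'' of the kind you propose, and the subsequent henselianness argument (lifting decompositions via ``henselianness of $A_\p^h$ through $A_\alpha$'') collapses, because that mechanism presupposes that the map from the henselization is local.

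What is salvageable is the opening reduction: $D$ is indeed the filtered union of normal domains $D_{\alpha,\beta}$ étale over $A$, each a factor of $A_\alpha\otimes_A A_\beta$. But identifying $\m_D$ and proving locality is the substantive content of Artin's Theorem 2.2, and it does not come from a distinguished prime in either henselization tower. Your own remark about inverting $f\in\q\smallsetminus\p$ is in fact the mechanism by which one sees that the center drops (and it is exactly how the paper's \cref{formallyreal} proceeds); as written, however, it arrives only at the end and refutes the construction of $\mathfrak{M}$ on which everything before it rests. The proposal's final paragraph concedes the locality step is unresolved, so the argument is incomplete as well as internally inconsistent.
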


The following lemma will prove useful.

\begin{lemma}\label{formallyreal}
Let $A\subset D$ be an extension of integral domains and assume that $D$ contains $A_\p^h$ and $A_\q^h$ for primes $\p,\q\subset A$ such that none of\/ $\p,\q$ contains the other. Then $-1$ is a sum of squares in $D$. In particular, no residue field of $D$ is formally real.
\end{lemma}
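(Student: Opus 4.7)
The plan is to reduce to \cref{artin-2.5} so that one may work with a strictly henselian ring, and then to extract $-1$ as a sum of squares by Hensel-lifting suitable roots of unity.

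First I would observe that $D$ contains the subring $D_0$ generated by $A_\p^h$ and $A_\q^h$. Since $\mathrm{Frac}(A_\p^h)$ and $\mathrm{Frac}(A_\q^h)$ are both separable algebraic over $K=\mathrm{Frac}(A)$, after choosing a separable closure $\bar K$ of $K$ (inside, say, an algebraic closure of $\mathrm{Frac}(D)$) I may identify $D_0$ with the join $[A_\p^h,A_\q^h]\subset\bar K$. By \cref{artin-2.5} and the non-comparability hypothesis on $\p,\q$, $D_0$ is a strictly henselian local ring; in particular its residue field $k$ is separably closed. It thus suffices to exhibit $-1$ as a sum of squares in $D_0$.

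I would distinguish two cases according to $p:=\ch(k)$. If $p\neq 2$, the polynomial $X^2+1$ is separable and splits in $k$, so Hensel's lemma lifts a root to $i\in D_0$ with $i^2=-1$. If $p=2$, this direct approach fails because $X^2+1=(X+1)^2$ is inseparable. Instead I would use the separable polynomial $X^3-1$: in characteristic $2$ its derivative $3X^2$ is nonzero at every nontrivial cube root of unity, and, as $X^2+X+1$ is irreducible over $\F_2$, the field $k$ contains such a root. Hensel's lemma then produces $\zeta\in D_0$ with $\zeta^3=1$ and $\zeta\neq 1$. The integrality of $D_0$ forces $\zeta^2+\zeta+1=0$, and, using $\zeta^4=\zeta$,
\[
-1\;=\;\zeta+\zeta^2\;=\;(\zeta^2)^2+(\zeta)^2,
\]
which expresses $-1$ as a sum of two squares in $D_0\subset D$.

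The ``in particular'' clause then follows by reducing an identity $-1=\sum x_i^2$ modulo any prime $\mathfrak P\subset D$ and passing to $\mathrm{Frac}(D/\mathfrak P)$, since a field in which $-1$ is a sum of squares cannot be formally real. The only serious obstacle is the residue-characteristic-$2$ case, in which one cannot Hensel-lift $\sqrt{-1}$ from $k$; the remedy, implemented above, is to substitute a nontrivial cube root of unity, whose minimal polynomial $X^2+X+1$ remains separable in characteristic $2$.
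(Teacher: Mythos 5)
Your proof takes a genuinely different route from the paper's, and it has a gap. The paper's argument is direct and self-contained: for positive residue characteristic $p$ it observes that $A_\p^h$ contains $\F_p$ or the ring of algebraic $p$-adic integers, in either of which $-1$ is already a sum of squares; for residue characteristic zero it uses CRT to find $a\in A$ which is a unit square in $A_\p^h$ and whose negative is a square in $A_\q^h$ (after a localization $A\leadsto A[f^{-1}]$ to arrange $\p+\q=A$, which does not change the two henselizations), so that $-1=a^{-1}(-a)$ is a single square in $D$. You instead pass through \cref{artin-2.5} to obtain a strictly henselian $D_0=[A_\p^h,A_\q^h]$ with separably closed residue field, then Hensel-lift $\sqrt{-1}$ (or, cleverly, a primitive cube root of unity when the residue characteristic is $2$). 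The cube-root device and the final identity $-1=(\zeta^2)^2+\zeta^2$ are correct.

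The gap is that \cref{artin-2.5} is stated (and proved by Artin) under the hypothesis that $A$ is a \emph{normal} integral domain, while \cref{formallyreal} carries no such hypothesis and the paper uses it precisely to justify passing to normalizations. You cannot simply replace $A$ by its normalization $\tilde A$ and the join by $[\tilde A^h_{\tilde\p},\tilde A^h_{\tilde\q}]$, because $\tilde A$ and the larger join need not be contained in the given ring $D$; and the bare claim that $[A_\p^h,A_\q^h]$ is strictly henselian for non-normal $A$ is not a cited result and would need its own proof. So your argument establishes the lemma only in the normal case, which is strictly weaker than what is asserted.

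There is also a conceptual circularity worth noting: the very part of Artin's Theorem 2.5 you invoke — that the residue field of the join is separably closed rather than merely real closed — is proved in \cite{Art71} by exhibiting $-1$ as a sum of squares, and the paper's proof explicitly says it is following that argument. Your route thus relocates rather than re-proves the key point; it is formally admissible as an appeal to a cited theorem (for normal $A$), but it does not provide an independent argument.
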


\begin{proof}
We follow the argument of \cite[Proof of Theorem~2.5]{Art71} with a small modification. Assume $\ch k(\p)=p>0$. Then $A_\p^h$ either contains $\F_p$ or the ring of algebraic $p$-adic integers. In both cases, $-1$ is a sum of squares in $A_\p^h$. It therefore  remains the case that the residue fields of both $\p$ and $\q$ have characteristic zero.

If $\p +\q =A$, then we can choose $a\in A$ with $a\equiv 1 \bmod \p$ and $a\equiv -1 \bmod \q$. Then $a$ is a unit and a square in $A_\p^h$ and $-a$ is a square in $A_\q^h$. Hence $-1=a^{-1}(-a)$ is a square in $D$.

Finally, assume that $\p+\q \neq A$. Choose $g\in \p \smallsetminus \q$, $h \in \q\smallsetminus \p$ and set $f=g+h$. Then $f\in \p+\q$ but $f\notin \p \cup \q$.  Replacing $A$ by $A[f^{-1}]\subset A_\p \cap A_\q\subset D$ does not change $A_\p^h$ and $A_\q^h$.
Since $\p$ and $\q$ are coprime in $A[f^{-1}]$, the argument of the last paragraph applies to show that $-1$ is a square in $D$.
\end{proof}

Let $A$ be a ring. The connected components of $\Spec(A)$ are of the form $\Spec(\bar A)$, where $\bar A$ is a colimit of rings of the form $A/eA$ with idempotent elements $e\in A$. We call the rings $\bar A$ occurring in this way the \emph{components} of $A$.

\medskip
Following M. Artin \cite{Art71}, we call a ring $A$ \emph{quasi-acyclic} if every component of $A$ is  a local henselian ring.
By \cite[Proposition 3.3]{Art71}, filtered colimits of quasi-acyclic rings and finite algebras over quasi-acyclic rings are quasi-acyclic.
In particular,  an integral algebra over a quasi-acyclic ring is quasi-acyclic. The main result of \cite{Art71} (though formulated as the slightly weaker statement Theorem~3.4 there) is the following:

\begin{theorem}[Artin]\label{artinslemma}
Let $A$ be a ring, $\p_1,\p_2\subset A$ prime ideals and, for $i=1,2$, $A_{\p_i}^h$ the henselization of the local ring $A_{\p_i}$. Let, for $i=1,2$,  $B_{i}$ be local, integral $A_{\p_i}^h$-algebras.
Then
\[
B= B_{1} \otimes_A B_{2}
\]
is quasi-acyclic. Let $D$ be a component of $B$. Then one of the following holds.
\begin{enumerate}
  \item If both ring homomorphism $B_i \to D$ are not local, then the residue field $D/\m_{D}$ of $D$ is separably closed.
  \item If, say, $B_1\to D$ is local, it is integral. In particular, $D/\m_D$ is an algebraic field extension of $B_1/\m_{B_1}$.
\end{enumerate}
\end{theorem}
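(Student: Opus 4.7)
The plan is to reduce to the case where $B_1$ and $B_2$ are finite local henselian $A_{\p_i}^h$-algebras, and then to identify each component of $B = B_1 \otimes_A B_2$ with a localization (at a maximal ideal) of a finite integral extension of a join of henselian rings, so that \cref{artin-2.5} and \cref{formallyreal} apply.

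First, I would write each $B_i$ as the filtered colimit $\colim_\alpha B_i^{(\alpha)}$ of its finite $A_{\p_i}^h$-subalgebras. Each $B_i^{(\alpha)}$ is a finite algebra over the henselian local ring $A_{\p_i}^h$, hence decomposes as a finite product of local henselian rings; replacing it by the unique factor through which $B_i^{(\alpha)} \hookrightarrow B_i$ factors (well-defined since $B_i$ is local), I may assume each $B_i^{(\alpha)}$ is local henselian. By \cite[Proposition~3.3]{Art71}, quasi-acyclicity is preserved under filtered colimits and under finite integral extensions, so it suffices to handle the case when $B_1$ and $B_2$ are finite over $A_{\p_i}^h$.

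In the finite case, a component $D$ of $B$ corresponds to a maximal ideal $\m \subset B$, which contracts to maximal ideals $\m_i \subset B_i$ and to a common prime $\q = \m \cap A \subseteq \p_i$ in $A$. Fix a separable closure $\bar K$ of $k(\q)$ and choose compatible embeddings of $B_1$ and $B_2$ (modulo suitable minimal primes) into $\bar K$ realizing the quotient map $B \to D$. Their joint image lies in the subring of $\bar K$ generated by $B_1$, $B_2$ and the join $[A_{\p_1}^h, A_{\p_2}^h]$, which is a finite integral extension of the join. By \cref{artin-2.5}, the join is local henselian, so this extension decomposes as a finite product of local henselian rings, and $D$ is identified with one of these factors. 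This proves that $B$ is quasi-acyclic.

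For the case dichotomy: if both $B_i \to D$ are non-local, then $\q \subsetneq \p_i$ for $i = 1, 2$, and the strict-henselianity clause of \cref{artin-2.5} (with \cref{formallyreal} excluding the formally real residue field obstruction) gives that $D/\m_D$ is separably closed. If $B_1 \to D$ is local, then $\m \cap A = \p_1 \subseteq \p_2$, so $A \smallsetminus \p_2 \subseteq A \smallsetminus \p_1$ and elements of $A \smallsetminus \p_2$ are units in $B_1$; hence the image of $A_{\p_2}^h$ in $D$, being integral over $A_{\p_2}$ which is contained in $B_1$, is integral over $B_1$, making $D$ integral over $B_1$ and $D/\m_D$ algebraic over $B_1/\m_{B_1}$. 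The main obstacle is choosing the compatible embeddings into $\bar K$ matching the prescribed $\m$, which is where the finite-case reduction and the decomposition of finite algebras over henselian local rings play a critical role.
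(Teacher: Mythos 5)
Your reduction to finite $B_i^{(\alpha)}$ via colimits is a reasonable variant of the paper's simpler observation that $B$ is integral over $A_{\p_1}^h\otimes_A A_{\p_2}^h$ (so one may take $B_i=A_{\p_i}^h$ from the start), but the core of the argument has a genuine gap: you never reduce to the case where $A$ is a normal integral domain, and this is exactly the hypothesis under which \cref{artin-2.5} is stated. The paper handles this by observing that the conclusion passes to factor rings, so one may assume $A$ is a normal domain. Without that reduction, several steps in your argument break down. You embed (quotients of) $B_1$ and $B_2$ into a separable closure $\bar K$ and form the join $[A_{\p_1}^h,A_{\p_2}^h]$; but it is the normality of $A$ that guarantees each component of $A_{\p_1}^h\otimes_A A_{\p_2}^h$ is a \emph{normal domain} (being ind-\'etale over $A$), which is what lets one identify a component with a subring of $\bar K$. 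For a general $A$ (or a non-normal domain quotient), the component $D$ need not be a domain and need not embed into $\bar K$, so the identification of $D$ with a local factor of $[B_1,B_2]$ is not available.

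Two further points. First, you write ``a component $D$ of $B$ corresponds to a maximal ideal $\m\subset B$'': this is circular, since a component being in bijection with a maximal ideal is precisely the assertion that $B$ is quasi-acyclic, which is what you are trying to prove. Second, since $[B_1,B_2]\subset\bar K$ is a domain, it is connected, so your ``decomposes as a finite product of local henselian rings'' yields a single factor and your claim collapses to $D=[B_1,B_2]$; but this equality is exactly what needs justification and, for a general component of a tensor product of integral henselian algebras (possibly with nilpotents), it is false. The correct route is the paper's: reduce to $B_i=A_{\p_i}^h$ and to $A$ a normal domain, at which point each component $D$ \emph{is} a normal domain ind-\'etale over $A$, embeds into $\bar K$, and is generated by the images of $A_{\p_1}^h$ and $A_{\p_2}^h$ (being a quotient of their tensor product), so that \cref{artin-2.5} applies directly.
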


\begin{proof}
Because $B$ is integral over $A_{\p_1}^h\otimes_A A_{\p_2}^h$, we may assume that  $B_i=A_{\p_i}^h$.

     We follow the arguments given in \cite{Art71}:
If $A$ is a factor ring of a ring $A'$ and the statement holds for $A'$, then it holds for $A$. Therefore we may assume that $A$ is a normal domain. Let $K$ be the quotient field of $A$, $\bar K/K$ a separable closure and $\bar{A}$ the integral closure of $A$ in $\bar{K}$. Then every component $D$ of $B$ is the limit of \'{e}tale $A$-algebras, hence a normal domain. Therefore, we can embed $D$ into $\bar{K}$. The induced maps $A_{\p_1}^h\to D$ and $A_{\p_2}^h\to D$  generate $D$ because $D$ is a quotient of $B=  A_{\p_1}^h\otimes_A A_{\p_2}^h$.
Hence we are in the situation of \cref{artin-2.5} and this theorem implies that $D$ is a local henselian ring. Now let $\bar{\p}_1, \bar{\p}_2 \subset \bar{A}$ be the unique extensions of $\p_1$ and $\p_2$ to $\bar{A}$, such that $\bar{\p}_i\bar{A}_{\p_i}\cap A_{\p_i}^h=\p_i A_{\p_i}^h$.
For a subextension $K'/K$ of $\bar{K}/K$ we let $A'$ be the normalization of $A$ in $\bar{K}$ and $\p_i'=\bar{\p}_i\cap A'$.

If, say,  $\bar{\p}_1 \subset\bar{\p}_2$, then $\p_1\subset \p_2$ and we are in case (2). Otherwise there is a finite field extension $K'/K$ such that~$\p_1'$ and~$\p_2'$ are not contained in one another.
Then $D' = [A'^h_{\p_1'},A'^h_{\p_2'}]$ is the integral closure of $D$ in the quotient field of $D'$. By \cref{artin-2.5}, $D'$ is henselian with separably closed residue field $D'/\m_{D'}$. Since the extensions of the residue fields is finite, we conclude that $D/\m_D$ is separably closed, as well.  Here, we have to exclude the possibility that $D/\m_D$ is real closed but not separably closed.  This cannot happen by \cref{formallyreal}.
\end{proof}

We call a ring homomorphism $f: A\to B$ \emph{ind-\'{e}tale} if it is a filtered colimit of \'{e}tale ring homomorphisms $f_i: A \to B_i$. If $A\to B$ is ind-\'{e}tale with $B$ local, henselian, then $B$ is an integral $A_\p^h$-algebra for some prime ideal $\p\subset A$. Hence the following \cref{Artins-thm} is an immediate consequence of \cref{artinslemma}.

\begin{theorem}[Artin]\label{Artins-thm}
Let $A$ be a ring and let $B_1,\ldots, B_n$ be ind-\'{e}tale quasi-acyclic $A$-algebras. Then their tensor product
\[
B= B_1\otimes_A \ldots \otimes_A B_n
\]
is quasi-acyclic. For a maximal ideal $\m\subset B$ one of the following holds
\begin{enumerate}
  \item $B/\m$ is separably closed, or
  \item $B/\m$ is a separable, algebraic extension of $B_i/\m_i$, where  $\m_i\subset B_i$ is a maximal ideal for some $i$, $1\leq i\le n$.
\end{enumerate}
\end{theorem}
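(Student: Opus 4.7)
The plan is to proceed by induction on $n$, using \cref{artinslemma} as the fundamental two-factor building block; the case $n=1$ is immediate from the definitions. For the inductive step, I would set $B' := B_1 \otimes_A \cdots \otimes_A B_{n-1}$ and regard $B$ as $B' \otimes_A B_n$. Since tensor products of étale $A$-algebras are étale, $B'$ is ind-étale over $A$, and is quasi-acyclic by the inductive hypothesis, reducing everything to a two-factor situation with ind-étale, quasi-acyclic factors.

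Concretely, I would fix a maximal ideal $\m \subset B$ and let $D$ be the corresponding local henselian component. The images of $B'$ and $B_n$ in $D$ factor through local henselian, ind-étale components $\bar B' \subset B'$ and $\bar B_n \subset B_n$ (idempotent quotients preserve ind-étaleness). The crucial preparatory step, and the main obstacle, is to identify $\bar B'$ as an integral $A_{\p'}^h$-algebra, where $\p' \subset A$ is the pullback of its maximal ideal, and similarly for $\bar B_n$; this is the form in which \cref{artinslemma} requires its input. The key observation is that $\bar B'$ is a filtered colimit of henselizations $(C_\alpha)_{\m_\alpha}^h$ of local rings of étale $A$-algebras $C_\alpha$, and each such henselization is finite étale (hence integral) over $A_{\p'}^h$.

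With this in place, \cref{artinslemma} applies to $\bar B' \otimes_A \bar B_n$, showing that $D$ is local henselian -- so $B$ is quasi-acyclic -- and yielding the residue-field dichotomy. Either both $\bar B' \to D$ and $\bar B_n \to D$ are non-local, forcing $D/\m_D$ to be separably closed (conclusion (1)), or one of them, say $\bar B_n \to D$ (resp.\ $\bar B' \to D$), is local and integral, so that $D/\m_D$ is algebraic over the corresponding residue field. Separability is automatic here because $D$ is ind-étale over $B_n$ (resp.\ over $B'$), being a component of the base change $B' \otimes_A B_n$. In the $\bar B_n$ case we land directly in conclusion (2) with $i=n$, the pullback of $\m_{\bar B_n}$ being a maximal ideal $\m_n \subset B_n$; in the $\bar B'$ case we feed $\bar B'/\m_{\bar B'}$ into the inductive hypothesis, landing either in conclusion (1) (if that field is separably closed, then so is the algebraic extension $D/\m_D$) or in conclusion (2) for some $i \le n-1$. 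Once the preparatory integrality step is established, the rest is a short bookkeeping argument.
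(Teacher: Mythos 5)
Your proof is correct and matches the paper's (very terse) argument. The paper's entire proof consists of the observation that an ind-étale $A$-algebra which is local and henselian is integral over $A_\p^h$ for $\p$ the contraction of its maximal ideal, followed by the claim that the theorem is then ``an immediate consequence'' of \cref{artinslemma}; you have simply unfolded what ``immediate'' means by running the induction on $n$ explicitly, reducing the component $D$ to a component of $\bar B' \otimes_A \bar B_n$, identifying $\bar B'$ and $\bar B_n$ as integral $A_{\p}^h$-algebras via the same observation, and noting that the separability asserted in case (2) -- which \cref{artinslemma} itself does not state -- comes from $D$ being ind-étale over each $B_i$. All of these steps are correct, and they are exactly the content the paper leaves implicit.
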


\section{Comparison with \v{C}ech cohomology} \label{sec:cech}

In this section we assume that all rings and schemes
lie over a fixed quasi-compact and quasi-separated  base scheme~$S$.

\begin{definition}
A ring homomorphism $f: A\to B$ is called \emph{tame} if it is  \'{e}tale and for every prime ideal $\p\subset A$ and every $S$-valuation $v$ of $k(\p)$ there exists a prime ideal $\q\subset B$ over $\p$ and an $S$-valuation $w$ of $k(\q)$ over $v$ such that $w/v$ is tamely ramified (in particular, $f$ is faithfully flat).
\end{definition}

\begin{remark} A ring homomorphism $A\to B$ is tame if and only if the associated morphism $\Spec(B)\to \Spec(A)$ is a covering in $(\Spec(A)/S)_t$. The composite of tame morphisms is tame.  Tame ring homomorphisms are stable under base change. The tensor product of tame $A$-algebras is a tame $A$-algebra.
\end{remark}

\begin{definition} A ring $A$ is \emph{tamely henselian} if $\Spec(A)$ is connected and every tame ring homomorphism $A\to B$ splits.
\end{definition}

\begin{example}
The tame henselizations of \cref{tame-henselization-def}  are tamely henselian rings.
\end{example}

\begin{remark}\label{tame-henselian-remark}
$A$ is tamely henselian if and only if  $\Spec(A)$ is connected and every tame covering of $\Spec(A)$ splits. This follows since every tame covering of $\Spec(A)$ has a finite subcovering by \cref{finite-cov}.
\end{remark}

\begin{lemma} \label{tamely hens charact} The following are equivalent.
\begin{enumerate}[\rm (i)]
  \item $A$ is tamely henselian
  \item $A$ is local, henselian and there exists an $S$-valuation~$v$ of  the residue field $k=A/\m_A$  such that $k$ does not admit any nontrivial, finite separable field extension $k'/k$ which is tamely ramified at some extension $w$ of $v$ to $k'$.
\end{enumerate}
\end{lemma}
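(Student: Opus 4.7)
The plan is to treat the two implications separately, with the harder work going into (i)$\Rightarrow$(ii). For (ii)$\Rightarrow$(i), I would let $\{f_i:U_i\to\Spec A\}$ be a tame covering and test tameness at the point $(x_0,v)\in\Val_S \Spec A$, where $x_0$ is the closed point and $v$ is the distinguished $S$-valuation from~(ii). This yields some $i$ and $(u_i,v_i)\in \Val_S U_i$ lying above $(x_0,v)$ with $v_i/v$ tamely ramified. Because $u_i$ lies in the \'{e}tale fibre over $x_0$, the residue extension $k(u_i)/k$ is finite separable, and the hypothesis on $v$ then forces $k(u_i)=k$. Since $A$ is henselian, the rational $k$-point $u_i$ of $U_i$ lifts to a section $\Spec A\to U_i$ of $f_i$, so the covering splits; connectedness of $\Spec A$ is automatic from locality.

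For (i)$\Rightarrow$(ii), I first extract that $A$ is local henselian by testing against Nisnevich coverings (which are tame). If $A$ had two distinct maximal ideals $\m_1,\m_2$, the family $\{\Spec A\smallsetminus\{\m_1\},\Spec A\smallsetminus\{\m_2\}\}$ would be a Nisnevich covering neither of whose open immersions admits a section, contradicting~(i); and for any \'{e}tale $Y\to\Spec A$ with a $k$-rational point over $x_0$, the Nisnevich covering $\{Y,\Spec A\smallsetminus\{x_0\}\}$ must split through $Y$, giving henselianness. To produce $v$, I argue by contradiction: suppose that for every $S$-valuation $v$ on $k$ there is a nontrivial finite separable $k'_v/k$ with a tamely ramified extension of $v$. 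Henselianness lifts each $k'_v$ uniquely to a finite \'{e}tale local $A$-algebra $A_v$ with residue field $k'_v$. By \cref{opennness-tame-locus} and the openness of \'{e}tale morphisms of adic spaces~\cite[1.10.12.i)]{Hu96}, the image $\cW_v\subset \Spa(\Spec A,S)$ of the tame locus of $\Spa(A_v,S)\to\Spa(\Spec A,S)$ is open and contains the adic point above $(x_0,v)$. Pulling back along the natural embedding $\Spa(\Spec k,S)\hookrightarrow\Spa(\Spec A,S)$ gives an open cover of $\Spa(\Spec k,S)$, which is spectral and hence quasi-compact by~\cref{spectral}. A finite subcover $\cW_{v_1},\dots,\cW_{v_n}$, together with $V=\Spec A\smallsetminus\{x_0\}$, yields a tame covering $\{V,\Spec A_{v_1},\dots,\Spec A_{v_n}\}$ of $\Spec A$: points of $V$ are covered by the identity, and every $(x_0,v)$ is tamely covered by some $A_{v_i}$. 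Yet no component admits a section---$V$ misses $x_0$, and each $A\to A_{v_i}$ has a nontrivial residue field extension that obstructs any retraction---contradicting~(i).

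The main technical point I expect to require care is identifying the fibre of $\Spa(\Spec A,S)\to\Spec A$ over $x_0$ with $\Spa(\Spec k,S)$ as topological spaces, so that the open sets $\cW_v$ restrict to an honest open cover of a spectral (and thus quasi-compact) space. This is functoriality of $\Spa$ along the closed immersion $\Spec k\hookrightarrow\Spec A$ together with the affinoid description of~\cite{Hu96}, but it is the hinge on which the adic-space input feeds back into the tame-site argument and must be verified carefully before the finiteness argument and the no-section argument combine to give the contradiction.
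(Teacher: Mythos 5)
Your proof is correct. The (ii)$\Rightarrow$(i) direction is essentially the paper's argument, phrased for coverings rather than for tame ring homomorphisms $A\to B$: both find a rational point over $x_0$ with tame extension of $v$, conclude the residue extension is trivial, and lift a section by henselianity. For (i)$\Rightarrow$(ii) you split ``local'' and ``henselian'' into two separate tame-Nisnevich arguments, whereas the paper handles local-henselian in one stroke via the henselization construction; this is a cosmetic difference (your henselianity step still needs the standard shrinking of $Y$ so that the designated rational point is the unique prime over $\m_A$, but that is routine and the paper also elides it). The genuine divergence is in producing the valuation $v$. The paper constructs a tame covering $\coprod_{v}\Spec k^v\to\Spec k$ of the residue field, invokes \cref{finite-cov} to get a finite tame $k$-algebra $R$ without a section, and then lifts $k\to R$ to a finite \'{e}tale $A\to B$. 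You instead build the $A_v$ over $A$ immediately and re-run the quasi-compactness mechanism of \cref{finite-cov}'s proof by hand inside $\Spa(\Spec A,S)$, restricting the opens $\cW_v$ to the fibre over $x_0$. This is sound (the identification of that fibre with $\Spa(\Spec k,S)$ is indeed the technical hinge you flag, and is handled by functoriality of $\Spa$ for the closed immersion $\Spec k\hookrightarrow\Spec A$), but the paper's route is cleaner precisely because it avoids this fibre identification: it does the finiteness purely over $\Spec k$ and only then lifts, so it can simply cite the already-established \cref{finite-cov} rather than re-proving its quasi-compactness core. In short: same underlying inputs (openness of the tame locus, spectral-space compactness, henselian lifting), but your organization is slightly more self-contained while the paper's is slightly more modular.
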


\begin{proof} (ii)$\Rightarrow$(i):  Let $A\to B$ be a tame homomorphism. Since $A$ is local henselian, we have a decomposition
\[
B= B_0 \times B_1 \times \cdots \times B_n,
\]
such that the maximal ideal $\m$ of $A$ is not in the image of  $\Spec B_0 \to \Spec A$, and $A\to B_i$ is finite \'{e}tale with $B_i$ local henselian for $i=1,\ldots,n$.  Since $A\to B$ is tame, we can find an index $i$ such that there is a valuation $w$ of $B_i/\m_i$ lying over $v$ such that $w/v$ is tamely ramified. By assumption $A/\m\to B_i/\m_i$ is an isomorphism, hence  $A\to B_i$ splits.

\noindent
(i)$\Rightarrow$(ii):
Let $A$ be tamely henselian. Assume that $A$ is not local henselian  and let $\m\subset A$ be a maximal ideal. Then (by the construction of the henselization) there exists an \'{e}tale $A$-Algebra $B$ such that there is exactly one maximal ideal $\n\subset B$ over $\m$, $A/\m\to B/\n$ is a field isomorphism and $A\to B$ does not split.
Then  $(\Spec A \smallsetminus \{\m\}) \cup \Spec B \to \Spec A$ is a tame covering of $\Spec (A)$ which does not split.
Hence $A$ is not tamely henselian by \cref{tame-henselian-remark}. We conclude that $A$ is local henselian.

Assume now that for every $S$-valuation $v$ of $k=A/\m$ there is a finite, separable field extension $k^v/k$ which is tamely ramified at some extension $w$ of $v$ to $k^v$. Then
\[
\coprod_{v\in \Val_S(k)} \Spec k^v \lang \Spec k
\]
is a tame covering.  By \cref{finite-cov}, there exists a finite subcovering. Hence we find a finite tame $k$-algebra $k\to R$ which does not split. Let $A\to B$ be the finite \'{e}tale morphism lifting $k\to R$. Then $(\Spec A \smallsetminus \{\m\}) \cup \Spec B \to \Spec A$ is a tame covering of $\Spec A$ which does not split. Contradiction.
\end{proof}

\begin{corollary}
A finite algebra over a tamely henselian ring is the finite product of tamely henselian rings.
\end{corollary}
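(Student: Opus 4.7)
The plan is to decompose $C$ into its local factors and then verify the tame-henselian property of each factor via criterion (ii) of \cref{tamely hens charact}. First I would invoke \cref{tamely hens charact} to see that $A$ is local henselian. Then, by the standard fact (already used in the paper to handle finite direct images) that a finite algebra over a local henselian ring is a finite product of local henselian rings, we obtain $C = \prod_{i=1}^n C_i$ with each $C_i$ finite local henselian over $A$. It therefore suffices to show each $C_i$ is tamely henselian. Fix an $S$-valuation $v$ on $k := A/\m_A$ witnessing (ii) for $A$, and pick any extension $v_i$ of $v$ to $k_i := C_i/\m_i$ (a finite, possibly inseparable, extension of $k$); I claim $v_i$ is a witness for $C_i$.

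Assume for contradiction that $\ell/k_i$ is a nontrivial finite separable extension tamely ramified at some prolongation $w/v_i$. To deal with the possible inseparability of $k_i/k$, let $k_i^s$ be the maximal separable subextension of $k_i/k$, so that $\Spec k_i \to \Spec k_i^s$ is a universal homeomorphism of $S$-schemes. By \cref{topinvariance} this gives an equivalence of the associated tame sites, and the underlying equivalence of \'{e}tale sites produces a nontrivial finite separable extension $\ell^{(s)}/k_i^s$ with $\ell^{(s)} \otimes_{k_i^s} k_i \cong \ell$. Because purely inseparable base change of a valued field leaves unchanged the separable degrees of the strict henselization extensions that measure tameness, $\ell^{(s)}/k_i^s$ is tamely ramified at $w|_{\ell^{(s)}}$ over $v_i^s := v_i|_{k_i^s}$.

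The remaining step is a Galois-theoretic computation over the separable extension $k_i^s/k$. Fix a separable closure $\bar k$ of $k$ containing $k_i^s$ and a prolongation $\bar v$ of $v$ with $\bar v|_{k_i^s} = v_i^s$. The hypothesis on $(k,v)$ translates into $R_{\bar v}(\bar k / k) = \Gal(\bar k / k)$, i.e., the wild ramification group exhausts the absolute Galois group of $k$ at $\bar v$. Since wild ramification is an intrinsic invariant of the valuation $\bar v$, one has $R_{\bar v}(\bar k / k_i^s) = R_{\bar v}(\bar k / k) \cap \Gal(\bar k / k_i^s)$, and combined with $\Gal(\bar k / k_i^s) \subset \Gal(\bar k / k)$ this yields $R_{\bar v}(\bar k / k_i^s) = \Gal(\bar k / k_i^s)$. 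Hence $k_i^s$ admits no nontrivial finite separable tame extension at $v_i^s$, contradicting the existence of $\ell^{(s)}$.

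The main obstacle I anticipate is the inseparable reduction in the second paragraph: one must verify carefully that the equivalence of tame sites from \cref{topinvariance} transports tameness at a \emph{specific} prolongation $w/v_i$, rather than merely global tameness of coverings. This hinges on the compatibility of strict henselizations with purely inseparable base change, that is, on purely inseparable extensions of valued fields being ``invisible'' to the separable---and hence to the tame---structure of ramification.
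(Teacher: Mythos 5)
Your proof is correct and follows essentially the same two-step strategy as the paper: decompose the finite algebra into local henselian factors, then check that the residue-field closure property of criterion (ii) of \cref{tamely hens charact} is inherited by the finite residue extensions $k_i/k$. The paper asserts this residue-field fact in a single sentence, whereas you spell out the verification: the reduction to the separable part $k_i^s$ via invariance of ramification groups under purely inseparable base change, and the ramification-group computation $R_{\bar v}(\bar k/k_i^s)=R_{\bar v}(\bar k/k)\cap \Gal(\bar k/k_i^s)=\Gal(\bar k/k_i^s)$.
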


\begin{proof}
A finite algebra over a local henselian ring is the finite product of local henselian rings. Moreover, the property of the residue field that it is closed under extensions which are tamely ramified with respect to a fixed valuation extends to finite field extensions.
\end{proof}

\begin{definition} A scheme $X$ is \emph{\'{e}tale} (resp.\ \emph{tamely})  \emph{acyclic} if every \'{e}tale resp.\ tame covering $X'\to X$ splits.
A ring $A$ is \'{e}tale resp. tamely acyclic if the scheme $\Spec A$ is \'{e}tale (resp.\ tamely) acyclic.
\end{definition}

\begin{proposition}\label{acyclic-comp} A ring
$A$ is \'{e}tale (resp.\ tamely) acyclic if and only if every component of $A$ is a strictly (resp.\ tamely) henselian ring.
\end{proposition}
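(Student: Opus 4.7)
The plan is to argue both the tame and \'{e}tale cases in parallel (the \'{e}tale case uses the classical analogues of \cref{finite-cov,inverselimit}). The underlying idea is that each connected component $\Spec\bar A$ of $\Spec A$ is a filtered intersection of clopens defined by idempotents, so descent along the colimit $\bar A=\colim_e A/eA$ combined with quasi-compactness of $\Spec A$ reduces the problem to bookkeeping with idempotents.

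For the direction ``every component tamely henselian $\Rightarrow$ $A$ tamely acyclic,'' I would take a tame covering $\pi\colon X'\to X=\Spec A$ and first reduce, by refining $X'$ with its affine opens and applying \cref{finite-cov}, to the case $X'=\Spec B$ with $B$ \'{e}tale and of finite presentation over $A$. For each component $C=\Spec\bar A$ of $X$, the base change $\pi_C$ splits by tame henselianness, and the resulting section corresponds to an $A$-algebra map $B\to\bar A$. Since $B$ is finitely presented and $\bar A$ is the filtered colimit of the $A/eA$ over the directed set of idempotents $e\in A$ with $C\subseteq V(eA)$, this map factors through some $B\to A/e_CA$, yielding a section of $\pi$ over the clopen $U_C:=V(e_CA)\supseteq C$. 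As $\{U_C\}$ covers the quasi-compact space $X$, a finite subcover $U_{C_1},\dots,U_{C_n}$, disjointified as $V_i:=U_{C_i}\smallsetminus\bigcup_{j<i}U_{C_j}$, partitions $X$ into clopens each carrying a section of $\pi$; these glue to the desired global section.

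For the converse direction, I would let $\bar A$ be a component and take a tame covering $Y\to\Spec\bar A$. After the same affine-plus-\cref{finite-cov} reduction, \cref{inverselimit} applied to the filtered affine system $\{\Spec(A/eA)\}_e$ descends $Y$ to a tame covering $Y_e\to\Spec(A/eA)$ for some idempotent $e$. Using the clopen decomposition $X=\Spec(A/eA)\sqcup\Spec(A/(1-e)A)$, I enlarge $Y_e$ to the tame covering $Y_e\sqcup\Spec(A/(1-e)A)\to X$, which splits by tame acyclicity of $A$; restricting the splitting to the clopen $\Spec(A/eA)$ necessarily lands in $Y_e$, and base-changing to $\Spec\bar A$ gives a section of $Y\to\Spec\bar A$. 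Since $\Spec\bar A$ is connected by construction, this is precisely what it means for $\bar A$ to be tamely henselian.

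The only genuine technical inputs are \cref{finite-cov,inverselimit}; granted these, the remaining argument is combinatorial bookkeeping with idempotents. The sole point requiring a small check is that an affine refinement of a tame covering is again tame, but this is immediate from the definition since tameness at a given valuation is witnessed by a single index in the family. Consequently I do not anticipate any serious obstacle.
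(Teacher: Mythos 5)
Your proof is correct and is essentially the argument of Artin's Proposition~3.2, which the paper simply cites for the \'etale case and then invokes ``word by word'' for the tame case. You have spelled out that standard idempotent/limit bookkeeping explicitly and correctly identified the two tame-specific inputs needed to make the transfer work, namely \cref{finite-cov} for finite subcoverings and \cref{inverselimit} for descending a covering from the filtered limit $\Spec\bar A=\lim_e\Spec(A/eA)$ to a finite level.
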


\begin{proof}
The assertion in the \'{e}tale case is given in \cite[Proposition 3.2]{Art71}. The proof in the tame case is word by word the same.
\end{proof}

From  \cref{Artins-thm}, we obtain

\begin{theorem}\label{product of tamely acyclic}
Let $A$ be a ring and let $B_1,\ldots, B_n$ be ind-\'{e}tale $A$-algebras  which are \'{e}tale (resp.\ tamely) acyclic . Then their tensor product
\[
B= B_1\otimes_A \ldots \otimes_A B_n
\]
is \'{e}tale (resp.\ tamely) acyclic.
\end{theorem}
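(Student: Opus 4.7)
The plan is to combine Theorem~\ref{Artins-thm} with Proposition~\ref{acyclic-comp}. By the latter, it suffices to verify that every component $D$ of $B$ is strictly henselian (in the \'{e}tale case), resp.\ tamely henselian (in the tame case). Every \'{e}tale or tamely acyclic ring is in particular quasi-acyclic, so Theorem~\ref{Artins-thm} is available: it gives that $B$ is quasi-acyclic, hence every component $D$ is local henselian, and that for its maximal ideal $\m_D$ we have either (1) $D/\m_D$ is separably closed, or (2) $D/\m_D$ is a separable algebraic extension of $B_i/\m_i$ for some maximal ideal $\m_i$ of some~$B_i$. Case~(1) immediately yields $D$ strictly henselian, hence tamely henselian as well, since condition~(ii) of Lemma~\ref{tamely hens charact} is vacuous when the residue field is separably closed.

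In case~(2) of the \'{e}tale assertion, Proposition~\ref{acyclic-comp} together with the hypothesis forces $B_i/\m_i$ to be separably closed, hence so is its algebraic extension $D/\m_D$, and $D$ is strictly henselian.

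For the tame assertion in case~(2), I would proceed as follows. The connected component of $\Spec B_i$ containing $\m_i$ is tamely henselian, so by Lemma~\ref{tamely hens charact}(ii) there exists an $S$-valuation $v$ on $k_i := B_i/\m_i$ such that no nontrivial finite separable extension of $k_i$ is tame at any prolongation of $v$. Fix a separable closure $\bar k_i/k_i$ containing $D/\m_D$, extend $v$ to $\bar v$ on $\bar k_i$, and set $v' := \bar v|_{D/\m_D}$; the composition $\Spec \O_{v'} \to \Spec \O_v \to S$ shows that $v'$ is an $S$-valuation. Galois-theoretically, property~(ii) for $(k_i,v)$ is equivalent to the equality $R_{\bar v} = G_{k_i}$, where $R_{\bar v} \subseteq G_{k_i} := \Gal(\bar k_i/k_i)$ is the wild inertia subgroup of $\bar v$. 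Since the wild inertia is characterized as the unique pro-$p$-Sylow of the inertia (with $p$ the residue characteristic of $\bar v$) and is therefore intrinsic to $\bar v$, the analogous subgroup for the extension $\bar k_i/(D/\m_D)$ is $R_{\bar v} \cap G_{D/\m_D} = G_{k_i} \cap G_{D/\m_D} = G_{D/\m_D}$. Applying the reformulation in reverse, $(D/\m_D, v')$ also satisfies property~(ii), and so $D$ is tamely henselian. Proposition~\ref{acyclic-comp} then concludes.

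The main subtlety is the Galois-theoretic translation in the tame case: one must carefully verify that the wild inertia subgroup behaves well under restriction to the Galois group of an algebraic subextension, and that the restricted valuation $v'$ is still an $S$-valuation. Once these points are settled, tame henselianity propagates cleanly from the factors $B_i$ to the tensor product.
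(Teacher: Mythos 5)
Your proof is correct and follows the same route as the paper's: apply \cref{Artins-thm} to get quasi-acyclicity and the dichotomy on components, then identify the residue fields via \cref{acyclic-comp}. The paper simply states that case~(2) ``follows from the assumption on the $B_i$''; you have supplied the missing Galois-theoretic details (the equivalence of tame-closedness with $R_{\bar v}=G_{k_i}$, the compatibility of wild inertia with passage to the subgroup $G_{D/\m_D}$, and the fact that $v'$ remains an $S$-valuation), all of which are correct and useful to have spelled out.
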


\begin{proof}
By \cref{Artins-thm}, $B$ is quasi-acyclic. It remains to show that every component is strictly (resp.\ tamely) henselian. We already know that the components are local henselian. Hence it suffices to show that the residue fields are separably closed  (resp.\ tamely closed with respect to some $S$-valuation). This is clear in case (1) of \cref{Artins-thm}, where the residue field is separably closed and follows in case (2) from the assumption on the $B_i$.
\end{proof}

\begin{definition}
We call a ring homomorphism $f: A\to B$ \emph{ind-tame} if it is a filtered colimit of tame homomorphisms $f_i: A \to B_i$.
\end{definition}
Since an inverse limit of finite nonempty sets is nonempty, $f$ is ind-tame if and only if it is  ind-\'{e}tale and for every prime ideal $\p\subset A$ and every $S$-valuation $v$ of $k(\p)$ there exists a prime ideal $\q\subset B$ over $\p$ and an $S$-valuation $w$ of $k(\q)$ over $v$ such that $w/v$ is tamely ramified. In particular, $f$ is faithfully flat.

\begin{proposition}\label{acyclic tame}
For every ring $A$ there exists a faithfully flat ind-\'{e}tale (resp.\ ind-tame)  $A$-algebra $ \widetilde A$ which is \'{e}tale (resp.\ tamely) acyclic.
\end{proposition}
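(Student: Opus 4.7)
The plan is to construct $\widetilde A$ by a countable iteration of a one-step ``splitting'' operation. I describe the tame case; the étale case is formally identical, with ``tame'' replaced by ``étale'' throughout.

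\emph{One-step splitting.} First I would define, for each ring $A$, an ind-tame faithfully flat $A$-algebra $T(A)$ in which every tame covering of $A$ becomes split. Choose a set $\mathcal{T}_A$ of representatives of isomorphism classes of tame ring homomorphisms $A\to B$; such a set exists because every étale $A$-algebra is of finite presentation, so each $B$ has cardinality bounded by $\max(|A|,\aleph_0)$. Set
\[
T(A) := \bigotimes_{[A\to B]\in \mathcal{T}_A} B,
\]
interpreted as the filtered colimit of finite subtensor products. Since finite tensor products of tame $A$-algebras are tame (Remark after the definition of tame in \cref{sec:cech}), each finite subtensor product is tame, so $T(A)$ is ind-tame over $A$, and in particular faithfully flat. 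For each tame cover $A\to B$ in $\mathcal{T}_A$, the inclusion of $B$ as a factor defines an $A$-algebra map $B\to T(A)$; after base change to $T(A)$ this provides a section of $T(A)\otimes_A B\to T(A)$. Hence every tame cover of $A$ splits over $T(A)$.

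\emph{Iteration and limit.} Set $A_0:=A$, $A_{n+1}:=T(A_n)$ for $n\ge 0$, and $\widetilde A:=\colim_n A_n$. A routine limit argument (descending finite-presentation étale algebras to a finite level) shows that the composition and filtered colimit of ind-tame morphisms are again ind-tame; therefore $\widetilde A$ is ind-tame over $A$, and so faithfully flat. To verify that $\widetilde A$ is tamely acyclic, let $\widetilde A\to C$ be an arbitrary tame covering. Since $\Spec\widetilde A=\lim_n \Spec A_n$ is a cofiltered limit, with affine transition maps, of affine (hence quasi-compact and quasi-separated) $S$-schemes, \cref{inverselimit} produces an index $n$ and a tame covering $A_n\to C_n$ whose base change to $\widetilde A$ is $\widetilde A\to C$. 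By the one-step property, $A_n\to C_n$ admits a section after base change to $A_{n+1}$, and a further base change to $\widetilde A$ yields the desired section of $\widetilde A\to C$.

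\emph{Main obstacle.} The essential input is the continuity statement \cref{inverselimit}: it is precisely this that allows a countable iteration (rather than a genuinely transfinite one) to suffice, since every tame covering of $\widetilde A$ already descends to a finite stage. The remaining points --- the set-theoretic issue of parametrising tame coverings of $A$ by a set (handled by finite presentation and the cardinality bound), and the closure of ind-tame morphisms under composition and filtered colimits --- are routine and I would not dwell on them.
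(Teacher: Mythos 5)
Your proof is correct and follows essentially the same Bhatt--Scholze-style argument that the paper uses: the same one-step tensor construction $T(A)$, the same countable tower, and the same colimit. The only (welcome) difference is that you make explicit the appeal to \cref{inverselimit} to descend a tame covering of $\widetilde A$ to a finite stage of the tower, a step the paper compresses into ``as tame morphisms of rings are finitely presented, one checks that any tame $\widetilde A$-algebra has a section''; spelling out this dependence is a good instinct, since the finite-presentation descent only gives you an \emph{étale} cover at a finite level, and \cref{inverselimit} is genuinely needed to ensure that the descended cover is \emph{tame}.
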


\begin{proof}
We restrict to the tame case and follow the method of Bhatt-Scholze, \cite[Proof of Lemma 2.2.7]{BS15}. Let $I$ be the set of isomorphism classes of tame $A$-algebras.
 For each $i \in I$ pick a representative $A\to B_i$ and set $A_1$  to be their tensor product, i.e.,
 \[
  A_1:= \colim_{J \subset I~\text{finite}} \bigotimes\limits_{j \in J} B_j,
 \]
where the tensor product is taken over $A$ and the (filtered) colimit is indexed by the poset of finite subsets of $I$. There is an obvious ind-tame map $A\to A_1$, and it is clear
from the construction that any tame $A$-algebra $B$ admits a map to $A_1$,
i.e., the map $A \to B$ splits after base change to $A_1$. Iterating the construction with $A_1$ replacing $A$ and proceeding inductively defines a tower $A\to A_1 \to A_2 \to \cdots $ of  $A$-algebras with ind-tame transition maps. Set $\widetilde A = \colim A_n$. As tame morphisms of rings are finitely presented, one checks that any tame $\widetilde A$-algebra has a section, so $\widetilde A$ is tamely acyclic.
\end{proof}

We will make use of the following topological observation:

\begin{proposition}\label{topology}
Let $X$ be a spectral space such that every connected component of $X$ contains a unique closed point.  Then the composition
\[
X^c\hookrightarrow X\twoheadrightarrow \pi_0(X)
\]
from the subspace of closed points of $X$ to the quotient space of connected components is a homeomorphism of profinite sets.
\end{proposition}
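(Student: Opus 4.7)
The plan is to show that the natural map $\phi: X^c \to \pi_0(X)$ is a continuous bijection from a quasi-compact space to a Hausdorff space, which forces it to be a homeomorphism; combined with the standard fact that $\pi_0$ of a spectral space is profinite (being the Stone dual of the Boolean algebra of clopen subsets of $X$), this gives the statement.

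Continuity of $\phi$ is immediate, being the composition of the inclusion $X^c \hookrightarrow X$ with the quotient $X \twoheadrightarrow \pi_0(X)$. For bijectivity I would observe that each connected component of $X$ is a nonempty closed subset, hence itself a spectral space, and therefore contains a point that is closed in it and, being a closed point of a closed subspace, closed in $X$ as well; uniqueness is part of the hypothesis.

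The crucial step is the quasi-compactness of the subspace $X^c$. Given an open cover $\{V_i \cap X^c\}_{i \in I}$ with $V_i \subseteq X$ open, I claim that the family $\{V_i\}$ already covers $X$, after which quasi-compactness of $X$ produces a finite subcover. To justify the claim, fix $x \in X$. The closure $\overline{\{x\}}$ is connected and closed in $X$, so it lies in the connected component $X_\alpha$ of $x$ and, being a nonempty closed subspace of $X$, contains some point $y$ closed in $X$; by hypothesis $y$ must coincide with the unique closed point $y_\alpha$ of $X_\alpha$, so $y_\alpha \in \overline{\{x\}}$. Choosing $i$ with $y_\alpha \in V_i$, the relation $y_\alpha \in \overline{\{x\}}$ means that every open neighborhood of $y_\alpha$ contains $x$, so $x \in V_i$. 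This passage from covers of $X^c$ to covers of $X$ is the main obstacle — it uses the uniqueness hypothesis essentially and fails for instance for $\Spec \Z$ — but once it is in hand the proof concludes formally: a continuous bijection from a quasi-compact space to the Hausdorff space $\pi_0(X)$ is a closed map, and hence a homeomorphism.
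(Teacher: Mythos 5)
Your argument is correct and follows the same overall strategy as the paper: exhibit $X^c \to \pi_0(X)$ as a continuous bijection, note $\pi_0(X)$ is profinite (hence Hausdorff), show $X^c$ is quasi-compact, and conclude via the standard fact that a continuous bijection from a quasi-compact space to a Hausdorff space is a homeomorphism. The one genuine difference is in how quasi-compactness of $X^c$ is established: the paper simply observes that $X^c$ is homeomorphic to $\Specm(R)$ for some ring $R$ and invokes the standard fact that maximal spectra are quasi-compact, whereas you give a self-contained direct argument by lifting open covers of $X^c$ to open covers of $X$. Your route is more elementary and makes the mechanism transparent at the cost of a few extra lines; the paper's is shorter but outsources the work to a cited fact.

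One side remark of yours is inaccurate, though it does not affect the validity of the proof. You claim the passage from covers of $X^c$ to covers of $X$ ``uses the uniqueness hypothesis essentially and fails for instance for $\Spec\Z$.'' In fact this lifting holds for every spectral space: any $x$ specializes to \emph{some} closed point $y$ of $X$ (any nonempty closed subset of a spectral space, in particular $\overline{\{x\}}$, contains a closed point), and any open $V_i$ containing $y$ must then contain $x$. The uniqueness hypothesis is not needed there, and $\Spec\Z$ is not a counterexample (every nonempty open of $\Spec\Z$ contains the generic point, so any cover of $\Specm\Z$ by opens covers $\Spec\Z$). The uniqueness hypothesis enters only to make $X^c \to \pi_0(X)$ injective; quasi-compactness of $X^c$ is automatic. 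Your proof still goes through because invoking the unique closed point $y_\alpha$ where an arbitrary closed point would have sufficed is harmless.
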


\begin{proof}  By \cite[TAG 0906]{stacks-project}, $\pi_0(X)$ is profinite. By assumption, $X^c \to \pi_0(X)$ is bijective (and continuous). We conclude that $X^c$ is Hausdorff. Moreover, being homeomorphic to $\Specm (R)$ for some commutative ring $R$, $X^c$ is quasi-compact. Therefore $X^c$ is compact and the continuous bijection is a homeomorphism.
\end{proof}

By an affine, pro-\'{e}tale $X$-scheme, we mean an inverse limit of affine \'{e}tale $X$-schemes. This is more restrictive than `pro-\'{e}tale' in the sense of \cite{BS15}.

\begin{proposition} \label{fibre product acylic} Let $X$ be a  scheme having the property that every finite subset of\/ $X$ is contained in an affine open.
 Suppose we are given affine pro-\'{e}tale $X$-schemes $U_i=\Spec A_i$, $i=1,\ldots,n$, and let
\[
U=U_1 \times_X \cdots \times_X U_n
\]
be their fibre product.
\begin{enumerate}[\rm (i)]
  \item If all $A_i$ are \'{e}tale acyclic, then $U$ is affine, $U=\Spec A$ with $A$ \'{e}tale acyclic.
  \item Let $X$ be an $S$-scheme. If all $A_i$ are tamely acyclic, then $U$ is affine, $U=\Spec A$ with $A$ tamely acyclic.
\end{enumerate}
\end{proposition}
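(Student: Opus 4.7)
The strategy is to prove (i) and (ii) uniformly; write ``acyclic'' for \'{e}tale acyclic in case (i) and tamely acyclic in case (ii). The main input is \cref{product of tamely acyclic}, which handles tensor products of ind-\'{e}tale acyclic algebras over a common base. The second essential ingredient is the hypothesis on $X$, which is used to cover a finite set of relevant points of $X$ by a single affine open.

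First I would show that $U=U_1\times_X\cdots\times_X U_n$ is affine. The hypothesis that every finite subset of $X$ lies in an affine open is exactly what is needed to guarantee that a finite fibre product over $X$ of affine schemes mapping to $X$ is again affine: for each point of $U$ the images of its projections form a finite subset of $X$ contained in a common affine open, and this local affine structure assembles --- using that $X$ is quasi-compact and quasi-separated --- into a global affine structure on $U$. Write $U=\Spec A$.

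Next I would extract the component structure of $A$. By \cref{acyclic-comp}, each component $\bar A_i$ of $A_i$ has $V_i=\Spec\bar A_i$ strictly (resp.\ tamely) henselian local. Any connected component of $U$ is contained in some
\[
V(\vec V)\;=\;V_1\times_X\cdots\times_X V_n,
\]
where $V_i$ ranges over components of $U_i$. Let $x_i\in X$ be the image of the closed point of $V_i$; by hypothesis some affine open $W=\Spec R\subset X$ contains $\{x_1,\ldots,x_n\}$, and since each $V_i$ is local, $V_i\to X$ factors uniquely through $W$. Therefore
\[
V(\vec V)\;=\;\Spec\bigl(\bar A_1\otimes_R\cdots\otimes_R\bar A_n\bigr).
\]
Since the projection $A_i\twoheadrightarrow\bar A_i$ is ind-\'{e}tale --- it is a filtered colimit of localizations at idempotents --- each $\bar A_i$ is an ind-\'{e}tale acyclic $R$-algebra. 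Then \cref{product of tamely acyclic} yields that the tensor product is acyclic, so every component of $V(\vec V)$, and hence every connected component of $U$, is strictly (resp.\ tamely) henselian local. The converse direction of \cref{acyclic-comp} then gives that $A$ is \'{e}tale (resp.\ tamely) acyclic.

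The hard part is the affineness of $U$ in the first step: the hypothesis on $X$ is weaker than separatedness, and one has to verify carefully that it still suffices to glue the local pieces into a single affine scheme (and not merely into a scheme that is affine over $X$). Once affineness is in hand, the remaining analysis is the clean componentwise application of \cref{product of tamely acyclic} sketched above.
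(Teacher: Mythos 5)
Your overall strategy matches the paper's: reduce to a situation where all the schemes in play map to a single affine open $W=\Spec R$ of $X$ (using the hypothesis that finite subsets lie in affine opens, together with the fact that a local scheme whose closed point lands in an open already maps into that open), and then invoke \cref{product of tamely acyclic} over $R$. The two places where your writeup departs from the paper's proof are worth flagging.

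First, the affineness of $U$. Your argument --- ``this local affine structure assembles, using that $X$ is quasi-compact and quasi-separated'' --- is not a proof and also appeals to a false hypothesis: nothing forces $X$ to be quasi-compact here (an infinite disjoint union of affines satisfies the finite-subset-in-affine condition but is not quasi-compact). The correct ingredient is \cref{separated-rem}: the hypothesis implies $X$ is separated, so $U=U_1\times_X\cdots\times_XU_n$ is a closed subscheme of the affine scheme $U_1\times_{\Z}\cdots\times_{\Z}U_n$ and hence affine. Alternatively, note that the paper never proves affineness separately at all: it obtains $U$ as a finite disjoint union of affine clopen pieces, so affineness falls out for free.

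Second, and more substantively, you decompose $U_i$ into its \emph{connected components} $V_i=\Spec\bar A_i$, whereas the paper uses \emph{clopen} neighborhoods $V_i$ (pulled back from $\pi_0(U_i)$ via \cref{topology}). This is a genuine difference, and it creates a gap you do not address: a connected component $C$ of $U$ is contained in $V(\vec V)=\Spec(\bar A_1\otimes_R\cdots\otimes_R\bar A_n)$, but to conclude that the component ring of $A=\O(U)$ at $C$ is strictly/tamely henselian you need to know it agrees with the component ring of $\bar A_1\otimes_R\cdots\otimes_R\bar A_n$ at $C$. Since $V(\vec V)$ is only an intersection of clopens of $U$, not itself clopen, this identification requires an argument (e.g.\ that the surjection between the two component rings is a homeomorphism on $\Spec$ with kernel generated by idempotents, hence zero). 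The paper sidesteps this entirely: a clopen $V_i\subset U_i$ corresponds to a direct factor of $A_i$, so $V(\vec V)$ is clopen in $U$ and its components are literally components of $A$. This is the reason the paper works with clopens rather than connected components; your route is salvageable but needs the extra verification, which is exactly the sort of detail that should not be glossed over.
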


\begin{remark} \label{separated-rem} A  scheme having the property that every finite subset is contained in an affine open is separated (\cite[TAG 01KP]{stacks-project}).
\end{remark}

\begin{proof}[Proof of \cref{fibre product acylic}]
If there is an affine Zariski-open subscheme $V\subset X$ such that all $U_i$ map to~$V$, the result follows from \cref{product of tamely acyclic}.

In the general case, let $p_i\in U_i$, $i=1,\ldots,n$, be closed points. By assumption, there is an affine, Zariski-open subscheme $V=\Spec B \subset X$ containing the images of $p_1,\ldots,p_n$. Since $U_i$ is quasi-acyclic, every connected component of $U_i$ has a unique closed point and it maps to $V$ if and only if its closed point maps to $V$. By \cref{topology}, $\varphi_i: \Specm A_i \to  \pi_0(U_i)$ is a homeomorphism of profinite spaces. Therefore, we find a closed and open subset $W_i \subset \Specm A_i$ which contains $p_i$ and maps to $V$. Then also the preimage $V_i\subset U_i$ of $\varphi_i(W_i)\subset \pi_0(U_i)$ under $U_i\to \pi_0(U_i)$ maps to $V$. Hence $V_i$ is a closed and open subset of $U_i$ containing $p_i$ and mapping to $V$. Being closed and open in $U_i$, $V_i$ is the spectrum of an ind-\'{e}tale, \'{e}tale (resp.\ tamely) acyclic $B$-algebra. By the first part of the proof, $V(p_1,\ldots,p_n):=V_1\times_X\cdots \times_X V_n$ is a closed and open subscheme of $U$, affine, ind-\'{e}tale and \'{e}tale (resp.\ tamely) acyclic. Varying the points $p_1,\ldots, p_n$, the $V(p_1,\ldots,p_n)$ cover $U$. Since $U$ is quasi-compact, we find a finite subcovering. Replacing the $V(p_1,\ldots,p_n)$ by closed and open subschemes, which are ind-\'{e}tale and \'{e}tale (resp.\ tamely) acyclic, we may assume that the finite union is disjoint. Hence $U$ is affine, ind-\'{e}tale and \'{e}tale (resp.\ tamely) acyclic.
\end{proof}

Next we generalize Artin's theorem on the comparison between \'{e}tale \v{C}ech and sheaf cohomology to the tame case. We also obtain a slightly sharper statement for the \'{e}tale case by weakening the Noetherian assumption to quasi-compactness.

\begin{theorem}[Comparison with \v{C}ech cohomology] \label{cechcompare} Let $X$ be a quasi-compact scheme having the property that every finite subset of $X$ is contained in an affine open, $S$ a quasi-compact and quasi-separated scheme and $X\to S$ a morphism. Then, for every presheaf $P$ of abelian groups on $X_\et$ with \'{e}tale resp.\ tame sheafification $P^{\# \et}$ resp.\ $P^{\#t}$, the natural maps
\[
\check{H}^n_\et (X,P) \to H^n_\et(X, P^{\#\et}) \text{ and }\check{H}^n_t(X/S,P) \to H^n_t(X/S, P^{\# t})
\]
are isomorphisms for all $n\ge 0$.
\end{theorem}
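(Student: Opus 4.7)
The plan is to adapt M.~Artin's strategy (\cite{Art71}) by exhibiting a cofinal system of coverings whose iterated fibre products are tamely (resp.\ \'{e}tale) acyclic, then degenerating a Cartan-Leray spectral sequence and identifying the \v{C}ech complexes of a presheaf and its sheafification on such covers. The new ingredients are the existence of ind-tame covers by tamely acyclic rings (\cref{acyclic tame}) and the stability of tame acyclicity under fibre products (\cref{fibre product acylic}). The argument for the \'{e}tale case is word-by-word identical, so I focus on the tame site.

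First I produce the acyclic refinements. Given any tame covering of $X$, \cref{finite-cov} reduces to a finite subcovering $(U_i \to X)_{i=1}^n$. By \cref{acyclic tame}, each $U_i$ admits a faithfully flat ind-tame cover $\tilde U_i \to U_i$ with $\tilde U_i$ tamely acyclic. The hypothesis on $X$ lets \cref{fibre product acylic} apply: for $\tilde U := \coprod_i \tilde U_i$, every iterated fibre power $\tilde U\times_X\cdots\times_X \tilde U$ is a finite disjoint union of affine, pro-\'{e}tale, tamely acyclic schemes. By \cref{inverselimit}, I may write $\tilde U \to X$ as a cofiltered limit of finitely presented tame covers $U_\lambda \to X$.

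Next I extract two consequences of tame acyclicity of a scheme $V$. (i) $H^n_t(V/S,F)=0$ for every tame sheaf $F$ and $n>0$, because every tame cover of $V$ splits, making $\Gamma(V,-)$ exact on tame sheaves. (ii) For any presheaf $P$, the map $P(V) \to P^{\#t}(V)$ is an isomorphism, because the identity cover is cofinal among tame covers of $V$, collapsing the colimits defining $P^{\#t}=(P^+)^+$. Applying (i) to the Cartan-Leray spectral sequence
\[
E_2^{p,q}(\lambda) = \check{H}^p(U_\lambda, \mathcal{H}^q(P^{\#t})) \Rightarrow H^{p+q}_t(X/S, P^{\#t})
\]
for each $\lambda$ and passing to the colimit using continuity (\cref{cohomologyoflimit}) yields $\colim_\lambda \mathcal{H}^q(P^{\#t})(U_\lambda^{\times k}) = H^q_t(\tilde U^{\times k}/S, P^{\#t}) = 0$ for $q>0$. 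The spectral sequence thus degenerates in the colimit to $\colim_\lambda \check{H}^n(U_\lambda, P^{\#t}) \cong H^n_t(X/S, P^{\#t})$.

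Finally, statement (ii) applied to the tamely acyclic pro-object $\tilde U^{\times k}$ identifies, in the colimit over $\lambda$, the value of $P$ with that of $P^{\#t}$, and hence the \v{C}ech complex of $P$ with that of $P^{\#t}$, giving $\colim_\lambda \check{H}^n(U_\lambda,P) = \colim_\lambda \check{H}^n(U_\lambda,P^{\#t})$. Varying the initial cover, the $U_\lambda$ range over a cofinal system of all finitely presented tame covers of $X$, so this colimit equals $\check{H}^n_t(X/S,P)$, yielding the desired isomorphism $\check{H}^n_t(X/S,P) \liso H^n_t(X/S,P^{\#t})$. The main obstacle is precisely this last step: the ``acyclic'' refinement $\tilde U$ is not itself a cover in the restricted tame site, so the passage from the finite-stage covers $U_\lambda$ to the limiting acyclic object requires applying the continuity results of Section~\ref{sec:cont} to the \v{C}ech double complex and carefully verifying that statement (ii), which fails at each finite stage, does hold in the colimit.
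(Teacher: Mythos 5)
Your overall strategy matches the paper's in its ingredients (\cref{acyclic tame}, \cref{fibre product acylic}, continuity over the tower of finite approximations to the acyclic cover), but the architecture differs in a way that leaves a genuine gap precisely at the step you flag as ``the main obstacle.''

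The paper first reduces the entire theorem to a single vanishing statement $(\dagger)$: $\check{H}^n_t(X/S, P) = 0$ for every presheaf $P$ with $P^{\#t}=0$. Given $(\dagger)$, the theorem follows by purely formal homological algebra: $\check{H}^\bullet_t(X/S,-)$ becomes a $\delta$-functor on sheaves because the presheaf cokernel of a sheaf surjection has trivial sheafification; it is erasing because injective sheaves are injective presheaves; and the passage from sheaves to arbitrary presheaves is a two-step d\'{e}vissage through $K$ and $C$ with $K^{\#t} = C^{\#t} = 0$. The entire analytic content of the theorem is then concentrated in the single verification that $\colim_i P(U_i^n) = 0$, which follows from $P^{\#t}=0$ (giving a tame cover killing each section), tame acyclicity of $U^n$ (so the pulled-back cover splits on the limit), and continuity (so the splitting descends to a finite stage $U_j^n$).

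Your route via the Cartan-Leray spectral sequence is not wrong, but it does not avoid this verification — it \emph{is} this verification in disguise. Identifying $\colim_\lambda \check{H}^n(U_\lambda, P)$ with $\colim_\lambda \check{H}^n(U_\lambda, P^{\#t})$ amounts to showing $\colim_\lambda P(U_\lambda^{\times k}) \liso \colim_\lambda P^{\#t}(U_\lambda^{\times k})$, and breaking $P \to P^{\#t}$ into two short exact sequences (with kernel $K$ and cokernel $C$ satisfying $K^{\#t}=C^{\#t}=0$) and using exactness of filtered colimits, this reduces to precisely $\colim_\lambda Q(U_\lambda^{\times k})=0$ for every presheaf $Q$ with trivial tame sheafification. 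You state that this ``requires applying the continuity results ... and carefully verifying,'' but you never carry out that verification — and it is not a formal triviality: one must combine $P^+=0$, \cref{finite-cov} to make the killing cover finite, tame acyclicity of the limit $\tilde U^{\times k}$ to split the pulled-back cover, and descent of the splitting to a finite level. Your claim~(ii) at a finite stage is also only true for objects in the site, and as you yourself note it \emph{fails} at each finite $U_\lambda^{\times k}$; the whole burden is carried by the limit argument. In short: the Cartan-Leray degeneration in step~2 is sound modulo routine care with affine transition maps and quasi-compactness, but step~3 is asserted rather than proven, and the missing computation is where all the work is. I would recommend either carrying out that computation explicitly (as above), or — more economically — adopting the paper's structure and proving $(\dagger)$ first, which isolates exactly this one computation and dispenses with the spectral sequence entirely.
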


\begin{proof} We give the argument in the tame case, the \'{e}tale case is analogous.
The statement of the theorem is a formal consequence of the assertion
\[
\check{H}^n_t(X/S,P)=0 \text{ for all }n\ge 0 \text{ and every presheaf }P\text{ with }P^{\# t}=0. \leqno (\dagger)
\]
Indeed, if $(\dagger)$ holds, then $\check{H}^\bullet_t(X/S,-)$ is a $\delta$-functor on \emph{sheaves}. Since injective sheaves are injective as presheaves, the functor is also erasing. This shows $\check{H}^n_t (X/S,F) \cong H^n_t(X/S, F)$ for every sheaf $F$. It remains to see that $\check{H}^n_t (X/S,P)\to \check{H}^n_t (X/S,P^{\#t})$ is an isomorphism for every presheaf $P$. Splitting the presheaf homomorphism $P \to P^{\# t}$ into
\[
0\to K \to P \to G\to 0, \quad 0\to G \to P^{\# t} \to C \to 0,
\]
we have $K^{\# t}=0=C^{\# t}$ and the statement follows from the long exact sequence for \v{C}ech cohomology of presheaves.

To prove $(\dagger)$, we cover $X$ by finitely many affine Zariski-open schemes.
By \cref{acyclic tame}, and forming the disjoint union,  we find an affine, tamely acyclic, pro-tame covering $U \to X$, i.e.,
 $U=\lim U_i$ is a limit of affine, tame coverings $U_i\to X$. Since every tame covering of $U$ splits,  the system $(U_i)$ is cofinal in the system of all tame coverings of $X$ (partially ordered by refinement).  For the \v{C}ech cohomology, we obtain
 \[
  \check{H}^p_t(X/S, P)= \colim_i \check{H}^p(U_i \to X,P)= H^p( \colim_i \check{C}^\bullet(U_i \to X,P)).
 \]
 For an $X$-scheme $Y$, we denote by  $Y^n$ the $n$-th self fibre product over $X$. Then, for  $n\ge 1$, $U^n=\lim_i U_i^n$ is tamely acyclic by \cref{fibre product acylic}.  Since $P$ has trivial sheafification, we obtain
 $\colim_i P(U_i^n)=0$. Hence the colimit of the \v{C}ech complexes $\check{C}^\bullet(U_i \to X,P)$ vanishes. In particular, the colimit has trivial cohomology.
\end{proof}

\section{Comparison with \'{e}tale cohomology}
In this section we compare tame and \'{e}tale cohomology in two situations. First, when the coefficients are invertible on $S$ and second, if $X\to S$ is proper.

\medskip
Since the \'{e}tale site is finer than the tame site and has the same underlying category, any \'{e}tale sheaf can be considered as a tame sheaf. To put this more formal, we denote the natural morphism of sites by $\alpha: X_\et\to (X/S)_t$. Then $\alpha_* F$ is the same presheaf as $F$ but considered as a tame sheaf.
\begin{proposition}[Invertible coefficients] \label{compare invertible} Let $m\ge 1$ be an integer invertible on $S$ and let $F$ be an \'{e}tale sheaf of $\Z/m\Z$-modules on $X$. Then
\[
H^n_t(X/S, \alpha_* F)\cong H^n_\et(X,F)
\]
for all $n\ge 0$.
\end{proposition}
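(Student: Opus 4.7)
The plan is to show that the higher direct images $R^q\alpha_*F$ vanish for $q\ge 1$; the assertion then follows immediately from the Leray spectral sequence for $\alpha$. Since the tame site has enough points by \cref{enoughpoints}, vanishing may be checked on stalks at tame points.

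Let $(\bar x,\bar v)$ be a tame point lying over a scheme-theoretic point $x_0\in X$. By continuity of \'{e}tale cohomology applied to the limit of a cofinal system of tame neighborhoods of $(\bar x,\bar v)$, one identifies
\[
(R^q\alpha_*F)_{(\bar x,\bar v)} \;\cong\; H^q_\et(\Spec\O_{X,(\bar x,\bar v)},F),
\]
where $\O_{X,(\bar x,\bar v)}$ is the tame henselization of \cref{tame-henselization-def}. By \cref{stalkremark} this is a local henselian ring with residue field $k(\bar x)_s$, so its \'{e}tale cohomology with torsion coefficients coincides with the Galois cohomology of the residue field:
\[
H^q_\et(\Spec\O_{X,(\bar x,\bar v)},F) \;\cong\; H^q\bigl(\Gal_{k(\bar x)_s},\,F_{\bar x}\bigr).
\]

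It remains to show that $\Gal_{k(\bar x)_s}$ is a pro-$p$ group, where $p=\ch k(\bar v)$. Granting this, since $\bar v$ is an $S$-valuation and $m$ is invertible on $S$, we have $\gcd(m,p)=1$, so cohomology of a pro-$p$ group with values in the $m$-torsion module $F_{\bar x}$ vanishes in positive degrees, which will finish the proof. To verify the pro-$p$ property, fix an algebraic closure of $k(\bar x)$ containing a separable closure $k(x_0)^\sep$ of $k(x_0)$; by definition one then has $k(\bar x)_s=k(x_0)^\sep\cap k(\bar x)$. For any finite Galois extension $\tilde L/k(\bar x)_s$ inside $k(x_0)^\sep$ this forces $\tilde L\cap k(\bar x)=k(\bar x)_s$, whence
\[
[\tilde L\cdot k(\bar x):k(\bar x)] \;=\; [\tilde L:\tilde L\cap k(\bar x)] \;=\; [\tilde L:k(\bar x)_s].
\]
The left-hand side is a $p$-power because the absolute Galois group of $k(\bar x)$ is pro-$p$ by definition of a tame point (\cref{deftamept}); hence so is $[\tilde L:k(\bar x)_s]$, as required.

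The step requiring the most care is the stalk identification in the second paragraph: a cofinal system of tame neighborhoods of $(\bar x,\bar v)$ must be exhibited whose inverse limit is $\Spec\O_{X,(\bar x,\bar v)}$, so that continuity of \'{e}tale cohomology applies. This is precisely the content of the description of the tame henselization given in \cref{stalkremark}.
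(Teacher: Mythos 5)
Your proof is correct and follows essentially the same route as the paper: Leray spectral sequence for $\alpha$, checking $R^q\alpha_*F=0$ ($q\ge 1$) stalkwise at tame points via continuity of \'{e}tale cohomology and the henselian property of the tame henselization, then invoking pro-$p$ Galois cohomology with prime-to-$p$ torsion. The one cosmetic difference is that the paper computes stalks only at the explicit conservative family from \cref{enoughpoints}, where the residue field is already a finite extension $L$ of the ramification fixed field $K_v$ so that $\Gal(k(x)^s/L)\subset R_{\bar v}$ is pro-$p$ for free, whereas you work with an arbitrary tame point and add a short argument that $\Gal_{k(\bar x)_s}$ is pro-$p$.
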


\begin{proof} Using the Leray-Serre spectral sequence, it suffices to show that the higher direct image sheaves $R^q\alpha_*F$ vanish for $q\ge 1$.  This can be checked on stalks at the conservative family of tame points of described in \cref{enoughpoints}.

Let $x\in X$ be a (scheme-)point, $k(x)^s$ a separable closure of $k(x)$, $v$ an $S$-valuation on $k(x)$, $\bar v$ a prolongation to $k(x)^s$, $K_v:=(k(x)^s)^{R_{\bar v}}$ the fixed field of its ramification group and $L$ a finite subextensions of $k(x)^s/K_v$. Let $\bar x_L: \Spec(L)  \to X$ be the induced morphism and $(\bar x_L, \bar v|_L)$ the associated tame point of $X$. Since, by \cite[VI, 5.8]{SGA4},  \'{e}tale cohomology commutes with inverse limits of quasi-compact and quasi-separated schemes with affine transition maps,  we have an isomorphism
\[
(R^q\alpha_*F)_{(\bar x_L, \bar v|_L)}\cong H^q_\et(\Spec \O_{(\bar x_L, \bar v|_L)}, F) ,
\]
where $\O_{(\bar x_L, \bar v|_L)}$ is the stalk of the structure sheaf at $(\bar x_L, \bar v|_L)$ described in \cref{stalkremark}. This ring is henselian with residue field $L$, hence
\[
(R^q\alpha_*F)_{(\bar x_L, \bar v|_L)} \cong H^q_\et(\Spec L, (\bar x_L)^*F).
\]
Let $p$ be the residue characteristics of $v$. Then the absolute Galois group of $L$ is a pro-$p$-group and since $(p,m)=1$, $H^q_\et(\Spec L, (\bar x_L)^*F)$ vanishes for $q\ge 1$ by \cite[(1.6.2)]{NSW}.
\end{proof}

\begin{proposition} [Proper morphisms] \label{proper-compare} Let $X$ be a quasi-compact scheme having the property that every finite subset of $X$ is contained in an affine open, $S$ a quasi-compact and quasi-separated scheme and $X\to S$ a proper morphism. Then, for every sheaf $F$ of abelian groups on $(X/S)_t$ with \'{e}tale sheafification $F^{\#\et}$ the natural map
\[
H^n_t(X/S, F)\to H^n_\et(X,F^{\#\et})
\]
is an isomorphism for all $n\ge 0$.
\end{proposition}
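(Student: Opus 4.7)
The plan is to reduce the statement to a \v{C}ech cohomology comparison via Theorem \ref{cechcompare}, and then to compute both \v{C}ech cohomologies by a common pro-covering of $X$, with properness entering essentially in constructing this cover.

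Since $F$ is a tame sheaf, $F^{\#t}=F$, so applying \cref{cechcompare} to the presheaf $F$ on the common underlying category $X_\et$ yields
\[
 H^n_t(X/S, F) \cong \check H^n_t(X/S, F)
 \quad\text{and}\quad
 H^n_\et(X, F^{\#\et}) \cong \check H^n_\et(X, F).
\]
Under these identifications, the natural map of the statement becomes the canonical morphism $\check H^n_t(X/S, F) \to \check H^n_\et(X, F)$ induced by the inclusion of tame coverings into \'{e}tale coverings, so it suffices to show this \v{C}ech-level map is an isomorphism.

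Following the strategy of the proof of \cref{cechcompare}, I would compute the right-hand side using a faithfully flat, ind-\'{e}tale, \'{e}tale-acyclic $X$-algebra $\widetilde A$ provided by \cref{acyclic tame}; set $\widetilde X=\Spec\widetilde A$. By \cref{fibre product acylic}(i) (invoking the hypothesis on affine opens of $X$), each $\widetilde X^n$ is \'{e}tale acyclic, and hence automatically tame acyclic as well, since any tame cover of an \'{e}tale-acyclic ring already splits. In particular $\widetilde X$ is cofinal in the system of \'{e}tale coverings, and as in \cref{cechcompare} we obtain
\[
 \check H^n_\et(X, F) \;\cong\; H^n\bigl(\check C^\bullet(\widetilde X\to X, F)\bigr).
\]
The key additional step is to arrange $\widetilde X$ to be moreover \emph{pro-tame} as a cover of $X$ — that is, a filtered limit of tame covers $U_\alpha\to X$. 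Once this is done, $\widetilde X$ is also cofinal among tame coverings of $X$, so the same complex computes $\check H^n_t(X/S, F)$, and the desired comparison is immediate.

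Properness of $X\to S$ enters essentially at exactly this step. Via the valuative criterion it implies $\Val_S U=\Val U$ for every \'{e}tale $X$-scheme $U$, so the tameness condition in $(U/S)_t$ is imposed on \emph{all} valuations on every such $U$. Using this, I would refine the Bhatt--Scholze iteration underlying \cref{acyclic tame} so that only tame covers are adjoined at each finite stage. The main obstacle — the technical heart of the argument — is to show that the resulting ind-tame cover $\widetilde X$ is nevertheless \emph{\'{e}tale acyclic}. The point is that, by \cref{Artins-thm} combined with the properness-induced equality $\Val_S=\Val$, any \'{e}tale cover of some $\widetilde X_\alpha$ that would obstruct \'{e}tale-acyclicity in the limit is already refined by a tame cover at a later stage of the tower: wild ramification cannot escape to "missing" valuations, because under properness every valuation is already tested. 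Once this construction is carried out, the \v{C}ech comparison — and hence the proposition — follows at once.
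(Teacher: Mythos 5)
Your opening step---reducing both sides to \v{C}ech cohomology via \cref{cechcompare}, whose hypotheses on $X$ and $S$ are exactly those of the proposition---is correct and matches the paper. But from there the proposal heads into a far more elaborate and ultimately incorrect direction, missing the short argument the paper actually uses.

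The paper's proof rests on one observation: since $X \to S$ is proper, \emph{every \'{e}tale covering of $X$ is a tame covering}. Given $(x,v) \in \Val_S X$, the valuative criterion for properness provides a morphism $\Spec\O_v \to X$, i.e., a center $c$ of $v$ on $X$. Any \'{e}tale covering $(U_i \to X)$ has some $U_i$ with a point over $c$; pulling back to $\Spec \O_v^h$, one obtains a section, whose generic point yields a point $u_i \in U_i$ over $x$ with $k(u_i) \subset K_v^h$. Then $v_i := v^h|_{k(u_i)}$ is unramified over $v$, so in particular tame, and $(u_i,v_i)\in\Val_S U_i$ lies over $(x,v)$. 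Hence the tame and \'{e}tale coverings of $X$ coincide, so the two \v{C}ech cohomologies are literally equal, and the proposition follows at once from \cref{cechcompare}. No pro-coverings or acyclicity arguments are needed.

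By contrast, you assert that properness "implies $\Val_S U = \Val U$ for every \'{e}tale $X$-scheme $U$." This is false. An $S$-valuation must admit a compatible morphism $\Spec\O_v \to S$, and properness of $X/S$ does not bestow this on arbitrary valuations: for instance $\P^1_\Q \to \Spec\Q$ is proper, yet a $p$-adic valuation on $\Q(t)$ is not a $\Q$-valuation. What properness actually gives is that $S$-valuations are \emph{centered on $X$}, which is what the paper's argument exploits.

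The rest of the proposal---modifying the Bhatt--Scholze tower to adjoin only tame covers and then showing the resulting ind-tame cover is \'{e}tale acyclic---is exactly the step you flag as "the technical heart" and then do not prove; that gap is genuine. An ind-tame cover built this way is tamely acyclic, and its components are tamely henselian, whose residue fields have pro-$p$ absolute Galois group rather than being separably closed, so there is no reason for such a cover to be \'{e}tale acyclic. Moreover, even granting that \'{e}tale coverings of $X$ are tame, this says nothing about \'{e}tale covers of intermediate rings in the tower, which are not proper over $S$; so the property does not propagate through the iteration. The whole pro-covering detour is unnecessary and would not close as written.
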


\begin{proof} By \cref{cechcompare}, the horizontal arrows in the commutative diagram
\[
\begin{tikzcd}
  \check{H}^n_t(X/S, F)\rar\dar  & H^n_t(X/S, F)\dar\\
  \check{H}^n_\et(X,F)\rar& H^n_\et(X,F^{\# \et})
\end{tikzcd}
\]
are isomorphisms. Since $X\to S$ is proper, every \'{e}tale covering of $X$ is a tame covering. Hence the left vertical arrow is an isomorphism and therefore also the right one.
\end{proof}

\section{Finiteness in dimension 1}\label{sec:finite}

We consider the arithmetic case.

\begin{lemma}\label{arithm-local}
Let $K/\Q$ be a number field, $\p\subset \O_K$ a prime ideal, $\O_{K,\p}^h$ the henselization of $\O_{K,\p}$ and $K_\p^h$  its quotient field.

Let $X=\Spec \O_{K,\p}^h$, $x \in X$ the closed point, $\eta\in X$ the generic point, and put $S=\Spec \Z$. Then for every tame torsion sheaf $F$ on $X/S$, we have $H^q_t(X/S,F)=0$ for $q>1$, $H^q_t(\eta/S, F)=0$ for $q>2$ and $H^q_{t, \{x\}}(X/S,F)=0$ for $q>3$.
If $F$ has finite stalks, these groups are finite for all $q$.
\end{lemma}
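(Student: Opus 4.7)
My plan is to compute each cohomology group by passing to \v{C}ech cohomology via \cref{cechcompare}, using a tamely acyclic pro-cover whose iterated fibre products are again tamely acyclic (provided by \cref{acyclic tame} and \cref{fibre product acylic}), and then to identify the resulting \v{C}ech complex with a continuous Galois cochain complex of known cohomological dimension.

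For $H^q_t(X/S, F)$, pick an ind-tame tamely acyclic $\O_{K,\p}^h$-algebra $\tilde R$ via \cref{acyclic tame}. By \cref{fibre product acylic} all iterated self-fibre-products of $\Spec\tilde R$ over $X$ are tamely acyclic, so by \cref{cechcompare} the \v{C}ech complex of the pro-cover $\Spec\tilde R \to X$ computes $H^*_t(X/S, F)$. Since $\O_{K,\p}^h$ is a henselian discrete valuation ring with finite residue field~$\F_q$, the deck transformation group $\Gamma_X$ of $\tilde R/\O_{K,\p}^h$ is a pro-cyclic quotient of $\pi_1^{\et}(X) = \Gal(\bar \F_q/\F_q) = \hat\Z$, so $H^q_t(X/S, F) \cong H^q_{\cont}(\Gamma_X, F(\tilde R))$ vanishes for $q > 1$ and is finite for finite stalks. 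For $\eta = \Spec K_\p^h$ the analogous construction yields a deck group $\Gamma_\eta$ equipped with a filtration by the tame inertia (an abelian pro-cyclic subgroup) and the residue Galois group $\Gal(\bar \F_q/\F_q)$, each of cohomological dimension $1$; hence $\cd(\Gamma_\eta) \le 2$, and $H^q_t(\eta/S, F) \cong H^q_{\cont}(\Gamma_\eta, F(\tilde K))$ vanishes for $q > 2$ with finiteness for finite stalks.

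For $H^q_{t,\{x\}}(X/S,F)$, apply the long exact localization sequence
\[
\cdots \to H^{q-1}_t(\eta/S, F|_\eta) \to H^q_{t,\{x\}}(X/S, F) \to H^q_t(X/S, F) \to H^q_t(\eta/S, F|_\eta) \to \cdots.
\]
For $q > 3$ both neighbouring terms vanish by the previous steps, so $H^q_{t,\{x\}}(X/S, F) = 0$; finiteness in all degrees transfers from the finite outer terms of the same sequence.

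The main technical obstacle will be pinning down the structure of $\Gamma_\eta$. Because $S = \Spec \Z$ imposes tameness at every valuation of $K_\p^h$ (not only at the $\p$-adic one), one must verify that these auxiliary constraints at the remaining places $v_q$ ($q \ne p$) do not enlarge the cohomological dimension beyond that of the classical tame Galois group of the henselian local field $K_\p^h$ at~$v_\p$. This should reduce to a cofinality argument showing that the maximal tame extension of $K_\p^h$ at $v_\p$ provides a pro-cover cofinal among pro-covers tame at every $\Z$-valuation, or alternatively a direct verification that $\cd(\Gamma_\eta) \le 2$ via the displayed extension structure.
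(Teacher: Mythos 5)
The strategy you outline — reduce to a pro-tame acyclic cover, identify the resulting \v{C}ech or Hochschild--Serre description with continuous Galois cohomology of a group of cohomological dimension~$1$ (for~$X$) or~$2$ (for~$\eta$), and then use the localization sequence for the groups with support — is structurally correct and is essentially the shape of the paper's argument. But you explicitly defer the one step that is the actual content of the lemma, and that step does not resolve itself by a ``cofinality argument'' in the vague sense you describe.

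The real issue is this: for the generic point $\eta=\Spec K_\p^h$ one must show that tameness with respect to \emph{all} $\Z$-valuations of $K_\p^h$ imposes no further constraint beyond tameness with respect to $v_\p$. A priori the valuations $v\neq v_\p$ on the algebraic extension $K_\p^h/\Q$ could force extra splitting or cut the deck group down to something whose cohomological dimension is not obviously controlled. The reason they do not is a specific number-theoretic fact: since $K_\p^h$ is the decomposition field of $v_\p$ inside $\bar\Q$ and decomposition groups of a number field at different places intersect trivially (NSW~12.1.3), every nontrivial $v\neq v_\p$ is \emph{completely split} in $\bar K/K_\p^h$, hence unramified (and trivially tame) in every finite subextension. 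Combined with the fact that the residue field of the closed point $x$ carries only the trivial valuation, this shows that the classical maximal tamely ramified extension $K_\p^t$ (at $v_\p$) is a pro-tame cover of $\Spec K_\p^h$ over $\Spec\Z$, that it is tamely acyclic, and that its Galois group $G=\Gal(K_\p^t/K_\p^h)\cong\hat\Z\ltimes\hat\Z^{(p')}$ has $\cd(G)=2$ with finite cohomology on finite discrete modules (NSW~7.2.5). Your proposal labels exactly this as the ``main technical obstacle'' and then stops; without it, your claim $\cd(\Gamma_\eta)\le 2$ is unsubstantiated.

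A secondary point: relying on the tamely acyclic algebra $\widetilde R$ produced by \cref{acyclic tame} (a large filtered tensor product) makes the identification of \v{C}ech cohomology with $H^\bullet_{\cont}(\Gamma,\,\cdot\,)$ awkward, because $\widetilde R$ is not manifestly a pro-Galois cover. The paper circumvents this by choosing the specific pro-Galois covers $\O_{K,\p}^\sh/\O_{K,\p}^h$ and $K_\p^t/K_\p^h$, verifying tame acyclicity of the source directly, and invoking Hochschild--Serre; with that choice the passage to group cohomology is automatic. If you want to keep the \v{C}ech route, you should replace $\widetilde R$ by these explicit Galois covers and check they are cofinal among tame covers — which again comes back to the complete-splitting observation above.
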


\begin{proof}
The only valuation on the finite field $k(x)=\O_K/\p$ is the trivial one. Since $K_\p^h$ is an algebraic extension of $\Q$, every valuation on $k(\eta)=K_\p^h$ is trivial or of rank one. First, we have the valuation $v_\p$ associated to the valuation ring $\O_{K,\p}^h$.  Let $K^t_\p/K_{\p}^h$ be the maximal tamely ramified extension (with respect to $v_\p$) of  the henselian local field $K_{\p}^h$ in an algebraic closure $\bar K$.

Then, by \cite[7.2.5]{NSW}, $G=\Gal(K_\p^t/K_{\p}^h)$ is the semidirect product of $\hat \Z$ with $\hat \Z^{(p')}= \prod_{\ell \neq p}\hat \Z_\ell$, where $p$ is the prime number below~$\p$. In particular, $G$ has cohomological dimension~$2$ and the cohomology of $G$ with values in finite discrete modules is finite.

Let $v$ be another valuation on $K_\p^h$. If $v$ is trivial, every extension of $K_\p^h$ is tame with respect to $v$. If $v$ is nontrivial, then, since different decomposition groups in the absolute Galois group of a number field have trivial intersection by \cite[12.1.3]{NSW}, $v$ is completely decomposed in $\bar K/K_{\p}^h$. We conclude that $\Spec K_\p^t/\Spec K_\p^h$ is a profinite tame cover of $K_\p^h$  with respect to $S=\Spec \Z$. Moreover, every tame cover of $\Spec K_\p^t$ splits. Hence for every sheaf $F\in \Sh_t(\Spec K_\p^t/\Spec \Z)$ the Hochschild-Serre spectral sequence induces isomorphisms
\[
H^q_t(\Spec K_\p^h/S, F) \cong H^q(G, \Gamma (\Spec K_\p^t, F)).
\]
This shows that these groups are trivial for $q>2$ if $F$ is a torsion sheaf and finite for all $q$ if $\Gamma (\Spec K_\p^t, F)$, which is the stalk of $F$ at $(\eta,v_\p)$, is finite.

A similar argument with $\Gal(\O_{K,\p}^\sh / \O_{K,\p}^h )\cong \hat \Z$ shows that $H^q_t(\Spec \O_{K,\p}^h/S, F)$ is trivial for $q>1$ if $F$ is a torsion sheaf, and finite for all $q$ if $F$ has finite stalks. Finally, the statement on the  cohomology groups with support follows from the result for the cohomology of $\Spec \O_{K,\p}^h$ and $\Spec K_\p^h$, and the long exact sequence connecting them.
\end{proof}

\begin{theorem}
Let $X \to S=\Spec \Z$ be quasi-finite and let $F\in \Sh_t(X/S)$ be a torsion sheaf. Assume that $F$ has finite stalks and its \'{e}tale sheafification $F^{\# \et}$ is constructible. Then the tame cohomology groups
\[
H^q_t(X/S,F)
\]
are finite for all $q$. For an arbitrary torsion sheaf $F\in \Sh_t(X/S)$ the following holds.
\begin{enumerate}[\rm (i)]
  \item If $\dim X=0$, then $H^q_t(X/S,F)=0$ for $q>1$.
  \item If $\dim X=1$ assume that $X(\R)=\varnothing$ or that $F$ has trivial $2$-torsion. Then $H^q_t(X/S,F)=0$ for $q >3$.
\end{enumerate}
\end{theorem}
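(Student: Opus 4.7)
The plan is to reduce to two explicit model cases---spectra of finite fields and open subschemes of rings of integers of number fields---and then to assemble the answer from \cref{arithm-local} and the classical Artin--Verdier theorem on $\Spec \O_K$.

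First I would cover $X$ by finitely many affine opens and use Mayer--Vietoris to reduce to $X$ affine. Since $X \to S$ is separated and quasi-finite, Zariski's Main Theorem embeds $X$ as an open subscheme of $\Spec A$ for some finite $\Z$-algebra $A$. Using \cref{topinvariance} I would replace $X$ by $X_\rd$, and then, passing to normalisations of the irreducible components (using the exactness of the finite direct image on tame sheaves established at the end of \S2), reduce to the situation that $X$ is a disjoint union of pieces, each either of the form $\Spec \F_q$ for a finite field $\F_q$ or an open subscheme of a ring of integers $\Spec \O_K$. For the finiteness assertion I would additionally apply the standard dévissage for constructible étale sheaves on $X_\et$: $F^{\#\et}$ admits a finite filtration whose graded pieces have the form $j_! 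G$ with $j\colon V \hookrightarrow X$ a locally closed immersion of a regular stratum and $G$ locally constant constructible, so the long exact tame cohomology sequences reduce the problem to finiteness of $H^q_t(V/S, G)$ on each stratum.

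In the zero-dimensional situation the only $\Z$-valuation on a finite field is trivial, hence the tame and \'{e}tale sites of $\Spec\F_q/S$ coincide and
\[
H^q_t(\Spec\F_q/S, F)\;=\;H^q_\et(\Spec\F_q, F)\;=\;H^q(\hat \Z, F_{\bar x}),
\]
which is finite for finite stalks and vanishes for $q > 1$, giving part~(i) and the corresponding finiteness. In the one-dimensional situation I would reduce further to $X \subseteq \Spec\O_K$ open. Every étale cover of $\Spec\O_K$ is already unramified everywhere and hence tame, so tame $=$ étale on $\Spec\O_K$ itself, and Artin--Verdier duality gives $H^q_\et(\Spec\O_K, F) = 0$ for $q > 3$ under the stated hypothesis on $2$-torsion or real places, together with finiteness for constructible $F$. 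Writing $\Spec\O_K \smallsetminus X = \{x_1, \dots, x_n\}$, I would invoke the localization long exact sequence combined with excision and the continuity theorem \cref{cohomologyoflimit}, which identify each local summand as
\[
H^q_{t, \{x_i\}}(\Spec \O^h_{\O_K, x_i}/S, F),
\]
a group that by \cref{arithm-local} vanishes for $q > 3$ and is finite when $F$ has finite stalks. A short induction then propagates the vanishing and the finiteness from $\Spec\O_K$ down to any open $X$; the passage to $\Spec K$ itself is handled by \cref{cohomologyoflimit}, which realizes $H^q_t(\Spec K/S, F)$ as a filtered colimit.

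The main obstacle is keeping track of constructibility and finiteness through the dévissage, and in particular verifying that the excision/continuity pair identifies the local contributions correctly at each closed point of $\Spec\O_K$. The hypothesis on $2$-torsion or absence of real places is invoked exactly once, when quoting Artin--Verdier duality; every other ingredient, including the local tame cohomology of \cref{arithm-local}, is insensitive to residue characteristic.
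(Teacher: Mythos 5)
Your overall strategy—reduce to separated and reduced, embed in a one‑dimensional compactification that is finite over $\Spec\Z$, use the localization sequence and excision/continuity to isolate local contributions governed by \cref{arithm-local}, and handle the proper piece by comparison with \'{e}tale cohomology and the classical Artin--Verdier/Mazur bounds—is essentially the route the paper takes. The paper's version is slightly leaner: rather than normalizing the irreducible components and reducing to opens of $\Spec\O_K$, it passes directly to a compactification $\bar X$ which is finite over $\Z$, contains $X$ as a dense open, and is regular only at the boundary points, and then converts $H^q_t(\bar X, j_!F)$ to $H^q_\et(\bar X, j_!F^{\#\et})$ via \cref{proper-compare}. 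Your observation that the tame and \'{e}tale sites of $\Spec\O_K$ and $\Spec\F_q$ literally coincide is correct and serves as a substitute for \cref{proper-compare} in your special case, but it does not handle the case where the compactification has non-normal interior points, which the paper's argument accommodates for free.

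Two steps in your proposal are not justified as written. First, the d\'{e}vissage: a finite filtration of $F^{\#\et}$ by sheaves $j_!G$ with $G$ locally constant is a filtration in $\Sh_\et(X)$, and it does \emph{not} automatically yield exact sequences of tame sheaves or long exact sequences of tame cohomology. The forgetful functor $\alpha_*:\Sh_\et(X)\to\Sh_t(X/S)$ is left but in general not right exact: the stalk of $\alpha_*F$ at a tame point $(\bar x,\bar v)$ is $H^0_\et(\Spec\O_{(\bar x,\bar v)},F)$, where $\O_{(\bar x,\bar v)}$ is the tame henselization, and its residue field has pro-$p$ absolute Galois group; consequently a surjection of \'{e}tale sheaves need not remain surjective on these stalks for $p$-torsion coefficients. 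Fortunately the d\'{e}vissage is not needed: once you are reduced to the proper case, \cref{proper-compare} (or, in your reduction, the identity of tame and \'{e}tale sites on $\Spec\O_K$) converts the problem entirely into constructible \'{e}tale cohomology, where the classical finiteness applies directly. Second, "passing to normalisations of the irreducible components using the exactness of the finite direct image" does not by itself reduce the problem: the lemma at the end of \S2 computes $H^q_t(X,\pi_*F')$ for $F'$ on $X'$, but you start with $F$ on $X$. You need to compare $F$ with $\pi_*\pi^*F$ via the exact sequence
\[
0 \to F \to \pi_*\pi^*F \to Q \to 0,
\]
note that $Q$ is supported on the (zero-dimensional) non-normal locus, and argue by induction on dimension using part (i); you must also check that $Q$ inherits finite stalks and constructible \'{e}tale sheafification. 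This is standard but should be said. Finally, the aside about $\Spec K$ is a red herring: $\Spec K$ is not quasi-finite over $\Z$, so it never appears as one of your $X$'s, and the localization sequence already reduces everything to the points of $\Spec\O_K\smallsetminus X$.
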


\begin{proof} Using Mayer-Vietoris and induction on the cardinality of a  covering by separated open subschemes, we may assume that $X$ is separated. By the topological invariance of tame cohomology (\cref{topinvariance}), we may assume that $X$ is reduced. If $\dim X=0$, $X$ is finite over $\Spec \Z$. If $\dim X=1$, we can find a scheme $\bar X$ which is finite over $\Spec \Z$, contains $X$ as a dense open subscheme and  is regular at all points $x \in \bar X \smallsetminus X$. We denote the open immersion by $j: X \hookrightarrow \bar X$. Then excision for the sheaf $j_! F$
on $\bar X$ yields the long exact sequence
\[
\cdots \to \bigoplus_{x \in \bar X \smallsetminus X} H^q_{t,\{x\}}(\bar X_x^h/S, j_! F) \to H^q_t(\bar X/S, j_! F) \to H^q_t(X/S,F) \to \cdots
\]
In view of \cref{arithm-local}, we are reduced to showing the result for $X=\bar X$.

For finite $X$, the assertion of the theorem follows by \cref{proper-compare} from the corresponding well-known result for \'{e}tale cohomology \cite{Mazur-etale}.
\end{proof}

The same arguments also show
\begin{theorem}
Let $k$ be a separably closed (resp.\ finite) field and $X \to S=\Spec k$ a separated scheme of finite type of dimension $\le 1$.  Then for every torsion sheaf $F\in \Sh_t(X/S)$ the tame cohomology groups
\[
H^q_t(X/S,F)
\]
are zero for $q >2$ (resp.\ q>3). If $F$ has finite stalks and its \'{e}tale sheafification $F^{\# \et}$ is constructible, these groups are finite for all~$q$.
\end{theorem}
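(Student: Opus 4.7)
The argument follows the structure of the previous proof verbatim. A Mayer--Vietoris reduction on a covering by separated open subschemes lets us assume $X$ separated, and \cref{topinvariance} then lets us assume $X$ reduced. If $\dim X=0$, then $X$ is finite over $\Spec k$, hence proper, and \cref{proper-compare} identifies $H^q_t(X/S,F)$ with $H^q_\et(X,F^{\#\et})$, which is concentrated in degree~$0$ when $k$ is separably closed and in degrees $\le 1$ when $k$ is finite. If $\dim X=1$, choose a proper $k$-scheme $\bar X$ containing $X$ as a dense open and regular at each point of $\bar X\smallsetminus X$; on a one-dimensional scheme such a regular compactification exists by normalizing near the missing closed points. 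Writing $j\colon X\hookrightarrow\bar X$ for the immersion, excision applied to the sheaf $j_!F$ yields a long exact sequence
\[
  \cdots\to\bigoplus_{x\in\bar X\smallsetminus X}H^q_{t,\{x\}}(\bar X_x^h/S,j_!F)\to H^q_t(\bar X/S,j_!F)\to H^q_t(X/S,F)\to\cdots.
\]
By \cref{proper-compare} the middle term equals the \'{e}tale cohomology $H^q_\et(\bar X,(j_!F)^{\#\et})$, which vanishes for $q>2$ (resp.\ $q>3$) by the classical bound on the cohomological dimension of a curve over a separably closed (resp.\ finite) field, and is finite under the constructibility hypothesis.

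The main step is the geometric analog of \cref{arithm-local}, bounding the local terms $H^q_{t,\{x\}}(\bar X_x^h/S,j_!F)$. Fix $x\in\bar X\smallsetminus X$ and set $A=\O^h_{\bar X,x}$, $K=\mathrm{Frac}(A)$, $\kappa=A/\m_A$. Since $\bar X$ is of finite type over $k$ and regular at $x$, $A$ is a henselian discrete valuation ring in equal characteristic and $\kappa/k$ is finite, so $\kappa$ is separably closed (resp.\ finite) when $k$ is. The maximal extension of~$K$ that is tamely ramified at the canonical valuation $v_x$ of $A$ is, by standard local tame theory, an extension of $\Gal(\kappa^\sep/\kappa)$ by $\hat\Z(1)^{(p')}$ (or $\hat\Z$ in characteristic zero), hence has cohomological dimension at most $1$ (resp.\ $2$) and its cohomology with finite coefficients is finite. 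Combining the resulting Galois-cohomology bounds for $\Spec A$ and $\Spec K$ via the long exact sequence of local cohomology with support gives vanishing of $H^q_{t,\{x\}}(\bar X_x^h/S,j_!F)$ in degrees $q>2$ (resp.\ $>3$) and finiteness in the constructible case. Plugging this into the excision sequence above finishes the proof.

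The principal obstacle is the local lemma. In the arithmetic case the triviality of pairwise intersections of decomposition groups on a number field (\cite[12.1.3]{NSW}) guarantees that the maximally tamely ramified extension at $v_x$ remains a pro-tame cover with respect to the full collection of $S$-valuations of $K$, and this is what reduces the tame cohomology of $\Spec K/S$ to Galois cohomology of the tame local group. In the equicharacteristic setting this must be re-established, either by a direct decomposition-group argument showing that the other $S$-valuations of $K$ are split in the maximal $v_x$-tame extension, or by comparison with the adic cohomology framework of \cite{HueAd}. Once this is in hand, the cohomological dimension bounds and the finiteness assertions are routine consequences of Galois cohomology of the procyclic groups $\hat\Z$ and $\hat\Z(1)^{(p')}$, exactly as in the arithmetic case.
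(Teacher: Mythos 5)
Your proposal follows exactly the paper's implicit argument: the paper literally reduces this theorem to the preceding (arithmetic) one with the single sentence ``The same arguments also show,'' and you have correctly unpacked what those same arguments are --- Mayer--Vietoris and topological invariance to reduce to separated reduced $X$, a regular compactification $\bar X$ over $\Spec k$, excision on $j_!F$, \cref{proper-compare} for the proper piece $\bar X$, and a geometric analogue of \cref{arithm-local} for the local terms at the missing points. Your computation of the cohomological dimension of the tame local Galois group ($1$ over separably closed $k$, $2$ over finite $k$), and the resulting bounds $q>2$ resp.\ $q>3$ on $H^q_{t,\{x\}}$, are the right numerics.

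The one point you explicitly flag but do not establish is the function-field analogue of the decomposition-group independence used in \cref{arithm-local}: that for two distinct places $\bar v\ne\bar w$ of a separable closure $\bar F$ of the function field $F=k(\bar X)$, the decomposition groups (for $k$ finite) or inertia groups (for $k$ separably closed) satisfy $D_{\bar v}\cap D_{\bar w}=\{1\}$ in $G_F$. This is what makes the maximal tame extension of $K=\mathrm{Frac}(\O^h_{\bar X,x})$ at $v_x$ a pro-tame cover of $\Spec K$ with respect to \emph{all} $S$-valuations, and reduces $H^q_t(\Spec K/S,-)$ to Galois cohomology of the procyclic tame quotient via Hochschild--Serre. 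The paper itself does not spell this out in the geometric case, so you have captured everything the paper says; but since you present this as an unresolved obstacle rather than closing it, it does count as a gap in your write-up. It is easily closed: the argument behind \cite[12.1.3]{NSW} is the F.~K.\ Schmidt theorem that a field henselian with respect to two inequivalent nontrivial valuations is separably closed, which applies verbatim to global function fields and (with decomposition replaced by inertia) to function fields of curves over a separably closed field, since any two distinct places of such a field trivial on $k$ are independent. Stating this explicitly, rather than presenting two possible unexecuted strategies, would complete the argument.
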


\section{Review of Huber pairs}
We refer to \cite{Hu96} for basic notions on adic spaces and to \cite{HueAd} for the definition of the tame site on such spaces.

\bigskip
In this paper, by a \emph{Huber pair} we mean a pair of rings $(A,A^+)$ with $A^+ \subset A$ an integrally closed subring. We equip $A$ and $A^+$ with the discrete topology. We call an adic space $\cX$ \emph{discretely ringed}, if it is locally of the form $\Spa(A,A^+)$ with $(A,A^+)$ a Huber pair.

Points of $\Spa(A,A^+)$ are by definition (multiplicatively written) valuations $v: A \to \Gamma_v \cup \{ 0\}$  such that $v(a_+)\leq 1$ for all $a_+\in A^+$. We will frequently consider $v$ also as a valuation on the quotient field $Q(A/{\supp v})$, where $\supp v = \{a\in A \mid v(a)=0\}$ is the \emph{support} of $v$.
The topology on $\Spa(A,A^+)$ is generated by the sets
\[
\{ v \in \Spa(A,A^+) \mid v(a) \le v(b)\ne 0 \} \quad (a,b \in A).
\]
Recall that a subgroup $H$ of the ordered group $\Gamma_v$ is \emph{convex} if each $\gamma$ with $1\le \gamma\le \delta\in H$ already belongs to $H$.
A convex subgroup $H\subset \Gamma_v$ \emph{bounds} $A$ if for all $a\in A$ there exists an $h\in H$ with $v(a)\le h$. In this case, the assignment
\[
v|H: A \to H \cup \{0\}, \ a\mapsto \left\{\begin{array}{cl}
v(a)& \text{if }v(a)\in H\\
0 & \text{if } v(a)\notin H
\end{array}\right.
\]
defines a valuation in $\Spa(A,A^+)$ which is a specialization of $v$. We have $\supp v \subset  \supp v|H$ and $v|H$ is called a \emph{horizontal} (or primary) \emph{specialization} of $v$. The set of horizontal specializations of $v$ is totally ordered and has a minimal element~$\bar{v}$ corresponding to the ``characteristic subgroup'' $c\Gamma_v$ of $\Gamma_v$, the convex subgroup generated by the subset $\{v(a) \mid a\in A,\, v(a)\ge 1\}$.

\begin{definition} \label{RZ-points}
$v\in \Spa(A,A^+)$ is a \emph{Riemann-Zariski point} if it has no nontrivial horizontal specializations.
\end{definition}

\begin{remark} The terminology ``Riemann-Zariski'' is motivated by the fact that these points are in bijection to the points of the relative Riemann-Zariski space of $\Spec A^+$ with respect to $\Spec A$, cf.\ \cite[Corollary~3.4.7]{Tem11}.
\end{remark}

The minimal horizontal specialization $\bar v$ of $v$ is the unique Riemann-Zariski point among the horizontal specializations of $v$. Assume that $\bar{v}=\tr_\p$ is the trivial valuation on its support $\p=\supp \bar v$. Then for any $\q \in \Spec(A)$, $\p \subset \q$, the trivial valuation $\tr_\q$ with support $\q$ is a specialization of $\bar v$, hence of  $v$. A \emph{generalized horizontal specialization} is a horizontal specialization or a specialization  $v \rightsquigarrow \tr_\q$ as above.

\medskip
For an arbitrary convex subgroup $H\subset \Gamma_v$ the assignment
\[
v/H: A \to (\Gamma/H) \cup \{0\}, \ a\mapsto \left\{\begin{array}{ll}
v(a) \bmod H & \text{if }v(a)\ne 0\\
\; 0 & \text{if } v(a)=0
\end{array}\right.
\]
defines a valuation with $\supp (v/H)=\supp v$ and $v/H$ is a generalization of $v$. A specialization of type $v/H \rightsquigarrow v$ is called a \emph{vertical} (or secondary) \emph{specialization}.

\begin{lemma} \label{special-aff-chara} Let $(A,A^+)$ be a Huber pair and let $v,w \in \Spa(A,A^+)$.
\begin{enumerate}[\rm (i)]
  \item
$w$ is a vertical specialization of $v$ if and only if $\supp v = \supp w$ and  $\O_w \subset \O_v \subset Q(A/\supp v)$. In this case $\O_v$ is the localization of $\O_w$ at some prime ideal.
  \item $w$ is a horizontal specialization of $v$ if and only if there is a prime ideal $\p \subset \O_v$ such that \medskip

   \begin{enumerate}
     \item $A/\supp v \subset (\O_v)_\p$ (as subrings in $Q(A/\supp v)$),
     \item $\supp w$ is the preimage of $\p$ under $A\twoheadrightarrow A/\supp v \hookrightarrow (\O_v)_\p$, and
     \item $\O_w= (\O_v/\p)\cap Q(A/\supp w)$ (intersection in $Q(\O_v/\p)$).
   \end{enumerate}
 \[
  \begin{tikzcd}
  \O_v \arrow[r,two heads]\arrow[d,hook]&\O_v/\p\arrow[d,hook] &\\
  (\O_v)_\p \arrow[r,two heads]\arrow[d,hook]& Q(\O_v/\p)&\O_w\arrow[d,hook]\arrow[lu,hook']\\
  Q(A/\supp v)&A/\supp v \arrow[l,hook']\arrow[ul,hook'] \arrow[r]& Q(A/\supp w) \arrow[ul, hook'].
    \end{tikzcd}
 \]
\end{enumerate}

\end{lemma}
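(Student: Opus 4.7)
The plan is to reduce both statements to the standard bijection between convex subgroups $H \subseteq \Gamma_v$ and prime ideals of $\O_v$: a convex subgroup $H$ corresponds to
\[
\p_H = \{a \in \O_v \mid v(a) < h \text{ for all } h \in H\},
\]
under which $(\O_v)_{\p_H}$ is the valuation ring of $v/H$ inside $K := Q(A/\supp v)$, while $\O_v/\p_H$ is the valuation ring of the $H$-valued valuation induced on $Q(\O_v/\p_H)$.

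For (i), if $w$ is the vertical specialization of $v$ given by $v = w/H$, then $\supp v = \supp w$ by construction of $w/H$, and $\O_v = (\O_w)_{\p_H}$, whence $\O_w \subseteq \O_v \subseteq K$. Conversely, if $\supp v = \supp w$ and $\O_w \subseteq \O_v$, then the inclusion of valuation rings on the common field $K$ forces $\O_v = (\O_w)_{\p}$ for a unique prime $\p \subseteq \O_w$; letting $H$ be the associated convex subgroup of $\Gamma_w$, one recovers $v = w/H$. The condition $v(a^+) \leq 1$ for $a^+ \in A^+$ descends from $w \in \Spa(A, A^+)$ because $H$ is convex and contains $1$.

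For (ii), assume $w = v|H$ and set $\p = \p_H$. The hypothesis that $H$ bounds $A$ translates directly into $A/\supp v \subseteq (\O_v)_\p$, which is (a). For (b), one computes that the preimage of the maximal ideal $\p (\O_v)_\p$ under $A \twoheadrightarrow A/\supp v \hookrightarrow (\O_v)_\p$ consists of exactly those $a \in A$ with $v(\bar a) < H$, which by definition is $\supp(v|H) = \supp w$. For (c), the valuation $v|H$ on $Q(A/\supp w)$ is the restriction of the $H$-valued valuation on $Q(\O_v/\p)$ (whose valuation ring is $\O_v/\p$) along the inclusion $Q(A/\supp w) \hookrightarrow Q(\O_v/\p)$ induced by (b); consequently $\O_w = (\O_v/\p) \cap Q(A/\supp w)$. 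The converse runs these equivalences backward: a prime $\p \subseteq \O_v$ satisfying (a) yields a convex subgroup $H \subseteq \Gamma_v$ that bounds $A$, and (b), (c) identify the resulting horizontal specialization $v|H$ with $w$ on support and valuation ring, hence as valuations.

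The main technical point is keeping the three relevant fields straight: $K = Q(A/\supp v)$ in which both $A/\supp v$ and $\O_v$ sit, the intermediate localization $(\O_v)_\p \subseteq K$, and the residue field $Q(\O_v/\p)$ in which $\O_w$ and $A/\supp w$ sit. The commutative diagram displayed in the statement makes this bookkeeping transparent; once it is unwound, the argument is a compilation of classical facts about the lattice of primes of a valuation ring and its correspondence with coarsenings and restrictions of the valuation.
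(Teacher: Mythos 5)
Your proof is correct and follows essentially the same route as the paper: both arguments reduce the characterization of horizontal and vertical specializations to the classical dictionary between convex subgroups $H\subseteq\Gamma_v$ and prime ideals of $\O_v$, together with the identifications $(\O_v)_{\p_H}=\O_{v/H}$ and $\O_v/\p_H=$ valuation ring of the residual valuation. The paper spells out a bit more of this dictionary by hand (e.g.\ checking directly that $\p$ is a prime ideal and that $H$ bounds $A$ in the converse direction), whereas you invoke it as a black box, but the underlying mechanism is identical.
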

\begin{proof}
(i) is obvious by valuation theory.\\
(ii) Let $w$ be a horizontal specialization of $v$ and let $c\Gamma_v\subset H \subset \Gamma_v$ be a convex subgroup as in the definition.
Let $\p \subset Q(A/\supp v)$ be the set of all $x$ with $v(x) \notin H$. Since $H\supset c\Gamma_v$, we have $\p\subset \O_v$.
\begin{enumerate}
  \item $a\in \O_v$, $b\in \p$ $\Rightarrow$ $ab\in \p$: Otherwise, $v(ab) \in H$ and  $v(ab) \le v(b) \le 1$ would imply $v(b)\in H$, a contradiction.
  \item $a,b\in \p$ $\Rightarrow$ $a+b\in \p$: $v(a+b) \le \max(v(a),v(b)) \le 1$: By convexity, $v(a+b)\in H$ implies $(v(a)\in H)\vee (v(b)\in H)$.
  \item $a,b\notin \p$ $\Rightarrow$ $ab\notin \p$: clear since $v(ab)=v(a)v(b)$ and $H$ is a group.
\end{enumerate}
We conclude that $\p\subset \O_v$ is a prime ideal. For $a \in A/\supp v$ with $v(a) \le 1$ we have $a \in \O_v$ by definition. If $v(a)>1$, then $v(a) \in H$, hence $v(a^{-1}) \in H$ and $v(a^{-1})\le 1$. We conclude $a^{-1}\in \O_v \smallsetminus \p$, hence $a \in (\O_v)_\p$. This shows (a).
Moreover, $\supp w =\{a\in A \mid v(a) \notin H\}$ is the preimage of $\p$ under $A\to A/\supp v \subset Q(A/\supp v)$, showing (b). Finally, $\O_v/\p$ is the valuation ring of the valuation $v'$ on $Q(\O_v/\p)$ given on $\O_v$ by $v'(a)=0$ for $v(a)\notin H$ and $v(a)$ otherwise. We conclude that $w$ is the restriction of $v'$ to $Q(A/\supp w)$, hence $\O_w= \O_v/\p \cap Q(A/\supp w)$.

Conversely, assume there exists a prime ideal $\p\subset \O_v$ such that (a)--(c) hold. As is known from valuation theory, there is a convex subgroup $H\subset \Gamma_v$ such that $\p= \{x\in \O_v \mid v(x) \notin H\}$.  By (a), for all $x\in A$, there is a $b\in \O_v\smallsetminus \p$ with $xb \in \O_v$.
We conclude that $v(x)\le v(b^{-1}) \in H$, hence $H$ bounds $A$. By (b) we see that $\supp w = \supp v|H$, and then $c$ implies $w=v|H$.
\end{proof}

\begin{corollary}\label{vertical-RZ}
Every vertical generalization of a Riemann-Zariski point is a Rie\-mann-Zariski point.
\end{corollary}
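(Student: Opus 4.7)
The plan is to unwind both sides of the statement using \cref{special-aff-chara}(i) together with the explicit description of vertical generalizations given right before that lemma. If $w$ is a vertical generalization of $v$, then $\supp w=\supp v$ and $\mathcal{O}_v\subset\mathcal{O}_w$ inside $Q(A/\supp v)$; equivalently, $w=v/H$ for some convex subgroup $H\subset\Gamma_v$, so that $\Gamma_w=\Gamma_v/H$ and the quotient map $\pi\colon\Gamma_v\twoheadrightarrow\Gamma_w$ sends $v(a)$ to $w(a)$.

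The second ingredient is a reformulation of the Riemann-Zariski condition. By definition, $c\Gamma_v$ is the convex subgroup of $\Gamma_v$ generated by $\{v(a)\mid v(a)\ge 1\}$; since a finite product $v(a_1)\cdots v(a_k)$ of such elements equals $v(a_1\cdots a_k)$, the equality $c\Gamma_v=\Gamma_v$ is equivalent to the statement that for every $\gamma\in\Gamma_v$ with $\gamma\ge 1$ there exists $a\in A$ with $v(a)\ge\gamma$. I would isolate this reformulation as the key point.

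To show $c\Gamma_w=\Gamma_w$, I pick an arbitrary $\gamma\in\Gamma_w$, lift it to some $\tilde\gamma\in\Gamma_v$, and set $\delta:=\max(\tilde\gamma,\tilde\gamma^{-1})\in\Gamma_v$, which automatically satisfies $\delta\ge 1$. By the reformulation, there is $a\in A$ with $v(a)\ge\delta$. Then $v(a)\ge 1$ forces $w(a)=\pi(v(a))\ge 1$, so $w(a)$ is one of the generators of $c\Gamma_w$; and the two inequalities $v(a)\ge\tilde\gamma$ and $v(a)\ge\tilde\gamma^{-1}$ descend under the order-preserving map $\pi$ to $w(a)^{-1}\le\gamma\le w(a)$. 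Hence $\gamma$ lies in the convex subgroup generated by $w(a)$, and in particular in $c\Gamma_w$.

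I do not expect a real obstacle. The only care needed is the order-theoretic bookkeeping through the quotient $\pi$: that $\pi$ is order-preserving and that convexity of $H$ is exactly what allows the characterization of $c\Gamma_v$ to descend to one of $c\Gamma_w$. One could equivalently phrase everything in terms of valuation rings via \cref{special-aff-chara}(i), but working at the level of value groups makes the Riemann-Zariski hypothesis the most transparent.
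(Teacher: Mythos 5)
Your proof is correct, and it takes a genuinely different route from the one in the paper. You work entirely at the level of value groups: you reformulate the Riemann--Zariski condition as the equality $c\Gamma_v = \Gamma_v$, observe that this is equivalent to the statement that every $\gamma\ge 1$ in $\Gamma_v$ is dominated by some $v(a)$ with $a\in A$, and then push that bound through the order-preserving quotient map $\pi\colon\Gamma_v\twoheadrightarrow\Gamma_w=\Gamma_v/H$. (The bookkeeping you flag is exactly right: $\pi$ is order-preserving because $H$ is convex, $w(a)=\pi(v(a))\ge 1$ is a legitimate generator of $c\Gamma_w$, and the two inequalities descend to $w(a)^{-1}\le\gamma\le w(a)$, putting $\gamma$ in $c\Gamma_w$.) The paper instead argues with valuation rings and prime ideals, by contraposition: it writes $\O_{v'}=(\O_v)_{\q}$ via \cref{special-aff-chara}(i), supposes $v'$ has a nontrivial horizontal specialization corresponding to a prime $(0)\subsetneq\p\subset\O_{v'}$, identifies $(\O_{v'})_\p$ with $(\O_v)_{\mathfrak P}$ for a nonzero prime $\mathfrak P\subset\q$, and reads off from the diagram and conditions (a)--(c) of \cref{special-aff-chara}(ii) a nontrivial horizontal specialization of $v$. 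The two arguments encode the same dictionary between convex subgroups of $\Gamma_v$ and primes of $\O_v$; yours is more elementary and self-contained (it uses only the definition of the characteristic subgroup and of the quotient valuation $v/H$, not the detailed statement of \cref{special-aff-chara}(ii)), while the paper's leans on the lemma it has just established. Both are sound.
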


\begin{proof}
If $v'$ is a vertical generalization of $v$, then by \cref{special-aff-chara} (i), $\supp v=\supp v'$ and $\O_{v'}=(\O_v)_\q$ for some prime ideal $\q \subset \O_v$. Assume that $v'$ has a non-trivial horizonal specialization. Then there exist a prime $(0)\subsetneq \p \subset \O_v'$ as in \cref{special-aff-chara} (ii). But $(\O_{v'})_\p=(\O_v)_\mathfrak{P}$  for some prime $\mathfrak{P} \subset \O_v$ with $(0)\subsetneq \mathfrak{P} \subset \q$. The diagram in \cref{special-aff-chara} (ii) shows that there is a non-trivial horizontal specialization of $v$, as well.
\end{proof}

We will use the following fact below:

\begin{proposition}[{\cite[Proposition 1.2.4]{HK94}}] \label{specialization-prop}
Any specialization in $\Spa(A,A^+)$ is a vertical specialization of a generalized horizontal specialization. \qed
\end{proposition}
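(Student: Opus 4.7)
The plan is to reduce to the case where $v$ and $w$ have the same support, at which point \cref{special-aff-chara}(i) immediately identifies the specialization as vertical. Given $v\rightsquigarrow w$ in $\Spa(A,A^+)$, I would construct an intermediate point $v'$ with $\supp v' = \supp w$ such that $v\rightsquigarrow v'$ is a generalized horizontal specialization; the remaining step $v'\rightsquigarrow w$ is then vertical by \cref{special-aff-chara}(i).

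If $\supp v = \supp w$, take $v' = v$. Otherwise set $\q := \supp w \supsetneq \supp v$ and let $H \subset \Gamma_v$ be the convex subgroup generated by $\{v(b) : b \in A\setminus\q\}$. The usable content of $v\rightsquigarrow w$ coming from the topology of rational opens is the implication $w(f)\le w(g)\ne 0 \Rightarrow v(f)\le v(g)$. Applied with $g=1$ and $f=a^n$ for $a\in\q$, this yields $v(a)^n\le 1$ for all $n\ge 1$, hence $v(a)\le 1$; applied with $f=a$, $g=b$ for $a\in\q\setminus\supp v$ and $b\in A\setminus\q$ it yields $v(a)\le v(b)$. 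If moreover $\bar v$ is nontrivial, there exists $a'\in A$ with $v(a')>1$, and then $a'\notin\q$ by the previous step; applying the implication to $f=a\cdot a'$, $g=b$ yields $v(a)v(a')\le v(b)$, which forces the strict inequality $v(a)<v(b)$. From these strict inequalities one verifies that $H$ bounds $A$, contains $c\Gamma_v$, and that $v(a)$ is cofinal below $H$ for every $a\in\q\setminus\supp v$, so that $\supp(v|H)=\q$; hence $v':=v|H$ is the desired horizontal specialization. In the remaining case $\bar v$ trivial, the strict inequality argument may fail, but then $v':=\tr_\q$ is by the very definition a permissible generalized horizontal specialization of $v$.

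The final step $v'\rightsquigarrow w$ is vertical: both points have support $\q$, and for $x=\bar f/\bar g\in\O_w$ with $g\in A\setminus\q$, the relation $w(f)\le w(g)$ combined with the above implication gives $v(f)\le v(g)$, i.e.\ $x\in\O_{v'}$, so \cref{special-aff-chara}(i) applies. The main obstacle in this plan is extracting the strict inequality $v(a)<v(b)$ from the a priori non-strict specialization relation; the dichotomy between a genuine horizontal $v|H$ and the generalized option $\tr_\q$ precisely reflects whether the multiplicative trick with $a'$ is available, and is the origin of the ``generalized'' qualifier in the statement. Once that strict inequality is in hand, the convexity and cofinality bookkeeping needed to confirm $\supp(v|H)=\q$ and $\O_w\subset\O_{v|H}$ is routine.
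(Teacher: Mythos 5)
The paper does not prove this statement itself --- it is quoted from Huber--Knebusch \cite{HK94} --- so I evaluate your argument on its own terms. Your strategy of interpolating a point $v'$ with $\supp v' = \supp w$, horizontal from $v$ and vertical to $w$, is correct, and the branch using $v' = v|H$ is sound once the strict inequality is propagated from the generators $v(b)$ to arbitrary elements of $H$ (apply your trick to $ad$, $d\notin\q$; routine, as you say). The error is in the other branch: you claim that when $\bar v$ is trivial, $v':=\tr_\q$ is ``by the very definition'' a permissible generalized horizontal specialization, but the definition requires $\supp\bar v\subseteq\q$, which you do not verify and which can fail. Concretely, take $A=A^+=k[x,y]$, let $v$ have value group $\Gamma_v=\gamma_1^\Z\gamma_2^\Z$ ordered so that $\gamma_2<\gamma_1^n<1$ for all $n$, with $v(x)=\gamma_1$, $v(y)=\gamma_2$ (so $\bar v$ is trivial, $\supp\bar v=(x,y)$), and let $w$ be the $x$-adic valuation on $Q(A/(y))$. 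One checks $v\rightsquigarrow w$ and $\q=(y)$, yet $\supp\bar v=(x,y)\not\subseteq(y)$; moreover $v\not\rightsquigarrow\tr_{(y)}$, since the basic open $\{u:u(x)\ge 1\}$ contains $\tr_{(y)}$ but not $v$. So your proposed $v'$ is not even a specialization of $v$ here, whereas $v|H$ with $H=\gamma_1^\Z$ is the correct horizontal specialization.

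The underlying issue is that triviality of $\bar v$ is the wrong criterion for the dichotomy. One always has either $\q\subseteq\supp\bar v$ or $\supp\bar v\subsetneq\q$. In the first case every $a\in\q\setminus\supp v$ satisfies $v(a)<1$, and the strict inequality $v(a)<v(b)$ for $b\notin\q$ follows \emph{without} an $a'$ with $v(a')>1$: from $w(a)=0\le w(b^2)\ne 0$ one gets $v(a)\le v(b)^2$, hence $v(a)<v(b)$ since $v(a)<1$ and $v(b)\le 1$. So $v|H$ still works when $\bar v$ is trivial. Only in the second case $\supp\bar v\subsetneq\q$ (equivalently, $v(b)=1$ for all $b\notin\q$) does the $H$-construction collapse to $H=\{1\}$ with $\supp(v|H)=\supp\bar v\ne\q$, and there $\tr_\q$ is the right choice --- and there $\supp\bar v\subseteq\q$ is automatic. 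Splitting along ``$\bar v$ trivial'' conflates these two regimes and, as the example shows, can land on a $v'$ that is not a specialization of $v$ at all.
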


A \emph{homomorphism of Huber pairs} $f: (A,A^+)\to (B,B^+)$ is a ring homomorphism $f: A\to B$ with $f(A^+)\subset B^+$. It is called \emph{integral}, if $A\to B$ and $A^+\to B^+$ are integral. It is called \emph{finite}, if it is integral and $A\to B$ is finite.
A homomorphism of Huber pairs  is called \emph{\'{e}tale}, \emph{tame} or \emph{strongly \'{e}tale} if the associated morphism of adic spaces $\Spa(B,B^+)\to \Spa(A,A^+)$ has this property. Explicitly, this means the following:

The homomorphism $f: (A,A^+)\to (B,B^+)$ is \emph{\'{e}tale} if $A\to B$ is an \'{e}tale ring homomorphism and $B^+$ is the integral closure in $B$ of a finitely generated $A^+$-subalgebra.

The homomorphism $f$ is \emph{tame}, resp.\ \emph{strongly \'{e}tale} if it is \'{e}tale and for every point $w$ of $\Spa(B,B^+)$ with image $v\in \Spa(A,A^+)$  the extension of valuations $w/v$ is tamely ramified resp.\ unramified in the separable algebraic field extension $k(\supp w)/k(\supp v)$.

\begin{remark}
In contrast to the scheme case, we do not assume that $\Spa(B,B^+)\to \Spa(A,A^+)$ is a tame \emph{covering}. Instead, we require tameness at every point of $\Spa(B,B^+)$.
\end{remark}

\medskip
Let $A$ be a local ring and~$A^+\subset A$ the preimage of a valuation ring $\O$ of the residue field~$k$ of~$A$. Then $A^+$ is local and, since valuation rings are integrally closed, $A^+$ is integrally closed in $A$. Hence $(A,A^+)$ is a Huber pair. We call such Huber pairs \emph{local}. If $(A,A^+)$ is a local Huber pair, then $\m_A$ is a prime ideal of $A^+$ and the natural map $A^+_{\m_A}=(A^+\smallsetminus \m_A)^{-1} A^+ \to A$ is an isomorphism. A homomorphism $f: (A,A^+)\to (B,B^+)$ of local Huber pairs is \emph{local} if $A^+\to B^+$ and $A\to B$ are local ring homomorphisms.

\medskip
For a Huber pair $(A,A^+)$ and $ v \in \Spa(A,A^+)$ we denote by $(A,A^+)_v$ the \emph{localization} of $(A,A^+)$ at~$v$. It is obtained as follows.
Let~$B:=A_{\supp v}$ be the localization of~$A$ at $\supp v$.
In its residue field~$k$ we have the valuation ring~$\O$ corresponding to~$v$. Let $B^+$ be the pre-image of $\O$ in $B$. Then $(A,A^+)_v=(B, B^+)$.

The adic space $\Spa((A,A^+)_v)$ is the intersection of all open neighborhoods of $v$ in $\Spa(A,A^+)$.
A point $w\in \Spa(A,A^+)$ specializes to $v$ if and only if $(A,A^+) \to (A,A^+)_w$ factors through $(A,A^+) \to (A,A^+)_v$.

\medskip
We start  the discussion of henselizations with the following observation.

\begin{lemma}\label{triple-henselian}
Let $A$ be a local ring with residue field $k$, $\pr: A\to k$ the projection and $\O\subset k$ a local subring. Set
\[
A^+=\pr^{-1}(\O) \subset A.
\]
Then $A^+$ is henselian if and only if $A$ and $\O$ are henselian.
\end{lemma}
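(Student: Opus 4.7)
My plan splits along the iff.

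For the forward direction, assume $A$ and $\O$ are henselian. I verify the simple-root form of Hensel's lemma for $A^+$ by chaining Hensel lifts through the residue tower $A^+ \twoheadrightarrow \O \twoheadrightarrow k_\O$. Given a monic $f \in A^+[x]$ and a simple root $\bar\alpha \in k_\O = A^+/\m_{A^+}$ of $\bar f$, the reduction of $f$ modulo $\m_A$ lies in $\O[x]$ since $A^+/\m_A = \O$. Applying henselianness of $\O$ lifts $\bar\alpha$ to a root $\beta \in \O$ of this reduction whose derivative is a unit in $\O$. Viewing $\beta \in \O \subset k$, the derivative is in particular nonzero in $k$, so $\beta$ is a simple root of the reduction of $f$ viewed in $k[x]$. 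Applying henselianness of $A$ lifts $\beta$ to a root $\alpha \in A$ of $f$. Since $\pr(\alpha) = \beta \in \O$, necessarily $\alpha \in \pr^{-1}(\O) = A^+$, as required.

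For the reverse direction, assume $A^+$ is henselian. Henselianness of $\O = A^+/\m_A$ is automatic: $\m_A \subset \m_{A^+}$ is contained in the Jacobson radical of $A^+$, and quotients of henselian local rings by such ideals are henselian. To prove $A$ henselian I will verify the factorization form of Hensel's lemma. Given monic $f \in A[x]$ of degree $n$ with coprime monic factorization $\bar f = \bar g_1 \bar g_2$ in $k[x]$ (degrees $n_i$, $n_1+n_2=n$), I choose $q \in A^+ \setminus \m_A$ (hence a unit in $A$) whose image $\bar q \in \O$ is divisible enough to clear the denominators in $k$ of both the coefficients of $f$ and those of $\bar g_1, \bar g_2$. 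The substitution $x = y/q$ produces a monic polynomial $F(y) = q^n f(y/q) \in A^+[y]$ whose reduction modulo $\m_A$ factors as $\bar F = \bar G_1 \bar G_2$ in $\O[y]$ with $\bar G_i(y) = \bar q^{n_i} \bar g_i(y/\bar q)$. Lifting this factorization to $A^+[y]$ uses Hensel's lemma for the henselian pair $(A^+,\m_A)$: this pair is henselian because $\m_A$ lies in the Jacobson radical of the henselian $A^+$, and the pair statement reduces to $A^+$-henselianness together with uniqueness of Hensel lifts in the henselian quotient $\O$. Once $F = H_1 H_2$ is obtained in $A^+[y]$, undoing the substitution via $h_i(x) := H_i(qx)/q^{n_i} \in A[x]$ yields the desired $f = h_1 h_2$ in $A[x]$ with $\bar h_i = \bar g_i$.

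The hard part is ensuring coprimality of $\bar G_1, \bar G_2$ in $\O[y]$, which is what Hensel's lemma for the pair $(A^+,\m_A)$ actually requires. This is equivalent to $\bar q^{n_1 n_2} \mathrm{Res}(\bar g_1, \bar g_2) \in \O^\times$, and only the weaker condition of nonvanishing in $k^\times$ is automatic. To surmount this I plan to refine the choice of $\bar q$ so that it not only clears denominators but also absorbs the nonunit part of $\mathrm{Res}(\bar g_1,\bar g_2)$, and if necessary to invoke henselianness of $\O$ to perform an intermediate Hensel adjustment on $\bar g_1, \bar g_2$ before passing to $A^+$. The rest of the argument is bookkeeping: the uniqueness of Hensel factorizations and the fact that $q \in A^\times$ together guarantee that the factorization of $F$ in $A^+[y]$ transports back to the desired factorization of $f$ in $A[x]$.
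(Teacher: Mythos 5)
Your forward direction is correct and essentially matches the paper's: both chain Hensel lifts through the residue tower $A^+/\m_{A^+}=\O/\m_\O \to \O \to k \to A$; the paper uses the coprime-factorization form of Hensel's lemma where you use the simple-root form, but these are equivalent characterizations and the structure of the argument is the same. The henselianness of $\O$ as a quotient is also fine.

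The step showing $A$ is henselian has a genuine gap, and I don't think it can be repaired along the lines you sketch. Two obstructions:

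\begin{enumerate}
\item \emph{Clearing denominators may be impossible.} To have $F(y)=q^n f(y/q)\in A^+[y]$ you need $\pr(a_i)\,\pr(q)^{n-i}\in\O$ for every coefficient $a_i$ of $f$. But the lemma only assumes $\O\subset k$ is a local subring, not a valuation ring, so there is no divisibility structure to exploit. For instance if $\O$ is a DVR whose fraction field $K$ is a proper subfield of $k$ and some $\pr(a_i)\in k\smallsetminus K$, then no nonzero $d\in\O$ satisfies $d\cdot\pr(a_i)\in\O$, and the whole substitution scheme collapses.
\item \emph{Even with $\O$ a valuation ring, the coprimality over $\O$ can fail irreparably.} You correctly identify that you need $\bar q^{\,n_1n_2}\mathrm{Res}(\bar g_1,\bar g_2)\in\O^\times$. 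In valuation terms this forces $v(\bar q)=-v\bigl(\mathrm{Res}(\bar g_1,\bar g_2)\bigr)/(n_1n_2)$. If the resultant already lies in $\m_\O$ --- e.g.\ $\bar f=x(x-\epsilon)$ with $0\ne\epsilon\in\m_\O$, $\bar g_1=x$, $\bar g_2=x-\epsilon$ --- you would need $v(\bar q)<0$, which is impossible for $\bar q\in\O$. Your proposed remedies (absorbing the nonunit part of the resultant into $\bar q$, or intermediate Hensel adjustments over $\O$) cannot fix this: scaling by $\bar q\in\O$ only contracts the distance between roots, never expands it, and the given factorization over $k$ need not descend to a coprime factorization over $\O$ in any substituted form.
\end{enumerate}

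The paper avoids all of this by working with a different characterization of henselianness: a local ring is henselian iff every connected finite (or integral) algebra over it is local. Given a connected finite $A$-algebra $A'$, one takes the integral closure $A'^+$ of $A^+$ in $A'$; since idempotents are integral, $A'^+$ has the same idempotents as $A'$ and is therefore connected, hence local henselian as an integral algebra over the henselian $A^+$. The remaining work is to show $A'/\m_A A'$ is connected, which the paper does via a short claim that the image of $A'^+/\m_A A'^+\to A'/\m_A A'$ is integrally closed. I recommend you switch to this idempotent/integral-closure argument for the reverse direction.
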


\begin{proof} First observe that  $\m_A$ is a prime ideal of $A^+$.
If $\O$ and $A$ are henselian, then applying Hensel's lemma twice shows that $A^+$ is henselian:
Let $f$ be a monic polynomial over $A^+$ and $f=gh$ modulo $\m_{A^+}$ a decomposition with $g \bmod \m_{A^+}$ and $h \bmod \m_{A^+}$ monic and coprime.  We have $A^+/\m_{A^+}=\O/\m_\O$ and we can lift the decomposition to $\O$. Since $A$ is henselian with residue field $k\supset \O$, we can lift the decomposition to $A$ and the coefficients lie modulo $\m_A$ in $\O$. Hence they are in ~$A^+$.

Now assume that $A^+$ is henselian. Then, being a quotient ring of $A^{+}$, also $\O$ is henselian.  Assume that $A\to A'$ is finite with $A'$ connected. Then $A'$ is semi-local and, in order to show that $A$ is henselian, we have to show that $A'$ is local. For this it suffices to show that $A'/\m_A A'$ is local. Since this ring is a finite algebra over the field $A/\m_A$, it suffices to show that $A'/\m_A A'$ is connected.  Let $A'^+$ be the integral closure of $A^+$ in $A'$. Since all idempotents of $A'$ lie in $A'^+$ and $A'$ is connected, $A'^+$ is connected, as well.  Hence  $A'^+$ is local henselian and the same holds for  $A'^+/\m_A A'^+$.

\medskip\noindent
\emph{Claim:} The image of $\phi: A'^+/\m_A A'^+\to A'/\m_A A'$ ist integrally closed in  $A'/\m_A A'$.

\medskip\noindent
Using the claim we see that  $\mathrm{im}(\phi)$ has the same connected components as $A'/\m_A A'$. But $\mathrm{im}(\phi)$ is (henselian) local, hence connected, showing that $A'/\m_A A'$ is connected.

\medskip\noindent
It remains to show the claim. First we observe that $\m_AA'\subset A'^+$. Now assume that for $x\in A'$ there exists a monic polynomial $f\in A'^+[T]$ with $f(x)=a'\in \m_A A' \subset A'^+$. Then $x$ is a zero of  $f-a'\in A'^+[T]$, hence $x\in A'^+$.
\end{proof}

\begin{definition} Let $(A,A^+)$ be a local Huber pair, $k=A/\m_A$ and $\O$ the image of $A^+$ in $k$.
\begin{enumerate}
  \item $(A,A^+)$ is called \emph{henselian} if  $A^+$ is a henselian ring. By \cref{triple-henselian}, this is equivalent to $A$ and $\O$ being henselian.
  \item $(A,A^+)$ is called \emph{strongly henselian} if $A^+$ is a strictly henselian ring. By \cref{triple-henselian}, this is equivalent to $A$ being henselian and $\O$ being strictly henselian.
  \item $(A,A^+)$ is called \emph{tamely henselian} if $(A,A^+)$ is henselian and $k$ does not admit any nontrivial, finite separable field extension which is tamely ramified with respect to the (henselian) valuation corresponding to $\O$. Equivalently: $(A,A^+)$ is strongly henselian and $k$ is separably closed if the residue field $\kappa$ of $\O$ has characteristic zero, resp.\ the absolute Galois group of $k$ is a pro-$p$-group if $\kappa$ has characteristic $p>0$.
  \item $(A,A^+)$ is called \emph{strictly henselian} if $A$ is a strictly henselian ring. In this case $\O$, and, by \cref{triple-henselian}, also $A^+$ is strictly henselian.
\end{enumerate}
\end{definition}

\begin{lemma} \label{adic-hensel-char} Let $(A,A^+)$ be a Huber pair such that $\cX=\Spa(A,A^+)$ is connected. Then
\begin{enumerate}[\rm (i)]
\item $(A,A^+)$ is local if and only if every open covering of $\cX$ splits.
\item $(A,A^+)$ is strongly henselian if and only if every strongly \'{e}tale covering of $\cX$ splits.
\item $(A,A^+)$ is tamely henselian if and only if every tame covering of $\cX$ splits.
\item $(A,A^+)$ is strictly henselian if and only if every \'{e}tale covering  of $\cX$ splits.
\end{enumerate}
\end{lemma}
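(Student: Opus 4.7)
The plan is to mimic the scheme-theoretic Lemma \cref{tamely hens charact}, leveraging the two-layered decomposition of local Huber pairs provided by \cref{triple-henselian}: henselicity of $A^+$ is equivalent to henselicity of $A$ and of $\O$ together, and the strongly/tamely/strictly henselian variants similarly refine into a condition on $A$ together with one on $\O$. All four cases follow a common pattern: the ``if'' direction amounts to extracting a finite subcovering (using quasi-compactness of $\cX$ from \cref{spectral}, together with openness of the tame locus \cref{opennness-tame-locus} and the openness of \'etale maps of adic spaces in cases (ii)-(iv)) and then locating a member that admits a section; the ``only if'' direction proceeds by contraposition, constructing an explicit non-splitting covering from a hypothesized failure of the algebraic condition.

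For (i), I would begin by identifying, for local $(A,A^+)$, a unique closed point $v_0 \in \cX$: by \cref{specialization-prop}, every point of $\cX$ admits a sequence of generalized horizontal and vertical specializations, and iterating to maximality lands at the point supported on $\m_A$ whose valuation ring in $k = A/\m_A$ is $\O$ itself, i.e., the point dual to $\m_{A^+}$. Any open subset $U \subset \cX$ containing $v_0$ must equal $\cX$: its complement is closed and, if nonempty, would contain $v_0$ as a specialization of one of its members, giving a contradiction. Conversely, if $A^+$ is not local, pick two maximal ideals $\m_1, \m_2$ of $A^+$ and elements $f \in \m_1 \smallsetminus \m_2$ and $g \in \m_2 \smallsetminus \m_1$; the principal opens $\{v(f) \ne 0\}$ and $\{v(g) \ne 0\}$ then witness a non-splitting open covering compatible with connectedness of $\cX$.

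For (ii)--(iv), each forward direction uses (i) implicitly: if $(A,A^+)$ is strongly/tamely/strictly henselian, it is in particular local, so $\cX$ has a unique closed point $v_0$. Given a covering of the specified type, I pass to a finite subcovering $\{\Spa(B_j, B_j^+) \to \cX\}_{j=1}^n$ and observe that each $A \to B_j$ is \'etale; since $A$ is henselian, each $B_j$ decomposes as a finite product of local henselian rings, each with residue field a finite separable extension of $k$. The covering property at $v_0$ identifies at least one local factor whose residue extension is appropriately unramified/tame/separable with respect to the valuation $\O$. The corresponding closure property of the residue field of $(A,A^+)$ then forces this factor to be isomorphic to $A$, yielding a section. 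For the ``only if'' directions, contraposition: any unramified/tame/separable extension of $k$ with respect to $\O$ lifts via Hensel's lemma to a finite \'etale $A$-algebra whose adic spectrum covers $\cX$ with the required property but admits no section; similarly, failure of henselicity of $A^+$ is already witnessed by a non-splitting open cover from (i).

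The main obstacle will be the careful treatment of horizontal specializations, which have no scheme-theoretic counterpart. In particular, to justify that $v_0$ in (i) is a specialization of \emph{every} $v \in \cX$, I need to chain a vertical specialization with a generalized horizontal specialization via \cref{specialization-prop}, and then verify the compatibility of valuation rings using \cref{special-aff-chara}(ii). Once this spectral picture of $\cX$ is in place, the arguments for (ii)--(iv) reduce essentially to a double application of \cref{tamely hens charact} (once to the local ring $A$ and once to the valuation ring $\O$), glued together by \cref{triple-henselian}.
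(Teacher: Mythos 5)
Your plan for (i) and the forward directions of (ii)--(iv) tracks the paper's, and your observation that (ii)--(iv) should decompose via \cref{triple-henselian} into a condition on $A$ and one on $\O$ is exactly the right lens. The genuine gap is in the ``only if'' direction of (ii)--(iv), specifically in establishing that $A$ (equivalently $A^+$) is \emph{henselian}, not merely local. You write that ``failure of henselicity of $A^+$ is already witnessed by a non-splitting open cover from (i),'' but open covers only detect failure of \emph{locality}; a local, non-henselian $A$ cannot be seen by any open cover. And the other half of your contrapositive --- lifting a nontrivial extension of $k$ to a finite \'etale $A$-algebra ``via Hensel's lemma'' --- already presupposes that $A$ is henselian, so it cannot be used to establish that fact. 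In short, the case where $(A,A^+)$ is local but $A$ is not henselian is not handled.

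The paper's proof supplies precisely the missing step. Given a connected \'etale $\Spec B \to \Spec A$ admitting a point $y$ over $\supp v$ with trivial residue extension (the Nisnevich-type cover that would detect non-henselicity of $A$), one sets $B^+$ to be the integral closure of $A^+$ in $B$, observes that $\Spa(B,B^+) \to \cX$ is strongly \'etale at every point with support $y$, invokes \cite[Corollary 4.3]{HueAd} to see that the strongly \'etale locus $\cY$ is open, and uses that its image is an open neighborhood of the closed point $v$ --- hence all of $\cX$ --- to conclude that $\cY \to \cX$ is a strongly \'etale covering; the hypothesized splitting then yields a section of $\Spec B \to \Spec A$, proving $A$ henselian. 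Only after that does one pass to $\O$ by the argument you sketch (and even there the lift is performed over the now-henselian $A$). A secondary, lesser issue is in your converse of (i): picking $f \in \m_1 \smallsetminus \m_2$, $g \in \m_2 \smallsetminus \m_1$ does not in general yield a cover, and even with the improved choice $f+g=1$ you must still rule out $f$ or $g$ being a unit of $A$, in which case $\{v(f)\ne 0\}$ is already $\cX$ and nothing is witnessed; the paper sidesteps this by arguing directly on the topological space that a non-local $\cX$ has two distinct closed points whose complements form a non-splitting open cover.
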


\begin{proof}
That the properties local, etc.\ imply the corresponding splitting property follows directly from the definitions.

Assume that every open covering of $\cX$ splits. Then  $\cX$ has exactly one closed point, say $v$, to which all other points specialize. Every open neighborhood of $v$ is equal to $\cX$, hence $(A,A^+)=(A,A^+)_v$ is local. This shows (i) and that $(A,A^+)$ is local in cases (ii)--(iv). We denote the image of $A^+$ in $k=A/\m_A$ by $\O$ (a valuation ring of $k$).

Now assume that every strongly \'{e}tale covering of $\cX$ splits. We first show that $A$ is henselian. Let $\Spec B \to \Spec A$ be a connected \'{e}tale covering such that there is a point $y\in \Spec B$ lying over $\supp v$ with trivial residue field extension $k(y)=k(\supp v)$. Let $B^+$ be the integral closure of $A^+$ in $B$. Then the natural projection $f: \Spa(B,B^+)\to \cX$ is \'{e}tale and strongly \'{e}tale at every point with support~$y$. Let $\cY\subset \Spa(B,B^+)$ be the strongly \'{e}tale locus of $f$, which is open by \cite[Corollary\,4.3]{HueAd}. Then $f(\cY)$ is open in $\cX$ and contains $v$, hence $f(\cY)=\cX$. We conclude that $f|_{\cY}: \cY \to \cX$ is a strongly \'{e}tale covering, hence admits a splitting. This induces a splitting of $\Spec B \to \Spec A$. We conclude that $A$ is a local henselian ring. In order to show (ii), it remains to show that $\O$ is strictly henselian.

If $\O$ were not strictly henselian,  there would exist a connected \'{e}tale $\O$-algebra $\O'$ which does not split. Being normal and connected, $\O'$ is a domain. Let $k'$ denote its quotient field. Then $k'/k$ is a nontrivial finite separable extension (and $\O'$ is a localization of the normalization of $\O$ in $k'$). Let $A\to B$ be the finite \'{e}tale extension of local henselian rings associated with $k'/k$. Let $\O'=\O[\bar{x}_1, \ldots, \bar{x}_n]$, $x_1,\ldots,x_n$ preimages of the $\bar{x}_i$ in $B$, and $B^+\subset B$ the integral closure in $B$ of $A[x_1,\ldots,x_n]$. Then $\Spa(B,B^+) \to \cX$ is \'{e}tale and strongly \'{e}tale over the closed point $v$. Since the strongly \'{e}tale locus is open, we obtain a strongly \'{e}tale covering of $\cX$ which does not split. This shows (ii).

Finally, using (ii), the verification of (iii) and (iv) is easy and left to the reader.
\end{proof}

For a Huber pair $(A,A^+)$ and a point $ x \in \Spa(A,A^+)$ the \emph{henselization} of $(A,A^+)$ at~$x$ is obtained as follows:
Let~$A^h$ be the henselization of~$A$ at $\supp x$. In its residue field~$k$ we have the valuation ring~$\O$ corresponding to~$x$.
The henselization $\O^h$ of $\O$ is a valuation ring, its quotient field $k^h$ is a separable algebraic field extension of $k$.
We define $C$ as the ind-(finite \'{e}tale) $A^h$-algebra corresponding to $k^h/k$ and~$C^+$ as the preimage of~$\O^h$ in~$C$.

\begin{definition}
We call  $(A,A^+)^h_x:=(C,C^+)$ the \emph{henselization} of $(A,A^+)$ at $x$.
\end{definition}

The induced homomorphism $(A,A^+)_x \to (A,A^+)^h_x$ is universal for local homomorphisms of $(A,A^+)_x$ to henselian Huber pairs.

Now we fix a separable closure $\bar k/k^h$. The henselian valuation of $k^h$ associated with $\O^h$ has a unique extension to $\bar k$.  Inside this we have the maximal tamely ramified and unramified extensions $k^t/k^\nr/k^h$. The normalization of $\O^h$ in $k^\nr$ is the strict henselization $\O^\sh$ of $\O^h$.  We denote the
normalization of $\O^h$ in $k^t$ by $\O^t$ and call it the tame henselization. Denoting the normalization of $\O^h$ in $\bar k$ by $\bar \O$, we obtain local ring extensions $\O^h\hookrightarrow \O^\nr\hookrightarrow\O^t\hookrightarrow \bar \O $.

\begin{definition}
The \emph{strong henselization} $(B,B^+)=(A,A^+)_x^\sh$ is given by setting $B$ the unique ind-(finite \'{e}tale) subextension of $A^\sh/A^h$ (strict henselization and henselization of $A$ at $\supp x$) associated to $k^\nr/k$ and $B^+$ the preimage of $\O^\sh \subset k^\nr$ in $B$.

The \emph{tame henselization} $(B,B^+)=(A,A^+)_x^t$ is given by setting $B$ the unique ind-(finite \'{e}tale) subextension of $A^\sh/A^h$ associated to $k^t/k$ and $B^+$ the preimage of $\O^t\subset k^t$ in $B$.

The \emph{strict henselization} $(B,B^+)=(A,A^+)_x^\strh$ is given by setting $B=A^\sh$ (strict henselization of $A$ at $\supp x$) and $B^+$ the preimage of $\bar \O \subset \bar k$ in $B$.
\end{definition}

The homomorphisms
\[
(A,A^+)_x^h \to (A,A^+)_x^\sh \to (A,A^+)_x^t \to (A,A^+)^\strh
\]
are integral by \cref{ind-etale-hens}\,(ii) below.

\begin{lemma}\label{m in plus}
Let $(A,A^+)$ be a henselian Huber pair and $(A,A^+)\to (B,B^+)$  a homomorphism of Huber pairs with $A\to B$ integral, ind-\'{e}tale and local. Then $\m_B\subset B^+$.
\end{lemma}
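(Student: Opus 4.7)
The plan is to show that every $b\in\m_B$ is integral over the subring $f(A^+)\subset B^+$; integral closedness of $B^+$ in $B$ then forces $b\in B^+$. The crucial preliminary observation is that $\m_A\subset A^+$: by assumption $(A,A^+)$ is a henselian, hence local, Huber pair, so $A^+=\pi^{-1}(\O)$ for a valuation ring $\O$ of $k=A/\m_A$, and of course $0\in\O$. Consequently $f(\m_A)\subset f(A^+)\subset B^+$.

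First I would reduce to a local finite \'{e}tale subalgebra. Since $f\colon A\to B$ is ind-\'{e}tale, $b$ is the image of some element $b_\alpha$ in an \'{e}tale $A$-algebra $B_\alpha$. Locality of $f$ implies that the preimage $\p_\alpha\subset B_\alpha$ of $\m_B$ lies over $\m_A$. Because $A$ is henselian, the \'{e}tale $A$-algebra $B_\alpha$ admits a direct product decomposition $B_\alpha=B_0\times B'$ in which $B_0$ is a local finite \'{e}tale $A$-algebra and $\p_\alpha\in\Spec B_0$; connectedness of $\Spec B$ then forces the map $B_\alpha\to B$ to factor as $B_\alpha\twoheadrightarrow B_0\to B$ with the second arrow local. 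Thus $b$ is the image of some $b_0\in\m_{B_0}$. Since $A\to B_0$ is \'{e}tale, hence unramified, and $B_0$ is local, one has $\m_{B_0}=\m_A\cdot B_0$. Writing $b_0=\sum_i m_i b'_i$ with $m_i\in\m_A$ and $b'_i\in B_0$ and passing to images in $B$, we obtain a presentation $b=\sum_i m_i c_i$ with $c_i\in B$.

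It remains to show that $mc\in B^+$ whenever $m\in\m_A$ and $c\in B$. Since $A\to B$ is integral, $c$ satisfies a monic relation $c^n+a_{n-1}c^{n-1}+\cdots+a_0=0$ with $a_j\in A$. Multiplying by $m^n$ yields the monic relation
\[
(mc)^n+(m\,a_{n-1})(mc)^{n-1}+(m^2 a_{n-2})(mc)^{n-2}+\cdots+m^n a_0=0
\]
for $mc$, whose coefficients $m^k a_{n-k}$ (for $k\ge 1$) all lie in $\m_A\subset A^+$, so that their images in $B$ lie in $f(A^+)\subset B^+$. Hence $mc$ is integral over $B^+$, and integral closedness gives $mc\in B^+$; summing yields $b\in B^+$. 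The only step requiring care is the henselian direct-factor decomposition of $B_\alpha$, which isolates the local finite \'{e}tale $A$-subalgebra through which the ind-\'{e}tale map to $B$ factors locally; once that is done, the rest is the explicit polynomial computation above.
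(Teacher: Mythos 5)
Your proof is correct. Both your argument and the paper's rest on the observation that $\m_A\subset A^+$, but the routes from there diverge. The paper argues directly: for $x\in\m_B$, the coefficients of its monic minimal equation over $A$ are (up to sign) elementary symmetric functions of the Galois conjugates of $x$, and since those conjugates all lie in the maximal ideal of a suitable Galois closure, the coefficients land in $\m_A\subset A^+$; hence $x$ is integral over $A^+$ and so belongs to $B^+$. You instead first establish the intermediate identity $\m_B=\m_A B$ by factoring through a local finite \'{e}tale $A$-subalgebra $B_0$ (using henselianness of $A$ and connectedness of $\Spec B$) and invoking unramifiedness to get $\m_{B_0}=\m_A B_0$; you then reduce to showing $mc\in B^+$ for $m\in\m_A$, $c\in B$, which you obtain from the monic relation for $c$ by the multiply-by-$m^n$ scaling trick. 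Your version is more elementary in that it sidesteps Galois closures and the symmetric-function computation, at the cost of the explicit henselian splitting $B_\alpha\cong B_0\times B'$; both are complete and each is a reasonable proof of the lemma.
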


\begin{proof}
Every element of $x\in \m_B$ satisfies an equation of the form $x^n+a_{n-1}x^{n-1}+\cdots + a_0=0$ with $a_0,\ldots,a_{n-1}\in \m_A$ (the $a_i$ are up to sign the elementary symmetric functions in the conjugates of $x$ in some Galois closure of $B/A$). Since $\m_A\subset A^+$, $x$ is integral over $A^+$, hence $x\in B^+$.
\end{proof}

\begin{lemma} \label{ind-etale-hens} Let $f: (A,A^+)\to (B,B^+)$ be a local homomorphism of local Huber pairs.
\begin{enumerate}[\rm (i)]
\item  If $(A,A^+)$ is henselian (resp.\ strongly, tamely or strictly henselian) and $f$ is integral, then $(B,B^+)$ is henselian (resp.\ strongly, tamely or strictly henselian).
\item If $f$ is ind-\'{e}tale and $(A,A^+)$ and $ (B,B^+)$ are henselian, then $f$ is integral.
\end{enumerate}
\end{lemma}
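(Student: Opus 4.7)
For (i), I would observe that by \cref{triple-henselian} and the definitions in the previous section, each of the four flavors of henselicity on $(A,A^+)$ is encoded as henselicity or strict henselicity of one of $A^+$, $A$, or the valuation ring $\O_A$ (the image of $A^+$ in $k_A$). Integrality and locality of $f$ yield integral local maps $A^+ \to B^+$ and $A \to B$, and induce an integral local map $\O_A \to \O_B$ via $A^+/\m_A \to B^+/\m_A B^+ \twoheadrightarrow B^+/\m_B = \O_B$. Since integral local extensions of henselian (respectively strictly henselian) local rings remain henselian (respectively strictly henselian), the cases henselian, strongly henselian, and strictly henselian transfer directly. For the tamely henselian case I would use the equivalent formulation of the paper: $(A,A^+)$ strongly henselian with $G_{k_A}$ pro-$p$ (trivial in residue characteristic zero). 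The first clause transfers as above, and since $k_B/k_A$ is algebraic, restriction to a separable closure of $k_A$ embeds $G_{k_B}$ as a closed subgroup of $G_{k_A}$, hence pro-$p$; residue characteristics agree because $\kappa_B/\kappa_A$ is algebraic as well.

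For (ii), I would start by showing that $A \to B$ is integral. Write $B = \colim_i B_i$ as the filtered colimit of \'etale $A$-algebras presenting the ind-\'etale structure, set $\n_i := \m_B \cap B_i$, and $B_i' := (B_i)_{\n_i}$. Locality of $f$ ensures that $\n_i$ lies over $\m_A$, so the residue field $k_i := B_i'/\m_{B_i'}$ is a finite separable extension of $k_A$. Because $A$ is henselian, an essentially \'etale local $A$-algebra with finite separable residue field extension is automatically finite \'etale; thus each $B_i'$ is a finite \'etale local $A$-algebra, hence a free $A$-module of rank $N_i = [k_i:k_A]$. The structure map $B_i \to B$ factors through $B_i'$, so every element of $B$ lies in some $B_i'$ and is therefore integral over $A$.

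For $A^+ \to B^+$ I would first identify $\O_B$. Since $\O_A$ is a henselian valuation ring and $k_B/k_A$ is separable algebraic, the valuation of $\O_A$ extends uniquely to $k_B$; locality of $A^+ \to B^+$ then forces $\O_B$ to be this unique extension, that is, $\O_B$ is the integral closure of $\O_A$ in $k_B$, and $\O_B = \colim_i \O_i$ where $\O_i$ is the integral closure of $\O_A$ in $k_i$. Consequently $B^+$ (the preimage of $\O_B$ in $B$) is $\colim_i (B_i')^+$ where $(B_i')^+$ is the preimage of $\O_i$ in $B_i'$, and it suffices to show each $(B_i')^+$ is integral over $A^+$. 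For $b \in (B_i')^+$ I would lift an integral equation for $\bar b \in \O_i$ over $\O_A$ through $A^+ \twoheadrightarrow \O_A$ to obtain a monic $g(T) \in A^+[T]$ with $g(b) \in \m_{B_i'} = \m_A B_i'$. The crucial step, which is the main obstacle, is the promotion from integrality over $A$ (which we have in abundance) to integrality over $A^+$: I would apply Cayley--Hamilton to multiplication by $g(b)$ on the free $A$-module $B_i'$, obtaining a monic characteristic polynomial $\chi(T) \in A[T]$ that reduces mod $\m_A$ to the characteristic polynomial of multiplication by $0$ on $k_i$, namely $T^{N_i}$; hence all lower coefficients of $\chi$ lie in $\m_A \subset A^+$, so $\chi \in A^+[T]$ and $\chi(g(b))=0$ exhibits $g(b)$ as integral over $A^+$. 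The defining relation $b^n + c_{n-1}b^{n-1} + \cdots + c_0 = g(b)$ then shows $b$ integral over $A^+[g(b)]$, hence over $A^+$, completing the argument.
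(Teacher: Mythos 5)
Your proof is correct. Part (i) is essentially the paper's own argument, reorganised slightly: you decompose each flavour of henselicity through \cref{triple-henselian} and propagate along the integral local maps on $A^+$, $A$ and $\O_A$, whereas the paper also uses integral local ascent for the henselian and strongly henselian cases but invokes the inclusion $G_{k_B}\subset G_{k_A}$ for the strict and tame cases; the two formulations are interchangeable.

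Part (ii) is where you genuinely diverge from the paper. The paper first notes $\m_B\subset A'$ (where $A'$ is the integral closure of $A^+$ in $B$, a consequence of the argument behind \cref{m in plus}), reduces mod $\m_B$ to identify $A'/\m_B$ with the integral closure of $\O_A$ in $k_B$, observes this is a valuation ring because $\O_A$ is henselian, and concludes by the fact that a local inclusion of valuation rings of the same field is an isomorphism; this pins down $B^+$ as equal to $A'$ in one stroke. You instead carry out the localisation of the presentation $B=\colim B_i$ explicitly (which the paper compresses into ``we may assume $f$ is \'{e}tale''), pin down $\O_B$ by uniqueness of extension of the henselian valuation of $\O_A$, and then produce an integral equation over $A^+$ for each $b\in(B_i')^+$ by lifting an $\O_A$-integral equation and applying Cayley--Hamilton to multiplication by $g(b)\in\m_A B_i'$ on the finite free $A$-module $B_i'$, noting the characteristic polynomial reduces to $T^{N_i}$ mod $\m_A\subset A^+$. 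Both arguments exploit exactly the same two ingredients---$\m_A\subset A^+$ (equivalently $\m_B\subset A'$) and the henselicity of $\O_A$---but the paper cashes them out structurally via the rigidity of valuation rings, while you cash them out computationally by exhibiting the integral dependence. The structural argument is shorter; yours is more self-contained, makes the localisation underlying ``we may assume $f$ is \'{e}tale'' explicit, and exhibits concretely where the monic polynomial over $A^+$ comes from.
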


\begin{proof}
(i) $B^+$, being an integral, local algebra over the henselian ring $A^+$, is henselian, as well. If $A^+$ is strictly henselian, then also $B^+$ is strictly henselian.  Let $k_A$ be the residue field of $A$ and $k_B$ that of $B$. Then $k_B/k_A$ is an algebraic field extension. We obtain an injection $G_{k_B}\subset G_{k_A}$ of absolute Galois groups showing the statement also in the strict and tame case.

(ii) We may assume that $f$ is \'{e}tale. Since $f$ is local and $A$ is henselian, $B$ is a local, finite \'{e}tale $A$-algebra. Let $A'$ be the integral closure of $A^+$ in $B$. We have to show that the inclusion $A' \to B^+$ is surjective. By \cref{m in plus}, we have  $\m_B\subset A'$. It therefore suffices to show that $A'/\m_B \to B^+/\m_B$ is surjective. Both are subrings of the field $B/\m_B=k_B$, which is a finite separable extension of $k_A=A/\m_A$.

We claim that $A'/\m_B$ is the integral closure of $A^+/\m_A\subset k_A$ in $k_B$. By definition of $A'$, every element of $A'/\m_B$ is integral over $A^+/\m_A$. On the other hand, if for $y\in B$ the element $\bar{y}\in k_B$ is integral over $A^+/\m_A$, then there is a congruence $y^n+a_{n-1}+\cdots + a_0\equiv 0 \bmod \m_B$ with all $a_i\in A^+$. Since $\m_B\subset A'$, $y$ is integral over $A'$ and hence $y \in A'$.

By assumption, $A^+/\m_A$ is a henselian valuation ring, hence so is $A'/\m_B$. Therefore  $A'/\m_B\to B^+/\m_B$ is a local homomorphism of valuation rings with the same quotient  field, hence an isomorphism. This concludes the proof.
\end{proof}

Now we consider Huber pairs $(A,A^+)$ via their adic spectrum $\Spa(A,A^+)$.

\begin{definition}
A Huber pair $(A,A^+)$ is \emph{\'{e}tale} (\emph{strongly \'{e}tale}, \emph{tamely}) \emph{acyclic} if every \'{e}tale (strongly \'{e}tale, tame) covering of $\Spa(A,A^+)$ splits.
\end{definition}

\begin{proposition}\label{tamely acyclic char} A Huber pair
$(A,A^+)$ is \'{e}tale (strongly \'{e}tale, tamely) acyclic if and only if every connected component of $\Spa(A,A^+)$ is the adic spectrum of a strictly (strongly, tamely) henselian Huber pair.
\end{proposition}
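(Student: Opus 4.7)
The plan is to imitate the proof of \cref{acyclic-comp} in the adic setting, replacing the splitting criterion for henselian local rings by its analog \cref{adic-hensel-char} for Huber pairs and using the quasi-compactness of $\Spa(A,A^+)$ to glue local information along the profinite set of connected components.

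The preparatory observation is that every clopen subset $W\subset \mathcal{X}:=\Spa(A,A^+)$ is cut out by an idempotent $e\in A$, which automatically lies in $A^+$ since $A^+$ is integrally closed in $A$; hence $W=\Spa(Ae,A^+e)$ is itself affinoid, and similarly for its complement $\Spa(A(1-e),A^+(1-e))$. Consequently, every connected component $Z\subset \mathcal{X}$ is the cofiltered intersection of its clopen neighborhoods $W$ and is of the form $Z=\Spa(A_Z,A_Z^+)$, where $(A_Z,A_Z^+)$ is the filtered colimit of the corresponding Huber pairs $(A_W,A_W^+)$. Moreover, $\pi_0(\mathcal{X})$ is profinite by \cref{spectral}, so every open cover of $\mathcal{X}$ admits a finite refinement to a disjoint clopen cover.

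For the ``only if'' direction, fix a connected component $Z\subset \mathcal{X}$ and a covering $f_Z:\mathcal{Y}_Z\to Z$ of the relevant type. By a standard approximation argument, $f_Z$ is the base change of a covering $f_W:\mathcal{Y}_W\to W$ of the same type, defined over some clopen neighborhood $W$ of $Z$. Then
\[
f_W\,\sqcup\,\mathrm{id}_{\mathcal{X}\smallsetminus W}\colon \mathcal{Y}_W\sqcup (\mathcal{X}\smallsetminus W)\lang \mathcal{X}
\]
is again a covering of the required type (the identity is trivially unramified, so its contribution does not destroy tameness or strong \'etaleness), which splits by hypothesis. Restricting the splitting to $Z$ splits $f_Z$; by \cref{adic-hensel-char} this shows that $(A_Z,A_Z^+)$ is strictly (resp.\ strongly, tamely) henselian.

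For the ``if'' direction, let $f:\mathcal{Y}\to \mathcal{X}$ be a covering of the relevant type. For every connected component $Z\subset \mathcal{X}$, the pullback $f_Z$ splits by \cref{adic-hensel-char}, producing a section $\sigma_Z:Z\to \mathcal{Y}_Z$. Spreading out, $\sigma_Z$ extends to a section of $f$ over some clopen neighborhood $W_Z$ of $Z$. As $Z$ ranges, the $W_Z$ form an open cover of $\mathcal{X}$; by compactness of $\pi_0(\mathcal{X})$ we extract a finite disjoint clopen refinement $\mathcal{X}=W_1\sqcup\cdots\sqcup W_n$ on each piece of which a section is defined, and these glue to a global section of $f$.

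The main obstacle is the spreading-out step used in both directions: \'etale, strongly \'etale and tame morphisms of discretely ringed adic spaces, together with their sections, must descend along the cofiltered limit $Z=\varprojlim W$ to finite level. For \'etale morphisms this is contained in \cite[\S1.6]{Hu96}; preservation of the tame (resp.\ strongly \'etale) condition under such limits follows from the openness of the tame (resp.\ strongly \'etale) locus (\cref{opennness-tame-locus} and \cite[Corollary\,4.3]{HueAd}), by the same compactness-in-the-constructible-topology argument that drives the proof of \cref{inverselimit}.
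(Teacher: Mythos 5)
Your proof is correct and follows the same route as the paper: identify the connected components of $\Spa(A,A^+)$ as filtered limits of clopen affinoid pieces (via idempotents of $A$, which lie in $A^+$) and reduce to the connected case via \cref{adic-hensel-char} by the spreading-out-and-glue limit argument. The paper's proof is just terser, deferring the limit argument to \cite[Proposition 3.2]{Art71}, whereas you spell it out explicitly.
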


\begin{proof} We consider the canonical inclusion $\Spec A \hookrightarrow \Spa(A,A^+)$, $x\mapsto (x,\tr_x)$. Then $\Spec A$ carries the subspace topology and is dense in $\Spa(A,A^+)$. Hence the connected components of $\Spa(A,A^+)$ are in bijection with the connected components of $\Spec A$.

Let $\p\subset A$ be a prime ideal and let $e$ run through all idempotents of $A$ contained in $\p$. Then the  connected component of $\p \in \Spec A$  is equal to $\Spec B$ with $B= \varinjlim_e A/e$. Note that $B$ can also be obtained from $A$ by inverting all elements  $1-e$.
Since $A^+$ is integrally closed in $A$, all idempotents of $A$ are in $A^+$.
Therefore the components of $\Spa(A,A^+)$ are of the form $\Spa(B,B^+)$, where $B=\varinjlim_e A/e$ is a component of $A$ and $B^+=\varinjlim_e A^+/e$.

Now the statement of the proposition reduces to \cref{adic-hensel-char} by exactly the same limit argument as in the proof of the analogous statement for schemes \cite[Proposition 3.2]{Art71}.
\end{proof}

We call a Huber pair $(A,A^+)$ \emph{quasi-acyclic} if every connected component of $\Spa(A,A^+)$ is the adic spectrum of a henselian Huber pair.

\section{Joins of henselian Huber pairs}

Our next aim is to compare the tame cohomology of an $S$-scheme $X$ with the tame cohomology of its associated adic space $\Spa(X,S)$. This is rather easy for \v{C}ech cohomology and we have proven in \cref{sec:cech} that tame cohomology coincides with tame \v{C}ech cohomology for many schemes. Hence our next task is to prove a similar statement for discretely ringed adic spaces. This turns out to be more complicated as the analysis of joins is more involved.

\bigskip
Let $(A,A^+)$ be a Huber pair such that $A$ is a normal domain. Let $K=Q(A)$ and $\bar{K}/K$ a separable closure. Let $v_1,v_2\in \Spa(A,A^+)$ be points and $(B_i,B_i^+)=(A,A^+)_{v_i}^h$. We embed $B_1$ and $B_2$ into $\bar{K}$, hence also $B_1^+$ and $B_2^+$ are embedded into $\bar{K}$.  Let $D=[B_1,B_2]\subset \bar K$ be the subring over $A$ generated by $B_1$ and $B_2$ and let $D^+\subset D$ be the integral closure of $[B_1^+,B_2^+]$ in $D$. We call the Huber pair $(D,D^+)$ the \emph{join} of the Huber pairs $(B_1,B_1^+)$ and $(B_2,B_2^+)$ (with respect to the chosen embeddings into $\bar K$):
\[
\begin{tikzcd}
  &\bar{K}\arrow[d,dash]\\
  &(D,D^+)=[(B_1,B_1^+),(B_2,B_2^+)]\arrow[dl,dash]\arrow[dr,dash]\\
(B_1,B_1^+)\arrow[dr,dash]&&(B_2,B_2^+)\arrow[dl,dash]\\
&(A,A^+)
\end{tikzcd}
\]

Now we can formulate the adic analogue of Artin's \cref{artin-2.5}.

\begin{theorem} \label{huberjoin} With the assumptions above, the join $(D,D^+)$ is a henselian Huber pair.
If $v_1,v_2$ are Riemann-Zariski points such that none of the $v_i$ specializes to the other, then $(D,D^+)$ is the strong henselization of $(A,A^+)$ at some Riemann-Zariski point $w\in \Spa(A,A^+)$: $(D,D^+)=(A,A^+)_w^\sh$.
\end{theorem}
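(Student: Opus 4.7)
The plan is to adapt Artin's scheme-theoretic results \cref{artin-2.5} and \cref{artinslemma} to the Huber-pair setting by applying them separately to the normal domain $A$ and to its subring $A^+$. The preliminary observation is that $A^+$ is also a normal integral domain: since $A^+$ is integrally closed in $A$ and $A$ is integrally closed in $K = Q(A)$, $A^+$ is integrally closed in $K$, hence in its own fraction field. Denote by $\p_i^+ \subset A^+$ the center of $v_i$. By construction, $B_i$ is an integral (ind-finite-\'etale) extension of $A^h_{\supp v_i}$, and $B_i^+$ is an integral extension of $(A^+)^h_{\p_i^+}$.

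For the first assertion, I would apply \cref{artin-2.5} to $A$ and to $A^+$ to conclude that the joins $E := [A^h_{\supp v_1}, A^h_{\supp v_2}]$ and $E^+ := [(A^+)^h_{\p_1^+}, (A^+)^h_{\p_2^+}]$ inside $\bar K$ are local henselian. Since $D$ is integral over $E$ and is a subdomain of $\bar K$, every finite $E$-subalgebra $D_\alpha\subset D$ is semi-local; since $E$ is henselian, $D_\alpha$ is a finite product of local henselian rings, and being a subdomain of $\bar K$ forces this product to have a single factor. Taking the filtered union $D = \bigcup_\alpha D_\alpha$ one obtains that $D$ is local henselian. The analogous argument applied to $D^+ \supset E^+$ gives that $D^+$ is local henselian. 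By \cref{triple-henselian}, $(D,D^+)$ is a henselian Huber pair.

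For the second assertion, assume the $v_i$ are Riemann-Zariski and neither specializes to the other. The goal is to strengthen the previous step by applying \cref{artin-2.5} in its refined form (the ``strictly henselian'' conclusion when the primes are incomparable) to $A^+$. I would show that the non-specialization hypothesis forces the centers $\p_1^+,\p_2^+$ in $A^+$ to be incomparable: if, say, $\p_1^+\subseteq\p_2^+$, then \cref{special-aff-chara}(ii) produces a horizontal specialization relating $v_1$ and $v_2$ (or their generalized-horizontal specializations), and \cref{specialization-prop} then combines this with a vertical part to produce a specialization between $v_1$ and $v_2$, contradicting the Riemann-Zariski hypothesis (which forbids non-trivial horizontal specializations). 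With $\p_1^+,\p_2^+$ incomparable, \cref{artin-2.5} gives that $E^+$ is \emph{strictly} henselian; the same integrality argument as before, combined with the fact that an integral extension of a strictly henselian ring that remains a local domain is again strictly henselian, yields that $D^+$ is strictly henselian. Hence $(D,D^+)$ is strongly henselian.

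Finally, define $w\in\Spa(A,A^+)$ by $\supp w := \m_D\cap A$ and $\O_w$ the preimage in $k(\supp w)$ of the valuation ring $\bar D^+ := D^+/(D^+\cap\m_D)\subset k_D = D/\m_D$ (a valuation ring because $D^+$ is integrally closed in $D$) under the natural inclusion $k(\supp w)\hookrightarrow k_D$. The point $w$ is Riemann-Zariski because $\bar D^+$ is already strictly henselian, leaving no convex subgroup strictly between the characteristic subgroup and the full value group. The identification $(D,D^+) = (A,A^+)_w^{\sh}$ then follows from the universal property of the strong henselization: $(D,D^+)$ is strongly henselian, receives a local map from $(A,A^+)_w$, and the description of $D^+$ as the integral closure of $[B_1^+,B_2^+]$ matches, via the integrality established above, the preimage description of the plus-part of the strong henselization. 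The main obstacle is the conversion of the Riemann-Zariski non-specialization hypothesis on the $v_i$ into the incomparability of the centers $\p_i^+$ in $A^+$; carrying this out rigorously requires careful bookkeeping with \cref{special-aff-chara} and \cref{specialization-prop}, as well as the invocation of \cref{formallyreal} (exactly as in Artin's proof of \cref{artinslemma}) to exclude formally real residue fields that fail to be separably closed.
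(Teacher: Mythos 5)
The proposal's central plan — apply Artin's join theorem (\cref{artin-2.5}) separately to the normal domain $A$ and to the subring $A^+$, then transfer the local/strictly henselian conclusion to $D$ and $D^+$ by integrality — breaks down at the very first technical claim, namely that ``$B_i^+$ is an integral extension of $(A^+)^h_{\p_i^+}$''. This is false in general. Take $A = k[x]$, $A^+ = k$, and $v$ the rank-one valuation on $k(x)$ with $v(x)>1$ (so $\supp v = (0)$, $\O_v = k[x^{-1}]_{(x^{-1})}$). Then $(A,A^+)^h_v = (k(x)^h,\O_v^h)$, while the center of $v$ in $\Spec A^+=\Spec k$ is the unique point and $(A^+)^h_{\p^+}=k$. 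The ring $\O_v^h$ contains $x^{-1}$ and is certainly not integral over $k$. The source of the error is that the henselization of a Huber pair is \emph{not} built by separately henselizing $A$ and $A^+$; $B_i^+$ is the \emph{preimage} in $B_i$ of a henselized valuation ring of the residue field, which is a much larger object than any henselization of $A^+$. Consequently $D^+$ need not be integral over $E^+=[(A^+)^h_{\p_1^+},(A^+)^h_{\p_2^+}]$, and the intended transfer of ``local henselian'' from $E^+$ to $D^+$ has no basis. The paper's proof sidesteps exactly this: it never tries to realize $D^+$ as integral over a join of henselizations of $A^+$ in the first part. Instead, it uses \cref{artinslemma} for $D$ alone, shows $\m_D\subset D^+$ via \cref{m in plus}, and then proves directly that $\O:=D^+/\m_D$ is a henselian valuation ring by exhibiting it as an overring of the (henselian) integral closure of $B_1^+/\m_1$, invoking \cref{valjoin}(i). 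That gives $(D,D^+)$ local and henselian via \cref{triple-henselian}.

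For the second assertion the proposal has two further problems. First, even under Riemann-Zariski and non-specialization hypotheses the centers $\p_1^+,\p_2^+$ in $\Spec A^+$ need \emph{not} be incomparable: e.g.\ with $A=k[x,y]$, $A^+=k$, the trivial valuations supported at $(x)$ and $(y)$ are non-specializing Riemann-Zariski points whose centers in $\Spec k$ coincide. Incomparability of centers is something the paper \emph{achieves} only after the reductions of \cref{affinoid_open_immersion} and \cref{affinoid_separate_centers}, which replace $(A,A^+)$ by another model; it is not automatic. Second, even after incomparability is arranged, the conclusion ``$D^+$ strictly henselian'' would again require $D^+$ to be integral over $E^+$, which as noted is false. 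The paper's actual route is quite different: it first shows (using \cref{vertical-RZ} and \cref{specialization-prop}) that in the nontrivial case the closed point $\bar w$ corresponds to a \emph{trivial} valuation, forcing $D=D^+$; it then identifies an auxiliary subring $C=[A_{c_1}^{+h},A_{c_2}^{+h}]\subset D$, shows $C$ is a strict henselization of $A$ (not merely of $A^+$) at a prime, and runs a decreasing induction on $\dim C$ driven by the identity $K=[\O_{v_1},A]$ and repeated model replacement, using \cref{formallyreal} to dispose of the formally-real obstruction. You do acknowledge the ``main obstacle'' of getting incomparability, but the deeper obstacle — that $D^+$ simply is not integral over the join of henselizations of $A^+$ — is not addressed and cannot be repaired within the proposed framework.
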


We start with some lemmas.

\begin{lemma} \label{affinoid_open_immersion}
 Let $(A,A^+)$ be a Huber pair such that $A$ is of finite type over $A^+$.
 Then any finite subset $M$ of $\Spa(A,A^+)$ has a common affinoid neighborhood $\Spa(B,B^+)$ such that $\Spec B^+ \to \Spec B$ is an open immersion.
 Moreover, any open affinoid contained in $\Spa(B,B^+)$ has this property.
\end{lemma}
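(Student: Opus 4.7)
The plan is to realize the sought-after common affinoid neighborhood as a rational subset $U\subset \Spa(A,A^+)$ whose associated Huber pair $(B,B^+)$ has $B$ a principal localization of $B^+$. Write $A=A^+[a_1,\ldots,a_n]$. I will treat the single-point case first, and then deal with the combinatorial issue that arises for finite $M$.

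\emph{Single point.} Given $v\in\Spa(A,A^+)$, partition the index set according to the sign of $v(a_i)-1$:
\[
I_v=\{i:v(a_i)>1\},\qquad J_v=\{i:v(a_i)\le 1\},
\]
and consider the rational subset
\[
U_v=\{w\in\Spa(A,A^+): w(a_i)\ge 1\text{ for }i\in I_v,\ w(a_j)\le 1\text{ for }j\in J_v\},
\]
obtained by intersecting the $R(1/a_i)$ for $i\in I_v$ with the $R(a_j/1)$ for $j\in J_v$. Clearly $v\in U_v$. The associated Huber pair is
\[
B=A[a_i^{-1}:i\in I_v],\qquad B^+=\text{integral closure of }A^+[a_j,a_i^{-1}:j\in J_v,i\in I_v]\text{ in }B.
\]
Set $h:=\prod_{i\in I_v}a_i^{-1}\in B^+$ (so $h=1$ if $I_v=\varnothing$). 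Inverting $h$ in $B^+$ reintroduces each $a_i$ for $i\in I_v$ as a product $a_i=h^{-1}\cdot\prod_{i'\neq i}a_{i'}^{-1}$ times elements already in $B^+$; together with the $a_j$'s ($j\in J_v$) present in $B^+$, this recovers all of $A$, hence $B=A[\{a_i^{-1}\}_{i\in I_v}]\subset B^+[1/h]$. The reverse inclusion is obvious, so $B=B^+[1/h]$ and $\Spec B\to \Spec B^+$ is the open immersion onto $D(h)$.

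\emph{Finite $M$.} The obstacle is that different $v\in M$ can impose contradictory sign conditions on the same generator (for example, $v_1(a_i)>1$ and $v_2(a_i)<1$), so naively intersecting the $U_v$ may exclude some point of $M$. I resolve this by replacing the generators $a_1,\ldots,a_n$ with a new finite generating set $b_1,\ldots,b_N\in A$ of $A$ over $A^+$ whose sign behavior $b_k(v)>1$ versus $b_k(v)\le 1$ is constant on $M$ for each $k$. A model case is to replace $a_i$ by an expression of the form $a_i^2-1$ (or more generally $P(a_i)$ for a suitable $P\in A^+[X]$), observing that then $v(a_i^2-1)\ge 1$ for every valuation $v$ with $v(a_i)\neq 1$. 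Since $M$ is finite, finitely many such polynomial modifications suffice; the resulting generators give rise to a single common partition $(I,J)$, and the construction of the single-point case then yields a rational subset $U\supset M$ with the required property. Alternatively, this step can be phrased in the language of relative Riemann-Zariski spaces (Temkin \cite{Tem11}): after a suitable blow-up of $\Spec A^+$ adapted to the finitely many valuations of $M$, all points of $M$ lie in a single affine chart, whose coordinate pair provides the desired $(B,B^+)$.

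\emph{The ``moreover'' clause.} Any open affinoid $\Spa(C,C^+)\subset\Spa(B,B^+)$ corresponds to a rational subset of $\Spa(B,B^+)$, so $C$ is a further principal localization $C=B[1/g]$ for some $g\in B$, and $C^+$ is the integral closure of $B^+[T/g]$ in $C$ for suitable $T$. Since $B=B^+[1/h]$, we get $C=B^+[1/(hg)]$, and a direct check shows that after passing to the integral closure $C^+\supset B^+[T/g]$ the ring $C$ remains a principal localization of $C^+$ at the image of $hg$; thus $\Spec C\to \Spec C^+$ is again an open immersion.

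The main obstacle is the combinatorial/geometric step of constructing the new generators (or equivalently the blow-up) that makes all valuations in $M$ share a common sign pattern; the rest is a direct computation with rational subsets and integral closures.
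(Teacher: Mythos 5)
Your single-point step is correct: the rational subset $U_v$ cut out by the sign pattern of $v$ on the generators has $B = B^+[1/h]$ with $h = \prod_{i\in I_v} a_i^{-1}$, as you verify. The gap is in the reduction of finite $M$ to the single-point case. The claim that one can choose a new generating set $b_1,\dots,b_N$ of $A$ over $A^+$ with consistent sign behavior on $M$ is false, and polynomial modifications of the $a_i$ cannot repair this. Take $A^+=\F_2$, $A=\F_2[t]$, and $M=\{v_0,v_1,v_\infty\}$, the three discrete valuations of $\F_2(t)$ centered at $0$, $1$, $\infty$ on $\P^1_{\F_2}$. For nonconstant $b\in\F_2[t]$ one has $v_\infty(b)>1$, so ``$v(b)\le 1$ for all $v\in M$'' only allows constants; and ``$v(b)\ge 1$ for all $v\in M$'' forces $b(0)=b(1)=1$, i.e.\ $b\equiv 1\bmod t(t+1)$. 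Hence any such $b_1,\dots,b_N$ generate a subring of $\F_2[t]$ that maps into the diagonal $\F_2\subset\F_2\times\F_2\cong\F_2[t]/(t^2+t)$, so they can never generate $\F_2[t]$. (Note also that the specific modification $a\mapsto a^2-1$ destroys the generating property: $a\notin A^+[a^2-1]$ in general.) In this example the required affinoid does exist, but it is $R(\{1,t,t^2\}/(t^2+t+1))$, which is not of the product shape $\bigcap_i R(1/b_i)\cap\bigcap_j R(b_j/1)$ that your construction yields.

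The alternative you mention in passing --- passing to a modification of $\Spec A^+$ adapted to $M$ --- is the right idea and is essentially the paper's proof: one takes a projective compactification $\Spec A\hookrightarrow S\to\Spec A^+$, lets $c_m\in S$ be the centers of the points $m\in M$ (valuative criterion of properness), picks a common affine open $\Spec B'\subset S$ of the finitely many $c_m$ (possible since $S$ is quasi-projective over the affine $\Spec A^+$), and sets $B$ to be the coordinate ring of the open subscheme $\Spec B'\cap\Spec A$ of $\Spec B'$ and $B^+$ the normalization of $B'$ in $B$; then $\Spec B\to\Spec B^+$ is an open immersion because normalization commutes with localization. Your treatment of the ``moreover'' clause also tacitly assumes that every affinoid open of a discretely ringed adic space is a rational subset, which is not immediate; the paper instead deduces the claim by base-changing the open immersion $\Spec B\hookrightarrow\Spec B^+$ along $\Spec C^+\to\Spec B^+$.
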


\begin{proof}
 Let $\Spec A \stackrel{i}{\hookrightarrow} S \stackrel{p}{\to} \Spec A^+$ be a compactification with an open immersion $i$ and $p$ projective. Let $\{c_m\}_{m\in M}$ be the set of centers in $S$ of the points in $M$. Then we can find a common open affine neighborhood $\Spec B'$ of the $c_m$'s.
 Setting $B = B' \otimes_{A^+} A$ and $B^+$ the normalization of $B'$ in $B$, we obtain the required affinoid neighborhood $\Spa(B,B^+)$.

 For an affinoid open $\Spa(C,C^+) \subset \Spa(B,B^+)$ consider the diagram
 \[
  \begin{tikzcd}
   \Spec C		\ar[r]	\ar[d]	& \Spec B	\ar[d]	\\
   \Spec C^+	\ar[r]			& \Spec B^+.
  \end{tikzcd}
 \]
 The upper horizontal and the right vertical arrows are open immersions.
 Hence, so is the left vertical arrow.
\end{proof}

\begin{lemma} \label{affinoid_separate_centers}
 Let~$M$ be a finite set of Riemann-Zariski points of an affinoid adic space $\Spa(A,A^+)$. Assume that no element of $M$ specializes to another.
 Then among all open affinoid neighborhoods $\Spa(B,B^+)$ of $M$ those are cofinal where the centers $c_m$ of the elements $m \in M$ in $\Spec B^+$ do not satisfy any specialization relation.
\end{lemma}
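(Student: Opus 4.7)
The plan is the following. By \cref{affinoid_open_immersion}, fix a common open affinoid neighborhood $\Spa(B,B^+)$ of $M$ in which $\Spec B^+ \to \Spec B$ is an open immersion (a property inherited by every smaller open affinoid). Let $c_m \in \Spec B^+$ denote the center of $m \in M$, and set
\[
 N := \#\bigl\{(m,m')\in M\times M \mid m \neq m', \ c_m \subseteq c_{m'}\bigr\}.
\]
We induct on $N$. The key monotonicity is: for any rational subset $V = \Spa(B_V, B_V^+) \subset \Spa(B,B^+)$ containing $M$, one has $c_m^V \cap B^+ = c_m$, so any inclusion $c_m^V \subseteq c_{m'}^V$ in $\Spec B_V^+$ restricts to $c_m \subseteq c_{m'}$ in $\Spec B^+$. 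Passing to $V$ therefore never creates new bad pairs; it suffices to construct, for each bad pair, a rational $V$ containing $M$ that breaks it.

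Given $N>0$, fix a bad pair $(m_1,m_2)$ with $c_{m_1} \subseteq c_{m_2}$. Since the Riemann-Zariski points $m_1,m_2$ satisfy no specialization relation in $\Spa(A,A^+)$, \cref{specialization-prop} combined with the definition of a Riemann-Zariski point (no non-trivial horizontal specialization) forces one of two possibilities: either the supports of $m_1, m_2$ in $\Spec B$ are incomparable, or they coincide and the valuation rings $\mathcal{O}_{m_1}, \mathcal{O}_{m_2}$ of the common residue field are incomparable. By an approximation argument---elementary when the supports differ; Bourbaki's approximation theorem for independent valuations on a field when they coincide---we produce $f, g \in B$ with $(f,g)=B$ and
\[
 v_m(f) \leq v_m(g) \neq 0 \ \ \text{for all } m \in M, \quad v_{m_1}(f) < v_{m_1}(g), \quad v_{m_2}(f) = v_{m_2}(g).
\]
The rational subset $V := \Spa(B,B^+)(\tfrac{f}{g})$ then contains $M$, and in its Huber pair $(B_V,B_V^+)$ the element $f/g \in B_V^+$ satisfies $v_{m_1}(f/g)<1$ while $v_{m_2}(f/g)=1$, so $f/g \in c_{m_1}^V \setminus c_{m_2}^V$. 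The bad pair is broken, $N$ strictly decreases, and the induction terminates.

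The principal obstacle will be realising all four valuation conditions on $f, g$ simultaneously together with $(f,g)=B$ for the whole finite set $M$. In the equal-support case this is classical approximation for independent valuations on a field. In the differing-support case it is more delicate; we expect to invoke Temkin's description of the relative Riemann-Zariski space as an inverse limit of affine modifications of $\Spec B^+$, which ensures that rational subsets of $\Spa(B,B^+)$ realise any desired separation of finitely many pairwise non-specializing Riemann-Zariski points.
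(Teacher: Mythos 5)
The proposal takes a genuinely different route but contains real gaps; it is not a complete proof. Two problems stand out.

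First, the case analysis of a bad pair is wrong. You claim that if $m_1,m_2$ are Riemann-Zariski points that do not specialize to each other, then either their supports are incomparable or they coincide. But the Riemann-Zariski condition only forbids nontrivial \emph{horizontal} specializations. If $m_1$ has a nontrivial valuation on its residue field, then by \cref{specialization-prop} every specialization of $m_1$ is a vertical specialization, hence has the same support. In particular, one can have $\supp m_1 \subsetneq \supp m_2$ (a strict containment in $\Spec B$) with neither $m_1$ nor $m_2$ specializing to the other, as long as the smaller-supported point carries a nontrivial valuation. Your dichotomy omits this case and hence the ensuing approximation argument is not set up to handle it.

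Second, the heart of your induction --- producing $f,g\in B$ with $(f,g)=B$, $v_m(f)\leq v_m(g)\ne 0$ for all $m\in M$, strict inequality at $m_1$ and equality at $m_2$ --- is never actually carried out, and you yourself flag it as the principal obstacle. You then say you ``expect to invoke Temkin's description of the relative Riemann-Zariski space as an inverse limit of affine modifications,'' which is precisely the tool the paper uses. The paper's argument is the direct version of what you gesture at: identify $\RZ(A,A^+)$ with $\lim S$ over $\Spec A$-modifications of $\Spec A^+$ (Temkin, Cor.~3.4.7 of \cite{Tem11}), observe that non-specialization of finitely many points in a cofiltered limit of spectral spaces is already witnessed at a finite stage, use Temkin's Chow Lemma (Cor.~3.4.8 loc.~cit.) to make that stage a projective modification $S \to \Spec A^+$, then take a common affine open $\Spec B' \subset S$ of the centers (possible since $S$ is quasi-projective over affine) and set $B = B'\otimes_{A^+}A$, $B^+ = $ normalization of $B'$ in $B$. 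Once you already need Temkin's inverse-limit description, the induction on bad pairs with rational subsets is an unnecessary detour; it adds complexity without buying generality. If you want to salvage your approach, you need to (a) correct the trichotomy on supports and (b) actually verify the existence of the separating pair $(f,g)$ --- but at that point you have essentially redone the paper's argument.
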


\begin{proof}
 Consider the map
 \[
 c: \Spa(A,A^+)\longrightarrow \lim S,
 \]
 where $S$ runs through all $\Spec A$-modifications of $\Spec A^+$, i.e., factorizations $$\Spec A \stackrel{i}{\hookrightarrow} S \stackrel{p}{\to} \Spec A^+$$ with $i$ an open dense immersion and $p$ proper, and the map $c$ is given by sending a valuation to the system of its centers on the various $S$.
 Let $\RZ(A,A^+)$ be the subspace of all Riemann-Zariski points in $\Spa(A,A^+)$. By \cite[Corollary 3.4.7]{Tem11}, the restriction of $c$ to $\RZ(A,A^+)$ is a homeomorphism  $$\RZ(A,A^+)\stackrel{\sim}{\longrightarrow} \lim S.$$ By Temkin's generalization of Chow's Lemma, \cite[Corollary 3.4.8]{Tem11}, among the modifications, those with $p: S\to \Spec A^+$ projective are cofinal. Since the $m \in M \subset \RZ(A,A^+)$ do not specialize to each other, we find a projective $\Spec A$-modification $S\to \Spec A^+$ such that the centers $(c_m)_{m\in M}$ on $S$ do not specialize to each other.
 Now choose a common open affine neighborhood $\Spec B' \subset S$ of the~$c_m$.
 Set $B = B' \otimes_{A^+} A$ and $B^+$ the normalization of $B'$ in $B$.
 This provides the required open affinoid neighborhood $\Spa(B,B^+)$.
 It is clear that any smaller open affinoid neighborhood of $M$ has the same property.
\end{proof}

\begin{lemma}\label{valjoin} Let $K$ be a field.
\begin{enumerate}[\rm (i)]
  \item If $\O\subset K$ is a henselian valuation ring of $K$ and  $\O'\supset \O$ is an overring of $\O$ in $K$, then also $\O'$ is a henselian valuation ring.
  \item If $\O,\O'\subset K$ are henselian valuation rings of $K$ such that none of $\O,\O'$ contains the other, then their join $\O\O'$ is strictly henselian.
\end{enumerate}
\end{lemma}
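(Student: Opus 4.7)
The plan is to prove (i) by reducing to the standard fact that coarsenings of henselian valuations are henselian, and then to deduce (ii) by applying F.~K.~Schmidt's theorem on independent henselian valuations to the residue field of the join.

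For (i), note first that any overring $\O' \supset \O$ of the valuation ring $\O$ is itself a valuation ring of $K$; explicitly, $\O' = \O_\p$ for the prime $\p = \{x\in\O : x^{-1}\notin\O'\}$, so $\O'$ corresponds to a coarsening $v'$ of the valuation $v$ of $\O$. To see that $v'$ is henselian, I would use the characterization of henselianness by uniqueness of prolongations to a separable closure $K^s$. Let $\bar v$ be the unique prolongation of $v$ and suppose $w$ is any prolongation of $v'$ to $K^s$. Then $w$ admits a refinement prolonging $v$, which is forced to equal $\bar v$, so $w$ is a coarsening of $\bar v$ by some convex subgroup $H' \subset \Gamma_{\bar v}$ whose intersection with $\Gamma_v$ is the convex subgroup $H$ defining $v'$. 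Since $\Gamma_{\bar v}/\Gamma_v$ is torsion, such an $H'$ is uniquely determined by $H$, so $w$ is unique.

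For (ii), observe first that the join $\O\O'$ is a valuation ring of $K$: if $x\in K^\times$ is not in $\O\O'$, then $x \notin \O$ and $x\notin \O'$, forcing $x^{-1}\in \O \cap \O' \subset \O\O'$. By part (i), $\O\O'$ is then henselian as an overring of the henselian $\O$. Writing $v_0$ for the corresponding valuation and $k_0$ for its residue field, it remains to show that $k_0$ is separably closed. The residual valuations $\bar v, \bar v'$ on $k_0$, whose valuation rings are the images of $\O$ and $\O'$ in $k_0$, are nontrivial, since $\O \subsetneq \O\O' \supsetneq \O'$ by the incomparability hypothesis. They are also independent: $\O\O'$ is by construction the smallest subring of $K$ containing both $\O$ and $\O'$, so $v_0$ is the finest common coarsening of $v, v'$, and any nontrivial common coarsening of $\bar v, \bar v'$ on $k_0$ would lift, via composition with $v_0$, to a common coarsening of $v, v'$ properly finer than $v_0$, a contradiction. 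Finally, $\bar v$ and $\bar v'$ are both henselian by the composition principle applied to the decompositions $v = v_0 \circ \bar v$ and $v' = v_0 \circ \bar v'$ (cf.\ \cite{EP2005}). F.~K.~Schmidt's theorem then forces $k_0$ to be separably closed, so $\O\O'$ is strictly henselian.

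The main subtlety lies in the composition principle for henselianness --- specifically, the implication that the residual valuation of a henselian valuation under a henselian coarsening is again henselian --- which is needed in order to have two henselian valuations on $k_0$ before invoking F.~K.~Schmidt. The rest of the argument is bookkeeping on convex subgroups, overrings, and the minimality of the join.
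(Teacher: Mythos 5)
Your proof is correct but takes a genuinely different route from the paper. For part (i), the paper argues directly with the normalization $A$ of $\O$ in a finite separable extension $L/K$: since $\O$ is henselian, $A$ is local (hence a valuation ring of $L$), and the prolongations of $v'$ correspond to the primes of $A$ over $\p$, which form a chain with incomparable elements — hence there is exactly one. Your argument via the torsion quotient $\Gamma_{\bar v}/\Gamma_v$ and uniqueness of the convex subgroup $H'$ with $H'\cap\Gamma_v=H$ is also valid, but somewhat less elementary. For part (ii) the paper reduces to Artin's join theorem (\cref{artin-2.5}) by setting $A=\O\cap\O'$: by \cite[Lemma 3.2.6]{EP2005} one has $\O=A_\p$, $\O'=A_{\p'}$ with $\p,\p'$ incomparable, and since $\O,\O'$ are already henselian the join $[\O,\O']=\O\O'$ is strictly henselian by Artin. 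Your argument instead proves this special case of Artin's theorem from scratch: use (i) to see $\O\O'$ is a henselian valuation ring, check that the residual valuations $\bar v,\bar v'$ on its residue field $k_0$ are nontrivial, independent, and henselian (via the composition/decomposition theorem of Engler--Prestel), and invoke F.~K.~Schmidt's theorem to conclude $k_0$ is separably closed. Both are valid; the paper's route is shorter given that it already has Artin's theorem available as a black box, while yours is self-contained modulo F.~K.~Schmidt, which is in fact the main ingredient in Artin's own proof — so conceptually the two arguments are cousins, with yours unwinding the reduction.
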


\begin{proof}
(i) Being an overring of $\O$, $\O'$ is a valuation ring. Let us denote the associated valuations by $v,v'$. Then $v'$ is a generalization of $v$ and $\O'$ is the localization of $\O$ at some prime ideal $\p$.
In order to show that $v'$ is henselian, we have to show that there is at most one extension of $v'$ to $L$ in any finite separable field extension $L/K$. Let $A$ be the normalization of $\O$ in $L$. Since $\O$ is henselian, $A$ is local, hence a valuation ring of $L$. The extensions of $v'$ to $L$ are in bijection to the prime ideals of $A$ lying over $\p\subset \O$. But there is only one since there are no containment relations between those primes and the set of primes of the valuation ring $A$ is totally ordered with respect to inclusion.

(ii) Let $\m\subset \O$ and $\m'\subset \O'$ be the maximal ideals. Put $A=\O\cap \O'$ and $\p=\m\cap A$, $\p'=\m'\cap A$.
By \cite[Lemma 3.2.6]{EP2005}, we have $\O=A_\p$, $\O'=A_{\p'}$. Since none of $\O,\O'$ contains the other, none of $\p,\p'$ contains the other. Hence the statement follows from \cref{artin-2.5}.
\end{proof}

\begin{proof}[Proof of \cref{huberjoin}]  By \cref{artinslemma}, $D$ is local, henselian.

\smallskip\noindent
Assume we are in case (1)  of  \cref{artinslemma}. Then both ring homomorphisms $B_1\to D$, $B_2\to D$ are not local and $k$ is separably closed by loc.\ cit. Let $\m_1$, $\m_2$ be the maximal ideals of $B_1$ and $B_2$. Since the homomorphisms are not local, the ideal $\m_1\m_2 \subset D$ is not contained in $\m_D$. Hence it is the whole ring $D$. Since $B_1^+$ and $B_2^+$ contain $\m_1$ and $\m_2$, respectively, we conclude that $D^+=D$. Hence $(D,D^+)$ is strongly henselian (it is the strong henselization of $(A,A^+)$ at some trivial valuation).

\smallskip\noindent
Assume we are in case (2) of  \cref{artinslemma} and, say, $B_1\to D$ is a local, integral and ind-\'{e}tale ring homomorphism. By \cref{m in plus}, we have $\m_{D}\subset D^+$. Let $\O:= D^+/\m_{D}\subset k=D/\m_{D}$ and $\O_1=B_1^+/\m_1\subset k_1=B_1/\m_1$. Then $\O$ contains the integral closure $\O_1'$ of $\O_1$ in the field extension $k/k_1$. Since $\O_1$ is a henselian valuation ring, the same is true for $\O_1'$. Hence $\O$ is an overring of a henselian valuation ring of $k$, hence itself a henselian valuation ring by \cref{valjoin}(i).  For $x\in D$ with $x \bmod \m_B \in \O$, we find $y\in D^+$ with $x-y\in \m_{D}\subset D^+$. This shows that $D^+$ is the full pre-image of $\O$ in~$D$. Hence $(D,D^+)$ is local. We conclude that $(D,D^+)$ is a henselian Huber pair.

\smallskip\noindent
For the rest of the proof we  assume that $v_1,v_2$ are Riemann-Zariski and that we are in case (2) of  \cref{artinslemma}.
Let $w'$ be the unique closed point of $\Spa(D,D^+)$ and let $w\in \Spa(A,A^+)$ be its image.
Let $\bar A$ and $\bar A^+$ be the integral closures of $A$ and $A^+$ in $\bar K$. Let $v_i'$ be the unique closed point of $(B_i,B_i^+)$. Then $v_i'$ maps to $v_i$ in $\Spa(A,A^+)$. On the other hand, since $(B_i,B_i^+)$ is henselian, there is a unique point $\bar v_i\in \Spa(\bar A, \bar A^+)$ lying over $v_i'$.

By assumption, none of $\bar v_1$, $\bar v_2$ specializes to each other.
The ring $D$ is a component of the tensor product $B_1\otimes_A B_2$ (and every component of the tensor product arises in this way).
It remains to show that $D^+\subset D$ is the pre-image of a strictly henselian valuation ring $\O\subset k=D/\m_{D}$.
Since $\Spa(D,D^+)$ is henselian, there is a unique point $\bar w \in \Spa(\bar A,\bar A^+)$ over $w'$. We learn:

 \medskip\noindent
 1. Since  $(A,A^+)\to (D,D^+)$ is ind-strongly \'{e}tale and $(D,D^+)$ is henselian, $(D,D^+)$ lies somewhere between $(A,A^+)_w^h$ and $(A,A^+)_w^\sh$.  (We want to show that $(D,D^+)=(A,A^+)_w^\sh$.)

 \noindent
2. $\bar w$ is a vertical generalization of $\bar{v}_1$ and a generalization of $\bar{v}_2$.

\noindent
3. We are in the case that $B_1\to D$ is local, hence  (cf.\ the proof of \cref{artinslemma}) in the case $\supp \bar v_1 \rightsquigarrow \supp \bar v_2$ in $\Spec(\bar A)$.

\medskip\noindent
If $\supp \bar{v}_1=\supp \bar{v}_2$, then the statement reduces to the case of valuations rings, where it follows from \cref{valjoin}.
Hence we may assume that $\supp \bar{v}_1\subsetneqq \supp \bar{v}_2$.
Since $v_1$ is a Riemann-Zariski point, the same is true for $\bar{v}_1$ and hence $\bar w$ is a Riemann-Zariski point by \cref{vertical-RZ}(ii). But $\bar v_2$ is a specialization of $\bar w$. By \cref{specialization-prop},  this is only possible if $\bar w= \tr_{\supp \bar w}$ and the specialization $\bar w \rightsquigarrow \bar v_2$ is of the form $\tr_{\supp \bar w} \rightsquigarrow \tr_{\supp \bar v_2} \rightsquigarrow \bar v_2$, where the second specialization is vertical.

We learn that $D=D^+$ and have to show that this henselian ring is strictly henselian. By a straightforward limit argument, we may assume that $A$ is finitely generated over $\Z$, in particular, $A$ is Nagata. Using \cref{formallyreal}, we may replace $A$ by the normalization of $A/\supp v_1$, i.e., we may assume that $\supp v_1$ is the generic point of $\Spec A$ and $D=D^+$ is a field. Since $v_1$ does not specialize to $v_2$, $v_1$ is not the trivial valuation (moreover, in this case the assertion would be true by trivial reasons).

Let $\bar v_3$ be the trivial valuation with support $\supp \bar v_2$, $v_3$ the restriction of $\bar v_3$ to $A$ and $(B_3,B_3^+)=(A,A^+)_{v_3}^h$. Then $B_2$ is an integral \'{e}tale extension of $B_3=B_3^+$. In order to show the assertion, we therefore may replace $v_2$ by the trivial valuation~$v_3$.

\bigskip\noindent
After all these reductions, we arrive at the following situation:\medskip

$\bullet$ $A$ is a normal domain of finite type over $\Z$ with quotient field $K$

$\bullet$ $\supp v_1$ is the generic point of $\Spec A$, $(B_1,B_1^+)=(A,A^+)_{v_1}^h$

$\bullet$ $B_1$ is a field (intermediate extension of $\bar K/K$) and $B_1^+\subsetneqq B_1$ a valuation ring

$\bullet$ $v_2$ is a trivial valuation, $(B_2,B_2^+)=(A,A^+)_{v_2}^h$, i.e., $B_2=B_2^+=A_{\supp v_2}^h$

$\bullet$ $D=D^+$ is a field (intermediate extension of $\bar K / B_1$).

\medskip\noindent
Furthermore, by \cref{affinoid_open_immersion} we may assume that $\Spec A \to \Spec A^+$ is an open immersion and by \cref{affinoid_separate_centers} we may assume that the centers $c_1,c_2 \in \Spec A^+$ of $v_1, v_2$ do not specialize to each other.

\medskip\noindent
Moreover, by \cref{formallyreal}, we are still permitted to replace $(A,A^+)$ by its normalization in a finite field extension $L/K$ inside $\bar K$ (and replace $v_1$, $v_2$ by points over them).

\medskip
The local homomorphisms $A_{c_1}^+ \to \O_{v_1}$, $A_{c_2}^+ \stackrel{\sim}{\to} A_{\supp v_2}$ induce maps $A_{c_1}^{+h} \to B_1^+$, $A_{c_2}^{+h}\stackrel{\sim}{\to} B_2^+$, hence a homomorphism
\[
 \ C:=[A_{c_1}^{+h}, A_{c_2}^{+h} ] \hookrightarrow D=[B_1,B_2] .
\]
By \cref{artin-2.5}, $C$ is strictly henselian. Since $C$ is the inductive limit of \'{e}tale $A^+$-algebras, it is the strict henselization of $A^+$ at some prime $\p^+$.
As $A \subset A_{\supp v_2} = A_{c_2}^+ \subset C$, the morphism $\Spec C \to \Spec A^+$ factors through the open subset $\Spec A$ of $\Spec A^+$. Hence $C$ can also be obtained as the strict henselization of $A$ at $\p = \p^+A$. In particular, $C$ is noetherian and has finite Krull-dimension.  If $C$ were a (separably closed) field, the inclusions $A\subset C\subset D\subset \bar K$ would prove that $D$ is separably closed, hence our assertion.  We will achieve this by decreasing induction on $\dim C$. Assume $\dim C >0$. Then $\p$ is not the zero ideal, i.e., we find $0\ne x \in \p \subset A$.

The support of $v_1$ is the generic point of $\Spec A$, hence $\O_{v_1}\subset K$. By valuation theory, the join $[\O_{v_1}, A]\subset K$ is the localization of $\O_{v_1}$ at some prime ideal. If $[\O_{v_1}, A]$ were a proper subring of $K$, \cref{special-aff-chara}(ii) would imply the existence of   a nontrivial horizontal specialization of $v_1$. Since  $v_1$ is Riemann-Zariski, we obtain
\[
K= [\O_{v_1}, A].
\]
Hence we can write
\[
x^{-1}= \sum_{\text{finite}} y_iz_i, \ y_i \in \O_{v_1},\ z_i\in A.
\]
Now the homeomorphism $\RZ(A,A^+) \cong \lim S$ above respects stalks. This means for a Riemann-Zariski point $v\in \Spa(A,A^+)$ with $(A,A^+)_v=(E,E^+)$, the ring $E^+$ is the colimit of the (scheme-) stalks of the models at the respective centers, i.e., $E^+=\colim \O_{S,c(v)}$. Applying this to $v_1$, we can (in the manner as we did above), modify $(A,A^+)$ to $(A',A'^+)$ to achieve that all $y_i$ lie in $A'_{c_1}$. Defining $C'$ similar to $C$ but for $(A',A'^+)$, we obtain the commutative diagram
\[
\begin{tikzcd}
  &D=D^+\\
  C\arrow[ru,hook]\arrow[rr,hook]&&C'\arrow[lu, hook]\\
  A\arrow[u,hook]\arrow[rr,hook]&&A'\arrow[u,hook],
\end{tikzcd}
\]
in which the lower horizontal map induces an open immersion on $\Spec$. Since all $y_i,z_i$ are in $C'$, $x$ is invertible in $C'$, hence the image of the closed point of $\Spec C'$ in $\Spec A$ is a nontrivial generalization of $\p$. This process stops since $A$ has finite Krull-dimension.
\end{proof}

Recall that the tensor product of Huber pairs $$(B,B^+)= (B_1,B_1^+)\otimes_{(A,A^+)} \ldots \otimes_{(A,A^+)} (B_n,B_n^+)$$ is defined as follows: $B$ is the (usual) tensor product $B_1\otimes_A \cdots \otimes_A B_n$ and $B^+$ is the integral closure of $B_1^+\otimes_{A^+} \cdots \otimes_{A^+} B_n^+$ in $B$.

\begin{theorem}\label{hubertensor}
Let $(A,A^+)$ be a Huber pair, $v_1,v_2\in \Spa(A,A^+)$ and, for $i=1,2$,  $(B_{i},B_i^+)$ a local, integral $(A,A^+)_{v_i}^h$-algebra.
Then
\[
(B,B^+)= (B_{1},B_1^+) \otimes_{(A,A^+)} (B_{2},B_2^+)
\]
is quasi-acyclic. Assume that $v_1,v_2\in \Spa(A,A^+)$ are  Riemann-Zariski points and let $(D,D^+)$ be a component of $(B,B^+)$. Then the closed point of $(D,D^+)$ maps to a Riemann-Zariski point of $\Spa(A,A^+)$ and one of the following holds.
\begin{enumerate}
  \item If both homomorphisms $(B_i,B_i^+) \to (D,D^+)$ are not local, then the residue field $D^+/\m_{D^+}$ of $D^+$ is separably closed.
  \item If, say, $(B_1,B_1^+)\to (D,D^+)$ is local, it is integral. In particular, $D^+/\m_{D^+}$ is an algebraic field extension of $B_1^+/\m_{B_1^+}$.
\end{enumerate}
\end{theorem}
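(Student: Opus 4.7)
The plan is to mimic Artin's proof of the scheme-theoretic \cref{artinslemma}, replacing the join result \cref{artin-2.5} used there with its Huber-pair counterpart \cref{huberjoin}. First, since $(B,B^+)$ is integral over the tensor product of $(A,A^+)^h_{v_1}$ and $(A,A^+)^h_{v_2}$, and since \cref{ind-etale-hens}(i) shows that a local integral extension of a henselian Huber pair is henselian, quasi-acyclicity is preserved by passing to this larger tensor product; the residue-field statements in cases (1) and (2), as well as the Riemann-Zariski image of the closed point, are also preserved under such integral extensions. Hence we may assume $(B_i,B_i^+)=(A,A^+)^h_{v_i}$ for $i=1,2$. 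As in Artin, a further reduction (passing to the quotient of $A$ by the kernel of $A\to D$ for a given component $D$ of $B$, then normalizing) brings us to the case that $A$ is a normal integral domain. Fix a separable closure $\bar K/K$ of the fraction field $K$ of $A$.

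Let $(D,D^+)$ be a component of $(B,B^+)$. By Artin's proof of \cref{artinslemma}, the ring $D$ is a filtered colimit of \'{e}tale $A$-algebras, hence a normal domain; choose an embedding $D\hookrightarrow \bar K$ over $A$. Inside $\bar K$ we then have $D=[B_1,B_2]$. The crucial compatibility is that $D^+$, as the corresponding component of the subring $B^+\subset B$, coincides under this embedding with the integral closure of $[B_1^+,B_2^+]$ in $D$. This relies on the definition of $B^+$ as the integral closure of $B_1^+\otimes_{A^+}B_2^+$ in $B$, together with the fact that passing to a component (i.e., quotienting by an ideal generated by idempotents, all of which lie in $B^+$ since $B^+$ is integrally closed in $B$) commutes with taking integral closures. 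Thus $(D,D^+)$ is exactly the join Huber pair $[(B_1,B_1^+),(B_2,B_2^+)]$ considered in the previous section.

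With this identification, \cref{huberjoin} supplies everything required. It gives that $(D,D^+)$ is henselian, yielding quasi-acyclicity of $(B,B^+)$. When $v_1,v_2$ are Riemann-Zariski, the two cases in the proof of \cref{huberjoin} match the dichotomy of the present statement: in case (1), where both $B_i\to D$ fail to be local, the join is strongly henselian with $D=D^+$ and separably closed residue field; in case (2), where, say, $B_1\to D$ is local, we obtain $(D,D^+)=(A,A^+)_w^{\sh}$ for a Riemann-Zariski point $w\in\Spa(A,A^+)$, so the image of the closed point of $(D,D^+)$ in $\Spa(A,A^+)$ is $w$. Integrality of $(B_1,B_1^+)\to(D,D^+)$ in case (2) then follows from \cref{ind-etale-hens}(ii), together with the observation that $D^+/\m_D$, being a valuation ring of the algebraic extension $D/\m_D$ of $B_1/\m_{B_1}$ lying over the henselian $B_1^+/\m_{B_1}$, must be its integral closure by uniqueness of valuation-ring extensions over henselian ones.

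The main technical obstacle will be the identification of $D^+$ with the integral closure of $[B_1^+,B_2^+]$ in $D$: one has to unwind how the $+$-structure interacts with the component decomposition of a tensor-product Huber pair. Once this is pinned down, the preliminary reductions are routine and the heavy lifting is done by \cref{huberjoin}.
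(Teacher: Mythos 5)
Your proof is correct and takes essentially the same approach as the paper, which simply asserts that the deduction of \cref{hubertensor} from \cref{huberjoin} mirrors the deduction of \cref{artinslemma} from \cref{artin-2.5}. You have usefully spelled out the one point the paper leaves implicit, namely that the component $(D,D^+)$ of the tensor-product Huber pair identifies with the join Huber pair $[(B_1,B_1^+),(B_2,B_2^+)]$ of the previous section, which holds because the idempotents defining the component already lie in $B^+$ and because taking the integral closure of $B_1^+\otimes_{A^+}B_2^+$ commutes with localizing at such idempotents.
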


\begin{proof} This follows in exactly the same way from \cref{huberjoin} as \cref{artinslemma} follows from \cref{artin-2.5}.
\end{proof}

We call a  homomorphism of Huber pairs $f: (A,A^+)\to (B,B^+)$ \emph{ind-\'{e}tale}
if it is a filtered colimit of \'{e}tale homomorphisms $f_i: (A,A^+)\to (B_i,B^+_i)$.

\begin{theorem}\label{product of acyclic Huber pairs}
Let $(A,A^+)$ be a Huber pair and let $(B_1,B_1^+),\ldots, (B_n,B_n^+) $ be ind-\'{e}tale $(A,A^+)$-algebras which are quasi-acyclic.  Then the tensor product
\[
(B,B^+)= (B_1,B_1^+)\otimes_{(A,A^+)} \ldots \otimes_{(A,A^+)} (B_n,B_n^+)
\]
is an ind-\'{e}tale quasi-acyclic $(A,A^+)$-algebra.  Suppose that all closed points of the $\Spa(B_i,B_i^+)$ map to Riemann-Zariski points in $\Spa(A,A^+)$. Then the closed points of $\Spa(B,B^+)$ map to Riemann-Zariski points in $\Spa(A,A^+)$.
For a component $(D,D^+)$ of $(B,B^+)$ one of the following holds
\begin{enumerate}
  \item $D^+/\m_{D^+}$ is separably closed, or
  \item $D^+/\m_{D^+}$ is an algebraic extension of $D_i^+/\m_{D_i^+}$, where  $(D_i,D_i^+)$ is a component of $(B_i,B_i^+)$ for some $i$, $1\leq i\le n$.
\end{enumerate}
\end{theorem}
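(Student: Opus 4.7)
The plan is to derive the statement from \cref{hubertensor} by passing to components, mimicking how \cref{Artins-thm} follows from \cref{artinslemma}. By induction it suffices to treat the case $n=2$. Ind-\'{e}taleness of $(B,B^+)$ over $(A,A^+)$ is formal: writing each $(B_i,B_i^+)=\colim_j (B_{i,j},B_{i,j}^+)$ as a filtered colimit of \'{e}tale Huber pairs expresses $(B,B^+)$ as a filtered colimit of \'{e}tale $(A,A^+)$-algebras $(B_{1,j_1},B_{1,j_1}^+)\otimes_{(A,A^+)}(B_{2,j_2},B_{2,j_2}^+)$.

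For the remaining claims, fix a maximal ideal $\mathfrak{m}\subset B$ and let $(D,D^+)$ be the component of $(B,B^+)$ containing $\mathfrak{m}$. Set $\mathfrak{p}_i=\mathfrak{m}\cap B_i$ and denote by $(D_i,D_i^+)$ the unique component of $(B_i,B_i^+)$ containing $\mathfrak{p}_i$; by quasi-acyclicity of $(B_i,B_i^+)$, each $(D_i,D_i^+)$ is a henselian Huber pair. Since $D$ is connected, any idempotent of $B_i$ lying in $\mathfrak{p}_i$ is killed in $D$, so the map $(B_i,B_i^+)\to (D,D^+)$ factors through $(D_i,D_i^+)$. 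Taking the tensor product, $(B,B^+)\to (D,D^+)$ factors through $(D_1,D_1^+)\otimes_{(A,A^+)}(D_2,D_2^+)$, and in fact $(D,D^+)$ coincides with the component of this tensor product at the image $\bar{\mathfrak{m}}$ of $\mathfrak{m}$: since the residue field of $D$ equals $B/\mathfrak{m}=(D_1\otimes_A D_2)/\bar{\mathfrak{m}}$, any idempotent of $D_1\otimes_A D_2$ lying in $\bar{\mathfrak{m}}$ must map to $0$ in the connected ring $D$, so the two localization-by-idempotents constructions coincide.

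Let $v_i\in\Spa(A,A^+)$ be the image of the unique closed point of $\Spa(D_i,D_i^+)$; by hypothesis, $v_i$ is a Riemann-Zariski point. The universal property of henselization factors the local map $(A,A^+)\to (D_i,D_i^+)$ uniquely as $(A,A^+)\to (A,A^+)_{v_i}^h\to (D_i,D_i^+)$, and since the second arrow is ind-\'{e}tale and both source and target are henselian, \cref{ind-etale-hens}(ii) shows it is integral. Thus $(D_i,D_i^+)$ is a local integral $(A,A^+)_{v_i}^h$-algebra, so \cref{hubertensor} applies to $(D_1,D_1^+)\otimes_{(A,A^+)}(D_2,D_2^+)$ and its conclusions transfer directly to $(D,D^+)$: this Huber pair is henselian (whence $(B,B^+)$ is quasi-acyclic), its closed point maps to a Riemann-Zariski point of $\Spa(A,A^+)$, and either both maps $(D_i,D_i^+)\to (D,D^+)$ are non-local and $D^+/\mathfrak{m}_{D^+}$ is separably closed, or one of them, say $(D_1,D_1^+)\to (D,D^+)$, is local and integral so that $D^+/\mathfrak{m}_{D^+}$ is algebraic over $D_1^+/\mathfrak{m}_{D_1^+}$; this is the asserted dichotomy because $(D_1,D_1^+)$ is by construction a component of $(B_1,B_1^+)$.

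The main technical point is the identification of $(D,D^+)$ as a component of $(D_1,D_1^+)\otimes_{(A,A^+)}(D_2,D_2^+)$ rather than merely a quotient of it, since tensor products of Huber pairs can introduce idempotents not coming from either factor; the residue-field argument above is exactly what ensures all such idempotents are killed in $(D,D^+)$.
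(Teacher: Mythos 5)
Your proof is correct and follows essentially the same route the paper indicates: you pass to components, realize each $(D_i,D_i^+)$ as a local integral algebra over $(A,A^+)_{v_i}^h$ via \cref{ind-etale-hens}(ii), and then invoke \cref{hubertensor}, which is precisely the analogue of the scheme-theoretic reduction of \cref{Artins-thm} to \cref{artinslemma}. The one step you work out more explicitly than the paper — identifying $(D,D^+)$ as a genuine component of $(D_1,D_1^+)\otimes_{(A,A^+)}(D_2,D_2^+)$ rather than a mere quotient — is sound (the cleanest way to see it is that $\Spec(D_1\otimes_A D_2)$ is a clopen subset of $\Spec B$ containing the connected component of $\mathfrak{m}$, and similarly on the level of $+$-rings since integral closure commutes with localization at idempotents).
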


\begin{proof} Regarding \cref{ind-etale-hens}(ii), this follows in exactly the same way from \cref{hubertensor} as \cref{Artins-thm} follows from \cref{artinslemma}.
\end{proof}

\begin{corollary}\label{product of strongly acyclic Huber pairs}
Let $(A,A^+)$ be a Huber pair and let $(B_1,B_1^+),\ldots, (B_n,B_n^+) $ be ind-\'{e}tale quasi-acyclic $(A,A^+)$-algebras.
Then the tensor product
\[
(B,B^+)= (B_1,B_1^+)\otimes_{(A,A^+)} \ldots \otimes_{(A,A^+)} (B_n,B_n^+)
\] is quasi-acyclic.
\begin{enumerate}[\rm (i)]
  \item If all $\Spa(B_i,B_i^+)$ are \'{e}tale acyclic, then also $\Spa(B,B^+)$ is \'{e}tale acyclic.
  \item If all $\Spa(B_i,B_i^+)$ are strongly \'{e}tale acyclic and all morphisms $\Spa(B_i,B_i^+)\to (A,A^+)$ map closed points to Riemann-Zariski points, then $(B,B^+)$ is strongly \'{e}tale acyclic.
  \item If $A$ is an $\F_p$-algebra for some prime number $p$, all $(B_i,B_i^+)$ are tamely acyclic and all morphisms $\Spa(B_i,B_i^+)\to (A,A^+)$ map closed points to Riemann-Zariski points, then
  $(B,B^+)$ is tamely acyclic.
\end{enumerate}
\end{corollary}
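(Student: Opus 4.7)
The proof splits into three parallel case analyses, all built on \cref{product of acyclic Huber pairs}. That theorem tells us $(B, B^+)$ is an ind-\'{e}tale quasi-acyclic $(A, A^+)$-algebra, so every component $(D, D^+)$ is automatically a henselian Huber pair, and under the Riemann-Zariski hypothesis used in (ii) and (iii), the closed points of $\Spa(B, B^+)$ lie over Riemann-Zariski points of $\Spa(A, A^+)$. By \cref{tamely acyclic char} we are reduced to upgrading henselness of each component to the appropriate stronger property. The theorem provides the dichotomy: for each component either (1) $D^+/\m_{D^+}$ is separably closed, or (2) there exists a component $(D_i, D_i^+)$ of some $(B_i, B_i^+)$ and a local integral homomorphism $(D_i, D_i^+) \to (D, D^+)$ making $D^+/\m_{D^+}$ algebraic over $D_i^+/\m_{D_i^+}$; since $D_i \to D$ is ind-\'{e}tale, the ring-level residue extension $D/\m_D$ of $D_i/\m_{D_i}$ is additionally separable.

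For part (i), one can sidestep the adic framework entirely: $\Spa$-components correspond bijectively to $\Spec$-components (see the proof of \cref{tamely acyclic char}), so \'{e}tale acyclicity of $(B_i, B_i^+)$ is equivalent to each component of the ring $B_i$ being strictly henselian. The purely ring-theoretic \cref{product of tamely acyclic}, applied to the ind-\'{e}tale $A$-algebras $B_1, \ldots, B_n$, then yields that every component $D$ of $B$ is strictly henselian, so $(D, D^+)$ is a strictly henselian Huber pair, and \cref{tamely acyclic char} gives \'{e}tale acyclicity of $(B, B^+)$.

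For part (ii), strong henselness of $(D, D^+)$ simply means $D^+/\m_{D^+}$ is separably closed. In case (1) this is the conclusion directly; in case (2), $D_i^+/\m_{D_i^+}$ is separably closed (since $(D_i, D_i^+)$ is strongly henselian), and since any algebraic extension of a separably closed field is separably closed, $D^+/\m_{D^+}$ is too.

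For part (iii), the characteristic~$p$ hypothesis ensures every residue field of an $S$-valuation has characteristic~$p$, so tame henselness of $(D, D^+)$ amounts to strong henselness (handled as in (ii)) together with the condition that the absolute Galois group of $k = D/\m_D$ is pro-$p$. Tracing case (1) back through \cref{huberjoin} one sees the sharper fact $D = D^+$, so $k = D^+/\m_{D^+}$ is separably closed and trivially pro-$p$. In case (2), $D/\m_D$ is separable algebraic over $D_i/\m_{D_i}$, so the absolute Galois group $G_{D/\m_D}$ embeds as a closed subgroup of $G_{D_i/\m_{D_i}}$, which is pro-$p$ by the tame henselness of $(D_i, D_i^+)$. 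The main delicacy is precisely this last appeal: one must use the refined fact (implicit in the proof of \cref{huberjoin}) that in case (1) not only is $D^+/\m_{D^+}$ separably closed, but $D$ itself equals $D^+$, which is what allows one to pull the tame condition through to $D/\m_D$ rather than merely to $D^+/\m_{D^+}$.
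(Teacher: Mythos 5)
Your proof is correct and follows essentially the same route as the paper's, but part (iii) is routed through the Huber-pair-level theorem (\cref{product of acyclic Huber pairs}) and then patched up, whereas the paper takes a shorter path via the ring-level \cref{Artins-thm}. The point is that in (iii) you need to control $G_k$ for $k=D/\m_D$, but the dichotomy you invoke is stated for the residue field of $D^+$. You address this in case (2) by observing that $D_i\to D$ is local and ind-\'{e}tale, hence $D/\m_D$ is separable algebraic over $D_i/\m_{D_i}$; that is fine, but it is exactly the content of case (2) of \cref{Artins-thm}, applied to $D$ as a component of the ring $B = B_1\otimes_A\cdots\otimes_A B_n$. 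In case (1) you go further afield, tracing back through \cref{huberjoin} to recover the sharper fact $D=D^+$. That claim does hold (when none of the $B_i\to D$ is local, $D$ is an integral extension of the subring generated by the $B_i^+$, so $D^+=D$), but the detour is unnecessary: the paper simply notes that $D$ is a component of the \emph{ring} $B$ and applies \cref{Artins-thm} directly, which already says $D/\m_D$ is either separably closed or separable algebraic over some $D_i/\m_{D_i}$. That single invocation handles both of your cases uniformly and avoids the re-examination of the join construction. Apart from this localized inefficiency and the slightly loose phrasing ``strong henselness of $(D,D^+)$ simply means $D^+/\m_{D^+}$ is separably closed'' (which is accurate here only because quasi-acyclicity already supplies henselness of $(D,D^+)$), the argument matches the paper's.
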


\begin{proof}
The first assertion follows immediately from \cref{product of acyclic Huber pairs}. Assertion (i) follows from \cref{Artins-thm} and (ii) follows again from \cref{product of acyclic Huber pairs}.
It remains to show (iii).
Assume that $A$ is an $\F_p$-algebra and let $(D,D^+)$ be a component of $(B,B^+)$. Since we already know that $(D,D^+)$ is strongly henselian, it suffices to show that the absolute Galois group $G_k$ of $k=D/\m_D$ is a pro-$p$-group. Note that $D$ is a component of $B$. Hence, by \cref{Artins-thm}, either $k$ is separably closed (in which case we are done) or there is a component $D_i$ of $B_i$ for some $i$ such that $D_i\to D$ is local and $k$ is an algebraic extension of the residue field $D_i/\m_{D_i}$. The absolute Galois group of $D_i/\m_{D_i}$ is a pro-$p$-group by assumption. Therefore its subgroup $G_k$ is also a pro-$p$-group.
\end{proof}

\section{Riemann-Zariski morphisms}

The main technical problem in proving a comparison theorem between the sheaf cohomology and \v{C}ech cohomology of  discretely ringed adic spaces along the lines of the proof for schemes as given in \cref{sec:cech} is the assumption on Riemann-Zariski points in \cref{product of acyclic Huber pairs} and \cref{product of strongly acyclic Huber pairs}. Therefore we have to carry out  a more thorough analysis of Riemann-Zariski points first.

Let $\cX$ be a discretely ringed adic space. We say that $P\in \cX$ specializes to $Q\in \cX$ (notation: $P \rightsquigarrow Q$) if $Q \in \overline{\{P\}}$. Let $\cX_P$ be the local adic space of $\cX$ in $P$, i.e., $\cX_P= \Spa (A,A^+)_P$ for an open affinoid neighbourhood $\Spa(A,A^+)$ of $P$ in $\cX$.    Then $P\rightsquigarrow Q$ if and only if
the morphism $\cX_P \to \cX$ factors through $\cX_Q\to \cX$.

\begin{definition} Let $P\rightsquigarrow Q$. We say $Q$ is a vertical resp.\ horizontal specialization of $P$ if this is the case in one/every affinoid open of $\cX$ containing $P$ and $Q$.
A point $x \in \cX$ is called a \emph{Riemann-Zariski point} if it has no nontrivial horizontal specializations.

We call a morphism $f: \cX' \to \cX$  \emph{Riemann-Zariski} if for every Riemann-Zariski point $x'\in \cX'$, $f(x')$ is a Riemann-Zariski point in $\cX$.
\end{definition}

In \cref{sec:cont} we recalled Temkin's construction of the discretely ringed adic space $\Spa(X,S)$ associated to a scheme morphism $X\to S$. Points of $\Spa(X,S)$ are triples $(x,v,\varepsilon)$, where~$x$ is a point of~$X$,~$v$ is a valuation of~$k(x)$ and $\varepsilon : \Spec \O_v \to S$ is a morphism compatible with $\Spec k(x) \to S$.

\begin{lemma} \label{hori-char} Let $p: X\to S$ be a scheme morphism,  $\cX= \Spa(X,S)$ and $(x,v,\varepsilon), (y,w,\mu)\in \cX$.
\begin{enumerate}[\rm (i)]
  \item $(y,w,\mu)$ is a vertical specialization of $(x,v,\varepsilon)$ if and only if $x=y$, $\O_w \subset \O_v\ (\subset k(x))$, and the diagram
  \[
\begin{tikzcd}
\Spec \O_v \arrow[d]\arrow[rd,"\varepsilon"]\\
\Spec \O_w \arrow[r,"\mu"']&S
\end{tikzcd}
\]
commutes (automatic if $S$ is separated).
  \item $(y,w,\mu)$ is a horizontal specialization of $(x,v,\varepsilon)$ if and only if there exists a point $P \in \Spec \O_v$ and an $x$-morphism $f:(\Spec \O_v)_P \to X$
  such that
  \begin{enumerate}[(a)]
    \item $f(P)=y$.
    \item The valuation $w$ is the restriction to $k(y)$ of the valuation on $k(P)$ given by the valuation ring $ \O_v/P $.
    \item The diagram (which exists by $(b)$ since $\O_v/P$ is the ring of global sections of the affine scheme $\overline{\{P\}}$.)
    \[
    \begin{tikzcd}
    \overline{\{P\}}\arrow[r]\arrow[d, hook]& \Spec \O_w\arrow[d,"\mu"]\\
    \Spec \O_v \arrow[r,"\varepsilon"]& S
    \end{tikzcd}
    \]
    commutes. (Note that (c) is automatic if $S$ is separated.)
   \end{enumerate}

Here is a diagram elaborating the conditions:
  \[
  \begin{tikzcd}
  \Spec \O_v \arrow[dddr,bend right=4cm,"\varepsilon"']&\overline{\{P\}} \arrow[l,hook']\arrow[rd]&\\
  (\Spec \O_v)_P \arrow[u,hook] \arrow[rd,"f"] & P\arrow[l,hook']\arrow[u,hook]\arrow[rd]&\Spec \O_w\arrow[ddl, bend left=3cm,"\mu"]\\
  x \arrow[u,hook] \arrow[r,hook]& X\arrow[d,"p"] & y\arrow[l,hook']\arrow[u,hook]\\
  &S&
    \end{tikzcd}
 \]
\end{enumerate}
\end{lemma}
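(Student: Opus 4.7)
\medskip

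The plan is to reduce to the case where both $X$ and $S$ are affine and then invoke \cref{special-aff-chara}, translating its algebraic conditions on valuations into the geometric language of the lemma.

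First I would establish the reduction. Specialization in $\cX = \Spa(X,S)$ is a local topological condition that can be tested in any open neighborhood of the more specialized point $(y,w,\mu)$. To obtain an affinoid neighborhood, choose an affine open $X_0 \subset X$ containing the scheme-theoretic point $y$, and let $s_0 \in S$ be the center of $\mu$ (the image of the closed point $\m_w \in \Spec \O_w$). Choose an affine open $S_0 \subset S$ containing $s_0$. The spectrum of a valuation ring has the property that its unique closed point has no proper open neighborhood; in particular the preimage $\mu^{-1}(S_0)\subset \Spec \O_w$ is open and contains $\m_w$, so it equals $\Spec \O_w$. Hence $\mu$ factors through $S_0$, and $(y,w,\mu)$ lies in the affinoid open $\Spa(X_0,S_0) \subset \Spa(X,S)$. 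If $(x,v,\varepsilon)\rightsquigarrow (y,w,\mu)$, the more generic point also lies in this neighborhood. Writing $X_0=\Spec A$, $S_0=\Spec R$, and letting $A^+$ be the integral closure of $R$ in $A$, we have $\Spa(X_0,S_0) = \Spa(A,A^+)$, with the triple $(x,v,\varepsilon)$ corresponding to the valuation $v$ on $A$ (with $\supp v = x$); the compatibility $\varepsilon$ is uniquely recovered from $v$ via the factorization $R \hookrightarrow A^+ \to \O_v$, which exists since elements of $A^+$ are integral over $R$ and therefore map into $\O_v \subset k(x)$.

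Second, I would prove (i) by direct translation of \cref{special-aff-chara}(i). That lemma states that $w$ is a vertical specialization of $v$ in $\Spa(A,A^+)$ iff $\supp v=\supp w$ and $\O_w\subset\O_v\subset Q(A/\supp v)$. Since $\supp v = x$ and $Q(A/\supp v)=k(x)$, this is exactly the condition $x=y$ together with $\O_w \subset \O_v \subset k(x)$. The stated diagram's commutativity is automatic in the affinoid case from the uniqueness of $\varepsilon$ and $\mu$ (and more generally from the separatedness hypothesis on $S$), and this is exactly the parenthetical remark in the statement.

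Third, for (ii), I would unpack \cref{special-aff-chara}(ii). It provides a prime $\p \subset \O_v$ with three properties. I set $P := \p$, viewed as a point of $\Spec \O_v$. Property (a) there, namely $A/\supp v \subset (\O_v)_\p$, gives a scheme morphism $(\Spec\O_v)_P = \Spec(\O_v)_\p \to \Spec(A/\supp v) \hookrightarrow \Spec A = X_0 \hookrightarrow X$, which I take as $f$; it is an $x$-morphism because its generic point maps to the generic point of $\Spec(A/\supp v)$, namely $x$. Property (b) there states that $\supp w$ is the preimage of $\p$ under $A\to (\O_v)_\p$, i.e. $f(P)=y$, giving (a) of our statement. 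Property (c) there identifies $\O_w=(\O_v/\p)\cap Q(A/\supp w)$, which says exactly that $w$ is the restriction to $k(y)$ of the valuation on $k(P)=Q(\O_v/\p)$ with valuation ring $\O_v/\p$, giving (b) of our statement. The final compatibility diagram (c) then commutes by the uniqueness of $\mu,\varepsilon$ in the affinoid situation. Conversely, given data $(P,f)$ satisfying (a)--(c), the prime $\p$ in $\O_v$ defined by $P$ satisfies the three conditions of \cref{special-aff-chara}(ii) by reversing the same identifications.

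The main subtlety, which I do not expect to be a serious obstacle but which requires careful bookkeeping, is the dictionary between the purely ring-theoretic setup of \cref{special-aff-chara} (inclusions of subrings inside a common fraction field) and the geometric/scheme-theoretic phrasing of the statement involving morphisms like $f:(\Spec\O_v)_P\to X$ and $\mu:\Spec\O_w\to S$; in particular, one must confirm that the morphism $f$ produced from the algebraic inclusion $A/\supp v \subset (\O_v)_\p$ is automatically an $x$-morphism and that the various compatibility diagrams commute. All of these verifications are standard once the reduction to the affinoid case has been made.
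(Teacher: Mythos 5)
Your overall strategy — reduce to a common affinoid neighborhood and translate \cref{special-aff-chara} into the geometric language of the lemma — is the same as the paper's, and the algebraic dictionary you set up (identifying $\p$ with $P$, $A/\supp v\subset(\O_v)_\p$ with $f$, and so on) is correct. The gap is in the converse direction of (ii): you never show that, \emph{given} data $(P,f)$ satisfying (a)--(c), the point $(x,v,\varepsilon)$ actually lies in the affinoid neighborhood $\Spa(X_0,S_0)$ you constructed around $(y,w,\mu)$. Your argument for why both points lie in a common affinoid explicitly invokes the specialization relation (``If $(x,v,\varepsilon)\rightsquigarrow(y,w,\mu)$, the more generic point also lies in this neighborhood''), which is only available in the forward direction. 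In the converse direction it is precisely what you are trying to prove, so it cannot be assumed.

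To repair this, one must extract the needed topological information from the hypotheses themselves. Condition (a) gives $x\rightsquigarrow y$ in $X$, since $f$ sends the generic point of $(\Spec\O_v)_P$ to $x$ (being an $x$-morphism) and $P$ to $y$; hence any open $X_0\ni y$ contains $x$. Condition (c), via the commutativity of the square involving $\overline{\{P\}}$, forces $\varepsilon(c_v)=\mu(c_w)$, where $c_v$ and $c_w$ are the closed points of $\Spec\O_v$ and $\Spec\O_w$; this shows $\varepsilon$ factors through any affine open $S_0\subset S$ that $\mu$ factors through. (Also, one should take $X_0$ inside $p^{-1}(S_0)$ so that $\Spa(X_0,S_0)$ really is an open affinoid of $\Spa(X,S)$; your construction does not enforce this.) With these two observations the reduction to the affinoid case goes through in both directions, and the rest of your translation of \cref{special-aff-chara} is then fine.
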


\begin{proof}
(i) is obvious.\\
(ii) If $(y,w,\mu)$ is a horizontal specialization of $(x,v,\varepsilon)$, then both points lie in a common open affinoid and the conditions hold by \cref{special-aff-chara}. The same argument shows that the given conditions imply that $(y,w,\mu)$ is a horizontal specialization of $(x,v,\varepsilon)$ if we find a common open affinoid neighborhood of the two points. Since $f(P)=y$ by $(a)$, we have $x\rightsquigarrow y$ in $X$ and $p(x)\rightsquigarrow p(y)$ in $S$. Denoting the closed point of $\Spec \O_w$ by $c_w$, we moreover have $p(y) \rightsquigarrow \mu(c_w)$. Let $S'$ be an open affine neighborhood of $\mu(c_w)$ in $S$ and $X'$ an open affine neighborhood of $y$ in $p^{-1}(S')$. Then $\Spa(X',S')$ is an open affinoid neighborhood of $(y,w,\mu)$ in $\Spa(X,S)$. We are done if we can show that $(x,v,\varepsilon)$ lies in $\Spa(X',S')$. First, $x\rightsquigarrow y$ implies $x\in X'$. Hence it suffices to show that $\varepsilon: \Spec \O_v \to S$ factors through $S'$. For this it suffices to show $\varepsilon(c_v)\in S'$, where  $c_v$ is the closed point of $\Spec \O_v$. This follows since the closed point of $\overline{\{P\}}$ maps to $c_v$ and $c_w$ respectively and condition (c)  implies $\varepsilon (c_v)=\mu(c_w)$.
\end{proof}

\begin{corollary}\label{hor-spez.to}
If\/ $X$ and $S$ are separated, then the set of horizontal specializations of a point in $\Spa(X,S)$ is totally ordered.
\end{corollary}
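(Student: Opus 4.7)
The plan is to reduce to the affinoid case, where the horizontal specializations of a given point are already known to be totally ordered (by the discussion preceding \cref{RZ-points}, and concretely by \cref{special-aff-chara}(ii), which puts them in bijection with a subset of the totally ordered set $\Spec\O_v$). So given two horizontal specializations $\xi_1=(y_1,w_1,\mu_1)$ and $\xi_2=(y_2,w_2,\mu_2)$ of $\xi=(x,v,\varepsilon)$, it will suffice to produce a single open affinoid $\Spa(X',S')\subset\Spa(X,S)$ containing all three.

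First I would apply \cref{hori-char}(ii) to realize each $\xi_i$ via a prime $P_i\in\Spec\O_v$ and an $x$-morphism $f_i:\Spec(\O_v)_{P_i}\to X$ over $S$. Since $\O_v$ is a valuation ring, its spectrum is totally ordered, so without loss of generality $P_1\subset P_2$, giving an inclusion $\Spec(\O_v)_{P_1}\hookrightarrow\Spec(\O_v)_{P_2}$. Each $(\O_v)_{P_i}$ is itself a valuation ring of $k(x)$, so the valuative criterion for the separated morphism $X\to S$ yields uniqueness of the lift $f_i$ of $\Spec k(x)\to X$; hence $f_1=f_2|_{\Spec(\O_v)_{P_1}}$, which produces the specialization $y_1=f_2(P_1)\rightsquigarrow f_2(P_2)=y_2$ in $X$. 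On the base, the diagrams of \cref{hori-char}(ii) give $\varepsilon(c_v)=\mu_1(c_{w_1})=\mu_2(c_{w_2})=:s_0$.

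With these ingredients the common affinoid is assembled as follows. Pick an affine open $S'\subset S$ containing $s_0$ and, writing $p:X\to S$ for the structure map, an affine open $X'\subset p^{-1}(S')$ containing $y_2$. Since $X'$ is open in $X$ and $x\rightsquigarrow y_1\rightsquigarrow y_2$, also $x,y_1\in X'$. The set-theoretic images of $\varepsilon,\mu_1,\mu_2$ in $S$ are chains of generalizations of $s_0\in S'$, hence lie entirely in the open $S'$, so these morphisms factor through $S'$. Thus $\Spa(X',S')$ is an open affinoid of $\Spa(X,S)$ containing $\xi,\xi_1,\xi_2$, and \cref{special-aff-chara}(ii) gives the total ordering. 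The only real obstacle is securing the common affinoid neighborhood, and both separatedness hypotheses are needed here: that of $X$ to invoke the valuative criterion and place $y_1,y_2$ in a single specialization chain (so that one affine open $X'$ suffices), and that of $S$ to force $\mu_i(c_{w_i})=\varepsilon(c_v)$ (so that one affine open $S'$ works simultaneously for $\varepsilon,\mu_1,\mu_2$).
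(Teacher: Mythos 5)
Your proof is correct. The approach is genuinely different in packaging from the paper's, though it relies on the same two uses of separatedness. The paper argues \emph{directly} that the triple $(y,w,\mu)$ is uniquely determined by the prime $P\in\Spec\O_v$ appearing in \cref{hori-char}(ii) --- separatedness of $X$ gives uniqueness of the lift $f$ and hence of $y$, the intersection formula for $\O_w$ then gives $w$, and separatedness of $S$ gives uniqueness of $\mu$ --- and then concludes from the total ordering of $\Spec\O_v$. You instead \emph{reduce} to the affinoid case by producing a single open affinoid $\Spa(X',S')$ containing $\xi$, $\xi_1$ and $\xi_2$, and then invoke the known fact that horizontal specializations of a fixed point in an affinoid adic spectrum are totally ordered (by the convex subgroups of $\Gamma_v$). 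Your construction of the common affinoid is essentially a two-specialization version of the argument that already appears inside the proof of \cref{hori-char}(ii) (where a common affinoid for $\xi$ and a single horizontal specialization was found), and it deploys separatedness in exactly the corresponding places. One small advantage of your reduction is that it makes explicit how the specialization order between $\xi_1$ and $\xi_2$ is actually compared --- namely inside a common affinoid, where the definition of `horizontal specialization' is local --- which the paper's version of the proof leaves somewhat implicit after establishing the bijection with a subset of $\Spec\O_v$.
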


\begin{proof} Let $(x,v,\varepsilon)\rightsquigarrow (y,w,\mu)$ be a horizontal specialization in $\Spa(X,S)$. We show that, with the notation of part (ii) of \cref{hori-char}, the triple $(y,w,\mu)$ is uniquely given by $P\in \Spec \O_v$.

First of all, since $X$ is separated, the morphism $f: (\Spec \O_v)_P \to X$, and hence $y=f(P)$ is unique.  Further, $\O_w= (\O_v/P) \cap k(y)$ (intersection in $k(P)$), hence $w$ is given by~$P$. Finally, since $S$ is separated, $\mu: \Spec \O_w \to S$ is uniquely given by~$y$, hence by~$P$.
The result follows since the set of prime ideals in a valuation ring is totally ordered.
\end{proof}

\begin{corollary} \label{characterize_RZ_points}
 A point $(x,v,\varepsilon)$ in $\Spa(X,S)$ is Riemann-Zariski if and only if $x$ is a closed point in $X \times_S \Spec \O_v$.
\end{corollary}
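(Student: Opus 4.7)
The plan is to translate the condition ``$x$ is closed in $X\times_S\Spec\O_v$'' into a statement about dominations of local subrings of $k(x)$, and then match it up with the criterion for horizontal specializations given in \cref{hori-char}(ii). Throughout I view $x$ as a point of $X\times_S\Spec\O_v$ via the morphism $\Spec k(x)\to X\times_S\Spec\O_v$ induced by $\Spec k(x)\to X$ and the generic point inclusion $\Spec k(x)\hookrightarrow \Spec\O_v$; in particular, $x$ lies over $(0)\in\Spec\O_v$.

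For the easy direction ($\Leftarrow$), suppose $x$ is closed in $X\times_S\Spec\O_v$ and assume a nontrivial horizontal specialization $(x,v,\varepsilon)\rightsquigarrow(y,w,\mu)$ exists. By \cref{hori-char}(ii) it is encoded by a nonzero prime $P\subset\O_v$ and an $x$-morphism $f\colon(\Spec\O_v)_P\to X$ satisfying conditions (a)--(c). Combining $f$ with $(\Spec\O_v)_P\hookrightarrow\Spec\O_v$ via the universal property of the fibre product (condition (c) provides the necessary compatibility) yields a morphism $(\Spec\O_v)_P\to X\times_S\Spec\O_v$ sending the generic point to $x$ and the closed point $P$ to some $y'\neq x$, since $y'$ lies over $P\neq(0)$. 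This contradicts closedness of $x$.

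For the main direction ($\Rightarrow$), suppose $(x,v,\varepsilon)$ is Riemann-Zariski and assume $x\rightsquigarrow y'$ in $X\times_S\Spec\O_v$ with $y'\neq x$. Let $P\subset\O_v$ be the image of $y'$ and $y_X\in X$ its projection to $X$. The separatedness of $X\to S$ implies that the graph of $x$ is closed in the generic fibre $X\times_S\Spec k(x)$, so $P\neq(0)$. Form the local ring $R=\O_{X\times_S\Spec\O_v,y'}/\p_x$, where $\p_x$ is the prime of $\O_{X\times_S\Spec\O_v,y'}$ corresponding to the generization $x$. Then $R$ is a local subring of $k(x)$ with residue field $k(y')$, and a straightforward check shows that $(\O_v)_P\hookrightarrow R$ is a local inclusion, i.e.\ $R$ dominates $(\O_v)_P$.

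The key step, and the main obstacle, is to prove the

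\smallskip\noindent
\textbf{Claim:} $R=(\O_v)_P$.

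\smallskip\noindent
For this, take $r\in R\subset k(x)=\mathrm{Frac}((\O_v)_P)$. If $r\notin(\O_v)_P$, then, since $(\O_v)_P$ is a valuation ring of $k(x)$, $1/r\in\m_{(\O_v)_P}$, hence $1/r\in\m_R$ by domination. But then $r=1/(1/r)\in R$ has an inverse in $R$ lying in $\m_R$, which is absurd. Hence $R\subset(\O_v)_P$, and equality follows.

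Granted the claim, the composite $\O_{X,y_X}\to\O_{X\times_S\Spec\O_v,y'}\twoheadrightarrow R=(\O_v)_P$ is a local ring homomorphism, which defines an $x$-morphism $f\colon(\Spec\O_v)_P\to X$ with $f(P)=y_X$. Verification of conditions (a)--(c) of \cref{hori-char}(ii) is routine (condition (c) being automatic since $S$ is separated), so we obtain a nontrivial horizontal specialization $(x,v,\varepsilon)\rightsquigarrow(y_X,w,\mu)$, contradicting the Riemann-Zariski hypothesis and finishing the proof.
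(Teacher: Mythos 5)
Your proof is correct and follows the same overall strategy as the paper's: translate ``Riemann-Zariski'' into ``no nontrivial horizontal specialization'' via \cref{hori-char}(ii), and in the hard direction extract from a nontrivial specialization $x\rightsquigarrow y'$ in $X\times_S\Spec\O_v$ the data $(P,f)$ producing a nontrivial horizontal specialization. Where you deviate is in how you get your hands on the morphism $f:\Spec(\O_v)_P\to X$. The paper invokes the valuative criterion (choose a valuation ring $\O\subset k(x)$ dominating $\O_{X\times_S\Spec\O_v,y'}/\p_x$, then observe $\O_v\subset\O$ forces $\O=(\O_v)_P$), whereas you show directly that the local domain $R=\O_{X\times_S\Spec\O_v,y'}/\p_x$ is itself equal to $(\O_v)_P$, using the fact that a valuation ring is maximal under domination among local subrings of its fraction field. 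This is slightly more economical: you get the equality outright rather than by squeezing between a chosen valuation ring and $\O_v$.

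Two small remarks. First, you invoke separatedness of $X\to S$ to argue $P\neq(0)$, but the corollary does not assume $X\to S$ separated, and you don't need it: your own Claim already yields $P\neq(0)$ for free, since $P=(0)$ would give $R=(\O_v)_{(0)}=k(x)$ and hence $\p_x=\m_{X\times_S\Spec\O_v,y'}$, i.e.\ $y'=x$, contrary to assumption. Second, the ``routine'' verification that the resulting horizontal specialization is nontrivial does deserve one more line: from $P\neq(0)$ one has $(\O_v)_P\subsetneq k(x)$, so the image $y_X=f(P)$ cannot equal $x$ (otherwise the local map $\O_{X,x}\to(\O_v)_P$, which factors through $k(x)$ because $f$ is an $x$-morphism, would force $k(x)\subset(\O_v)_P$, a contradiction). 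With those minor points tidied up, your argument is a complete and correct proof.
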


\begin{proof}
Assume that $x$ is not closed in $X \times_S \Spec \O_v$. Then there exists a nontrivial specialization  $\tilde y$ of $x$ in $X \times_S \Spec \O_v$. Hence we can find a valuation ring $\O\subsetneq k(x)$ such that $x\to X \times_S \Spec \O_v$ factors as $x \to \Spec \O \to X \times_S \Spec \O_v$ and $\tilde y$ is the image of the closed point in $\Spec \O$. Projecting to the second factor shows  $\O_v \subset \O$, hence $\O=(\O_v)_P$ for a point $P\in \Spec \O_v$. Projecting to the first factor, we find an $x$-morphism $f:\Spec \O \to X$ and hence by \cref{hori-char} a horizontal specialization of  $(x,v,\varepsilon)$. Since $\O \subsetneq k(x)$, the image of the closed point of $\Spec \O$ in $X$ is not $x$, hence the horizontal specialization of $(x,v,\varepsilon)$ is nontrivial. We conclude that $(x,v,\varepsilon)$ is not a Riemann-Zariski point.

If $(x,v,\varepsilon)$ is not a Riemann-Zariski point, it has a nontrivial horizontal specialization. Then the $x$-morphism $f: (\Spec \O_v)_P \to X$ of \cref{hori-char} and the canonical morphism $(\Spec \O_v)_P \to \Spec \O_v$ give a morphism $(\Spec \O_v)_P \to X\times_S \Spec \O_v$. It maps $P$ to a proper specialization of $x$.
\end{proof}

\begin{lemma} \label{RZ_specialization}
 If $X$ is quasi-compact, then every point in $\Spa(X,S)$ has a horizontal specialization which is a Riemann-Zariski point.
 Moreover, if $X$ and $S$ are separated, this Riemann-Zariski point is unique.
\end{lemma}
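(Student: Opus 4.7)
The plan is to construct the required horizontal specialization by passing to the scheme $X\times_S\Spec\O_v$ and then to verify the Riemann--Zariski condition by combining \cref{characterize_RZ_points} with a faithfully flat lifting argument.

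Since $\Spec\O_v$ is local, the morphism $\varepsilon$ factors through an affine open of~$S$ and is therefore affine, so the projection $X\times_S\Spec\O_v\to X$ is affine and $X\times_S\Spec\O_v$ is quasi-compact. The morphism $\Spec k(x)\to X\times_S\Spec\O_v$ induced by $x$ and the generic point of $\Spec\O_v$ has image a single point $\tilde x$, and by quasi-compactness $\tilde x$ admits a closed specialization $\tilde y$. Let $P\in\Spec\O_v$ and $y\in X$ be its two images. The local ring $A:=\O_{\overline{\{\tilde x\}},\tilde y}$ of the reduced closure is a local subring of $k(x)$ receiving a local injection from $(\O_v)_P$; since $(\O_v)_P$ is a valuation ring of $k(x)$, every local overring of it inside $k(x)$ is a further localization $(\O_v)_Q$ with $Q\subset P$, and locality of the map forces $Q=P$, so $A=(\O_v)_P$. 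The composition $\Spec A=(\Spec\O_v)_P\hookrightarrow\overline{\{\tilde x\}}\hookrightarrow X\times_S\Spec\O_v\to X$ provides the $x$-morphism $f\colon(\Spec\O_v)_P\to X$ with $f(P)=y$ required by \cref{hori-char}(ii), and hence a horizontal specialization $(x,v,\varepsilon)\rightsquigarrow(y,w,\mu)$ with $\O_w=\O_v/P\cap k(y)$.

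By \cref{characterize_RZ_points}, $(y,w,\mu)$ is Riemann--Zariski if and only if $y$ is closed in $X\times_S\Spec\O_w$. Assume it is not and pick a nontrivial specialization $\tilde y_w\rightsquigarrow\tilde z$ in $X\times_S\Spec\O_w$, where $\tilde y_w=(y,\text{generic point of }\O_w)$. The inclusion $\O_w\hookrightarrow\O_v/P$ is faithfully flat: flatness holds because $\O_v/P$ is torsion-free over the valuation ring $\O_w$, and $\Spec(\O_v/P)\to\Spec\O_w$ is surjective because every convex subgroup $\Delta\subset\Gamma_w$ satisfies $\Delta=\tilde\Delta\cap\Gamma_w$ for its convex hull $\tilde\Delta\subset\Gamma_{v/P}$. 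The base change $X\times_S\Spec(\O_v/P)\to X\times_S\Spec\O_w$ is therefore faithfully flat, and $\tilde y$, viewed inside the closed subscheme $X\times_S\Spec(\O_v/P)\subset X\times_S\Spec\O_v$, lies over $\tilde y_w$. Going-down now lifts the specialization to a nontrivial $\tilde y\rightsquigarrow\tilde z'$ inside $X\times_S\Spec\O_v$, contradicting the closedness of $\tilde y$.

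Uniqueness when $X$ and $S$ are separated is formal from \cref{hor-spez.to}: the horizontal specializations of $(x,v,\varepsilon)$ form a totally ordered set, so any two Riemann--Zariski horizontal specializations are comparable by specialization, and being Riemann--Zariski forces that relation to be trivial. The decisive technical step is the Riemann--Zariski verification in the previous paragraph; it rests on the faithful flatness of $\O_w\hookrightarrow\O_v/P$, established via the lifting of convex subgroups of $\Gamma_w$ to $\Gamma_{v/P}$, together with going-down to transport a hypothetical specialization of $\tilde y_w$ back to a specialization of the closed point $\tilde y$ in $X\times_S\Spec\O_v$.
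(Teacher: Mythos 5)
Your construction of a horizontal specialization by choosing a closed specialization $\tilde y$ of $\tilde x$ in the quasi-compact scheme $X\times_S\Spec\O_v$ is sound and is genuinely different from the paper's proof, which instead applies Zorn's lemma directly to the poset of horizontal specializations: by quasi-compactness any chain lies in a single affinoid $\Spa(A_i,A_i^+)$ and is bounded below by $v|c\Gamma_v$, so a maximal horizontal specialization exists and is tautologically Riemann--Zariski. Your approach is more explicit and it shifts the burden to the verification that the chosen $(y,w,\mu)$ is Riemann--Zariski; it is precisely there that a genuine gap appears.

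The decisive step ``Going-down now lifts the specialization to a nontrivial $\tilde y\rightsquigarrow\tilde z'$'' is incorrect. Going-down for a flat ring map $A\to B$ lifts \emph{generalizations}: given $\q_2\subset\q_1$ in $\Spec A$ and $\p_1$ over $\q_1$, it produces $\p_2\subset\p_1$ over $\q_2$. What you need is the opposite, a lift of the \emph{specialization} $\tilde y_w\rightsquigarrow\tilde z$ starting from the given point $\tilde y$ over $\tilde y_w$, i.e., the ``specializing'' property of the morphism. Faithfully flat quasi-compact morphisms do not in general lift specializations: for instance $\Spec k[t]\sqcup\Spec k[t,t^{-1}]\to\Spec k[t]$ (identity on the first factor, open immersion on the second) is faithfully flat and quasi-compact, yet the generic point of the second factor has no specialization lying over $(t)$. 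So $\O_w\hookrightarrow\O_v/P$ being faithfully flat is not enough, and the base change $X\times_S\Spec(\O_v/P)\to X\times_S\Spec\O_w$ is certainly not universally closed (the extension $\O_w\subset\O_v/P$ need not be integral). The claim you want is in fact true, but proving it requires genuine valuation-theoretic input: from a nontrivial specialization of $\tilde y_w$ one extracts, as in the proof of \cref{characterize_RZ_points}, a valuation ring $\O'=(\O_w)_Q\subsetneq k(y)$ with $Q\neq(0)$ dominating the local ring at the specialized point; one must then lift this to a valuation ring $\O''=(\O_v/P)_{\tilde Q}\subsetneq k(P)$ over a prime $\tilde Q$ chosen so that $\tilde Q\cap\O_w=Q$ (using that convex subgroups of $\Gamma_w$ are intersections of their convex hulls in $\Gamma_{\O_v/P}$, which also gives the surjectivity you invoked), check that $\O''\cap k(y)=\O'$ by a locality argument, and finally assemble a morphism $\Spec\O''\to X\times_S\Spec\O_v$ exhibiting a nontrivial specialization of $\tilde y$. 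As written, your proof does not supply this, and ``going-down'' cannot. Your uniqueness argument via \cref{hor-spez.to} is fine and matches the paper's.
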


\begin{proof}
Let $(x,v,\phi)$ be a point in $\Spa(X,S)$. The set of horizontal specializations is partially ordered.  As~$X$ is quasi-compact, it is covered by finitely many affinoids $\Spa(A_i,A_i^+)$ and thus any chain of horizontal specializations of $(x,v,\phi)$ is contained entirely in one of the $\Spa(A_i,A_i^+)$. We can then view~$v$ as a valuation of~$A_i$ and all specializations of the chain specialize to $v|{c\Gamma_v}$. Hence, by Zorn's lemma, there is a maximal horizontal specialization $(y,w,\psi)$ of $(x,v,\phi)$. In particular, $(y,w,\psi)$ is a Riemann-Zariski point. If $X$ and $S$ are separated, $(y,w,\psi)$ is unique as the horizontal specializations are totally ordered by \cref{hor-spez.to}.
\end{proof}

\begin{definition}
 A commutative square of morphisms of schemes
 \[
  \begin{tikzcd}
   X'	\ar[r]	\ar[d]	& X	\ar[d]	\\
   S'	\ar[r]			& S
  \end{tikzcd}
 \]
 is said to have universally closed diagonal if the induced morphism $X' \to X \times_S S'$ is universally closed.
\end{definition}

\begin{lemma} \label{RZ_closed_diagonal}
 Let
 \[
  \begin{tikzcd}
   X'	\ar[r]	\ar[d]	& X	\ar[d]	\\
   S'	\ar[r]			& S
  \end{tikzcd}
 \]
 be a square with universally closed diagonal.
 Then the induced morphism
 \[
  \Spa(X',S') \to \Spa(X,S)
 \]
is Riemann-Zariski.  The converse holds if $X'$ is quasi-compact and all residue field extensions of $X' \to X$ are algebraic (e.g., if $X' \to X$ is \'{e}tale).
\end{lemma}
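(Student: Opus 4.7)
The plan is to invoke \cref{characterize_RZ_points}: a point $(x,v,\varepsilon)$ of $\Spa(X,S)$ is Riemann--Zariski precisely when $x$ is closed in $X \times_S \Spec \O_v$. Both implications then reduce to statements about closed points under a base change, bridged by the valuative criterion for universal closedness (\cite[TAG 01KF]{stacks-project}).

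For the forward direction, let $(x',v',\varepsilon')$ be Riemann--Zariski in $\Spa(X',S')$, suppose its image $(x,v,\varepsilon)$ is not, and seek a contradiction. The assumed proper specialization $x \rightsquigarrow y$ in $X \times_S \Spec \O_v$ is witnessed by a valuation ring $B \supset \O_v$ of $k(x)$ together with a morphism $\Spec B \to X \times_S \Spec \O_v$. Since $\O_v \hookrightarrow \O_{v'}$ is faithfully flat (torsion-free over a valuation domain, with $\m_v \O_{v'} \neq \O_{v'}$ by the non-archimedean inequality), $\Spec \O_{v'} \to \Spec \O_v$ is surjective, so I can lift $B = (\O_v)_\mathfrak{p}$ to $B' := (\O_{v'})_{\mathfrak{p}'}$ in $k(x')$ with $B' \cap k(x) = B$ and $\O_{v'} \subset B'$. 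The data assemble into
\[
\begin{tikzcd}
\Spec k(x') \ar[r] \ar[d] & X' \ar[d] \\
\Spec B' \ar[r] & X \times_S S',
\end{tikzcd}
\]
and the valuative criterion for universal closedness of $X' \to X \times_S S'$ yields, after an extension $B' \hookrightarrow B''$, a lift $\Spec B'' \to X'$. Paired with $\Spec B'' \to \Spec \O_{v'}$ this produces $\Spec B'' \to X' \times_{S'} \Spec \O_{v'}$; its closed point is a specialization of $x'$, hence equals $x'$ by Riemann--Zariski-ness, and comparing the image of this closed point under $\Spec B'' \to X' \to X$ versus $\Spec B'' \to \Spec B \to X$ forces $x = y$, contradicting the choice of $y$.

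For the converse I verify the valuative criterion for $X' \to X \times_S S'$: given $\Spec K \to X'$ and $\Spec A \to X \times_S S'$ with $K = Q(A)$, denote the images $x' \in X'$, $x \in X$ and set $\O_{v'} := A \cap k(x')$, $\O_v := A \cap k(x)$; the local map $\O_{S',s''} \to A$ (with $s''$ the image of the closed point of $\Spec A$ in $S'$) factors through $\O_{v'}$, because $\Spec K \to S'$ factors through $\Spec k(x') \to X' \to S'$ and $A \hookrightarrow K$ is injective. This produces a canonical $\varepsilon' : \Spec \O_{v'} \to S'$ and hence a point $(x',v',\varepsilon') \in \Spa(X',S')$. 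Using $X'$ quasi-compact and \cref{RZ_specialization}, I pick a horizontal specialization $(\bar x',\bar v',\bar \varepsilon')$ that is Riemann--Zariski; its image $(\bar x,\bar v,\bar \varepsilon)$ in $\Spa(X,S)$ is Riemann--Zariski by hypothesis. On the other hand, $\Spec A \to X$ factors through $\Spec \O_v \to X$, which via \cref{hori-char}(ii) at $P = \m_v$ exhibits $(y, \mathrm{triv}, \mu)$ as a Riemann--Zariski horizontal specialization of $(x,v,\varepsilon)$. A careful comparison of these two Riemann--Zariski specializations—aided by \cref{specialization-prop} and \cref{hor-spez.to}—identifies $\bar x$ with $y$; combined with the algebraic residue hypothesis on $X' \to X$ and the scheme morphism $(\Spec \O_{v'})_P \to X'$ encoding the horizontal specialization in $\Spa(X',S')$, this assembles into the required lift $\Spec A'' \to X'$ after enlarging $A$.

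The principal obstacle is this last step of the converse—promoting the Spa-theoretic coincidence $\bar x = y$ into an honest scheme-theoretic valuative-criterion lift. The algebraic residue condition is indispensable here: without it the valuations of $K$ restricting to $v$ on $k(x)$ are too loose to match $A$ on all of $K$, and one additionally needs care with how horizontal versus vertical specializations behave under the continuous map $\Spa(X',S') \to \Spa(X,S)$. The forward direction, in contrast, collapses to a single specialization-lifting argument once the surjectivity of $\Spec \O_{v'} \to \Spec \O_v$ is in hand.
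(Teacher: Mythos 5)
The forward direction is essentially the paper's argument with a bit more detail on constructing the lift $B' \subset k(x')$ of the localization $B \subset k(x)$; the final contradiction is derived in a slightly roundabout way (via $x = y$) but is correct. The ``after an extension $B' \hookrightarrow B''$'' hedge is harmless but unnecessary under the quasi-compactness of an \'{e}tale morphism, since the valuative criterion for universal closedness (Stacks 01KF) then gives a lift on $\Spec B'$ itself.

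The converse direction has a genuine gap. You and the paper start identically: produce $(x',v',\varepsilon') \in \Spa(X',S')$, pass to a Riemann--Zariski horizontal specialization $(\bar{x}',\bar{v}',\bar{\varepsilon}')$ by \cref{RZ_specialization}, and note its image in $\Spa(X,S)$ is Riemann--Zariski by hypothesis. The paper then argues directly: because $(x,v,\varepsilon)$ has a center in $X$, so does any horizontal specialization of it, in particular the image $(\bar{x},\bar{v},\bar{\varepsilon})$; a Riemann--Zariski point with center in $X$ must carry a trivial valuation; the algebraic-residue-field hypothesis forces $\bar{v}'$ to be trivial as well; and with the preliminary reduction ``WLOG $K=k(x')$'' (which you skip), triviality of $\bar{v}'$ produces the lift $\Spec A \to X'$ immediately. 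Your route instead manufactures a second Riemann--Zariski horizontal specialization $(y,\mathrm{triv},\mu)$ from the center of $v$ in $X$ and then tries to conclude $\bar{x}=y$ by invoking uniqueness of the Riemann--Zariski specialization via \cref{hor-spez.to}. That uniqueness holds only when $X$ and $S$ are separated, which is not in the hypothesis of the lemma — the paper's converse works without ever appealing to it. Even granting separatedness, the final step ``this assembles into the required lift $\Spec A'' \to X'$ after enlarging $A$'' is not carried out; as you note yourself, this is the principal obstacle, and it is exactly what the paper's ``center in $X'$ implies the dotted arrow exists'' (after the $K = k(x')$ reduction) dispenses with cleanly. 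In short: replace the uniqueness detour by the paper's observation that $(\bar{x},\bar{v},\bar{\varepsilon})$ inherits a center in $X$ from $(x,v,\varepsilon)$ and is therefore trivial, and make the reduction $K = k(x')$ explicit at the start.
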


\begin{proof}
 Suppose the square has universally closed diagonal.
 Let $(x',v',\pi')$ be a Riemann-Zariski point of $\Spa(X',S')$ and assume that its image $(x,v,\phi)$ in $\Spa(X,S)$ is not a Riemann-Zariski point.
 This means there exists a localization $\O \subsetneq k(x)$ of the valuation ring $\O_v$ such that the dotted arrow exists in the following diagram:
 \[
  \begin{tikzcd}
   \Spec k(x)	\ar[rr]	\ar[d]			&								& X	\ar[d]	\\
   \Spec \O	\ar[r]	\ar[urr,dashed]	& \Spec \O_v	\ar[r,"\phi"]	& S.
  \end{tikzcd}
 \]
 There is a localization $\O' \subset k(x')$ of~$\O_{v'}$ with $\O' \cap k(x) = \O$.
 This yields a diagram
 \[
  \begin{tikzcd}
   \Spec \O'	\ar[drr]	\ar[dr,dashed]	\ar[ddr,"\phi'"']				\\
																& X \times_S S'	\ar[r]	\ar[d]	& X	\ar[d]	\\
																& S'			\ar[r]			& S.
  \end{tikzcd}
 \]
The valuative criterion for universal closedness for the morphism $X' \to X \times_S S'$ provides a lift:
 \[
  \begin{tikzcd}
   \Spec \O'	\ar[drr]	\ar[dr,dashed]	\ar[ddr,"\phi'"']				\\
																& X'	\ar[r]	\ar[d]	& X	\ar[d]	\\
																& S'			\ar[r]			& S.
  \end{tikzcd}
 \]
 But this means that $(x',v',\phi')$ is not a Riemann-Zariski point, a contradiction.

 Assume now that $X'$ is quasi-compact and all residue field extensions of $X' \to X$ are algebraic,
  and that $\Spa(X',S') \to \Spa(X,S)$ maps Riemann-Zariski points to Riemann-Zariski points.
 We have to show that for any diagram
 \[
  \begin{tikzcd}
   \Spec k		\ar[r]	\ar[d]			& X'			\ar[d]			\\
   \Spec \O	\ar[r]	\ar[ur,dashed]	& X \times_S S',
  \end{tikzcd}
 \]
 where~$\O$ is a valuation ring of the field~$k$, the dotted arrow exists.

 Without loss of generality we may assume that $k$ equals the residue field of the image of $\Spec k \to X'$.
 The solid diagram then defines a point $(x',v',\phi')$ in $\Spa(X',S')$
  whose image $(x,v,\phi)$ in $\Spa(X,S)$ has center in~$X$.
 Applying \cref{RZ_specialization} we obtain a horizontal specialization $(y',w',\psi')$ of $(x',v',\phi')$ which is a Riemann-Zariski point.

 By assumption it maps to a Riemann-Zariski point $(y,w,\psi)$ in $\Spa(X,S)$.
 We know that $(y,w,\psi)$ is a horizontal specialization of $(x,v,\phi)$ and that $(x,v,\phi)$ has center in~$X$.
 Therefore, also $(y,w,\psi)$ has center in~$X$.
 Being a Riemann-Zariski point with center in~$X$, $(y,w,\psi)$ has to correspond to a trivial valuation.
 By our hypothesis on residue field extensions this implies that $(y',w',\psi')$ corresponds to a trivial valuation, as well.
 In particular, $(x',v',\phi')$ has center in~$X'$ which is moreover compatible with the given center of $(x,v,\phi)$ in~$X$.
 In other words, the dotted arrow exists.
\end{proof}

\begin{lemma}\label{uc-prop}
 \begin{enumerate}[\rm (i)]
   \item (Composition) If the squares
    \[
  \begin{tikzcd}
   X''	\ar[r]	\ar[d]	& X'	\ar[d]	&	& X'	\ar[r]	\ar[d]	& X	\ar[d]	\\
   S''	\ar[r]			& S'			&	& S'	\ar[r]			& S
  \end{tikzcd}
  \]
  have universally closed diagonal, then the same is true for the square
  \[
  \begin{tikzcd}
   X''	\ar[r]	\ar[d]	& X	\ar[d]	\\
   S''	\ar[r] & S.
  \end{tikzcd}
  \]
   \item (Base change) Consider two squares
   \[
  \begin{tikzcd}
   Y	\ar[r]	\ar[d]	& X	\ar[d]	&	& X	\ar[d]	& X'	\ar[l]	\ar[d]	\\
   T	\ar[r]			& S			&	& S			& S'	\ar[l]
  \end{tikzcd}
 \]
 and assume that the right hand one has universally closed diagonal. Then the same is true for the square
   \[
  \begin{tikzcd}
Y	\ar[d]&   Y'=Y	\times_XX' \ar[l] \ar[d] \\
   T &T'=T\times_S S'	\ar[l]
  \end{tikzcd}
 \]
 \item (Limits) Let $X\to S$ be a scheme morphism and let $(X_i\to S_i)_{i\in I}$ be a family of morphisms over $X\to S$ such that all $X_i$ are quasi-compact and all squares
 \[
 \begin{tikzcd}
   X_i\ar[r] \ar[d]&X\ar[d]\\
   S_i\ar[r]& S
 \end{tikzcd}
 \]
 have universally closed diagonal. Then the square
 \[
 \begin{tikzcd}
   \lim X_i\ar[r] \ar[d]&X\ar[d]\\
   \lim S_i\ar[r]& S
 \end{tikzcd}
 \]
 has universally closed diagonal.
 \end{enumerate}
\end{lemma}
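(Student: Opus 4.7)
Parts (i) and (ii) follow from the stability of universally closed morphisms under composition and base change. For (i), the induced morphism $X'' \to X \times_S S''$ factors as $X'' \to X' \times_{S'} S'' \to X \times_S S''$; the first arrow is universally closed by hypothesis, and the second is the base change of $X' \to X \times_S S'$ along $X \times_S S'' \to X \times_S S'$, hence universally closed. For (ii), the induced morphism $Y \times_X X' \to Y \times_T T' = Y \times_S S'$ is precisely the base change of $X' \to X \times_S S'$ along $Y \to X$.

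For (iii) I will assume, as in \cref{inverselimit}, that the indexing category is filtered with affine transition morphisms, so that $X_\infty := \lim X_i$ and $S_\infty := \lim S_i$ exist as schemes and $|X_\infty| = \lim |X_i|$. The plan is to verify the valuative criterion for universal closedness of $X_\infty \to X \times_S S_\infty$ directly. Given a valuation ring $\O$ with fraction field $K$ and a solid commutative square
\[
\begin{tikzcd}
\Spec K \arrow[r,"a"] \arrow[d] & X_\infty \arrow[d] \\
\Spec \O \arrow[r,"b"] & X \times_S S_\infty,
\end{tikzcd}
\]
I must produce a lift $\Spec \O \to X_\infty$. Projecting to level~$i$ yields analogous data $a_i,b_i$, and the hypothesis on the $i$-th square supplies at least one lift $\psi_i : \Spec \O \to X_i$; the task is to extract a compatible family.

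The key observation is that, for each $i$, the set $C_i$ of such lifts is \emph{finite}. Let $\eta_i \in X_i$ denote the image of $\Spec K$ and replace $X_i$ by the reduced integral subscheme $\overline{\{\eta_i\}}$; every lift factors through it, and universal closedness of the diagonal persists because closed immersions are universally closed. A lift is then uniquely determined by the image $x_i \in X_i$ of the closed point of $\Spec \O$: the generic field map $k(\eta_i) \hookrightarrow K$ is fixed by $a_i$ and forces the local homomorphism $\O_{X_i,x_i} \to \O$ to coincide with the restriction of this inclusion. Existence of $\psi_i$ with closed image $x_i$ is therefore equivalent to $x_i$ being a centre of the restricted valuation $\O \cap k(\eta_i)$ on $X_i$ whose image in $X$ matches the one prescribed by $b$. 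On any affine open $\Spec R \subseteq X_i$ such a centre is unique if it exists, being given by the prime ideal $R \cap \m_\O$ precisely when $R \subseteq \O$. Combined with quasi-compactness of $X_i$ this shows $C_i$ is finite, and it is non-empty by the hypothesis. The transition maps $X_j \to X_i$ evidently send $C_j$ into $C_i$.

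Consequently $(C_i)_{i \in I}$ is a cofiltered inverse system of non-empty finite sets, whose limit is non-empty. A coherent choice $(x_i) \in \lim_i C_i$ determines a point $x_\infty \in X_\infty$, and via the identification $\O_{X_\infty,x_\infty} = \colim_i \O_{X_i,x_i}$ the compatible family of local homomorphisms $\O_{X_i,x_i} \to \O$ assembles into the required lift $\psi_\infty : \Spec \O \to X_\infty$. The principal obstacle I anticipate is a clean proof of the finiteness of $C_i$: one must confirm the affine-local uniqueness of centres of a valuation on an integral scheme and verify that the reduction to $\overline{\{\eta_i\}}$, together with compatibility with $b$, is functorial in $i$. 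Once this is in hand, the compactness argument via filtered limits of finite non-empty sets delivers the lift.
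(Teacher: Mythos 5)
Your proof is correct and follows essentially the same route as the paper: for (i) and (ii) the paper defers to the reader exactly the factorizations/base-change identifications you spell out, and for (iii) the paper likewise verifies the valuative criterion by observing that the set $M_i$ of lifts at level $i$ is a finite nonempty set (finite because $X_i$ is quasi-compact and a lift is determined on each affine open by the forced ring map into $\O$), then takes the inverse limit of finite nonempty sets. Your more detailed uniqueness-of-centres discussion and the reduction to $\overline{\{\eta_i\}}$ are a harmless elaboration of the paper's terse "at most one dotted arrow per affine open."
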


\begin{proof}
We leave the standard verifications of (i) and (ii) to the reader. For (iii) note that  by the valuative criterion for universal closedness \cite[TAG 01KF]{stacks-project}, it suffices to show that in every solid diagram
 \[
  \begin{tikzcd}
   \Spec K		\ar[r]	\ar[d]			& \lim_{i \in I} X_i				\ar[d]	\\
   \Spec \O	\ar[r]	\ar[ur,dashed]	& X \times_{S} \lim_{i \in I}S_i,
  \end{tikzcd} \eqno (*)
 \]
 where~$\O$ is a valuation ring with quotient field~$K$, the dotted arrow exists.
 For every $i \in I$ denote by~$M_i$ the set of all possible dotted arrows in
 \[
  \begin{tikzcd}
   \Spec K		\ar[r]	\ar[d]			& X_i					\ar[d]	\\
   \Spec \O	\ar[r]	\ar[ur,dashed]	& X\times_{S} S_i.
  \end{tikzcd}
 \]
 By assumption~$M_i$ is non-empty.
 Since~$X_i$ is quasi-compact, it is furthermore finite.
 The reason is that given a finite affine cover $X_i = \bigcup_k U_k$ every dotted arrow factors through some affine open~$U_k$
  and for every~$U_k$ there is at most one dotted arrow.
 The~$M_i$ thus form an inverse system of finite sets.
 Hence, their limit $M = \lim_i M_i$ is non-empty and every element of~$M$ provides a dotted arrow for the diagram~($*$).
\end{proof}

\begin{lemma} \label{cover_RZ_affinoids}
 Let $S$ be quasi-compact and quasi-separated and  $X \to S$  a separated morphism of finite type.
 Then there is an open covering of $\Spa(X,S)$ by finitely many affinoids $\Spa(A_i,A_i^+)$ coming from squares
 \[
  \begin{tikzcd}
   \Spec A_i	\ar[r]	\ar[d]	& X	\ar[d]	\\
   \Spec A_i^+	\ar[r]			& S
  \end{tikzcd}
 \]
 with universally closed diagonal such that $\Spec A_i \to X$ is a quasi-compact open immersion
  and $\Spec A_i^+ \to S$ is of finite type.
 In particular, $\Spa(A_i,A_i^+) \to \Spa(X,S)$ is a quasi-compact open immersion which is a Riemann-Zariski morphism.
\end{lemma}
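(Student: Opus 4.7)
My plan is to combine a Nagata compactification of $X \to S$ with an admissible blow-up that turns the boundary $\bar X \setminus X$ into a Cartier divisor; on affine opens of the blow-up the intersection with $X$ then becomes a principal open, and on such data the diagonal will automatically be closed.

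First, since $S$ is quasi-compact, I cover it by finitely many affine opens $S_k = \Spec R_k$ and reduce to the case $S$ affine, since the $\Spa(X \times_S S_k, S_k)$ already cover $\Spa(X, S)$. Now $X \to S$ is separated of finite type over an affine qcqs base, so by Nagata compactification (in the form of Conrad--L\"utkebohmert) I fix a dense quasi-compact open immersion $X \hookrightarrow \bar X$ into a proper $S$-scheme. The complement $\bar X \setminus X$ is the support of a finite-type quasi-coherent ideal sheaf $\mathcal{I} \subset \O_{\bar X}$; blowing up $\bar X$ along $\mathcal{I}$ yields $\pi \colon \tilde X \to \bar X$ which is proper, an isomorphism over $X$, and makes $\mathcal{I}\O_{\tilde X}$ invertible. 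Hence $\tilde X \to S$ is proper and $\tilde X \setminus X$ is a Cartier divisor in $\tilde X$.

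Next I cover $\tilde X$ by finitely many affine opens $\tilde U_j = \Spec B_j$; on each of them $\tilde X \setminus X$ is cut out by a single element $f_j \in B_j$, so that $X \cap \tilde U_j = D(f_j)$ as open subschemes of $\tilde U_j$. Setting $A_j := B_j[f_j^{-1}]$ and $A_j^+$ to be the integral closure of $B_j$ in $A_j$ gives a Huber pair with $\Spec A_j \to X$ the quasi-compact open immersion $D(f_j) \hookrightarrow X$, and $\Spec A_j^+ \to S$ of finite type (to ensure the latter without noetherian hypotheses one normalizes $\tilde X$ beforehand so that already $A_j^+ = B_j$ is of finite type over $S$). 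Universal closedness of the diagonal $\Spec A_j \to X \times_S \Spec A_j^+$ is then obtained by pulling back the closed immersion $\Delta_{\tilde X/S}\colon \tilde X \to \tilde X \times_S \tilde X$ (closed since $\tilde X \to S$ is separated) along the open immersion $X \times_S \tilde U_j \hookrightarrow \tilde X \times_S \tilde X$: the pullback is the closed immersion $D(f_j) \hookrightarrow X \times_S \tilde U_j$, which is exactly the diagonal in question.

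Finally, given $(x, v, \varepsilon) \in \Spa(X, S)$ with $\varepsilon$ factoring through $S_k$, the valuative criterion applied to the proper $\tilde X \to S_k$ lifts $\varepsilon$ to $\Spec \O_v \to \tilde X$ extending $\Spec k(x) \to X \hookrightarrow \tilde X$. The closed point then lies in some $\tilde U_j$, whence $x \in X \cap \tilde U_j = D(f_j) = \Spec A_j$ and the point belongs to $\Spa(A_j, A_j^+)$. The ``in particular'' clause then follows automatically from \cref{RZ_closed_diagonal}. The main obstacle will be the finiteness bookkeeping: the integral closure of $B_j$ in $A_j$ is not a priori of finite type over $S$ without noetherian or excellence hypotheses, and one must normalise $\tilde X$ or refine the blow-up to a normal model to guarantee this.
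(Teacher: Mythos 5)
Your argument is essentially the paper's: Nagata--Deligne compactification $X \hookrightarrow \bar X$ over $S$, blow up to make $\bar X \smallsetminus X$ the support of a Cartier divisor, cover $\bar X$ by affine opens $\bar U_i$ so that $U_i := \bar U_i \cap X$ is affine, and read off the universally closed diagonal from the separatedness of $\bar X$ over $S$. The covering claim is the same valuative-criterion argument. Two points are worth flagging.

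First, the place where you diverge from the paper is your choice of $A_j^+$: you take the integral closure of $B_j = \Gamma(\tilde U_j)$ in $A_j$, which is what a literal reading of the Huber-pair convention would demand, and this is what creates the finite-type worry you raise at the end. But that worry is self-inflicted: the intended (and the paper's) choice is simply $A_j^+ = B_j$, so that $\Spec A_j^+ = \tilde U_j \to S$ is of finite type for free (affine open of a proper, hence finite type, $S$-scheme), and the diagonal of the square is literally $D(f_j) \hookrightarrow X \times_S \tilde U_j$, which you correctly show is a closed immersion. The notation $\Spa(A_j,A_j^+)$ is then Temkin's $\Spa(\Spec A_j, \Spec A_j^+)$, which is the same adic space as the Huber-pair affinoid of the integral-closure pair. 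Note also that your proposed fix (normalize $\tilde X$ beforehand) does not actually remove the finiteness problem in the stated generality, since normalization of a non-noetherian, non-Nagata scheme need not be of finite type; and if one does insist on the integral closure, closedness of $D(f_j)\to X\times_S\Spec A_j^+$ still follows from the $B_j$-case by cancellation (the projection $X\times_S\Spec A_j^+\to X\times_S\Spec B_j$ is affine, hence separated), but the finite-type clause then genuinely fails without Nagata hypotheses.

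Second, the preliminary reduction to $S$ affine is harmless but unnecessary: Deligne's compactification (Conrad, Theorem 4.1), which the paper cites, already applies directly to a separated finite-type morphism over a quasi-compact quasi-separated base.
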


\begin{proof}
By Deligne's generalization of Nagata's compactification theorem \cite[Theorem~4.1]{Con07}, we can find a compactification~$\bar{X}$ of~$X$ over~$S$. Blowing up the complement of~$X$ in~$\bar{X}$ if necessary, we may assume that $\bar{X}\smallsetminus X$ is the support of a Cartier divisor.  Hence, we can find a covering of~$\bar{X}$ by finitely many affine open subschemes~$\bar U_i$ such that  $U_i= \bar U_i \cap X$ is affine as well.  Then the affinoids $\Spa(U_i,\bar U_i)$ form an open covering of $\Spa (X,S)$. Moreover, since $\bar X\to S$ is separated, $X\hookrightarrow \bar X \times_S X$ is a closed immersion and applying $\bar U_i \times_{\bar{X}} - $, we see that also $U_i \to \bar{U}_i \times_{S} X $ is a closed immersion for all $i$. Hence the squares
\[
\begin{tikzcd}
U_i\ar[r]\ar[d]&X\ar[d]\\
\bar U_i \ar[r]&S
\end{tikzcd}
\]
have a universally closed diagonal.
\end{proof}

\begin{lemma} \label{dominate_RZ}
 Let $f:\Spa(X',S') \to \Spa(X,S)$ come from a square
 \[
  \begin{tikzcd}
   X'	\ar[r]	\ar[d]	& X	\ar[d]	\\
   S'	\ar[r]			& S
  \end{tikzcd}
 \]
 of separated, finite type morphisms of quasi-compact and quasi-separated schemes. Assume that $f$ is \'{e}tale and surjective. Then there exists a finite set of morphisms of finite type $T_i \to S'$  such that the induced morphism
  \[
 \coprod_i \Spa (X',T_i) \longrightarrow \Spa (X,S)
 \]
is a surjective (\'{e}tale) Riemann-Zariski morphism.
\end{lemma}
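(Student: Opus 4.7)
The plan is to combine Zariski's Main Theorem with \cref{cover_RZ_affinoids} applied to a suitable finite extension of $X'$. First, since $X' \to X$ is \'etale it has discrete fibres, so the morphism $X' \to X \times_S S'$ induced by $f$ and the structure map $X' \to S'$ is quasi-finite, separated, and of finite type between quasi-compact, quasi-separated schemes. By Zariski's Main Theorem it factors as $X' \overset{j}{\hookrightarrow} Y \to X \times_S S'$, with $j$ a quasi-compact open immersion and $Y \to X \times_S S'$ finite. Then $Y \to S'$ is of finite type, and since $Y \to X \times_S S'$ is finite (hence universally closed), the square $(Y, S') \to (X,S)$ has universally closed diagonal. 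By \cref{RZ_closed_diagonal}, $\Spa(Y, S') \to \Spa(X, S)$ is a Riemann-Zariski morphism.

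Next I would apply \cref{cover_RZ_affinoids} to $Y \to S'$ to obtain finitely many affinoids $\Spa(C_l, C_l^+)$ covering $\Spa(Y,S')$, with $\Spec C_l \hookrightarrow Y$ a quasi-compact open immersion, $\Spec C_l^+ \to S'$ of finite type, and each $(\Spec C_l, \Spec C_l^+) \to (Y,S')$ having universally closed diagonal. Composing with the UCD square of Step 1 via \cref{uc-prop}(i) gives universally closed diagonal for each $(\Spec C_l, \Spec C_l^+) \to (X, S)$, so each $\Spa(C_l, C_l^+) \to \Spa(X,S)$ is Riemann-Zariski. To incorporate \'etaleness---which the finite morphism $Y \to X$ destroys---restrict each $\Spec C_l$ to the open subscheme $\Spec C_l \cap X' \subset \Spec C_l$ (open since $X' \subset Y$ is open) and cover this intersection by finitely many principal affine opens $\Spec (C_l)_{g_{l,i}}$. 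For each pair $(l,i)$, I would define $T_{l,i}$ to be a finite-type $S'$-scheme containing $X'$ as a quasi-compact open subscheme---for instance by gluing $X'$ to $\Spec (C_l^+)_{g_{l,i}}$ along the common open $\Spec (C_l)_{g_{l,i}}$---so that the induced $\Spa(X', T_{l,i}) \to \Spa(X, S)$ identifies on the relevant piece with the rational subspace inclusion $\Spa((C_l)_{g_{l,i}}, (C_l^+)_{g_{l,i}}) \hookrightarrow \Spa(C_l, C_l^+) \to \Spa(X, S)$.

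With this setup, \'etaleness of each $\Spa(X', T_{l,i}) \to \Spa(X,S)$ is immediate from $\Spec (C_l)_{g_{l,i}} \hookrightarrow X' \xrightarrow{f} X$ being a composition of an open immersion and an \'etale morphism. Surjectivity of $\coprod_{l,i} \Spa(X', T_{l,i}) \to \Spa(X,S)$ follows since any point of $\Spa(X,S)$ lifts via $f$ to some point of $\Spa(X',S') \subset \Spa(Y,S')$, whose underlying scheme-point lies in $X'$ and therefore in some $\Spec(C_l)_{g_{l,i}}$. The main obstacle, and the technical heart of the argument, is the verification of the Riemann-Zariski property for each $\Spa(X', T_{l,i}) \to \Spa(X,S)$: a general rational subspace of an R-Z morphism need not be R-Z, because horizontal specializations may leave the subspace. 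Here the particular rational subspaces are cut out by the open immersion $X' \subset Y$ into a finite extension of $X \times_S S'$, and UCD for $(\Spec(C_l)_{g_{l,i}}, \Spec(C_l^+)_{g_{l,i}}) \to (X,S)$ is verified directly via the valuative criterion by combining the UCD of Step 2 with the finiteness of $Y \to X \times_S S'$, and then invoking the equivalence of UCD with Riemann-Zariski in \cref{RZ_closed_diagonal} using that $\Spec(C_l)_{g_{l,i}}$ is quasi-compact and has algebraic residue field extensions over $X$.
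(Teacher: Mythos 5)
Your proposal follows a genuinely different route from the paper: you use Zariski's Main Theorem to realize $X'$ as an open subscheme of a finite $X\times_S S'$-scheme $Y$, then apply \cref{cover_RZ_affinoids} to $Y$ and cut down to $X'$ by principal opens, whereas the paper compactifies $X\to S$ and $X'\to S'$ directly to reduce to open immersions and then works with the full family of $X\times_S S'$-\emph{modifications} of $S'$, discarding the closure of $Z = (X\times_S S')\smallsetminus X'$ in each. Your Riemann--Zariski verification can in fact be made to work: the square $(\Spec(C_l)_{g_{l,i}},\Spec(C_l^+)_{g_{l,i}})\to(X,S)$ has universally closed diagonal because it is the base change of the universally closed diagonal square for $(\Spec C_l,\Spec C_l^+)\to(X,S)$ along the open immersion $X\times_S\Spec(C_l^+)_{g_{l,i}}\hookrightarrow X\times_S\Spec C_l^+$ (using $g_{l,i}\in C_l^+$, which you can indeed arrange since $\Spec C_l\cap X'$ is open in $\Spec C_l^+$), and from there UCD for the glued square $(X',T_{l,i})\to(X,S)$ follows by a straightforward two-case check of the valuative criterion using that $X'\to X\times_S X'$ is a closed immersion. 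You do not spell this out, but it is not the real problem.

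The genuine gap is in the surjectivity step, and it is exactly the difficulty that forces the paper to introduce modifications of $S'$ and to invoke \cite{TemTyom18}, Lemma~5.1.3. You argue that a lift $(x',v',\varepsilon')\in\Spa(X',S')$ has its \emph{scheme point} $x'$ in some $\Spec(C_l)_{g_{l,i}}$, and conclude that the point lies in $\Spa(X',T_{l,i})$. But membership in $\Spa(X',T_{l,i})$ (and \emph{a fortiori} in the rational subspace $\Spa((C_l)_{g_{l,i}},(C_l^+)_{g_{l,i}})$) also requires the \emph{center} of $v'$ on $\Spec C_l^+$ to lie in $D(g_{l,i})$. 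You have chosen $g_{l,i}$ to vanish on $Z\cap\Spec C_l$, so if the center $c$ of $v'$ lies in the closure $\overline{Z\cap\Spec C_l}$ taken inside $\Spec C_l^+$ --- which happens whenever the valuation has a center on the boundary $\Spec C_l^+\smallsetminus\Spec C_l$ that is a limit of points of $Z$ --- then $g_{l,i}(c)=0$ for every admissible choice of $g_{l,i}$, and the point is missed by every $\Spa(X',T_{l,i})$ with this $l$. To escape this you must replace the compactification $\Spec C_l^+$ by one where $c$ and $\overline{Z}$ are separated, which is a modification of $S'$; this is precisely what the paper does, and the quasi-compactness argument needed to pass from ``for each point some modification works'' to ``finitely many suffice'' rests on the Temkin--Tyomkin lemma. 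In short, your construction produces Riemann--Zariski families, but without modifications you cannot guarantee they cover $\Spa(X,S)$.
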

\begin{proof} Let $\bar{X}$ be a compactification of~$X$ over~$S$ (see \cite{Con07}, Theorem~4.1).
 Then~$f$ coincides with the induced morphism
 \[
  \Spa(X',\bar{X} \times_X S') \to \Spa(X,\bar{X}).
 \]
 We may thus assume that $X \to S$ is an open immersion.
 By the same arguments we reduce to the case where also $X' \to S'$ is an open immersion.
 We thus have a diagram
 \[
  \begin{tikzcd}
   X'	\ar[drr]	\ar[dr]	\ar[ddr,open]	&									&				\\
															& X \times_S S'	\ar[r]	\ar[d,open]	& X	\ar[d,open]	\\
															& S'			\ar[r]				& S.
  \end{tikzcd}
 \]
 The map $X' \to X \times_S S'$ is thus an open immersion as well.
 We denote by $Z \subset X \times_S S'$ the complement of~$X'$ (with reduced scheme structure).
 Let $I$ be a set indexing the isomorphism classes of $X \times_S S'$-modifications of $S'$,
  i.e., for each $i \in I$ we have a diagram
 \[
  \begin{tikzcd}
   X \times_S S'	\ar[rr,open]	\ar[dr,open]	&	& S_i	\ar[dl,"\text{proper, birational}"]	\\
													& S'.
  \end{tikzcd}
 \]
 Denote by~$Z_i$ the closure of~$Z$ in~$S_i$ and set $T_i = S_i  \smallsetminus Z_i$.
 We obtain a diagram
 \[
  \begin{tikzcd}
   X'	\ar[drrr]	\ar[dr,open]	\ar[ddr,open]	&									&				\\
																	& X \times_S T_i	\ar[r,open]	\ar[d,open]	& X \times_S S_i	\ar[r]	\ar[d,open]	& X	\ar[d,open]	\\
																	& T_i				\ar[r,open]				& S_i			\ar[r]				& S.
  \end{tikzcd}
 \]
 In fact we have
 \[
  X \times_S T_i = T_i \cap (X \times_S S_i) = (X \times_S S')  \smallsetminus Z = X',
 \]
where we have used that $X \times_S S_i = X \times_S S'$. Hence, by \cref{RZ_closed_diagonal}, $\Spa(X',T_i) \to \Spa(X,S)$ is Riemann-Zariski. Furthermore, it dominates $\Spa(X',S') \to \Spa(X,S)$.

 What is left to show is that the family $(\Spa(X',T_i) \to \Spa(X,S))_{i \in I}$ is surjective.
 Then we know by the quasi-compactness of $\Spa(X,S)$ that finitely many $\Spa(X',T_i)$ cover $\Spa(X,S)$
  and we can take their disjoint union.
 In order to show this surjectivity it suffices to prove that every \emph{closed} point of $\Spa(X,S)$ is contained in the image of one of the $\Spa(X',T_i)$.
 Let $(x,v,\phi)$ be a closed point of $\Spa(X,S)$ and pick a closed point $(x',v',\phi')$ in its preimage in $\Spa(X',S')$.

 We can view $(x',v',\phi')$ as a point of $\Spa(X \times_S S',S')$ and as such it is a Riemann-Zariski point for the following reason.
 By \cref{characterize_RZ_points} we have to show that~$x'$ is a closed point in
 \[
  \Spec \O_{v'} \times_{S'} (X \times_S S') = \Spec \O_{v'} \times_S X.
 \]
 But this is equivalent to saying that
 \[
  \begin{tikzcd}
   \Spec k(x')		\ar[r]	\ar[d]	& X	\ar[d,open]	\\
   \Spec \O_{v'}	\ar[r]			& S
  \end{tikzcd}
 \]
 has universally closed diagonal, which in turn is equivalent by \cref{RZ_closed_diagonal} to
 \[
  \Spa(k(x'),\O_{v'}) \to \Spa(X,S)
 \]
 being Riemann-Zariski.
 This is true as the image $(x,v,\phi)$ of $(x',v',\phi')$ in $\Spa(X,S)$ is a closed point.

 For any $i \in I$ we have $\Spa(X \times_S S',S') \cong \Spa(X \times_S S',S_i)$.
 The point $(x',v',\phi')$ in $\Spa(X \times_S S',S')$ identifies with the point $(x',v',\phi_i)$,
  where $\phi_i : \Spec \O_{v'} \to S_i$ is the unique lift of the morphism $\phi' : \Spec \O_{v'} \to S'$ coming from the valuative criterion of properness.
 Denote by~$c_i$ the center of $(x',v',\phi_i)$ in~$S_i$, i.e., the image of the closed point of $\Spec \O_{v'}$ under $\phi_i$.
 Now \cite{TemTyom18}, Lemma~5.1.3 gives us $i \in I$ such that $c_i$ is not contained in~$Z_i$.
 In other words, $(x',v',\phi_i) \in \Spa(X',T_i)$
\end{proof}

\section{\v{C}ech cohomology of discretely ringed adic spaces}
In this section we transfer the results of \cref{sec:cech} to the realm of adic spaces. Let $\tau$ denote one of the topologies `$\et$' (\'{e}tale), `$\set$' (strongly \'{e}tale) or `$t$' (tame). We make the general assumption  that all schemes and scheme morphisms are quasi-compact and quasi-separated.

\begin{proposition}\label{acyclic adic}
 For every affinoid adic space $\Spa(A,A^+)$ there is a surjective Riemann-Zariski pro-$\tau$ morphism
 \[
  \Spa(\widetilde{A},\widetilde{A}^+) \to \Spa(A,A^+)
 \]
 with $(\widetilde{A},\widetilde{A}^+)$ $\tau$-acyclic.
\end{proposition}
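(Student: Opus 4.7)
The plan is to adapt the iterative Bhatt--Scholze construction used in \cref{acyclic tame} to the adic setting, while carefully propagating the Riemann--Zariski property through every step.

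By \cref{dominate_RZ} (together with \cref{cover_RZ_affinoids}), every $\tau$-covering of $\Spa(A,A^+)$ admits a refinement by a finite disjoint union of surjective Riemann--Zariski $\tau$-morphisms with affinoid sources. Let $I$ index the isomorphism classes of Riemann--Zariski $\tau$-morphisms $\Spa(B_i,B_i^+) \to \Spa(A,A^+)$ with affinoid source, and define
$$
(A_1,A_1^+) := \colim_{J \subset I \text{ finite}} \bigotimes_{j \in J}(B_j,B_j^+),
$$
the tensor products being formed over $(A,A^+)$ in the sense of Section 11. Since Riemann--Zariski $\tau$-morphisms are stable under finite fibre products (\cref{uc-prop}(i),(ii)) and filtered inverse limits of such fibre products of quasi-compact affinoids (\cref{uc-prop}(iii)), the resulting morphism $\Spa(A_1,A_1^+) \to \Spa(A,A^+)$ is a pro-$\tau$ Riemann--Zariski morphism. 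It is surjective: restricting the indexing to finite $J$ containing a fixed surjective element of $I$ yields a cofinal system of surjective fibre products, and the inverse limit of these non-empty quasi-compact spectral spaces over every point $v \in \Spa(A,A^+)$ is non-empty. By construction, every $\tau$-morphism over $\Spa(A,A^+)$ acquires a section after base change to $\Spa(A_1,A_1^+)$.

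Setting $(A_0,A_0^+):=(A,A^+)$ and iterating the construction produces a tower
$$
(A,A^+) \to (A_1,A_1^+) \to (A_2,A_2^+) \to \cdots
$$
of surjective pro-$\tau$ Riemann--Zariski morphisms; put $(\widetilde A,\widetilde A^+):=\colim_n (A_n,A_n^+)$. A further application of \cref{uc-prop}(iii) makes $\Spa(\widetilde A,\widetilde A^+) \to \Spa(A,A^+)$ a surjective pro-$\tau$ Riemann--Zariski morphism. To see that $(\widetilde A,\widetilde A^+)$ is $\tau$-acyclic, consider any $\tau$-covering $\Spa(C,C^+) \to \Spa(\widetilde A,\widetilde A^+)$; a spreading-out / finite-presentation argument along the lines of \cref{inverselimit} descends it to some level $\Spa(C_n,C_n^+) \to \Spa(A_n,A_n^+)$, and the construction of $(A_{n+1},A_{n+1}^+)$ then equips its base change with a section, which further base-changes to a section over $\Spa(\widetilde A,\widetilde A^+)$.

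The main technical obstacle, and the reason we cannot simply invoke the scheme-theoretic argument verbatim, is the need to ensure that the Riemann--Zariski condition is genuinely preserved at each stage: the acyclicity results for tensor products of Huber pairs, \cref{product of acyclic Huber pairs} and \cref{product of strongly acyclic Huber pairs}, require that closed points of the factors map to Riemann--Zariski points, and this is exactly what \cref{dominate_RZ} and the stability properties of \cref{uc-prop} are designed to guarantee. The secondary subtlety is the spreading-out of a $\tau$-covering from $\Spa(\widetilde A,\widetilde A^+)$ down to finite level; since all rings involved carry the discrete topology, this transfers from the scheme-theoretic statement \cref{inverselimit} with only minor adaptations.
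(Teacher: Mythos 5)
Your proof follows the paper's proof closely (the same Bhatt--Scholze-style tower built from tensor products of Riemann--Zariski $\tau$-algebras, with \cref{dominate_RZ} and \cref{cover_RZ_affinoids} supplying the Riemann--Zariski refinements), but there is a genuine gap in your choice of the index set~$I$. You take $I$ to index \emph{all} Riemann--Zariski $\tau$-morphisms with affinoid source, whereas one must restrict to \emph{surjective} ones. This is not cosmetic: $(A_1,A_1^+)$ is the colimit over \emph{all} finite subsets $J\subset I$, and if some $B_j$ fails to be faithfully flat over~$A$ --- say its image in $\Spa(A,A^+)$ misses a point $v$ --- then every finite $J$ containing~$j$ has $v$ outside the image of $\Spa\bigl(\bigotimes_{j'\in J}(B_{j'},B_{j'}^+)\bigr)$, so the limit has empty fibre over~$v$. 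Your attempted salvage, passing to the cofinal subsystem of $J$ that contain a fixed surjective element, does not help: that subsystem is cofinal, but its fibre products are \emph{not} surjective, because a non-surjective tensor factor still kills fibres. In the worst case $I$ even contains the zero ring (an \'etale $A$-algebra, vacuously Riemann--Zariski), and then $(A_1,A_1^+)$ is the zero Huber pair.

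The fix is simply to define~$I$ as the set of isomorphism classes of \emph{surjective} Riemann--Zariski $\tau$-morphisms $\Spa(B_i,B_i^+)\to\Spa(A,A^+)$ with affinoid source, as the paper does. With that change your argument goes through: surjectivity is immediate since tensor products and filtered colimits of faithfully flat \'etale $A$-algebras are faithfully flat; every surjective $\tau$-morphism is dominated by some element of $I$ via \cref{dominate_RZ} and \cref{cover_RZ_affinoids}, so it splits after base change to $(A_1,A_1^+)$; and iterating gives $\tau$-acyclicity in the limit, where your spreading-out remark (\'etale homomorphisms are finitely presented) is the key point. One small caveat on your invocation of \cref{uc-prop}: it is stated for scheme squares with universally closed diagonal, so to use it for stability of the Riemann--Zariski condition you must first pass through \cref{RZ_closed_diagonal}, which the paper leaves implicit.
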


\begin{proof} As in the proof of \cref{acyclic tame}, we follow the method of Bhatt-Scholze, \cite[Proof of Lemma 2.2.7]{BS15}.
 Let~$I$ be the set of isomorphism classes of surjective Riemann-Zariski $\tau$-morphisms $\Spa(A',A'^+) \to \Spa(A,A^+)$.
 For every $i \in I$ pick a representative $\Spa(B_i,B_i^+) \to \Spa(A,A^+)$ and set
 \[
  (A_1,A_1^+) := \colim_{J \subset I \text{ finite}} \bigotimes_{j \in J} (B_j,B_j^+),
 \]
 where the tensor product is taken over $(A,A^+)$ and the (filtered) colimit is indexed by the poset of finite subsets of $I$.
 There is an obvious ind-$\tau$ map $(A,A^+) \to (A_1,A_1^+)$ which induces a surjective Riemann-Zariski morphism on adic spectra.
 Moreover, by \cref{dominate_RZ}, any surjective $\tau$- morphism $\Spa(B,B^+) \to \Spa(A,A^+)$ is dominated by a Riemann-Zariski surjective $\tau$-morphism $\Spa(X,S) \to \Spa(A,A^+)$.
 By \cref{cover_RZ_affinoids} we may assume that $\Spa(X,S)$ is affinoid, i.e., $\Spa(X,S) \cong \Spa(A_i,A_i^+)$ for some $i \in I$.
 We therefore obtain an $(A,A^+)$-homomorphism $(B,B^+) \to (A_1,A_1^+)$.
 This implies that $(A,A^+) \to (B,B^+)$ splits after base change to $(A_1,A_1^+)$.
 Iterating the construction with $(A_1,A_1^+)$ replacing $(A,A^+)$ and proceeding inductively defines a tower
 \[
  (A,A^+) \to (A_1,A_1^+) \to (A_2,A_2^+) \to \cdots
 \]
 of faithfully flat Riemann-Zariski $(A,A^+)$-algebras with ind-$\tau$ transition maps.
 Set $(\widetilde{A},\widetilde{A}^+) = \colim (A_n,A_n^+)$.
 As \'{e}tale homomorphisms are finitely presented, one checks that any faithfully flat $\tau$-$(\widetilde{A},\widetilde{A}^+)$-algebra has a section, so $(\widetilde{A},\widetilde{A}^+)$ is $\tau$-acyclic.

 Being a filtered colimit of faithfully flat Riemann-Zariski $\tau$-$(A,A^+)$-algebras, $(A,A^+) \to (\widetilde{A},\widetilde{A}^+)$ is ind-$\tau$ and Riemann-Zariski.
 Moreover, since the inverse limit of finite non-empty sets is non-empty, every point of $\Spa(A,A^+)$ has a preimage in
 $ \Spa(\widetilde{A},\widetilde{A}^+)$.
\end{proof}

\begin{theorem} \label{product_acyclic adic}
 Let $\cX$ be a discretely ringed adic space with the property that any finite subset of~$\cX$ is contained in an affinoid open. Let $\tau \in \{\et, \set, t\}$.
 If~$\tau=t$  assume that~$\cX$ is of pure characteristic $p \ge 0$.
 Suppose we are given for $i= 1,\ldots,n$ pro-$\tau$ Riemann-Zariski morphisms $\cU_i=\Spa(A_i,A_i^+) \to \cX$ with $(A_i,A_i^+)$ $\tau$-acyclic.
 Then the fiber product
 \[
  \cU:=\Spa(A_1,A_1^+) \times_\cX \Spa(A_2,A_2^+) \times_\cX \ldots \times_\cX \Spa(A_n,A_n^+)
 \]
 is affinoid and $\tau$-acyclic.
\end{theorem}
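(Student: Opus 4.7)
The plan is to adapt the scheme-theoretic proof of \cref{fibre product acylic} to the adic setting, with \cref{product of strongly acyclic Huber pairs} playing the role of \cref{product of tamely acyclic} and \cref{topology} providing topological control of connected components. First I would dispose of the special case in which all $\cU_i$ factor through a single open affinoid $\cV=\Spa(B,B^+)\subset\cX$. In that case $\cU$ is automatically affinoid, namely $\cU=\Spa(A,A^+)$ where $(A,A^+)$ is the Huber-pair tensor product $(A_1,A_1^+)\otimes_{(B,B^+)}\cdots\otimes_{(B,B^+)}(A_n,A_n^+)$, and its $\tau$-acyclicity is a direct application of \cref{product of strongly acyclic Huber pairs}. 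The Riemann-Zariski hypothesis on the $\cU_i\to\cX$ is exactly what supplies the ``closed points map to Riemann-Zariski points'' hypothesis of \cref{product of strongly acyclic Huber pairs}(ii)--(iii), and in the tame case the pure-characteristic-$p$ assumption on $\cX$ provides the $\F_p$-algebra hypothesis of (iii).

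For the general case I proceed as in the scheme proof. Given a tuple $p=(p_1,\ldots,p_n)$ of closed points $p_i\in\cU_i$, pick a common affinoid open $\Spa(B_p,B_p^+)\subset\cX$ containing all the images (possible by the hypothesis on $\cX$). Since $(A_i,A_i^+)$ is $\tau$-acyclic, \cref{tamely acyclic char} shows it is in particular quasi-acyclic, so by \cref{topology} the canonical map $\varphi_i\colon \cU_i^c \liso \pi_0(\cU_i)$ is a homeomorphism of the subspace of closed points onto the profinite space of components. Choosing a clopen neighborhood of $\varphi_i(p_i)$ in $\pi_0(\cU_i)$ whose preimage in $\cU_i$ maps into $\Spa(B_p,B_p^+)$, I obtain a clopen affinoid $\cV_i^p\subset \cU_i$ corresponding to inverting an idempotent of $(A_i,A_i^+)$. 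This operation preserves ind-$\tau$-ness, $\tau$-acyclicity, and, since closed points of $\cV_i^p$ are closed points of $\cU_i$, also the Riemann-Zariski property of the morphism to $\cX$. Applying the special case to $\cV_1^p,\ldots,\cV_n^p$ over $\Spa(B_p,B_p^+)$ yields a clopen affinoid $\tau$-acyclic subspace
\[
\cV^p=\cV_1^p\times_\cX\cdots\times_\cX\cV_n^p\subset \cU
\]
containing $p$. The $\cV^p$ cover $\cU$ as $p$ varies, and a finite-subcover together with a disjoint-refinement argument identical to the one in \cref{fibre product acylic} (using that $\cU$, being covered by finitely many of the affinoid $\cV^p$, is quasi-compact) identifies $\cU$ itself with a finite disjoint union of affinoid $\tau$-acyclic pieces, hence with an affinoid $\tau$-acyclic adic space.

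The main subtlety absent in the scheme case is the bookkeeping needed to preserve the Riemann-Zariski hypothesis at every restriction step, which is essential because \cref{product of strongly acyclic Huber pairs} depends on it. This reduces to two observations: (a) the closed points of a clopen subspace $\cV_i^p\subset \cU_i$ are exactly the closed points of $\cU_i$ lying in $\cV_i^p$, so $\cV_i^p\to \cX$ remains Riemann-Zariski; and (b) restricting the target from $\cX$ to the open subspace $\Spa(B_p,B_p^+)$ preserves the Riemann-Zariski property, as being Riemann-Zariski is local on the target. Once these routine points are verified, the scheme-theoretic argument transfers essentially verbatim, the only genuinely new ingredient being the use of Riemann-Zariski morphisms in lieu of the simpler Huber-pair formalism available in the scheme case.
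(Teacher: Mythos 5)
Your proof is correct and follows essentially the same approach as the paper's: reduce to the single-affinoid case via \cref{product of strongly acyclic Huber pairs}, use \cref{topology} to produce clopen pieces of each $\cU_i$ mapping into a common affinoid, and conclude with a finite subcover and disjointification. The paper's justification that the Riemann--Zariski hypothesis survives passage to a clopen piece is phrased slightly differently (it observes that the clopen immersion $\cV_i \hookrightarrow \cU_i$ is itself a Riemann--Zariski morphism and composes), but your closed-points argument suffices because \cref{product of strongly acyclic Huber pairs} only requires closed points to map to Riemann--Zariski points.
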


 \begin{proof}
 If $f: \cY \to \cX$ is a Riemann-Zariski morphism which factors through an open subspace $\cZ \subset \cX$, then $\cY\to \cZ$ is also Riemann-Zariski. Indeed, if $x\in \cZ$ is Riemann-Zariski in $\cX$, it is Riemann-Zariski in $\cZ$.

 Moreover, a Riemann-Zariski morphism sends closed points to Riemann-Zariski points. Hence, if there is an affinoid open subspace $\cV\subset \cX$ such that all $\cU_i$ map to~$\cV$, the result follows from \cref{product of strongly acyclic Huber pairs}.

In the general case, let $p_i\in \cU_i$, $i=1,\ldots,n$, be closed points. By assumption, there is an affinoid open $\cV =\Spa(B,B^+)\subset \cX$ containing the images of $p_1,\ldots,p_n$. Since $\cU_i$ is quasi-acyclic, every component of $\cU_i$ has a unique closed point and it maps to $\cV$ if and only if its closed point maps to $\cV$. By \cref{topology}, $\varphi_i: \cU_i^c \to  \pi_0(\cU_i)$ is a homeomorphism of profinite spaces. Therefore, we find a closed and open subset $\cW_i \subset \cU_i^c$ which contains $p_i$ and maps to $\cV$. Then also the preimage $\cV_i\subset  \cU_i$ of $\varphi_i(\cW_i)\subset \pi_0(\cU_i)$ under $\cU_i\to \pi_0(\cU_i)$ maps to $\cV$. Hence $\cV_i$ is a closed and open subset of $\cU_i$ containing $p_i$ and mapping to $\cV$. Being closed and open in $\cU_i$, $\cV_i$ is the adic spectrum of an ind-\'{e}tale, $\tau$-acyclic $(B,B^+)$-algebra.
Moreover $\cV_i \to \cV$ is a Riemann-Zariski morphism for the following reason.
The composition $\cV_i \to \cU_i \to \cX$ is Riemann-Zariski because the first morphism is closed and the second is Riemann-Zariski by assumption.
Hence $\cV_i \to \cV$ is Riemann-Zariski by the remark at the beginning of the proof.

By the first part of the proof, $\cV(p_1,\ldots,p_n):=\cV_1\times_\cX\cdots \times_\cX \cV_n$ is a closed and open subspace of $\cU$, hence affinoid ind-\'{e}tale and $\tau$-acyclic. Any nonempty closed subset in a spectral space contains a closed point.
Hence, varying the points $p_1,\ldots, p_n$, the $\cV(p_1,\ldots,p_n)$ cover $\cU$. Since $\cU$ is quasi-compact, we find a finite subcovering. Replacing the $\cV(p_1,\ldots,p_n)$ by closed and open subspaces, which are ind-\'{e}tale and $\tau$-acyclic, we may assume that the finite union is disjoint. Hence $\cU$ is affinoid and $\tau$-acyclic.
\end{proof}

Using \cref{acyclic adic} and \cref{product_acyclic adic} instead of \cref{acyclic tame} and \cref{product of tamely acyclic}, the proof of \cref{cechcompare adic} below is word-by-word the same as the proof of \cref{cechcompare}.

\begin{theorem}[Comparison with \v{C}ech cohomology] \label{cechcompare adic} Let $\cX$ be a quasi-compact discretely ringed adic space  with the property that any finite subset of~$\cX$ is contained in an affinoid open. Let $\tau \in \{\et, \set, t\}$.
 If\/ $\tau=t$  assume that~$\cX$ is of pure characteristic $p \ge 0$.

Then, for every presheaf $P$ of abelian groups on $\cX_\tau$ with sheafification $P^{\# \tau}$, the natural maps
\[
\check{H}^n_\tau (\cX,P) \to H^n_\tau(\cX, P^{\#\tau})
\]
are isomorphisms for all $n\ge 0$.
\end{theorem}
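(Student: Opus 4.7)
The plan is to mimic the proof of \cref{cechcompare} step-by-step, replacing the scheme-theoretic inputs \cref{acyclic tame} and \cref{product of tamely acyclic} by their adic analogues \cref{acyclic adic} and \cref{product_acyclic adic}. First I would carry out the standard formal reduction: the statement is equivalent to the vanishing assertion
\[
(\dagger)\quad \check{H}^n_\tau(\cX,P)=0 \text{ for all } n\ge 0 \text{ and every presheaf } P \text{ with } P^{\#\tau}=0.
\]
Indeed, assuming $(\dagger)$, splitting the canonical morphism $P\to P^{\#\tau}$ into a pair of short exact sequences of presheaves whose kernel and cokernel have trivial sheafification reduces the comparison of $\check H^\bullet$ of $P$ and $P^{\#\tau}$ to $(\dagger)$. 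Moreover, $\check H^\bullet_\tau(\cX,-)$ becomes a $\delta$-functor on sheaves, which is erasing because every injective sheaf is injective as a presheaf; hence $\check H^n_\tau(\cX,F)\cong H^n_\tau(\cX,F)$ for every sheaf $F$.

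To prove $(\dagger)$, I would cover $\cX$ by finitely many affinoid open subspaces (using quasi-compactness, and noting that the hypothesis that finite subsets lie in a common affinoid makes the disjoint union of these affinoids again land in a common affinoid open). Applying \cref{acyclic adic} to each affinoid and passing to the disjoint union yields an affinoid, $\tau$-acyclic, pro-$\tau$ morphism $\cU\to \cX$ which is a surjective Riemann-Zariski morphism, i.e.\ $\cU=\lim \cU_i$ for a cofiltered system of $\tau$-coverings $\cU_i\to \cX$. Because any $\tau$-covering of $\cU$ splits, the system $(\cU_i)$ is cofinal in the directed set of all $\tau$-coverings of $\cX$ (ordered by refinement), so
\[
\check H^n_\tau(\cX,P)=\colim_i \check H^n(\cU_i\to \cX,P)=H^n\bigl(\colim_i \check{C}^\bullet(\cU_i\to\cX,P)\bigr).
\]

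The crucial input, which is also where the main technical obstacle lies, is to control iterated fibre products. For $n\ge 1$ the $n$-fold self fibre product $\cU^n=\lim_i \cU_i^n$ fits into the hypotheses of \cref{product_acyclic adic}: each projection $\cU\to\cX$ is a pro-$\tau$ Riemann-Zariski morphism with $\tau$-acyclic source, and $\cX$ has the property that finite subsets lie in a common affinoid (the characteristic hypothesis in the tame case is needed precisely to apply \cref{product of strongly acyclic Huber pairs}(iii)). Thus $\cU^n$ is again affinoid and $\tau$-acyclic. In particular every $\tau$-covering of $\cU^n$ splits, so $P(\cU^n)=\colim_i P(\cU_i^n)$ vanishes because $P^{\#\tau}=0$. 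Hence the colimit of the \v{C}ech complexes is zero, proving $(\dagger)$.

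The hard part, already absorbed into the preparatory sections, was proving \cref{product_acyclic adic}; this required the Riemann-Zariski analysis of \cref{hori-char}--\cref{dominate_RZ} together with the Huber-pair join theorem \cref{huberjoin} in order to guarantee that fibre products of pro-$\tau$ Riemann-Zariski morphisms over $\cX$ remain affinoid and $\tau$-acyclic. With those tools in hand the comparison theorem itself is a formal transcription of the scheme argument.
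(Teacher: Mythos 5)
Your proposal is correct and follows the paper's approach exactly: the paper itself states that the proof of \cref{cechcompare adic} is word-by-word the same as that of \cref{cechcompare}, with \cref{acyclic adic} and \cref{product_acyclic adic} replacing the scheme-theoretic inputs, and you have reproduced precisely that transcription, correctly flagging the extra Riemann-Zariski hypothesis needed to invoke \cref{product_acyclic adic} and correctly locating where the pure-characteristic assumption enters via \cref{product of strongly acyclic Huber pairs}(iii). The only slip is the parenthetical claim that the finite-subset-in-affinoid hypothesis makes the disjoint union $\cU$ land in a common affinoid open of $\cX$: that cannot be the case (the affinoids cover $\cX$), and is also not needed --- a finite disjoint union of affinoids is automatically affinoid; the hypothesis is used instead inside \cref{product_acyclic adic} to control the iterated fibre products, exactly as you state in the subsequent sentence.
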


\section{Comparison between algebraic and adic tame cohomology}

For $X/S$ we consider the tame site $(X/S)_t$ and the adic tame site $\Spa(X,S)_t$ and want to compare their cohomology. Since there is no obvious site morphism between the two, we consider an additional site $\Spa(X,S)_{t,\Nis}$ which admits morphisms
\[
\begin{tikzcd}
\Spa(X,S)_t & \Spa(X,S)_{t,\Nis}\arrow[r, "\phi"] \arrow[l, "\psi"'] & (X/S)_t
\end{tikzcd}
\]
and we will prove that there are isomorphisms for all $q\ge 0$:
\renewcommand{\arraystretch}{1.5}
\[
\begin{array}{rcl}
H^q_t(X/S,F) & \cong& H^q_{t,\Nis}(\Spa(X,S),\phi^*F),\quad  F\in \Sh_t(X/S)\\
H^q_{t}(\Spa(X,S),\psi_*G) &\cong &H^q_{t,\Nis}(\Spa(X,S),G),\quad  G\in \Sh_{t,\Nis}(\Spa(X,S)).
\end{array}
\]

\begin{definition} Let $\cX$ be a discretely ringed adic space. The \emph{Nisnevich-tame site} $\cX_{\Nis,t}$ is defined by the following data:

\smallskip
The category $\Cat(\cX_{\Nis,t})$ is the category of \'{e}tale morphisms of adic spaces $\cU\to \cX$.

A family $(p_i: \cU_i \to \cU)$ of morphisms in $\Cat (\cX_{\Nis,t})$ is a covering if for every point $u\in \cU$ there exists an index~$i$ and a point $u_i\in \cU_i$ mapping to $u$ such that $p_i$ is tame at $u_i$.
\end{definition}

The category $\Cat (\cX_t)$ is a full subcategory of $\Cat (\cX_{\Nis,t})$ and obviously every covering in $\cX_t$ is also a covering in $\cX_{\Nis,t}$. This explains the existence of the morphism $\psi: \cX_{\Nis,t}\to \cX_{t}$ (in particular for $\cX=\Spa(X,S)$).
The morphism $\phi$ is defined by mapping an \'{e}tale $X$-scheme $U$ to  $\Spa(U,S)$, which lies \'{e}tale over $\Spa(X,S)$.

\medskip
The comparison result for $\psi$ will follow from  the openness of the tame locus (\cref{opennness-tame-locus}).

\begin{lemma} \label{dominate Nis covers}
 Let $\cX$ be a discretely ringed adic space and $\cU$ an object in $\cX_t$.
 The coverings of $\cU$ in $\cX_t$ are cofinal among the coverings in $\cX_{t,\Nis}$.
\end{lemma}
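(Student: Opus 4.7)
The plan is to use the openness of the tame locus (\cref{opennness-tame-locus}) to refine any Nisnevich-tame covering by a tame covering, simply by restricting each member of the covering to its tame locus.

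More precisely, let $(\cU_i \to \cU)_{i \in I}$ be a covering in $\cX_{t,\Nis}$. Each map $\cU_i \to \cU$ is étale, hence by \cref{opennness-tame-locus} (applied to $\cU_i \to \cU$, viewing $\cU$ as the base) the set $\cU_i^t \subset \cU_i$ of points at which $\cU_i \to \cU$ is tame is an open subspace. The restricted morphism $\cU_i^t \to \cU$ is then étale and tame at every point of $\cU_i^t$, so by the definition of tame morphisms of Huber pairs it is a tame morphism. In particular, $\cU_i^t$ is an object of $\Cat(\cX_t)$ over $\cU$.

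It remains to verify that the family $(\cU_i^t \to \cU)_{i \in I}$ is a covering in $\cX_t$, i.e., that for every point $(u,v) \in \Val_S \cU$ (or rather, every point of $\cU$ in the sense of the adic site) there exists an index $i$ and a preimage in $\cU_i^t$ whose ramification over $(u,v)$ is tame. But this is immediate from the Nisnevich-tame covering property of $(\cU_i \to \cU)$: for any point $u \in \cU$ we find $i$ and $u_i \in \cU_i$ over $u$ such that $\cU_i \to \cU$ is tame at $u_i$; by definition this means $u_i \in \cU_i^t$, so $u$ lies in the image of $\cU_i^t \to \cU$, and the ramification at $u_i$ is tame.

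There is essentially no obstacle here beyond invoking \cref{opennness-tame-locus}; the proof is a direct consequence of the fact that tameness of an étale morphism of adic spaces is an open condition on the source, which allows us to carve out the ``tame part'' of each member of a Nisnevich-tame covering while preserving that the union of their images still covers $\cU$.
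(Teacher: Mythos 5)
Your proof is correct and follows the same approach as the paper: both restrict each $\cU_i$ to its open tame locus (via the openness of the tame locus, \cref{opennness-tame-locus}), observe that the restricted morphisms are tame, and note that surjectivity of the restricted family is immediate from the definition of coverings in $\cX_{t,\Nis}$. The only cosmetic wrinkle is your initial reference to $\Val_S\cU$, which is scheme-side notation, but you immediately rephrase this in terms of points of the adic space, so there is no real issue.
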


\begin{proof}
 Consider a covering $(\cU_i \to \cU)$ in $\cX_{t,\Nis}$.
 For every~$i$ we denote by $\cV_i \subseteq \cU_i$ the subset of points where $\cU_i \to \cU$ is tame.
 This is an open subset by {\cite[Corollary 4.4]{HueAd}}.
 Moreover, $(\cV_i \to \cU)$ is a surjective family by the definition of coverings in $\cX_{t,\Nis}$.
 Hence, $(\cV_i \to \cU)$ is a covering dominating $(\cU_i \to \cU)$.
\end{proof}

\begin{lemma} \label{psi isomorphism cohomology}
 The morphism of sites
 \[
  \psi: \cX_{t,\Nis} \to \cX_t
 \]
 induces isomorphisms
 \[
  H^q_{t}(\cX,\psi_*G) \cong H^q_{t,\Nis}(\cX,G)
 \]
 for all $q \ge 0$ and for every $G \in \Sh_{t,\Nis}(\cX)$.
\end{lemma}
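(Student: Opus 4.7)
The plan is to invoke the Leray spectral sequence for $\psi$ and reduce the claim to the vanishing of the higher direct images $R^q\psi_* G$ for $q \geq 1$. To set this up I need $\psi_*$ to preserve injectives, equivalently, $\psi^*$ to be exact. Since the sites $\cX_t$ and $\cX_{t,\Nis}$ share the same underlying category $\Cat(\cX_t)$, the functor $\psi_*$ sends a sheaf $G \in \Sh(\cX_{t,\Nis})$ to the \emph{same} presheaf $G$, viewed as a $\cX_t$-sheaf (which it automatically is, since tame coverings are $(t,\Nis)$-coverings). Dually, for $F \in \Sh(\cX_t)$, $\psi^*F$ is the $(t,\Nis)$-sheafification of $F$ regarded as a presheaf; sheafification is exact, hence so is $\psi^*$. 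The Leray spectral sequence
\[
E_2^{p,q} = H^p_t(\cX, R^q\psi_*G) \Longrightarrow H^{p+q}_{t,\Nis}(\cX, G)
\]
therefore exists, and the statement follows once $R^q\psi_* G = 0$ for $q \geq 1$.

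By definition, $R^q\psi_* G$ is the $\cX_t$-sheafification of the presheaf $U \mapsto H^q_{t,\Nis}(U, G)$. To show this sheafification vanishes, I take $U \in \Cat(\cX_t)$ and a class $\xi \in H^q_{t,\Nis}(U,G)$ with $q \geq 1$, and I must produce a tame covering $(V_j \to U)_{j\in J}$ with $\xi|_{V_j} = 0$ for all $j$. Choosing an injective resolution $G \to I^\bullet$ in $\Sh(\cX_{t,\Nis})$, the associated complex of sheaves is exact in positive degrees, so a cocycle $s \in I^q(U)$ representing $\xi$ becomes a coboundary locally: there is a $(t,\Nis)$-covering $(U_i \to U)_{i \in I}$ with $\xi|_{U_i} = 0$ for every $i$.

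At this point \cref{dominate Nis covers} is exactly what is needed: it provides a tame covering $(V_j \to U)_{j \in J}$ refining $(U_i \to U)_{i \in I}$, so that each $V_j \to U$ factors through some $U_{i(j)}$. Restricting $\xi|_{U_{i(j)}} = 0$ along $V_j \to U_{i(j)}$ gives $\xi|_{V_j} = 0$, which concludes the vanishing of $R^q\psi_* G$. I do not anticipate any serious obstacle; the real content is already packaged into \cref{dominate Nis covers} (ultimately a consequence of the openness of the tame locus, \cref{opennness-tame-locus}), while the rest is the standard formalism of the Leray spectral sequence together with the observation that $\psi$ is the identity on underlying categories.
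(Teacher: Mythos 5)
Your approach matches the paper's exactly: both reduce via the Leray spectral sequence to the vanishing of $R^q\psi_*G$ for $q>0$, and both deduce this from the cofinality statement of \cref{dominate Nis covers}. The paper's proof is just a terser version of yours.

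However, you open with a false claim: you assert that $\cX_t$ and $\cX_{t,\Nis}$ ``share the same underlying category $\Cat(\cX_t)$.'' The paper states explicitly that $\Cat(\cX_t)$ is a \emph{full subcategory} of $\Cat(\cX_{\Nis,t})$ — objects of $\cX_{t,\Nis}$ are all \'etale $\cU \to \cX$, whereas objects of $\cX_t$ are the tame ones. Consequently $\psi_*$ is a genuine restriction functor, not the identity, and your justification for the exactness of $\psi^*$ (``$\psi^*F$ is the sheafification of $F$ regarded as a presheaf'') is not correct as stated: one must first perform a presheaf pullback (left Kan extension along the inclusion) before sheafifying. This is not fatal — exactness of $\psi^*$ is part of what it means for $\psi$ to be a morphism of sites, so it holds for the standard reason (the comma categories are filtered because $\Cat(\cX_t)$ has fibre products over $\cX$) — but the argument you give for it would only be valid in the situation you wrongly assumed. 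The remainder of your proof, representing a class by a cocycle in an injective resolution, killing it on a $(t,\Nis)$-covering, and refining by \cref{dominate Nis covers} (which correctly hands you a tame covering factoring through the given one), is sound and handles the actual inclusion $\Cat(\cX_t) \subsetneq \Cat(\cX_{t,\Nis})$ correctly despite the erroneous preamble.
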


\begin{proof}
 We have to show that the higher direct images $R^q\psi_* G$ vanish for $q > 0$.
 Unravelling the definitions this comes down to showing that every covering $(\cV_j \to \cU)$ in $\cX_{t,\Nis}$ of an object $\cU$ in $\cX_t$ is dominated by a covering in $\cX_t$.
 This is the assertion of \cref{dominate Nis covers}.
\end{proof}

\begin{lemma} \label{compare two cech}
 Let $\cX$ be a discretely ringed adic space and $F$ a presheaf on $\cX_{t,\Nis}$.
 Then for any $\cU$ in $\cX_t$ the natural homomorphism
 \[
  \check{H}^q_t(\cU,\psi_*F) \to \check{H}^q_{t,\Nis}(\cU,F)
 \]
 is an isomorphism for all $q \ge 0$.
\end{lemma}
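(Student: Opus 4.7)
The plan is to reduce everything to a comparison of Čech complexes attached to the same coverings, exploiting the cofinality result of \cref{dominate Nis covers}.

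Recall that $\check{H}^q_\tau(\cU,-)$ for $\tau \in \{t,\,t\text{-}\Nis\}$ is computed as a filtered colimit over (the poset of refinement classes of) coverings of $\cU$ in the respective topology. The functor $\psi: \cX_{t,\Nis} \to \cX_t$ is induced by the inclusion of full subcategories $\Cat(\cX_t) \hookrightarrow \Cat(\cX_{t,\Nis})$, and consequently $\psi_*F$ is simply the restriction of $F$ to $\Cat(\cX_t)$. Hence for any $\cV \in \Cat(\cX_t)$ one has $(\psi_*F)(\cV) = F(\cV)$.

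Next I would observe that for a tame covering $(\cU_i \to \cU)_{i\in I}$ (i.e.\ a covering in $\cX_t$), every iterated fibre product $\cU_{i_0} \times_\cU \cdots \times_\cU \cU_{i_n}$ again lies in $\Cat(\cX_t)$, because tame morphisms of adic spaces are stable under composition and base change. Therefore the entire Čech complex $\check{C}^\bullet((\cU_i\to\cU),\psi_*F)$ coincides term by term with $\check{C}^\bullet((\cU_i\to\cU),F)$, and the two induced cohomology groups $\check{H}^q((\cU_i\to\cU),\psi_*F)$ and $\check{H}^q((\cU_i\to\cU),F)$ agree.

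Finally, by \cref{dominate Nis covers}, coverings of $\cU$ in $\cX_t$ are cofinal among coverings in $\cX_{t,\Nis}$. Passing to the colimit over coverings therefore yields the desired isomorphism
\[
\check{H}^q_t(\cU,\psi_*F) \;=\; \colim_{(\cU_i\to\cU)\in \cX_t} \check{H}^q((\cU_i\to\cU),F) \;\liso\; \check{H}^q_{t,\Nis}(\cU,F)
\]
for all $q \ge 0$. No serious obstacle is anticipated: the essential input, openness of the tame locus underlying the cofinality statement, has already been proved as \cite[Corollary 4.4]{HueAd} and applied in \cref{dominate Nis covers}; the remainder is a formal comparison of Čech complexes indexed by the same set of coverings.
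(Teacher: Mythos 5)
Your proof is correct and matches the paper's approach; the paper simply states the lemma is a direct consequence of \cref{dominate Nis covers}, whereas you spell out the routine steps (that $\psi_*F$ is restriction of $F$ to $\Cat(\cX_t)$, that fibre products of a tame covering stay in $\Cat(\cX_t)$ so the Čech complexes agree term by term, and that cofinality of tame coverings among $t$-Nis coverings lets you identify the colimits).
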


\begin{proof}
 This is a direct consequence of \cref{dominate Nis covers}.
\end{proof}

\begin{proposition} Let $\cX$ be a quasi-compact discretely ringed adic space  with the property that any finite subset of~$\cX$ is contained in an affinoid open. Moreover assume that~$\cX$ is of pure characteristic $p \ge 0$.  Let $F$ be a presheaf on $\cX_{t,\Nis}$ with sheafification $F^{\#}$
 Then there are natural isomorphisms
 \[
  \check{H}^q_{t,\Nis}(\cX,F) \cong H^q_{t,\Nis}(\cX,F^{\#})
 \]
 for all $q \ge 0$.
\end{proposition}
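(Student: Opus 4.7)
The plan is to reduce this to \cref{cechcompare adic} (the corresponding statement for the tame topology) via the identification of topologies.

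First I would observe that the tame and Nisnevich-tame pretopologies on the common underlying category $\Cat(\cX_t) = \Cat(\cX_{t,\Nis})$ generate the same Grothendieck topology. Indeed, every $t$-covering is by definition a $t,\Nis$-covering, and conversely every $t,\Nis$-covering is refined by a $t$-covering by \cref{dominate Nis covers} (replace each member by its tame locus, which is open by \cite[Corollary~4.4]{HueAd}). Since the covering sieves thus coincide, the sheaf categories $\Sh(\cX_t)$ and $\Sh(\cX_{t,\Nis})$ agree; in particular, for any presheaf $F$ the sheafifications $F^{\# t}$ and $F^{\# t,\Nis}$ coincide, and for any sheaf~$G$ one has $H^n_t(\cX, G) = H^n_{t,\Nis}(\cX, G)$.

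Second, I would invoke \cref{compare two cech} (which rests on the same cofinality) to identify the \v{C}ech cohomologies: under the tautological identification of presheaves on the common category, $\check H^n_{t,\Nis}(\cX, F) \cong \check H^n_t(\cX, F)$ for every presheaf $F$.

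Putting the pieces together, for any presheaf $F$ on $\cX_{t,\Nis}$ one obtains
\[
\check H^n_{t,\Nis}(\cX, F) \;\cong\; \check H^n_t(\cX, F) \;\cong\; H^n_t(\cX, F^{\# t}) \;=\; H^n_{t,\Nis}(\cX, F^{\#}),
\]
where the middle isomorphism is \cref{cechcompare adic} applied with $\tau = t$; the hypotheses on~$\cX$ in the proposition (quasi-compactness, the affinoid-covering property for finite subsets, and pure characteristic) are precisely those required there. There is no real obstacle: the analytic content is already contained in \cref{cechcompare adic}, and the only delicate point is verifying that the refinement of coverings in \cref{dominate Nis covers} really forces the two pretopologies to define equal Grothendieck topologies, which is the standard fact that two pretopologies on the same category with the same covering sieves produce the same topos.
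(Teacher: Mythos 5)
Your proof hinges on the assertion that the two sites share a common underlying category, ``$\Cat(\cX_t)=\Cat(\cX_{t,\Nis})$'', and that the $t$- and $(t,\Nis)$-pretopologies are two pretopologies on one and the same category generating the same Grothendieck topology. That assertion is false: as the paper itself states just after introducing $\cX_{t,\Nis}$, the category $\Cat(\cX_t)$ is only a \emph{full subcategory} of $\Cat(\cX_{t,\Nis})$. Objects of $\cX_{t,\Nis}$ are arbitrary \'{e}tale morphisms of adic spaces $\cU\to\cX$, whereas objects of $\cX_t$ are required to be tame morphisms (tame at every point). So the covering sieves do not live in the same category, and your phrase ``the covering sieves thus coincide'' has no meaning. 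In particular, the isomorphism you write as $\check H^n_{t,\Nis}(\cX,F)\cong\check H^n_t(\cX,F)$ is not a tautology about two pretopologies on one category; it is exactly the content of \cref{compare two cech}, and the object on the right should be $\check H^n_t(\cX,\psi_*F)$.

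The consequence you want (equivalence of the two sheaf categories, so that sheafification and sheaf cohomology can be transported back and forth) is nevertheless true, but the correct justification is the comparison lemma for a \emph{dense subsite}: $\cX_t\hookrightarrow\cX_{t,\Nis}$ is fully faithful, continuous, cocontinuous, and \cref{dominate Nis covers} (openness of the tame locus) supplies cofinality of tame coverings, so restriction $\psi_*$ is an equivalence on sheaves. This is precisely what the paper's proof encodes through its commutative square: it uses \cref{compare two cech} to identify the two \v{C}ech theories, \cref{cechcompare adic} (for $\tau=t$) to pass from \v{C}ech to sheaf cohomology on $\cX_t$, the $q=0$ case of \cref{compare two cech} to identify $(\psi_*F)^{\#}$ with $\psi_*(F^{\#})$, and \cref{psi isomorphism cohomology} ($R^q\psi_*=0$ for $q>0$) to pass from $H^*_t(\cX,\psi_*(F^{\#}))$ to $H^*_{t,\Nis}(\cX,F^{\#})$. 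Your chain of isomorphisms is the same chain, but the first link needs to be phrased in terms of $\psi_*$ and the density of the subsite rather than an incorrect identity of categories, and the last equality $H^n_t(\cX,(\psi_*F)^{\#t})=H^n_{t,\Nis}(\cX,F^{\#})$ needs the two lemmas just mentioned rather than a claimed literal equality of topoi.
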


\begin{proof}
 Consider the commutative diagram
 \[
  \begin{tikzcd}
   \check{H}^q_t(\cX,\psi_*F)	\ar[r]	\ar[dd]	& H^q_t(\cX,(\psi_*F)^{\#})	\ar[d]	\\
						& H^q_t(\cX,\psi_*(F^{\#}))	\ar[d]	\\
   \check{H}^q_{t,\Nis}(\cX,F)	\ar[r]		& H^q_{t,\Nis}(\cX,F^{\#}).
  \end{tikzcd}
 \]
 The upper horizontal map is an isomorphism by \cref{cechcompare adic}, the left vertical map by \cref{compare two cech}, and the lower right vertical map by \cref{psi isomorphism cohomology}.
 In order to see that the upper right vertical map is an isomorphism we note that sheafification is given by applying $\check{H}^0$ twice.
 Hence, $(\psi_*F)^{\#} \to \psi_*(F^{\#})$ is an isomorphism by \cref{compare two cech} for $q = 0$.
\end{proof}

\begin{lemma}\label{cech-adic-algebraic}
 Let $X \to S$ be a morphism of  schemes and $F\in \PrSh_t(X/S)$ a presheaf.
We consider the morphism of sites
 \[
  \phi: \Spa(X,S)_{t,\Nis} \to (X/S)_t
 \]
 and let $G=\phi^*_{\mathit{pre}}(F) \in \PrSh_{t,\Nis}(\Spa(X,S))$ be the presheaf pull-back of $F$.
 Then the natural homomorphism on \v{C}ech cohomology
 \[
\check{H}^n_t(X/S, F) \lang \check{H}^n_{t, \Nis}(\Spa(X,S), G)
\]
is an isomorphism for all $n\ge 0$.
\end{lemma}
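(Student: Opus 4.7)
The natural homomorphism arises because the functor $\phi^{-1}\colon U \mapsto \Spa(U,S)$ is fully faithful from \'{e}tale $X$-schemes to \'{e}tale adic spaces over $\Spa(X,S)$, preserves fiber products over~$X$, and sends tame algebraic coverings to $t,\Nis$-coverings. Full faithfulness makes $(U,\id)$ initial in the comma category $\Spa(U,S) \downarrow \phi^{-1}$, so the definition of the presheaf pullback yields $G(\Spa(U,S)) = F(U)$, and more generally $G$ applied to an iterated fiber product $\Spa(U_{j_0} \times_X \cdots \times_X U_{j_n}, S)$ returns $F(U_{j_0} \times_X \cdots \times_X U_{j_n})$. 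For every tame algebraic covering $\mathcal{U}$ of~$X$ one therefore obtains a canonical equality of \v{C}ech complexes $\check{C}^\bullet(\mathcal{U}, F) = \check{C}^\bullet(\phi^{-1}\mathcal{U}, G)$. Taking cohomology and assembling over tame algebraic coverings produces the natural homomorphism of the statement.

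To prove bijectivity I would realize the right hand side as a filtered colimit of terms contributing to the left hand side. Given a $t,\Nis$-covering $\mathcal{V} = \{\cV_j\}_{j \in J}$ of $\Spa(X,S)$, an application of \cref{dominate Nis covers} together with a reduction via a finite Zariski cover of~$X$ permits assuming that $\mathcal{V}$ is a finite tame covering. A \emph{coherent algebraic datum} for $\mathcal{V}$ is a collection of \'{e}tale $X$-schemes $U_j$ equipped with morphisms $\cV_j \to \Spa(U_j,S)$ over $\Spa(X,S)$ for every $j \in J$. Such data form a filtered category under the common refinement $(U_j \times_X U_j')$, and the induced family $\{U_j \to X\}$ is a tame algebraic covering of~$X$, because every tame lift of an $S$-valuation into some $\cV_j$ descends to a tame lift in $\Spa(U_j,S)$.

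Each coherent datum induces a chain map $\check{C}^\bullet(\{U_j \to X\}, F) = \check{C}^\bullet(\phi^{-1}\{U_j\}, G) \to \check{C}^\bullet(\mathcal{V}, G)$ via the canonical morphisms $\cV_{j_0} \times_{\Spa(X,S)} \cdots \times_{\Spa(X,S)} \cV_{j_n} \to \Spa(U_{j_0} \times_X \cdots \times_X U_{j_n}, S)$. Since~$J$ is finite, filtered colimits commute with the products defining $\check{C}^n$. The crux of the argument is to show that the algebraic factorizations of the iterated fiber products that come from coherent data are cofinal in the filtered category of all algebraic factorizations of $\cV_{j_0} \times \cdots \times \cV_{j_n}$. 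Granting this cofinality, one obtains a chain-level identification $\colim_{\mathrm{coh}} \check{C}^\bullet(\{U_j \to X\}, F) \liso \check{C}^\bullet(\mathcal{V}, G)$, and passing to cohomology followed by a further colimit over~$\mathcal{V}$ provides the desired inverse to the natural homomorphism.

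The main obstacle is precisely this cofinality: given an algebraic factorization $\cV_{j_0} \times \cdots \times \cV_{j_n} \to \Spa(W,S)$, one must construct coherent single-index factorizations $\cV_{j_k} \to \Spa(U_{j_k},S)$ together with an $X$-morphism $W \to U_{j_0} \times_X \cdots \times_X U_{j_n}$ whose two resulting factorizations of the fiber product agree. This requires the finiteness of~$J$ together with the local structure of \'{e}tale morphisms of discretely ringed adic spaces, and I expect it to be the most delicate point of the proof.
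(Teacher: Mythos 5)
Your first paragraph is correct but stops just short of the observation that makes the whole proof elementary. You compute $G(\Spa(U,S)) = F(U)$ from full faithfulness of $U \mapsto \Spa(U,S)$, and then build the comparison map from pull-back coverings. What the paper proves instead, and what you miss, is the stronger identity $G(\Spa(U,T)) = F(U)$ for \emph{every} affinoid $\Spa(U,T) \in \Spa(X,S)_{t,\Nis}$ with $U \to X$ \'{e}tale and $T \to S$ of finite type, not just for $T = S$. This follows from the same colimit formula for $\phi^*_{\mathit{pre}}$: in the indexing category of factorizations $\Spa(U,T) \to \Spa(V,S) \to \Spa(X,S)$ with $V \to X$ \'{e}tale, the choice $V = U$ is final, so the colimit collapses. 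The upshot is that $G$ is blind to the $T$-component. Consequently, for any $t,\Nis$-covering of the form $\cV = (\Spa(U_\alpha,T_\alpha) \to \Spa(X,S))$, the \v{C}ech complex of $G$ relative to $\cV$ is term-by-term identical to the \v{C}ech complex of $F$ relative to $(U_\alpha \to X)$, hence also to the \v{C}ech complex of $G$ relative to the pull-back covering $r(\cV) = (\Spa(U_\alpha,S))$. Coverings of the form $\cV$ are cofinal (the topology is generated by such affinoids), $\cV$ refines $r(\cV)$, and then \cref{colimlemma} applied to the retraction $\cV \mapsto r(\cV)$ yields the isomorphism of colimits directly.

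The gap in your proposal is precisely the step you flag as the crux: the cofinality of ``coherent algebraic data'' among algebraic factorizations of the iterated fiber products $\cV_{j_0} \times \cdots \times \cV_{j_n}$. You do not prove this and say you expect it to be the most delicate point. In fact this delicate cofinality is a consequence of not using the stronger evaluation $G(\Spa(U,T)) = F(U)$: once you have it, you never need to manufacture factorizations of adic spaces back to pull-back coverings, because $G$ already agrees on $\Spa(U,T)$ and $\Spa(U,S)$. Your route also invokes \cref{dominate Nis covers} to reduce to adic-tame coverings, but this still leaves coverings by adic spaces that are not pulled back from the algebraic side, so the reduction does not by itself deliver the cofinality you need. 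The paper never uses \cref{dominate Nis covers} in this lemma; the reduction to coverings of the form $(\Spa(U_\alpha,T_\alpha))$ is a routine basis-of-the-topology statement, and the nontrivial work is done by the presheaf computation and \cref{colimlemma}, not by a refinement of coverings.
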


\begin{proof} We consider all diagrams
 \[
  \begin{tikzcd}
   U	\ar[r]	\ar[d]	& X	\ar[d]	\\
   T	\ar[r]			& S,
  \end{tikzcd}
 \]
 with $U \to X$ \'{e}tale and $T \to S$ of finite type, i.e., $\Spa(U,T) \in \Spa(X,S)_{t,\Nis}$.
 By definition of $\phi^*_{\mathit{pre}}$, we have $G(\Spa(U,T))=\colim_V F(V)$,
  where the colimit is taken over all diagrams
 \[
  \begin{tikzcd}
   U	\ar[r]	\ar[d]	& V	\ar[r]			\ar[d]	& X	\ar[d]	\\
   T	\ar[r]			& S	\ar[r,equal]			& S
  \end{tikzcd}
 \]
 with $V \to X$ \'etale. Since $U$ is a final object, we obtain $G(\Spa(U,T)) = F(U)$.

 For a tame covering $\cU=(U_\alpha \to X)$ in $(X/S)_t$ we consider the pull-back covering $\phi^*\cU=(\Spa(U_\alpha,S) \to \Spa(X,S))$ in $\Spa(X,S)_{t,\Nis}$. We have a natural isomorphism of \v{C}ech complexes $\check{C}^\bullet(\cU, F) \cong \check{C}^\bullet(\phi^*\cU, G)$. It therefore suffices to show that the colimit of the \v{C}ech cohomology groups of $G$ with respect to all pull-back coverings coincides with the colimit with respect to all coverings of $\Spa(X,S)$ in the $(t,\Nis)$-topology. Among these, coverings of the form $\cV=(\Spa(U_\alpha,T_\alpha) \to \Spa(X,S))$ with all $U_\alpha \to X$ \'{e}tale and all $T_\alpha \to S$ of finite type are cofinal, so we can restrict to coverings of this type. The observation at the beginning of this proof shows that the \v{C}ech cohomology of $G$ for $\cV$ coincides with the \v{C}ech cohomology of $F$ for the covering $(U_\alpha \to X)$ and hence with the \v{C}ech cohomology of $G$ for the pull-back covering $r(\cV):=(\Spa(U_\alpha,S) \to \Spa(X,S))$, of which $\cV$ is a refinement. Hence our result follows from \cref{colimlemma} below with $J$ the subset of pull-back coverings and $H$ the \v{C}ech cohomology for $G$.
\end{proof}

\begin{lemma}\label{colimlemma}
Let $I$ be a partially ordered directed set and $J\subset I$ a directed subset. Let $(H_i)_{i\in I}$ be a system of abelian groups indexed by $I$. Assume there is an order preserving retraction $r$ to the inclusion $J\subset I$ such that $r(i)\le i$ for all $i\in I$ and the natural map $H_{r(i)}\to H_i$ is an isomorphism for all $i\in I$. Then the natural map
\[
\colim_J H_j \to \colim_I H_i
\]
is a isomorphism.
\end{lemma}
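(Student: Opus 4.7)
The plan is to check separately that the canonical map $\colim_J H_j \to \colim_I H_i$ is surjective and injective, using the retraction $r$ to move freely between $I$ and $J$.

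For surjectivity, I would take a class $\xi \in \colim_I H_i$ represented by some $x \in H_i$. Since by hypothesis the transition map $H_{r(i)} \to H_i$ is an isomorphism, there exists a (unique) $y \in H_{r(i)}$ with image $x$. As $r(i) \in J$, the element $y$ defines a class in $\colim_J H_j$ whose image in $\colim_I H_i$ is $\xi$.

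For injectivity, suppose $\xi \in \colim_J H_j$ is represented by $x \in H_j$ with $j \in J$, and that its image in $\colim_I H_i$ vanishes. Then there exists $i \ge j$ in $I$ such that the transition map sends $x$ to $0$ in $H_i$. Since $r$ is order preserving and $r(j) = j$, we have $j = r(j) \le r(i)$, and of course $r(i) \le i$. Functoriality of the system $(H_i)_{i\in I}$ then gives a commutative triangle
\[
\begin{tikzcd}
H_j \ar[r] \ar[rd] & H_{r(i)} \ar[d,"\sim"] \\
& H_i,
\end{tikzcd}
\]
in which the vertical arrow is an isomorphism by assumption. Hence the image of $x$ in $H_{r(i)}$ is already zero, and since $r(i) \in J$ this shows $\xi = 0$ in $\colim_J H_j$.

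There is no real obstacle here; the argument is purely formal and the hypothesis $r(i) \le i$ together with the isomorphism condition is exactly what allows one to replace any index $i \in I$ by an index in $J$ without changing the cohomology class.
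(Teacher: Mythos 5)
Your proof is correct and follows essentially the same route as the paper: surjectivity via the surjectivity of $H_{r(i)}\to H_i$, and injectivity by factoring $H_j\to H_i$ through $H_{r(i)}$ using $j=r(j)\le r(i)\le i$ and the injectivity of $H_{r(i)}\to H_i$.
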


\begin{proof}
Surjectivity follows from the surjectivity of $H_{r(i)}\to H_i$ for all $i\in I$. Let $j\in J$ and $a\in H_j$ with trivial image in $H_i$ for some $i\in I$, $i\ge j$. We have to find a $j'\in J$, $j'\ge j$, such that $a$ has trivial image in $H_{j'}$. By our assumptions, we have $j=r(j)\le r(i)\le i$ and $H_{r(i)}\to H_i$ is injective. Hence $j'=r(i)$ has the required property.
\end{proof}

Now we are ready to prove our comparison theorem.
\begin{theorem}\label{main compare}
Let $S$ be an affine $\F_p$-scheme for some prime number $p$ and let $X$ be an $S$-scheme.  Then for any sheaf $F\in \Sh_t(X/S)$ the homomorphism
\[
H^n_t(X/S, F) \lang H^n_{t, \Nis}(\Spa(X,S), \phi^*F)
\]
is an isomorphism for all $n\ge 0$.
\end{theorem}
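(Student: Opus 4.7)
The plan is to assemble the theorem from three \v{C}ech-cohomological comparisons that have already been proved in the preceding sections. Given $F\in \Sh_t(X/S)$, set $G:=\phi^*_{\mathit{pre}} F \in \PrSh_{t,\Nis}(\Spa(X,S))$ and observe that $G^{\#}=\phi^* F$, since sheafification commutes with the presheaf inverse image.

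The core case is when $X$ is affine. Then $X$ is quasi-compact, every finite subset of $X$ lies in an affine open (trivially, $X$ itself works), and, since $S$ is affine over $\F_p$, the adic space $\Spa(X,S)\cong \Spa(A,A^+)$ is affinoid, hence quasi-compact with every finite subset in an affinoid open, and of pure characteristic~$p$. Under these hypotheses one may chain the isomorphisms
\[
H^n_t(X/S,F)\;\stackrel{(1)}{\cong}\; \check{H}^n_t(X/S,F)\;\stackrel{(2)}{\cong}\; \check{H}^n_{t,\Nis}(\Spa(X,S),G)\;\stackrel{(3)}{\cong}\; H^n_{t,\Nis}(\Spa(X,S),\phi^*F),
\]
where (1) is \cref{cechcompare} applied to the sheaf $F$ (so $F^{\#t}=F$), (2) is \cref{cech-adic-algebraic} applied to $F$ regarded as a presheaf on $(X/S)_t$, and (3) is the proposition preceding \cref{cech-adic-algebraic}, using $G^{\#}=\phi^*F$. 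The composite map is exactly the natural map of the theorem, so the result holds for affine $X$.

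For general $X$, the plan is to reduce to the affine case by Mayer-Vietoris descent. Pick a Zariski open cover of $X$ by affine subschemes $\{U_\alpha\}$; this is simultaneously a tame cover in $(X/S)_t$ and induces a $(t,\Nis)$-cover $\{\Spa(U_\alpha,S)\to \Spa(X,S)\}$ on the adic side. The site morphism $\phi$ carries the algebraic covering to the adic one and commutes with restriction to Zariski opens, so the two \v{C}ech-to-derived-functor spectral sequences for the respective covers map into each other termwise. If every iterated intersection $U_{\alpha_0}\cap\cdots\cap U_{\alpha_k}$ satisfied the hypotheses of the core case, the spectral sequence comparison would conclude; in the possibly non-separated setting one recursively covers these intersections by affines and iterates, noting that all auxiliary cohomologies in sight are \v{C}ech-computable in the relevant range by the previous section.

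The main obstacle is not the core affine case, which is a clean combination of the three existing comparisons, but rather verifying compatibility of the Mayer-Vietoris/\v{C}ech descent with the morphism $\phi$ in sufficient generality. One must check that the pull-back $\phi^*_{\mathit{pre}}$ interacts correctly with restriction to Zariski opens and that the quasi-compactness and pure-characteristic hypotheses needed to invoke \cref{cechcompare} and the adic analogue are preserved through the recursive descent on non-affine intersections.
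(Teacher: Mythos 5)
Your proof takes essentially the same route as the paper: reduce to affine $X$ via the Zariski \v{C}ech-to-derived spectral sequences on both sides, then combine the three \v{C}ech-level comparisons (\cref{cechcompare} for the algebraic side, \cref{cech-adic-algebraic} to transfer \v{C}ech cohomology across $\phi$, and the comparison of \v{C}ech with sheaf cohomology on $\Spa(X,S)_{t,\Nis}$) into a commutative square. Your citation of the unnamed proposition preceding \cref{cech-adic-algebraic} for step (3) is in fact the precise reference, since that is the one formulated for the $(t,\Nis)$-topology; the concerns you raise about non-affine iterated intersections in the descent step are not discussed in the paper either and do not reflect a difference in method.
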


\begin{proof} Let $X=\bigcup U_i$ be an affine open covering.  Then  $\Spa(X,S)=\bigcup \Spa(U_i,S)$ is also an open covering.  By the \v{C}ech-to-derived spectral sequences on both sides, we may therefore assume that $X$ itself is  affine. Let $G=\phi^*_{\mathit{pre}}F$, i.e., $\phi^*F=G^\#$. Then the statement follows from the commutative diagram
\[
\begin{tikzcd}
\check{H}^n_t(X/S, F)\rar{a}\dar{b}&\check{H}^n_{t, \Nis}(\Spa(X,S), G)\dar{c}\\
H^n_t(X/S, F)\rar&H^n_{t, \Nis}(\Spa(X,S), \phi^*F),
\end{tikzcd}
\]
in which $a$, $b$ and $c$ are isomorphisms by \cref{cech-adic-algebraic}, \cref{cechcompare} and \cref{cechcompare adic}, respectively.
\end{proof}

\section{Purity and homotopy invariance}

With the help of our comparison results between algebraic and adic tame cohomology, we can now exploit the results of \cite{HueAd}.

The version of resolution of singularities used in this paper is the following.
\begin{definition} \label{ros} Let~$S$ be a noetherian scheme.
 We say that \emph{resolution of singularities holds over~$S$} if for any reduced scheme~$X$ of finite type over~$S$ there is a locally projective birational morphism $X' \to X$ such that~$X'$ is regular and $X' \to X$ is an isomorphism over the regular locus of~$X$.  (By \cite[IV, 7.9.5]{EGAIV.2}, this particularly implies that $S$ is quasi-excellent.)
\end{definition}

A sheaf $F\in \Sh_t(X/S)$ is \emph{locally constant} if there is a tame covering $(U_i \to X)_i$ such that the restriction $F|_{U_i}$ is isomorphic to a constant sheaf $\underline{A_i}$ on $U_i$ for all $i$.

\begin{theorem}[Purity] \label{purity2}
 Let~$S$ be an affine noetherian scheme of characteristic $p > 0$ and $X$ a regular scheme which is separated and essentially of finite type over~$S$.
 Assume that resolution of singularities holds over~$S$.
 Then for any open dense subscheme $U \subset X$  and every locally constant $p$-torsion sheaf $F\in \Sh_t(X/S)$ the natural map
 \[
 H^q_t(X/S, F) \lang H^q_t(U/S,F|_U)
 \]
is an isomorphism for all $q\ge 0$.
\end{theorem}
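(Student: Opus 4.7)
The plan is to use the comparison theorem \cref{main compare} to transfer the assertion into an analogous purity statement on the discretely ringed adic space $\Spa(X,S)$, which is the content of the corresponding result in \cite{HueAd}; this is precisely the application for which the comparison theorem was developed, as announced at the beginning of this section.

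First I would reduce to the case where $X$ is affine. Cover $X$ by finitely many affine opens $X_i$. Each $U_i=U\cap X_i$ is open and dense in $X_i$ (when nonempty), the restriction $F|_{X_i}$ remains locally constant and $p$-torsion, and regularity, separatedness and essential finiteness over~$S$ all pass to the $X_i$; moreover, since $X$ is separated, all finite intersections of the $X_i$ are again affine and still regular. Comparing the Mayer--Vietoris long exact sequences for $X$ and for $U$ (and for both tame sites), combined with an induction on the number of pieces and the five lemma, reduces the theorem to the case of affine~$X$.

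With $X$ and $S$ affine, \cref{main compare} applied to both $(X/S)_t$ and $(U/S)_t$ yields a commutative square
\[
\begin{tikzcd}
H^q_t(X/S,F)\arrow[r,"\sim"]\arrow[d] & H^q_{t,\Nis}(\Spa(X,S),\phi^*F)\arrow[d]\\
H^q_t(U/S,F|_U)\arrow[r,"\sim"] & H^q_{t,\Nis}(\Spa(U,S),\phi^*(F|_U)),
\end{tikzcd}
\]
in which the horizontal arrows are isomorphisms and the right vertical arrow is restriction along the open immersion $\Spa(U,S)\hookrightarrow \Spa(X,S)$. It therefore suffices to prove that this restriction is an isomorphism.

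The adic purity statement is established in \cite{HueAd}. Resolution of singularities over~$S$ (which is the reason this hypothesis is imposed in the theorem) allows one to reduce, via a proper descent argument on $\Spa(-,S)$, to the case in which $X \smallsetminus U$ is a strict normal crossing divisor; Mayer--Vietoris on its irreducible components and induction on the number of components then reduces matters to the local question at a single smooth boundary divisor. The local statement uses the characteristic-$p$ hypothesis in an essential way through the principle that every nontrivial degree-$p$ cover ramified along the divisor is wildly ramified and hence invisible to the tame topology. The main obstacle is thus the local adic purity at a smooth boundary divisor in positive characteristic; granted that, the translation back to the algebraic side via \cref{main compare} is formal.
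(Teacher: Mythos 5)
Your overall architecture is the right one—transfer the statement to the adic side via \cref{main compare} and invoke the adic purity result from \cite{HueAd}—but there is a concrete gap: you apply the adic purity result directly to a \emph{locally constant} $p$-torsion sheaf, whereas the adic statement you are citing ([HueAd, Corollary~14.5]) is for the constant sheaf $\Z/p\Z$. This is precisely why the paper's proof opens with a nontrivial reduction of coefficients: first, a trivializing tame covering of $F$ together with the Čech-to-derived spectral sequences (compatible for $X$ and $U$) reduces the locally constant case to the constant case; then \cref{colimitsheaves} reduces to a finite abelian group $A$; then one passes to cyclic $A = \Z/p^n\Z$ and finally to $n=1$ by dévissage. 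Without this chain of reductions, the invocation of the adic purity result is not justified, and it is the heart of what must be supplied. Your paragraph sketching what the adic proof might look like internally (resolution of singularities, SNC reduction, local wild-ramification argument) is not wrong in spirit, but it is commentary on the cited result rather than part of the argument you need to make.

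Two smaller points. First, the comparison \cref{main compare} lands in $H^q_{t,\Nis}(\Spa(X,S),-)$, not in the adic tame cohomology $H^q_t(\Spa(X,S),-)$ for which [HueAd] is stated; the bridge is \cref{psi isomorphism cohomology}, which you do not mention but which the paper's proof cites explicitly. Second, your initial Mayer--Vietoris reduction to affine $X$ is unnecessary: \cref{main compare} only requires $S$ affine (the reduction to affine $X$ already happens inside its proof), so you may apply it to $X$ and $U$ as they stand. This simplification would also spare you the need to carefully track the hypotheses (regular, essentially of finite type, dense open) through the inductive step.
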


\begin{proof} Let $(U_i \to X)_i$ be a trivializing tame covering for $F$. Then we have compatible \v{C}ech-to-derived spectral sequences
\[
E_2^{rs}=\check{H}^r((U_i\to X)_i, \cH^s_t(-/S,F)) \Rightarrow H^{r+s}_t(X/S, F),\ \text{and}
\]
\[
E_2^{rs}=\check{H}^r((U_i\times_X U\to U)_i, \cH^s_t(-/S,F|_U)) \Rightarrow H^{r+s}_t(U/S, F|_U).
\]
We therefore may assume that $F$ is constant. By \cref{colimitsheaves}, we may assume that $F=\underline{A}$ for a finite abelian group $A$. For this it suffices to consider the cyclic case $A=\Z/p^n\Z$ and by d\'{e}vissage , we may assume $n=1$. Then the statement follows from its adic version \cite[Corollary 14.5]{HueAd}, and the comparison results \cref{main compare} and \cref{psi isomorphism cohomology}.
\end{proof}

\begin{remarks}
1. Let us say a few words about why \cref{purity2} is a purity statement.
Let $\nu_n(r)$ be the logarithmic deRham Witt sheaves as defined in \cite{Milne86}.
For a regular closed immersion $Z \to X$ of codimension~$c$, we expect to hold the purity statement
\[
 H^q_{t,Z}(X/S,\nu_n(r)) \cong H^{q-c}_t(Z/S,\nu_n(r-c)).
\]
Since $\nu_n(0) \cong \Z/p^n\Z$ and $\nu_n(r) = 0$ for $r < 0$, this implies $H^q_{t,Z}(X/S,\Z/p^n\Z)=0$ for all $q$, which is essentially equivalent to \cref{purity2} for constant coefficients.

\smallskip\noindent
2. Being an \'{e}tale sheaf over a tame covering, a locally constant tame sheaf is already an \'{e}tale sheaf. In view of the comparison result \cref{compare invertible}, purity for the tame cohomology of locally constant \emph{prime-to-$p$} torsion sheaves is identical to  purity for \'{e}tale cohomology, see \cite{Fuj00}.
\end{remarks}

\begin{theorem}[Homotopy invariance] \label{homotopyinvariance}
 Let~$S$ be an affine noetherian scheme of characteristic $p > 0$ and $X$ a regular scheme which is essentially of finite type over~$S$.
 Assume that resolution of singularities holds over~$S$.
 Then for every locally constant torsion sheaf $F\in \Sh_t(X/S)$ the natural map
 \[
   H^q_t(X/S,F) \lang H^q_t(\A^1_X/S,\pr^*F),
 \]
where $\pr: \A^1_X \to X$ is the natural projection, is an isomorphism for all $q\ge 0$.
\end{theorem}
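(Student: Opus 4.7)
My plan is to follow closely the strategy used in the proof of \cref{purity2}. The first step is to reduce to constant coefficient sheaves. Let $(U_i \to X)_i$ be a tame covering trivializing $F$; pulling back by $\pr$ yields the tame covering $(\A^1_{U_i} \to \A^1_X)_i$ trivializing $\pr^*F$. The iterated fibre products $U_{i_0\cdots i_n} := U_{i_0}\times_X\cdots\times_X U_{i_n}$ are \'{e}tale over~$X$, hence again regular and essentially of finite type over~$S$, and the same holds after base change with $\A^1$. The two \v{C}ech-to-derived spectral sequences
\[
E_2^{r,s} = \check H^r((U_i\to X)_i, \cH^s_t(-/S,F)) \Rightarrow H^{r+s}_t(X/S,F)
\]
and its analogue for $\A^1_X$ are compatible via pullback, reducing us to the case $F = \underline A$ constant on each $U_{i_0\cdots i_n}$.

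Next, using \cref{colimitsheaves} and writing any torsion abelian group as a filtered colimit of its finite subgroups, we reduce to $A$ finite. Splitting $A$ into its $p$-primary and prime-to-$p$ parts, we treat the two cases separately. For the prime-to-$p$ part, \cref{compare invertible} identifies both tame cohomology groups with the corresponding \'{e}tale cohomology groups, and the desired isomorphism is the classical homotopy invariance of \'{e}tale cohomology with torsion coefficients invertible on $S$ (SGA 4, XV). For the $p$-primary part, a d\'{e}vissage through the short exact sequences $0 \to \Z/p^{n-1}\Z \to \Z/p^n\Z \to \Z/p\Z \to 0$ combined with the five-lemma reduces us to $A = \Z/p\Z$.

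In this final and critical case, I would apply the comparison theorem \cref{main compare} on both $X$ and $\A^1_X$ (both $S$-schemes in characteristic $p$, with $S$ affine) to pass to the $(t,\Nis)$-cohomology of the adic spaces $\Spa(X,S)$ and $\Spa(\A^1_X,S)$, and then use \cref{psi isomorphism cohomology} to identify these with the tame cohomology of the respective adic spaces. The sought-for isomorphism then follows from the homotopy invariance result for adic tame cohomology established in \cite{HueAd}.

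The main obstacle is, as in the purity case, the $p$-primary part: everything else is a formal reduction, either to Čech-theoretic bookkeeping or to classical \'{e}tale homotopy invariance, whereas this final case rests on the deep adic homotopy invariance result from \cite{HueAd}, which itself makes essential use of the resolution of singularities hypothesis.
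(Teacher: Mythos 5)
Your proposal follows essentially the same route as the paper's proof: the paper likewise reduces (as in the proof of its purity theorem) via the \v{C}ech-to-derived spectral sequence for a trivializing covering to constant coefficients, then via \cref{colimitsheaves} to finite coefficients, then by d\'{e}vissage to $\underline{\Z/q\Z}$ for a prime $q$, handles $q\ne p$ by \cref{compare invertible} together with classical \'{e}tale homotopy invariance (the paper cites Milne rather than SGA~4, but this is immaterial), and handles $q=p$ via \cref{main compare}, \cref{psi isomorphism cohomology}, and the adic homotopy invariance result of \cite{HueAd}. The only cosmetic difference is the order of the reduction steps (you split off the prime-to-$p$ part before passing to cyclic quotients, the paper passes to a single prime $q$ first), which does not change the substance.
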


\begin{proof} As in the proof of \ref{purity2}, we may reduce to the case where  $F=\underline{\Z/q\Z}$ for a prime number $q$. If $q\ne p$, then the result follows from \cref{compare invertible} and the homotopy invariance of \'{e}tale cohomology with invertible coefficients \cite[VI,\ Corollary 4.20]{Milne80}. For $q=p$ the result follows from its adic version  \cite[Corollary 14.6]{HueAd}, and the comparison results \cref{main compare} and \cref{psi isomorphism cohomology}.
\end{proof}

\section{Connection to Suslin homology}\label{sec_suslin}
Let $k$ be an algebraically closed field and assume that resolution of singularities holds over~$k$ (see \cref{ros}).
Let $X/k$ a connected scheme of finite type and  $m\ge 1$ an integer.
In \cite{SV96}, A.~Suslin and V.~Voevodsky defined the Suslin (or singular) homology groups $H_n^S(X,\Z/m\Z)$ and constructed for $(m,\ch(k))=1$ an isomorphism between \'{e}tale and Suslin cohomology
$$H^n_\et(X,\Z/m\Z) \cong H^n_S(X,\Z/m\Z)=\Hom(H^S_n(X,\Z/m\Z), \Z/m\Z).$$
Originally, \cite{SV96} had to assume resolution of singularities but later this could be avoided by using deJong's alterations.

In \cite{GS16}, T.~Geisser and the second author extended this result in degree~1 to general $m$ not necessarily prime to $\ch(k)$ by replacing the group $H^1_\et(X,\Z/m\Z)$ by the tame cohomology group $H^1_t(X,\Z/m\Z)$, which was defined in an ad hoc man\-ner as the dual of the curve-tame fundamental group (cf.\ \cite{KeSch10}), i.e.,
\[
H^1_t(X,\Z/m\Z):= \Hom_\mathrm{cts}(\pi_1^{ct}(X), \Z/m\Z).
\]
Here the assumption on resolution of singularities could not be avoided. A generalization of this result to higher degrees was not possible since a definition of (higher) tame cohomology groups was missing.

\medskip
Having tame cohomology at hand now, we construct for all $n\ge 0$ a natural pairing
\[
H_n^S(X,\Z/m\Z) \times H^n_t(X/k,\Z/m\Z) \lang \Z/m\Z,
\]
which defines maps
\[
H^n_t(X/k,\Z/m\Z) \lang H^n_S(X,\Z/m\Z)
\]
in all degrees $n$. The construction is the following.

\bigskip\noindent
Let $\Sch/k$ be the category of separated schemes of finite type over $k$ and
$\Cor/k$  the category with the same objects as $\Sch/k$ and finite correspondences as morphisms. Recall that the group $\Cor_k(X,Y)$ of \emph{finite correspondences} from $X$ to $Y$ is the group of relative cycles in $X\times_kY/X$ which are finite, equidimensional and universally integral, see \cite[A.1]{MVW06} or \cite[\S9]{CiDe12}. The graph functor
\[
\Sch/k \lang \Cor/k, \quad f\in \Mor_k(X,Y) \longmapsto \Gamma_f \in \Cor_k(X,Y)
\] is a faithful embedding.
 A \emph{presheaf with transfers} on $\Sch/k$ is a contravariant additive functor  $F: \Cor/k\to \Ab$. The category of all presheaves with transfers is denoted by
$\PrSh(\Cor/k)$. Composition with the graph functor yields a faithful embedding
\[
\PrSh(\Cor/k) \lang \PrSh(\Sch/k)
\]
from the category of presheaves with transfers to the category of presheaves on $\Sch/k$. We will consider presheaves with transfers as presheaves with the additional structure of transfer maps for finite correspondences.

\bigskip\noindent
If $X/k$ is smooth and connected, then $\Cor_k(X,Y)$ is the free abelian group generated by integral subschemes of $X\times_kY$ whose projection to $X$ is finite and surjective, see \cite[Proposition 3.3.5]{relcyc}.  For any $Y \in \Sch/k$ and any abelian group $A$ the \emph{Suslin homology}  $H_\bullet^S(Y,A)$  of $Y$ with values in $A$ (defined in \cite{SV96}) is the homotopy of the simplicial abelian group
\[
\begin{tikzcd}
\ds \cdots\arrow[r]\arrow[r,shift left=2]\arrow[r,shift right=2]&\Cor_k(\Delta^2_k,Y)\otimes A\arrow[r, shift left]\arrow[r, shift right]& \Cor_k(\Delta^1_k,Y)\otimes A\arrow[r]& \Cor_k(\Delta^0_k,Y)\otimes A,
\end{tikzcd}
\]
i.e., the homology of the complex
\[
 \cdots \Cor_k(\Delta_k^2,Y)\otimes  A \to \Cor_k(\Delta_k^1,Y)\otimes  A\to \Cor_k(\Delta_k^0,Y)\otimes  A  \to 0,
\]
where the differentials are induced by the alternating sums of the face maps. Likewise the \emph{Suslin cohomology} $H^\bullet_S(Y,A)$ of $Y$ with values in an abelian group $A$ is the cohomology of the complex
\[
\Hom_\Z(\Cor_k(\Delta_k^\bullet,Y), A).
\]

\bigskip\noindent
We consider the \emph{big tame site} $(\Sch/k)_t$ which consists of  the category $\Sch/k$  with tame coverings.  For $X\in \Sch/k$ and $F\in \Sh_t(\Sch/k)$, the cohomology of $X$ with values in $F$ coincides with the cohomology of the restriction of $F$ to the (small) tame site $(X/k)_t$ (cf.\ \cite[III,\,3.1]{Milne80}). A \emph{tame sheaf with transfers} is a presheaf with transfers which is a tame sheaf on $\Sch/k$ when considered as a presheaf on $\Sch/k$ via the graph functor. We denote the category of these by $\Sh_t(\Cor/k)$. For an abelian group $A$, the constant Zariski-sheaf $\underline{A}$ on $\Sch/k$ is already an \'{e}tale sheaf, in particular a tame sheaf. For $X\in \Sch/k$ we have
\[
\underline{A}(X)= \Cor_k(X,\Spec k)\otimes A,
\]
in particular, $\underline{A}$ has transfers in a natural way. Similar arguments as  for the Nisnevich and \'{e}tale cohomology show the following
\begin{lemma}\label{tamesheaftransfer}
The category $\Sh_t(\Cor/k)$ has sufficiently many injectives. An injective object $I\in \Sh_t(\Cor/k)$ is flabby as a sheaf in $\Sh_t(\Sch/k)$. In particular, we can calculate the tame cohomology of a tame sheaf with transfers by using an injective resolution in $\Sh_t(\Cor/k)$. For any $F\in \Sh_t(\Cor/k)$ the tame cohomology presheaves
\[
U \longmapsto H^n_t(U, F)
\]
are presheaves with transfers in a natural way.
\end{lemma}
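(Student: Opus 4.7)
The plan is to verify the four assertions in order, with the essential work concentrated in~(2).

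For~(1), the category $\Sh_t(\Cor/k)$ is a Grothendieck abelian category: it is abelian, filtered colimits are exact, and the tame sheafifications $\Z_\tr(X):=(\Cor_k(-,X))^{\#t}$ of the representable presheaves with transfers, as $X$ ranges over $\Sch/k$, form a generating set via the Yoneda-type identity
\[
\Hom_{\Sh_t(\Cor/k)}(\Z_\tr(X), F)=F(X)
\]
for $F\in\Sh_t(\Cor/k)$. Grothendieck's theorem then yields enough injectives.

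For~(2), I would establish the natural isomorphism of $\delta$-functors
\[
\Ext^n_{\Sh_t(\Cor/k)}(\Z_\tr(X), F) \cong H^n_t(X, F), \quad n\ge 0,\ F\in \Sh_t(\Cor/k).
\]
Both sides agree in degree zero with $F(X)$; hence the comparison follows once we show that both sides are effaceable, and this will imply the desired flabbiness, since for injective $I$ we then obtain $H^n_t(X,I)=\Ext^n_{\Sh_t(\Cor/k)}(\Z_\tr(X),I)=0$ for $n>0$ and every $X$. The effaceability of the left-hand side reduces to the exactness in $\Sh_t(\Cor/k)$ of the augmented \v{C}ech complex
\[
\cdots\to\Z_\tr(U\times_X U)\to\Z_\tr(U)\to\Z_\tr(X)\to 0
\]
associated to any tame covering $U\to X$. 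Exactness is tested on the conservative family of tame points of \cref{enoughpoints}: at a tame point $(\bar x,\bar v)$ with tame henselization $\O:=\O_{X,(\bar x,\bar v)}$, the stalk of $\Z_\tr(Y)$ equals $\Cor_k(\Spec\O, Y)$ by a direct colimit computation (analogous to the Nisnevich case), and $U\to X$ acquires a section over $\Spec \O$ because $\O$ is tamely henselian (\cref{tame-henselian-remark}). Such a section induces a simplicial contraction of the stalk complex, giving exactness.

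Part~(3) is immediate from~(2): an injective resolution $F\to I^\bullet$ in $\Sh_t(\Cor/k)$ is a resolution by higher-cohomology-acyclic sheaves in $\Sh_t(\Sch/k)$, so $H^n_t(X,F)=H^n(I^\bullet(X))$ via the standard comparison. For~(4), note that each term $I^j$, being a sheaf with transfers, is in particular a presheaf with transfers; thus $I^\bullet(-)$ is a complex of presheaves with transfers, and its cohomology $U\mapsto H^n(I^\bullet(U))=H^n_t(U,F)$ inherits a presheaf-with-transfers structure, independent of the chosen resolution by the usual homotopy-uniqueness argument in the derived category.

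\textbf{Main obstacle.} The delicate step is the stalkwise exactness of the \v{C}ech complex of $\Z_\tr$'s at tame points. This requires (a) computing the stalk of $\Z_\tr(Y)$ at a tame point as $\Cor_k(\Spec\O,Y)$---an analog of the Nisnevich/\'{e}tale stalk computation in Voevodsky's theory---and (b) splitting any tame cover of a tamely henselian scheme, provided by \cref{tame-henselian-remark}. Both ingredients mirror the classical Nisnevich/\'{e}tale arguments, transported to the tame setting using the framework of tame henselizations developed earlier in the paper.
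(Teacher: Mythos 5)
Your proposal reproduces, in detail, the standard Voevodsky/Cisinski--D\'{e}glise argument that the paper itself invokes by citation (\cite[\S 3.1]{tria}, \cite[\S 10.3]{CiDe12}); the paper's own "proof" is just the observation that this argument is topology-agnostic for topologies between Nisnevich and \'{e}tale. The technical core of your argument---exactness of the augmented \v{C}ech complex of $\Z_\tr$'s for a tame cover, checked stalkwise at tame points using the splitting of tame covers over tamely henselian rings (\cref{tame-henselian-remark}) and the fact that finite algebras over tamely henselian rings remain tamely henselian (the paper's Corollary after \cref{tamely hens charact})---is exactly the right transportation of the Nisnevich/\'{e}tale argument.

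However, the logic of your step~(2) is circular as written. You propose to prove the isomorphism $\Ext^n_{\Sh_t(\Cor/k)}(\Z_\tr(X),F)\cong H^n_t(X,F)$ by showing ``both sides are effaceable'' and then to \emph{deduce} flabbiness. But effaceability of $H^n_t(X,-)$ on the category $\Sh_t(\Cor/k)$ means precisely that every sheaf with transfers embeds into one with vanishing higher tame cohomology on every $X$---which is the flabbiness statement you are trying to establish, not a consequence of the comparison. (The effaceability of $\Ext^n$, which you also assign to the \v{C}ech exactness, is of course automatic from enough injectives and has nothing to do with that complex.) The standard argument instead goes the other way: (a) exactness of the augmented \v{C}ech complex of $\Z_\tr$'s (your stalkwise argument); (b) applying $\Hom_{\Sh_t(\Cor/k)}(-,I)$ for $I$ injective gives $\check{H}^n(\cU/X,I)=0$ for every tame cover $\cU\to X$ and $n>0$; (c) this \v{C}ech vanishing \emph{is} flabbiness in the relevant sense, and flabby sheaves are $\Gamma$-acyclic by general topos theory. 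Only after that does the $\delta$-functor comparison (and the transfer structure on tame cohomology) follow. Reorganize step~(2) along these lines and the proof is complete.
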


\begin{proof}
As the tame site sits between the \'{e}tale and the Nisnevich site, the arguments of \cite[\S 3.1]{tria} (given there for sheaves with transfers on the category of smooth $k$-schemes) apply without change. See \cite[\S 10.3]{CiDe12} for more details and a treatment in the setting of $\Sch/k$.
\end{proof}

\ms\noi
 We make the following general observation. Let $(M^{\bullet\bullet},d',d'')$ be a commutative (or anti-commutative) double complex of abelian groups. For $(i,j)\in \Z \times \Z$ we put
\[
Z^{ij}= \ker(d': M^{ij}\to M^ {i+1,j}) \cap \ker(d'': M^{ij}\to M^ {i,j+1})
\]
\[
B^{ij}= \im(d'\circ d'': M^{i-1,j-1}\to M^{ij})\quad \text{and} \quad H^{ij}=Z^{ij}/B^{ij}.
\]
If all lines (i.e., the single complexes $M^{\bullet k}$ for all $k$) are exact, we obtain for all $(i,j)$ a natural homomorphism
\[
h_{i,j}: H^{i,j} \lang H^{i-1,j+1}
\]
as follows: Let $x\in H^{i,j}$ and let $m\in Z^{i,j}$ be a representing element. We choose $m'\in M^{i-1,j}$ with $d'(m')=m$ and then we define $h_{i,j}(x)\in H^{i-1,j+1}$ as the class of $d''(m')$. Our assumptions guarantee that $h_{i,j}$ is well-defined.

\ms\noindent
Fix an injective resolution $\Z/m\Z\to I^\bullet$ in the category of tame sheaves of $\Z/m\Z$-modules with transfers on $\Sch/k$. The finite correspondences
\[
\partial=\sum_{i=0}^n (-1)^i \partial_i \, :\  \Delta^{n-1} \to \Delta^n
\]
induce maps $\partial^*: I^\bullet(\Delta^n)\to I^\bullet (\Delta^{n-1})$. Consider  the double complex given by
\[
M^{ij}= \left\{
\begin{array}{ll}
I^i(\Delta^{-j}),&i\ge 0,\ j\le 0\\
\Z/m\Z,&i=-1,\ j \le 0\\
0,&\text{else}
\end{array}\right.
\]
with differentials as indicated below
\[
\begin{tikzcd}
\Z/m\Z\rar&I^0(\Delta^0)\rar{d}&I^1(\Delta^0)\rar{d}&I^2(\Delta^0)\\
\Z/m\Z\uar{0}\rar&I^0(\Delta^1)\uar{\partial^*}\rar{d}&I^1(\Delta^1)\uar{\partial^*}\rar{d}&I^2(\Delta^1)\uar{\partial^*}\\
\Z/m\Z\uar{\id}\rar&I^0(\Delta^2)\uar{\partial^*}\rar{d}&I^1(\Delta^2)\uar{\partial^*}\rar{d}&I^2(\Delta^2)\uar{\partial^*}\\
\Z/m\Z\uar{0}\rar&I^0(\Delta^3)\uar{\partial^*}\rar{d}&I^1(\Delta^3)\uar{\partial^*}\rar{d}&I^2(\Delta^3)\uar{\partial^*}.
\end{tikzcd} \eqno (*)
\]
Since $\Delta^n\cong \A^n_k$ and by homotopy invariance (\cref{homotopyinvariance}), we have
\[
H^i_t(\Delta^n,\Z/m\Z)\cong H^i_t(\Spec k,\Z/m\Z)= \left\{
\begin{array}{ll}
\Z/m\Z,&i=0\\
0,&i\ge 1.
\end{array}\right.
\]
Hence the lines of $(*)$ are exact and  we obtain a  map
\[
h_n: H^{n,-n} \to H^{n-1,-n+1} \to \ldots \to H^{0,0}= \Z/m\Z.
\]

\ms\noi
Returning to our original task,  let $\alpha \in H_n^S(X,\Z/m\Z)$ and $\beta \in H^n_t(X,\Z/m\Z)$ be given.
Let $a\in \Cor(\Delta^n,X)$ represent $\alpha$ and let $b\in I^n(X)$ represent $\beta$. By definition we have $db=0$ and $a\circ \partial = m\chi$ for some $\chi\in \Cor(\Delta^{n+1}, X)$.
We consider the pull back
\[
a^*(b) \in I^n(\Delta^n)= M^{n,-n}.
\]
We have
\[
d(a^*(b))=a^*(db)=a^*(0)=0, \ \text{and}
\]
\[
\partial^*(a^*(b))=(a\circ \partial)^*(b)=(m\chi)^*(b)=\chi^*(mb)=\chi^*(0)=0.
\]
Hence $a^*(b)\in Z^{n,-n}$, and we denote its class in $H^{n,-n}$ by $\overline{a^*(b)}$

\begin{definition}
We define
\[
\langle \alpha, \beta \rangle = h_n(\overline{a^*(b)}) \in \Z/m\Z.
\]
\end{definition}

\begin{lemma}
$\langle \alpha, \beta \rangle$ is independent of the choices made, hence we obtain a bilinear pairing
\[
H_n^S(X,\Z/m\Z) \times H^n_t(X,\Z/m\Z) \lang \Z/m\Z.
\]
\end{lemma}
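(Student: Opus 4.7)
The value $\langle\alpha,\beta\rangle$ depends on three choices: the representative $b\in I^n(X)$ of $\beta$, the representative $a\in \Cor_k(\Delta^n,X)$ of $\alpha$, and the intermediate $d'$-lifts performed when computing $h_n$. My plan is to verify independence from each choice in turn; bilinearity is then automatic since $(a,b)\mapsto a^*(b)$ is bilinear. Before addressing the three cases one checks that each individual step $h_{i,j}\colon H^{i,j}\to H^{i-1,j+1}$ is itself well-defined by the standard diagram chase: two $d'$-lifts of the same cocycle differ by an element in $\ker d'$, which by line-exactness (provided by \cref{homotopyinvariance}, applied after d\'evissage to $\Z/m\Z$-coefficients) equals $d'(c)$ for some $c$, and hence their $d''$-images differ by $\pm d'(d''(c))\in B^{i-1,j+1}$. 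The terminal step $H^{1,-1}\to H^{0,0}$ is handled using that the augmentation column map $M^{-1,-1}\to M^{-1,0}$ is zero since $\sum_{i=0}^1(-1)^i=0$.

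For independence from the choice of $b$, suppose $b$ is replaced by $b+de$ with $e\in I^{n-1}(X)$. Then $a^*(b)$ is altered by $d'(a^*(e))$, and the crucial point is that $a^*(e)\in M^{n-1,-n}$ is itself $d''$-closed:
\[
d''(a^*(e))=\partial^*(a^*(e))=(a\circ \partial)^*(e)=(m\chi)^*(e)=\chi^*(me)=0,
\]
where the last equality uses that $e$ takes values in a sheaf of $\Z/m\Z$-modules. Therefore, when computing $h_{n,-n}$ on the perturbation $d'(a^*(e))$, one may take $a^*(e)$ itself as the $d'$-lift, whose $d''$-image is zero; the perturbation contributes nothing to $h_n$.

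For independence from the choice of $a$, two types of modification must be considered. First, altering $a$ by $ma'$ changes $a^*(b)$ by $m(a')^*(b)=(a')^*(mb)=0$, using that $b$ is $m$-torsion. Second, altering $a$ by a Suslin boundary $a''\circ\partial$ (with $a''\in \Cor_k(\Delta^{n+1},X)$ and the corresponding $\partial\colon\Delta^n\to\Delta^{n+1}$) changes $a^*(b)$ by $(a''\circ\partial)^*(b)=\partial^*((a'')^*(b))=d''((a'')^*(b))$. The element $(a'')^*(b)\in M^{n,-n-1}$ is $d'$-closed because $db=0$, so line-exactness yields $(a'')^*(b)=d'(c)$ for some $c\in M^{n-1,-n-1}$; consequently $d''((a'')^*(b))=\pm d'(d''(c))\in B^{n,-n}$, vanishing already in $H^{n,-n}$.

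The main obstacle is essentially bookkeeping: keeping straight the two distinct roles of $\partial$, namely as the degree-lowering differential in the Suslin complex and as the vertical differential $d''$ in the double complex which raises the second index, together with the associated sign conventions. Substantively, everything rests on two ingredients already at hand: \cref{homotopyinvariance}, which provides exactness of the horizontal lines of the double complex, and \cref{tamesheaftransfer}, which ensures that the transfer maps $a^*$ and $\partial^*$ act naturally on the injective resolution $I^\bullet$.
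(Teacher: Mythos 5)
Your proof is correct and follows essentially the same line of argument as the paper's: in both, independence from the representative of $\beta$ reduces to $\partial^*\circ a^*=(a\circ\partial)^*=(m\chi)^*$ together with $m$-torsion of the coefficients, and independence from the representative of $\alpha$ reduces to exactness of the rows (i.e.\ vanishing of $H^n_t(\Delta^{n+1},\Z/m\Z)$ for $n\geq1$) combined with $m$-torsion to kill the $my$-term. The only differences are presentational: you treat $n=0$ and $n\geq1$ uniformly where the paper splits the cases, and you spell out the $my$-term and the well-definedness of the individual steps $h_{i,j}$, which the paper leaves implicit.
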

\begin{proof}
Let $b'\in I^n(X)$ be another representative of $\beta$. If $n=0$, then $b=b'$ and there is nothing to prove. For $n\ge 1$, $b-b'=dc$, $c\in I^{n-1}(X)$, and we have to show that
\[
h_n(\overline{a^*(dc)})=0\ \text{for any}\ c\in I^{n-1}(X).
\]
By definition of the map $h_{n,-n}: H^{n,-n} \to H^{n-1,-n+1}$ above,
we have
\begin{multline*}
 h_{n,-n}(\overline{a^*(dc)})= h_{n,-n}(\overline{d(a^*(c))})= \overline{\partial^* (a^*(c))}=\overline{(a\circ \partial)^*(c)}=  \\
\overline{ (m\chi)^*(c)}=\overline{\chi^*(mc)}=\overline{\chi^*(0)}=0.
\end{multline*}
Since $h_n$ factors through $h_{n,-n}$, this shows the independence from the choice of~$b$.

\ms\noi
If $a'\in \Cor(\Delta^n,X)$ is another representative of $\alpha$, then there exist $x\in \Cor(\Delta^{n+1},X)$ and $y\in \Cor(\Delta^n,X)$ with
\[
a'-a = x \circ \partial + m y.
\]
It therefore remains to show that, for any $x\in \Cor(\Delta^{n+1},X)$, the map $(x\circ \partial)^*: H^n_t(X,\Z/m\Z)\to H^{n,-n}$ is zero. We have
\[
(x\circ \partial)^*(b)= \partial^*(x^*(b)).
\]
For $n=0$, $x^*(b)$ lies in $\ker(I^0(\Delta^1)\to I^1(\Delta^1))=H^0_t(\Delta^1,\Z/m\Z)=\Z/m\Z$ and
\[
\partial^*: H^0_t(\Delta^1,\Z/m\Z)=\Z/m\Z \lang H^{0,0}=\Z/m\Z
\]
is the zero map because $\partial^*=\partial_0^*-\partial_1^*=\id_{\Z/m\Z} -\id_{\Z/m\Z}=0$.

For $n\geq 1$, we have $H^n_t(\Delta^{n+1},\Z/m\Z)=0$, hence $x^*(b)=dc$ for some $c\in I^{n-1}(\Delta^{n+1})$ and we obtain
\[
\partial^*(x^*(b))=\partial^*(dc)\in B^{n,-n}
\]
showing the result.
\end{proof}

Using the pairing and noting that $\Z/m\Z$ is an injective $\Z/m\Z$-module, we obtain homomorphisms for all $n\ge 0$:
\[
\beta_n: H^n_t(X,\Z/m\Z)\lang \Hom(H_n^S(X,\Z/m\Z),\Z/m\Z)= H^n_S(X,\Z/m\Z).
\]

\begin{conjecture}\label{conj:suslinhom}
For any $m\ge 1$, the homomorphism $$\beta_n: H^n_t(X,\Z/m\Z)\lang  H^n_S(X,\Z/m\Z)$$ is an isomorphism of finite abelian groups for all $n\ge 0$.
\end{conjecture}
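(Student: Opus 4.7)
The plan is to follow the Suslin-Voevodsky strategy, adapted to the tame topology. First, write $m = m' p^r$ with $(m',p) = 1$ where $p = \ch(k)$. The $m'$-part of the conjecture follows by combining \cref{compare invertible} (identifying $H^n_t$ with $H^n_\et$ for invertible coefficients) with the classical theorem of Suslin-Voevodsky \cite{SV96}, both $\beta_n$'s being compatible with the product decomposition $\Z/m\Z \cong \Z/m'\Z \oplus \Z/p^r\Z$. A standard d\'evissage using the short exact sequences $0 \to \Z/p\Z \to \Z/p^r\Z \to \Z/p^{r-1}\Z \to 0$ and the five lemma then reduces the problem to the case $m = p$.

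For the $p$-torsion case, one places the question inside the derived category $D^b(\Sh_t(\Cor/k))$ of tame sheaves with transfers (\cref{tamesheaftransfer}). The Suslin complex $C_*(X)\otimes \Z/p\Z$ is a complex of presheaves with transfers whose global sections over $\Spec k$ compute Suslin cohomology. The aim is to produce a natural quasi-isomorphism in $D^b(\Sh_t(\Cor/k))$ between the tame sheafification of $C_*(X)\otimes \Z/p\Z$ and an injective resolution of the constant tame sheaf $\underline{\Z/p\Z}$ restricted to $X$; passing to global sections would then realize $\beta_n$ as an isomorphism via the two resulting computations of $R\Gamma_t(X, \Z/p\Z)$.

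The crucial ingredient would be a Suslin-type rigidity theorem for tame cohomology: for a tame henselization $A = \O_{X,(\bar x, \bar v)}$ with residue field $k(\bar x, \bar v)$, and any homotopy-invariant tame sheaf with transfers $F$ of $\Z/p\Z$-modules, the restriction $F(A) \to F(k(\bar x, \bar v))$ should be an isomorphism. Combined with homotopy invariance (\cref{homotopyinvariance}) and the conservative family of tame points (\cref{enoughpoints}), this would determine the tame stalks of the sheafified Suslin complex and allow one to conclude.

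The principal obstacle is precisely this rigidity statement. Unlike the classical Suslin-Voevodsky setting, where the stalks are strict henselizations of separably closed fields, the residue fields at tame points have pro-$p$ absolute Galois group and in general admit non-trivial $p$-torsion Galois cohomology, so rigidity is far from automatic. A plausible route is to pass through the comparison theorem (\cref{main compare}) to the adic side, where the analysis of Riemann-Zariski morphisms developed in the paper gives tighter control of stalks, and then to invoke the conjectured purity for deRham-Witt sheaves alluded to in the introduction (of which the curve case \cite{HueCur} is known). These technical inputs are the main source of difficulty and the principal obstruction to a near-term proof.
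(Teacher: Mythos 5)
The statement you are attacking is labeled as a \emph{conjecture} in the paper, and the paper itself offers no proof: the authors explicitly write that \cref{conj:suslinhom} is ``known if $(m,p)=1$ and all $n$ by \cite{SV96} and for general $m$ and $n=1$ by \cite{GS16}'' (both only ``up to comparing the maps''), and that they ``will address \cref{conj:suslinhom} in a forthcoming paper.'' So there is no internal proof to compare your attempt against, and your proposal should not be graded as if one existed.

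That said, your analysis is sound as a research outline and aligns with what the paper suggests. Your reduction of the $(m',p)=1$ part to \cref{compare invertible} plus \cite{SV96}, and the d\'evissage from $\Z/p^r\Z$ to $\Z/p\Z$, are the expected first steps; the one caveat you gloss over is the ``up to comparing the maps'' issue, i.e., you still need to check that the pairing-defined $\beta_n$ of this paper literally coincides with the Suslin--Voevodsky comparison map after passing through $\alpha_*$, which is a nontrivial compatibility. Your identification of the missing ingredient — a rigidity theorem for homotopy-invariant tame sheaves with transfers at tame henselizations, whose residue fields are only ``$p$-closed'' rather than separably closed — is exactly the right diagnosis, and your suggestion to route the argument through \cref{main compare} into the adic setting and eventually through deRham--Witt purity is consistent with the technology the paper develops. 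In short: you have correctly located the open problem; you have not (and could not, with the paper's tools alone) closed it, and neither does the paper.
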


Up to comparing the maps, \cref{conj:suslinhom} is known if $(m,p)=1$ and all $n$ by \cite{SV96} and for general $m$ and $n=1$ by \cite{GS16}. We will address \cref{conj:suslinhom} in a forthcoming paper.
\bibliographystyle{./meinStil}
\bibliography{./citations}

\vskip1cm
\small

{\sc The Hebrew University of Jerusalem, Einstein Institute of Mathematics, Giv'at Ram, Jerusalem, 91904, Israel}

\textit{E-mail address:} {\tt katharin.hubner@mail.huji.ac.il}

\medskip
{\sc  Universit\"{a}t Heidelberg, Mathematisches Institut, Im Neuenheimer Feld 205, D-69120 Heidelberg, Deutschland}

\textit{E-mail address:} {\tt schmidt@mathi.uni-heidelberg.de}
\end{document}